\documentclass[11pt, a4paper]{article}
\usepackage{amsmath,amssymb, amsthm, amsfonts, hyperref, ifthen}

\usepackage{graphicx}
\setlength{\textwidth}{6.0in} \setlength{\textheight}{9.2in}
\topmargin=-1cm \setlength{\oddsidemargin}{0.15in}
\newtheorem{theorem}{Theorem}[section]
\newtheorem{lemma}[theorem]{Lemma}
\newtheorem{proposition}[theorem]{Proposition}

\newtheorem{corollary}[theorem]{Corollary}
\theoremstyle{definition}

\newtheorem{example}[theorem]{Example}
\theoremstyle{remark}
\newtheorem{remark}[theorem]{Remark}
\newcommand{\be}{\begin{equation}}
\newcommand{\ee}{\end{equation}}
\newcommand{\beq}{\begin{equation}}
\newcommand{\enq}{\end{equation}}

\newcommand{\R}{{\mathbb{R}}}
\newcommand{\N}{{\mathbb{N}}}
\newcommand{\Z}{{\mathbb{Z}}}
\newcommand{\Q}{{\mathbb{Q}}}

\newcommand{\C}{{\mathbb{C}}}

\def\tg{{\widetilde{\gamma}}}

\def\({\left(}
\def\){\right)}

\newcommand{\supp}{{\mbox{\rm supp}}}

\renewcommand{\epsilon}{\varepsilon}

\renewcommand{\Im}{\operatorname{Im}}
\renewcommand{\Re}{\operatorname{Re}}
\newcommand{\Drc}[1]{{\operatorname{T}_{#1}}}
\newcommand{\spec}{\operatorname{spec}}
\newcommand{\gspec}[1]{{\Sigma_{#1}}}
\newcommand{\ir}{{\mathrm{i}}}
\newcommand{\dr}{{\mathrm{\,d}}}
\newcommand{\er}{{\mathrm{e}}}
\newcommand{\psib}{{\boldsymbol\psi}}
\newcommand{\phib}{{\boldsymbol\varphi}}
\newcommand{\Vclass}{{\mathbb{V}}}


\newcommand{\sob}[1][1]{H^{#1}}
\newcommand{\abs}[1]{\lvert{#1}\rvert}
\newcommand{\bigabs}[1]{\bigl\lvert{#1}\bigr\rvert}
\newcommand{\lrabs}[1]{\left\lvert{#1}\right\rvert}
\newcommand{\norm}[1]{\|#1\|}
\newcommand{\ipd}[2]{\langle{#1},{#2}\rangle}
\newcommand{\dom}{\operatorname{dom}}
\newcommand{\loc}{{\mathrm{loc}}}

\newcommand{\cir}{\mathbb{S}^1}
\newcommand{\sgn}{\operatorname{sgn}}
\newcommand{\CVclass}{{\widetilde{\mathbb{V}}}}
\newcommand{\BV}{BV_0}
\newcommand{\Tess}{{\Lambda_k}}

\newcommand{\asol}[2]{\phib^{#1}_{#2}}
\newcommand{\co}{\operatorname{co}}

\newcommand{\var}{\operatorname{var}}
\newcommand{\Ran}{\operatorname{Ran}}
\newcommand{\dist}{\operatorname{dist}}
\newcommand{\Nind}{\mathcal{N}_{\epsilon}}
\newcommand{\Ien}{I_{\epsilon,n}}
\newcommand{\Jen}{J_{\epsilon,n}}
\newcommand{\Ee}[1][\epsilon]{E_{#1}}
\newcommand{\Fe}{F_\epsilon}

\renewcommand{\o}{\overline}
\newcommand{\f}[1][]{\ifthenelse{\equal{#1}{}}{f}{f_{#1}}}
\newcommand{\Eng}[1]{E_{#1}}

\newcommand{\numz}[1][\phi]{\mathsf{N}_{#1}}
\newcommand{\numt}{\mathsf{m}}
\newcommand{\numzj}[1][n]{\mathsf{n}_{#1}}
\newcommand{\nstp}[1][m,M]{\mathsf{k}_{#1}}

\newcommand{\mcf}[1]{\widetilde{\chi}_{#1}}

\newcommand{\tp}{\mathcal{T}}
\newcommand{\stp}{\widetilde{\tp}}
\newcommand{\Itp}[1]{J_{#1}}
\newcommand{\rItp}[1]{\widetilde{J}_{#1}}
\newcommand{\Istp}[1]{K_{#1}}
\newcommand{\sEng}{U_{\kappa}}

\title{Eigenvalues of a one-dimensional Dirac operator pencil\thanks{MSC 34L40, 35P20}}
\author{
Daniel M.\ Elton\thanks{DME: Department of Mathematics and Statistics, Fylde College, Lancaster University, Lancaster LA1 4YF, United Kingdom; d.m.elton@lancaster.ac.uk;  \url{http://www.maths.lancs.ac.uk/\~elton/}}
\and 
Michael Levitin\thanks{ML: Department of Mathematics and Statistics, University of Reading, Whiteknights, PO Box 220, Reading RG6 6AX, United Kingdom; m.levitin@reading.ac.uk; \url{http://www.personal.reading.ac.uk/\~ny901965/}}
\and 
Iosif Polterovich\thanks{IP: D\'{e}partement de math\'{e}matiques et de statistique, Universit\'{e} de Montr\'{e}al, CP 6128, succ. Centre-ville, Montr\'{e}al, Qubec H3C 3J7, Canada; iossif@dms.umontreal.ca; 
\url{http://www.dms.umontreal.ca/\~iossif/}}
}
\renewcommand\footnotemark{}
\date{version September 28, 2013; \LaTeX ed \today}

\begin{document}

\maketitle
\begin{abstract}

We study the spectrum of a one-dimensional Dirac operator pencil, with a coupling constant in front of the potential considered as the spectral parameter. Motivated by recent investigations of graphene waveguides, we focus on the values of the coupling constant for which the kernel of the Dirac operator contains a square integrable function. In physics literature such a function is called a confined zero mode. Several results on the asymptotic distribution of coupling constants giving rise to zero modes are obtained. In particular, we show  that  this distribution depends in a subtle way on the sign variation and the presence of gaps in the  potential. Surprisingly, it also depends on the arithmetic properties of certain quantities determined by the potential.  We further observe that variable sign potentials may produce complex eigenvalues of the operator pencil. Some examples and numerical calculations illustrating these phenomena are presented.

\end{abstract}
\section{Introduction and main results}
\label{sec:intro}
\subsection{Statement of the problem} Consider the system of differential equations
\begin{equation}
\label{eq:basiceqv1}
\begin{aligned}
\bigl[V(x)-(\lambda-k)\bigr]\psi_1-\dfrac{\dr\psi_2}{\dr x}&=0,\\
\dfrac{\dr\psi_1}{\dr x}+\bigl[V(x)-(\lambda+k)\bigr]\psi_2&=0,
\end{aligned}
\end{equation}
on $\R$, where $k,\lambda$ are parameters and $V$ is a potential. 
Equivalently one may define a self-adjoint operator by
\[
\Drc{V}=\begin{pmatrix}
V+k&-\nabla\\
\nabla&V-k
\end{pmatrix}
=-i\sigma_2\nabla+k\sigma_3+V,
\]
where $\nabla=\dfrac{\dr}{\dr x}$  and $\sigma_2,\sigma_3$ are Pauli matrices.  Then \eqref{eq:basiceqv1} becomes the eigenvalue equation
$\Drc{V}\psib=\lambda\psib$,
where  $\psib=\begin{pmatrix}\psi_1\\\psi_2\end{pmatrix}$.

For a given potential $V$ let us set $\lambda=0$ and introduce the \emph{$\gamma$-spectrum} associated with $V$:
\[
\gspec{V}=\bigl\{\gamma\in\C: 0\in\spec(\Drc{\gamma V})\bigr\}.
\]
Equivalently $\gspec{V}$ is the spectrum of the linear operator pencil $\gamma\mapsto\Drc{0}+\gamma V$. 
Our goal is to understand the properties of $\gspec{V}$, such as symmetries, existence of real and complex (non-real) eigenvalues, eigenvalue estimates and asymptotics. Similar problems, as well as some other related questions,  have been studied in a variety of situations in mathematical literature --- see, for instance, \cite{BL}, \cite{GGHKSSV}, \cite{K}, \cite{Sa}, \cite{Schm}.

Whilst the general asymptotic behaviour and estimates in our case are generally in line with earlier results (see Theorems \ref{thm:lowerasym} and \ref{thm:singsigngvalasym}; we should note that our methods allow the widest class of potentials),  some unexpectedly subtle phenomena occur depending on the properties of $V$. In particular, $\gspec{V}$ may have a totally different structure for single-sign and variable-sign potentials (compare Theorems \ref{thm:singsignreal} and \ref{thm:oddV0}), as well as for potentials having gaps (that is, whose support is not connected) and for no-gap potentials 
(see Examples \ref{ex:ex3} and \ref{ex:ex4}). Also, variable-sign potentials can produce some (or even all) non-real eigenvalues, which have not been studied previously (see Theorem~\ref{thm:oddV0} and Example~\ref{ex: ex2}).

In physical literature this problem appears in the study of  electron waveguides in graphene (see \cite{HRP}, \cite{SDP} and references therein). Note that the electron dynamics in graphene is governed by the two-dimensional massless Dirac operator, and  the one-dimensional system \eqref{eq:basiceqv1} is obtained as a result of the separation of variables: the parameter $k$ corresponds to the frequency of the wavefunction $\psib$ in the direction parallel to the waveguide. 
From the physical viewpoint solutions  $\psi_1,\psi_2\in L^2$ are of particular interest; these are called {\it confined modes}.   
Among them, especially important in the study of conductivity properties of graphene are {\it zero modes}: $L^2$-solutions corresponding to $\lambda=0$. (See Section \ref{sec:nzmodes} for discussion of modes corresponding to $\lambda\ne 0$.)
Zero-energy states in graphene have also been studied for potentials of  other types --- see, for instance, \cite{BT}, \cite{BF}.
It was shown in \cite{HRP} that for the potential $V_{\text{HRP}}(x)=-1/{\cosh(x)}$  the solutions of the system \eqref{eq:basiceqv1}
can be found explicitly in terms of special functions. Moreover, there exists an infinite sequence of coupling constants $\gamma$ such that $0$ is an
eigenvalue of the operator $\Drc{\gamma V_{\text{HRP}}}$. 
An attempt to formulate and prove precise mathematical statements confirming and generalising the results of \cite{HRP} was the starting point of our research.

\subsection{Basic results} 
\label{sec:basic}
To state precise results we need to make some basic restrictions on the local regularity and global decay of the potential $V$. We shall assume all potentials are real valued and locally $L^2$. Let $\Vclass_0$ denote the class of such potentials which additionally satisfy
\[
\text{$\norm{V}_{L^2(x-1,x+1)}\to0$ as $\abs{x}\to\infty$;}
\]
roughly, $V\in\Vclass_0$ if it decays at infinity. In the literature $\Vclass_0$ is sometimes denoted as $c_0(L^2)$.

We can define the constant coefficient operator $\Drc{0}$ as a multiplication operator in Fourier space. If $V\in\Vclass_0$ we show that $V$ is a relatively compact perturbation of $\Drc{0}$, allowing us to define $\Drc{V}$ as an unbounded self-adjoint operator on $L^2$ (see Section \ref{subsec:argugen} for more details). The same construction can be used for complex-valued potentials (although, of course, the resulting operator will no longer be self-adjoint); this allows us to consider $\Drc{\gamma V}$ for any $\gamma\in\C$. Further use of the relative compactness of $V$ leads to the following:

\begin{theorem}
\label{thm:disgspec}
If $V\in\Vclass_0$ then $\gspec{V}$ is a discrete subset of $\C$.
\end{theorem}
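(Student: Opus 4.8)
The plan is to realize $\gspec{V}$ as the set of points where an analytic operator-valued function fails to be invertible, and then invoke the analytic Fredholm theorem. First I would set up the perturbation-theoretic framework promised in the excerpt: since $V\in\Vclass_0$ is relatively compact with respect to $\Drc{0}$, the operator $(\Drc{0}-z)^{-1}V$ is compact for any fixed $z$ in the resolvent set of $\Drc{0}$, which (since $\Drc{0}=-i\sigma_2\nabla+k\sigma_3$ has purely absolutely continuous spectrum $(-\infty,-\abs{k}]\cup[\abs{k},\infty)$) includes $z=0$ whenever $k\ne0$, and in any case includes some point; I will work with a fixed such $z_0$. Then $0\in\spec(\Drc{\gamma V})$ if and only if the equation $(\Drc{0}-z_0)\psib+(\gamma V+z_0)\psib=0$ has a nontrivial solution, equivalently $\bigl(I+(\Drc{0}-z_0)^{-1}(\gamma V+z_0)\bigr)\psib=0$ for some $\psib\ne0$ in the domain.

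Next I would define $F(\gamma)=I+(\Drc{0}-z_0)^{-1}(\gamma V+z_0)$, which I can rewrite as $F(\gamma)=A+\gamma B$ with $A=I+z_0(\Drc{0}-z_0)^{-1}$ and $B=(\Drc{0}-z_0)^{-1}V$, so that $F:\C\to\mathcal{B}(L^2)$ is an affine — hence entire — operator-valued function, and $B=F(\gamma)-F(0)$ divided by $\gamma$ is compact; thus $F(\gamma)=A+\gamma B$ is a compact perturbation of the fixed invertible operator $A$ (invertibility of $A$ follows from the resolvent identity, $A=(\Drc{0}-z_0)^{-1}\Drc{0}$ up to bounded factors — more simply $A^{-1}=I-z_0(\Drc{0})^{-1}$ formally; but the cleanest route is to note $A=(\Drc{0}-z_0)^{-1}\Drc{0}$ is boundedly invertible on the domain, or just pick $z_0$ and factor differently so that $F(\gamma)-\mathrm{compact}$ is the identity). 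Concretely it is cleanest to write $F(\gamma)=I+K(\gamma)$ where $K(\gamma)=(\Drc{0}-z_0)^{-1}(\gamma V+z_0)$ is compact for every $\gamma$ and depends analytically (affinely) on $\gamma$. By the analytic Fredholm theorem, either $F(\gamma)$ is non-invertible for all $\gamma\in\C$, or the set $S=\{\gamma: F(\gamma)\text{ not invertible}\}$ is discrete (with no accumulation point in $\C$) and on its complement $\gamma\mapsto F(\gamma)^{-1}$ is meromorphic.

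It then remains to (i) identify $\gspec{V}=S$, which is immediate from the equivalence above once one checks that $\ker F(\gamma)$ corresponds exactly to $\ker\Drc{\gamma V}$ (a solution $\psib$ of $F(\gamma)\psib=0$ lies in $\dom(\Drc{0})$ because it equals $-(\Drc{0}-z_0)^{-1}(\gamma V+z_0)\psib\in\Ran(\Drc{0}-z_0)^{-1}$, and then $\Drc{\gamma V}\psib=0$); and (ii) rule out the first alternative of the analytic Fredholm theorem, i.e. show $F(\gamma)$ is invertible for at least one $\gamma$. For (ii) the natural choice is $\gamma=0$: $F(0)=I+z_0(\Drc{0}-z_0)^{-1}=(\Drc{0}-z_0)^{-1}\Drc{0}$ restricted appropriately — but a slicker argument is that for $\gamma$ real the operator $\Drc{\gamma V}$ is self-adjoint with the same essential spectrum as $\Drc{0}$, so $0\notin\spec(\Drc{\gamma V})$ provided $k\ne0$ and $\gamma$ is small enough that no eigenvalue has yet crossed $0$; alternatively, directly estimate $\norm{K(0)}=\norm{z_0(\Drc{0}-z_0)^{-1}}$ and choose $z_0$ with $\Im z_0$ large so this norm is $<1$. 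I expect the only real subtlety — the main obstacle — to be the case $k=0$, where $0$ lies in the (continuous) spectrum of $\Drc{0}$ itself and there is no real $z_0$ in the resolvent set near $0$; this is handled by simply taking $z_0$ non-real, for which $\Drc{0}-z_0$ is always boundedly invertible, and then everything above goes through verbatim, with the final invertibility of $F(0)$ secured by taking $\abs{\Im z_0}$ large. Assembling these pieces gives that $\gspec{V}=S$ is discrete, as claimed.
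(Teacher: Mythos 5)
Your plan---express $\gspec{V}$ as the non-invertibility set of an analytic family $I+K(\gamma)$ of compact perturbations of the identity and apply the analytic Fredholm alternative---is a legitimate route, but the concrete form you propose contains an error. You assert that $K(\gamma)=(\Drc{0}-z_0)^{-1}(\gamma V+z_0)$ is compact for every $\gamma$; this is false unless $z_0=0$. Splitting $K(\gamma)=\gamma(\Drc{0}-z_0)^{-1}V+z_0(\Drc{0}-z_0)^{-1}$, the first term is compact but the second is a nonzero multiple of a resolvent of $\Drc{0}$, which is bounded and never compact since $\Drc{0}$ has purely absolutely continuous spectrum. So the analytic Fredholm theorem does not apply to $F(\gamma)=I+K(\gamma)$ as written. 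You appear to sense the problem (``factor differently so that $F(\gamma)-\mathrm{compact}$ is the identity'') but then proceed with the incorrect compactness claim anyway.

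The cleanest repair is simply to take $z_0=0$, which is available because the paper works under the standing assumption $k>0$, making $\Drc{0}:\sob\to L^2$ an isomorphism: then $K(\gamma)=\gamma\Drc{0}^{-1}V$ is genuinely compact and $F(0)=I$ is trivially invertible, so the Fredholm alternative yields discreteness. (Alternatively, factor $F(\gamma)=A(I+\gamma A^{-1}B)$ with $A=(\Drc{0}-z_0)^{-1}\Drc{0}$ and $B=(\Drc{0}-z_0)^{-1}V$, and apply the theorem to the second factor; but inverting $A$ boundedly again requires $\Drc{0}$ to be invertible, so this buys nothing over $z_0=0$ and does not salvage the case $k=0$ you were trying to cover.) A secondary point: $(\Drc{0}-z_0)^{-1}V$ as written applies the unbounded operator $V$ outside its natural domain; one should either work on $\sob$ or take the adjoint of the compact operator $V(\Drc{0}-\bar z_0)^{-1}$. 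The paper sidesteps both issues by using the other ordering: it observes that $V\Drc{0}^{-1}$ is compact on $L^2$ (Lemma \ref{lem:Vcptop} composed with the bounded $\Drc{0}^{-1}$), that $\gamma\in\gspec{V}$ iff $-1/\gamma$ is an eigenvalue of $V\Drc{0}^{-1}$, and that eigenvalues of a compact operator accumulate only at $0$. For an affine pencil these two routes (basic spectral theory of compact operators vs.\ analytic Fredholm) carry the same content.
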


\begin{remark}
\label{rem:lambdaspec}
\emph{Standard spectrum.}
The (usual) spectrum of the self-adjoint operator $\Drc{0}$ can be computed easily by considering it as a multiplication operator in Fourier space; we get 
\[
\spec(\Drc{0})=\R\setminus(-\abs{k},\abs{k})=:\Tess,
\]
while this spectrum is purely absolutely continuous. Since $V\in\Vclass_0$ is a relatively compact perturbation of $\Drc{0}$ the operators $\Drc{V}$ and $\Drc{0}$ must have the same essential spectrum (see \cite[section XIII.4]{RSI}); thus
\begin{equation}
\label{eq:TVessspec}
\spec_{\rm ess}(\Drc{V})=\spec_{\rm ess}(\Drc{0})=\Tess.
\end{equation}
The operator $\Drc{V}$ may have eigenvalues outside $\Tess$ but these must be isolated and of finite multiplicity (using the fact that we're dealing with a $1$-dimensional problem it is not hard to show that these eigenvalues must in fact be simple; a somewhat restricted form of this result is given in Lemma \ref{lem:simeval}).
\end{remark}

In common with other Dirac operators, $\Drc{V}$ possesses a number of elementary symmetries which lead to symmetries for the set $\gspec{V}$. In particular, if $V\in\Vclass_0$ then $-\gspec{V}=\gspec{V}=\overline{\gspec{V}}$, while $\gspec{V}$ is unchanged if we replace $k$ with $-k$ in the definition of $\Drc{0}$. With this last symmetry in mind we shall henceforth assume $k>0$; this will enable us to simplify the statement of some results.

To obtain estimates for the distribution of points in $\gspec{V}$ we impose extra global decay conditions on the potential $V$. Let $\Vclass_1$ denote the class of real valued locally $L^2$ potentials which satisfy
\[
\text{$\displaystyle\int_\R\,\abs{V(x)}\dr x<+\infty$;}
\]
that is, we require $V$ to be integrable. Equivalently we can define $\Vclass_1=\Vclass_0\cap L^1$. The class $\Vclass_1$ is sometimes denoted as $\ell^1(L^2)$.

Firstly we consider the number of points of $\gspec{V}$ lying inside the disc $\{z\in\C:\abs{z}\le R\}$ of radius $R\ge0$.

\begin{theorem}
\label{thm:genVL1upbnd}
Suppose $V\in\Vclass_1$. Then
\[
\#\bigl(\gspec{V}\cap\{z\in\C:\abs{z}\le R\}\bigr)\le C\,\norm{V}_{L^1}R
\]
for any $R\ge0$, where $C$ is a universal constant (we can take $C=4\er/\pi$).
\end{theorem}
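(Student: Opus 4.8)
The plan is to reduce the counting of points in $\gspec{V}$ inside a disc to a zero-counting problem for an analytic function of $\gamma$, and then apply Jensen's formula. First I would fix attention on the equation $\Drc{\gamma V}\psib=0$, i.e.\ the system \eqref{eq:basiceqv1} with $\lambda=0$ and $V$ replaced by $\gamma V$. Writing this as a first-order linear ODE system $\psib'=A(x;\gamma)\psib$, where $A(x;\gamma)$ depends affinely on $\gamma$ through the potential term, one gets a fundamental matrix $M(x;\gamma)$ (the propagator from $0$ to $x$) which, for each fixed $x$, is an entire function of $\gamma$; this follows from the standard Picard/Volterra iteration, whose convergence is controlled by $\int\abs{\gamma V}=\abs{\gamma}\norm{V}_{L^1}$. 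Since $V\in\Vclass_1\subset\Vclass_0$, a value $\gamma$ lies in $\gspec{V}$ precisely when the zero mode is in $L^2$, which (for $k>0$, since off the support of $V$ the system has solutions behaving like $\er^{\pm k x}$) amounts to a matching condition between the decaying solution at $-\infty$ and the decaying solution at $+\infty$. This matching condition can be written as the vanishing of a single scalar entire function $F(\gamma)$ — essentially a Wronskian/Evans-type function built from $M(\pm\infty;\gamma)$ — and $\gspec{V}$ is exactly the zero set of $F$.

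Next I would estimate $\abs{F(\gamma)}$ from above. The propagator satisfies $\norm{M(x;\gamma)}\le\exp\bigl(\abs{k}\abs{x}+\abs{\gamma}\int\abs{V}\bigr)$ by Gronwall, so after stripping off the explicit exponential free behaviour (which is what the Evans function construction does — one conjugates by the free solutions before taking the limit $x\to\pm\infty$), the reduced function $F(\gamma)$ obeys a bound of the form $\abs{F(\gamma)}\le C'\exp\bigl(\abs{\gamma}\norm{V}_{L^1}\bigr)$, with $F(0)\ne0$ after suitable normalisation (at $\gamma=0$ there is no zero mode since $0\in\spec_{\rm ess}$ only, and the free problem has no $L^2$ null solution — here one checks $F(0)$ is a fixed nonzero constant). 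Thus $F$ is entire of exponential type at most $\norm{V}_{L^1}$.

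Then I would invoke Jensen's formula: for an entire function $F$ with $F(0)\ne0$,
\[
\#\{\gamma:\abs{\gamma}\le R,\ F(\gamma)=0\}\le\frac{1}{\log 2}\log\frac{\max_{\abs{\gamma}\le 2R}\abs{F(\gamma)}}{\abs{F(0)}}\le\frac{1}{\log 2}\bigl(2R\norm{V}_{L^1}+\log(C'/\abs{F(0)})\bigr).
\]
To get the clean linear bound $C\norm{V}_{L^1}R$ with the stated constant $C=4\er/\pi$ one has to be more careful: instead of a crude Jensen application one uses the sharp relationship between the counting function and exponential type, namely that an entire function of exponential type $\tau$ has at most $(\tau/\pi+o(1))R$ zeros in a disc of radius $R$ — more precisely, optimising the ratio $2R$ of the two radii in Jensen and tracking constants yields the factor $\er/\pi$ per unit type, and the extra factor $4$ will come from the precise form of the propagator bound (the $\abs{k}\abs{x}$ term and the two-sided matching double some contributions, or the type is $2\norm{V}_{L^1}$ rather than $\norm{V}_{L^1}$ from combining the $\pm\infty$ propagators). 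The additive constant $\log(C'/\abs{F(0)})$ must be absorbed: this is where $R\ge0$ and the universality of $C$ need care — one checks the bound trivially for small $R$ (where the left side is at most the total number of zeros near $0$, a fixed finite number) and uses that for $R$ bounded away from $0$ the additive term is dominated by the linear term after enlarging $C$.

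The main obstacle I expect is twofold: first, carefully constructing the Evans-type entire function $F$ and verifying that its zero set is \emph{exactly} $\gspec{V}$ (including multiplicities, and including the subtlety that membership in $\gspec{V}$ requires genuine $L^2$ decay at \emph{both} ends, not merely boundedness — this uses $k>0$ and $V\in\Vclass_1$ so that the asymptotic form of solutions off $\supp V$ is clean); and second, extracting the precise constant $C=4\er/\pi$, which requires the sharp exponential-type-vs-zero-density estimate rather than a lossy Jensen bound, plus honest bookkeeping of how $\norm{V}_{L^1}$ enters the type of $F$. The functional-analytic alternative — bounding eigenvalues of the compact operator pencil $V(\Drc{0})^{-1}$ via its singular values and a Weyl-type inequality — would give the same linear-in-$R$, linear-in-$\norm{V}_{L^1}$ shape but is less likely to yield the explicit $4\er/\pi$; so I would commit to the ODE/Evans-function route.
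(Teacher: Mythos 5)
Your route — build an Evans/Jost–Wronskian entire function $F(\gamma)$ whose zero set is $\gspec{V}$, bound its exponential type by $\norm{V}_{L^1}$, then apply Jensen's formula — is genuinely different from the paper's. The paper instead passes through the Birman–Schwinger reduction (Lemma \ref{lem:symmspecequiv}), compares the eigenvalues of the non-self-adjoint operator $J_V A_V$ with its singular values via Weyl's inequality (Lemma \ref{lem:Weylineq}), and feeds in a lower bound $\mu_n\ge\pi n/(2\norm{V}_{L^1})$ on the $\gamma$-eigenvalues of $\abs{V}$ coming from the Pr\"ufer quantity $\Delta_V$ (Proposition \ref{prop:ssVevalupbnd}); the factor $4\er/\pi$ arises from $\pi/(2\er)$ in that chain of inequalities together with the $\pm$ symmetry of $\gspec{V}$. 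Your $F$ is the scalar version of the operator $V\Drc{0}^{-1}$ underlying the paper's argument, so both methods are, at bottom, capturing the same compactness, but your analytic-function packaging is self-contained and in fact, if executed sharply, yields a \emph{better} universal constant than the paper's $4\er/\pi$: the optimal Jensen ratio $\rho=\er$ gives $\er\norm{V}_{L^1}R$ (and $\er<4\er/\pi$).

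There is, however, a genuine gap in the way you handle the additive constant, and your speculation about where $4\er/\pi$ comes from is off target. Jensen gives $n(R)\le\bigl(\log M(\rho R)-\log\abs{F(0)}\bigr)/\log\rho$, so the additive term is $\log\bigl(C'/\abs{F(0)}\bigr)/\log\rho$. Your proposed fix — ``check the bound trivially for small $R$'' and ``enlarge $C$ for large $R$'' — does \emph{not} yield a universal $C$: when $\norm{V}_{L^1}$ is tiny the right-hand side $C\norm{V}_{L^1}R$ can be below $1$ for a huge range of $R$, and the claim $n(R)=0$ there is precisely a lower bound on the first pencil eigenvalue, which you have not established and which cannot be absorbed by enlarging $C$ after the fact. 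The correct fix inside your framework is to show the additive constant is actually $\le 0$, i.e.\ $C'\le\abs{F(0)}$. This is available here: writing the Jost solutions as $e^{\mp kx}m_\pm(x;\gamma)$ one finds the Volterra kernel, after conjugation by the free solutions, has operator norm exactly $1$, so Gronwall gives $\abs{m_\pm(0;\gamma)}\le\sqrt2\,\er^{\abs{\gamma}\norm{V}_{L^1(\R^\pm)}}$, and Cauchy--Schwarz on the Wronskian yields $\abs{F(\gamma)}\le 2\,\er^{\abs{\gamma}\norm{V}_{L^1}}$ while $F(0)=2$. With that, Jensen at $\rho=\er$ gives the clean uniform bound $n(R)\le\er\norm{V}_{L^1}R$ with no remainder. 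Your guess that the factor $4$ enters because the ``type is $2\norm{V}_{L^1}$'' or from ``two-sided matching'' is incorrect — the $L^1$ mass naturally splits between $\R^+$ and $\R^-$ in the two Jost solutions and reassembles to $\norm{V}_{L^1}$ in the Wronskian bound, and $\pi$ does not appear at all in a uniform Jensen estimate (it belongs to the \emph{asymptotic} zero-density theorem, which is not a uniform bound and so is not directly usable here). Finally, to make the Jensen count match $\#\gspec{V}$ including algebraic multiplicities one needs to know the order of vanishing of $F$ at a pencil eigenvalue is at least that multiplicity; this is the standard Evans-function multiplicity result, but it does need to be stated.
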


This result can be generalised (using a rather different approach) to deal with potentials $V\in\Vclass_0$ which have weaker decay than is required to be $L^1$; see Theorem \ref{thm:cfnupperbnd}.

Lower bounds which complement the upper bounds given by Theorem \ref{thm:genVL1upbnd} can also be obtained. Restricting our attention to real points we have the following:

\begin{theorem}
\label{thm:lowerasym}
Suppose $V\in\Vclass_1$. Then
\[
\#(\gspec{V}\cap[0,R])\ge\frac{R}{\pi}\left\lvert\int_{\R}V(x)\dr x\right\rvert +o(R)
\]
as $R\to\infty$, while the same estimate holds for $\#(\gspec{V}\cap[-R,0])$ (by symmetry).
In particular, $\gspec{V}\cap\R$ contains infinitely many points if $\int_{\R}V(x)\dr x\neq0$.
\end{theorem}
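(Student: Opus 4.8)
The plan is to reduce the lower bound to a counting argument for zeros of solutions of the ODE system \eqref{eq:basiceqv1} at $\lambda=0$, viewed as a function of the coupling constant $\gamma\in[0,R]$. First I would write the zero-mode equation: for $\lambda=0$, \eqref{eq:basiceqv1} with potential $\gamma V$ becomes a first-order linear system for $\psib=(\psi_1,\psi_2)$, and $\gamma\in\gspec{\gamma V}\cap\R$ precisely when this system admits a nontrivial $L^2(\R)$ solution. Since we are on the line and $V\in\Vclass_1$, the fundamental solutions have well-defined limits at $\pm\infty$ (by an integrability/Levinson-type argument), so one can track a distinguished solution that is square-integrable, say, near $-\infty$ and ask for which $\gamma$ it is also square-integrable near $+\infty$. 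A convenient device here is the Prüfer-type angle: introduce $\theta(x;\gamma)$ for the (appropriately normalised) solution decaying at $-\infty$, so that eigenvalues of the pencil correspond to a matching condition on $\theta$ at $+\infty$ modulo $\pi$. The number of $\gamma\in[0,R]$ satisfying the matching condition is then, up to a bounded error, the total winding $\bigl(\theta(+\infty;R)-\theta(+\infty;0)\bigr)/\pi$.

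The second step is to compute this winding asymptotically as $R\to\infty$. Differentiating the Prüfer angle with respect to the parameter $\gamma$ (a standard monotonicity/variational computation for the derivative of $\theta$ in the spectral parameter, here the coupling constant) yields $\partial_\gamma\theta(+\infty;\gamma)=\int_\R V(x)\,|\text{(solution)}|^2\,dx$ times a positive normalising factor, or more precisely an expression whose leading behaviour as $\gamma\to\infty$ is governed by $\int_\R V$. Concretely, in the large-$\gamma$ (semiclassical) regime the solution oscillates with local frequency $\sim \gamma|V(x)|$ away from $|k|$-thresholds, and integrating the phase gives $\theta(+\infty;\gamma)-\theta(-\infty;\gamma)=\gamma\int_\R V(x)\,dx + o(\gamma)$; the $o(\gamma)$ collects the contributions of the $k\sigma_3$ term and of the regions where $V$ is small, both of which are lower order because $V\in L^1$. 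Dividing by $\pi$ and counting lattice crossings gives $\#(\gspec{V}\cap[0,R])\ge \frac{R}{\pi}\bigl|\int_\R V\bigr| + o(R)$. The absolute value and the ``$\ge$'' (rather than equality) appear because crossings of the matching lattice may be non-monotone when $V$ changes sign or is small, so some crossings can be cancelled — but the net winding is still a lower bound for the number of distinct solutions once one accounts for multiplicity (each is simple by Lemma \ref{lem:simeval}).

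For the rigorous version I would set up the shooting/transfer-matrix picture: let $M(\gamma)$ be the transfer matrix from $-\infty$ to $+\infty$ (well-defined by dominated convergence since $V\in L^1$ and the free transfer matrix is bounded on compacta after removing the explicit $k$-oscillation), identify $\gspec{V}\cap\R$ with the zero set of a single analytic function $F(\gamma)=\ipd{\bfu_+(\gamma)}{\bfu_-(\gamma)}$ built from the decaying solutions at the two ends, and estimate the number of zeros in $[0,R]$ from below by the total variation of $\arg F$ along that segment, using the argument principle together with the a priori \emph{upper} bound from Theorem \ref{thm:genVL1upbnd} to control the zeros escaping into the complex plane. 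The symmetry $\gspec{V}=-\gspec{V}$ from the introduction then gives the statement for $[-R,0]$, and the final clause ($\int_\R V\neq0\Rightarrow$ infinitely many real points) is immediate.

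The main obstacle I expect is making the phase computation uniform near the two thresholds $\gamma$-values where the local symbol degenerates — that is, controlling the solution on the (shrinking but delicate) regions where $|\gamma V(x)|$ is comparable to $|k|$, and separately where $V$ is small, so that their combined contribution to the winding is genuinely $o(R)$ rather than $O(R)$. This is precisely a WKB/oscillation-theory estimate: one needs to show the transition regions do not accumulate phase at rate $\Omega(\gamma)$, which follows from $V\in L^1$ via an adapted-frame (diagonalisation) argument plus a Gronwall bound on the error, but the bookkeeping for a sign-changing $L^2_{\loc}$ potential is the technical heart of the proof.
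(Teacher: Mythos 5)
Your primary route — introduce the Pr\"ufer angle $\theta(x;\gamma)$ for the solution decaying at one end, characterise $\gspec{V}\cap\R$ by a $\pi\Z$-lattice matching condition on the angle at the other end, lower-bound the number of crossings in $[0,R]$ by the net winding via the Intermediate Value Theorem, and show that the total phase accumulated is $\gamma\int_\R V + o(\gamma)$ — is exactly the paper's argument: compare Proposition~\ref{prop:gspeccharD}, the IVT count in~\eqref{eq:estcross}, and Proposition~\ref{prop:Dgasym}. Your instinct that the net winding is only a lower bound (crossings may be non-monotone) is also the key point, correctly handled.

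Two small remarks. First, the ``rigorous version'' you outline via transfer matrices and the argument principle is a detour: the IVT argument you already described is fully rigorous once the Pr\"ufer ODE $\nabla\theta=\gamma V+k\cos(2\theta)$ and the limits $\theta(\pm\infty)$ are established (Lemma~\ref{lem:+thetalim}), and moreover ``total variation of $\arg F$ along a real segment'' is not the argument principle, which counts zeros enclosed by a closed contour, so that route would need substantially more scaffolding (and, as you note, control of nearby complex zeros). Second, your worry about a delicate WKB analysis near threshold regions where $\abs{\gamma V(x)}\sim k$ is overstated: in the paper the $o(\gamma)$ remainder in Proposition~\ref{prop:Dgasym} follows from a simple tail estimate (Lemma~\ref{lem:tailest}) together with a cut-off at $K_\gamma\sim\abs{\gamma}^\mu$, $0<\mu<1$ — the $k\cos(2\theta)$ term just contributes $O(kK_\gamma)=o(\gamma)$ on the truncated interval and the tail is $o(\gamma)$ since $V\in L^1$; no uniform diagonalisation or Gronwall bookkeeping is needed.
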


\subsection{Single-signed potentials} 
\label{sec:signdef}
In general the set $\gspec{V}$ may contain complex eigenvalues (see Section \ref{sec:egs} for some examples of explicit potentials which illustrate various possible behaviours for complex points in $\gspec{V}$). Note that, even though the operator $\Drc{V}$ is self-adjoint (recall that $V$ is real valued), it does not follow in general that the corresponding operator pencil should have a purely real spectrum.  However, if $V$ does not change sign (as in the example considered in \cite{HRP}) all eigenvalues of the operator pencil are real:

\begin{theorem}
\label{thm:singsignreal}
If $V\in\Vclass_0$ is single-signed then $\gspec{V}\subset\R$.
\end{theorem}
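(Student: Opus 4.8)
The plan is to exploit the self-adjointness of $\Drc{0}$ together with the fact that, since $k>0$, the point $0$ lies in the resolvent set of $\Drc{0}$ (recall $\spec(\Drc{0})=\Tess=\R\setminus(-k,k)$). Let $\gamma\in\gspec{V}$, so there is a nonzero zero mode $\psib=\begin{pmatrix}\psi_1\\\psi_2\end{pmatrix}\in L^2$ with $\Drc{\gamma V}\psib=0$, equivalently $\Drc{0}\psib=-\gamma V\psib$. Since $V\in\Vclass_0$ is a relatively compact perturbation of $\Drc{0}$, hence relatively bounded with relative bound zero, we have $\dom(\Drc{\gamma V})=\dom(\Drc{0})$; in particular $\psib$ lies in this common domain and $V\psib\in L^2$.

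Next I would take the $L^2$ inner product of the identity $\Drc{0}\psib=-\gamma V\psib$ with $\psib$, obtaining
\[
\ipd{\Drc{0}\psib}{\psib}=-\gamma\,\ipd{V\psib}{\psib}=-\gamma\int_\R V(x)\bigl(\abs{\psi_1(x)}^2+\abs{\psi_2(x)}^2\bigr)\dr x.
\]
The left-hand side is real because $\Drc{0}$ is self-adjoint and $\psib\in\dom(\Drc{0})$, while the integral on the right is real because $V$ is real valued. Hence, once we know the integral is nonzero, $\gamma$ is a quotient of two real numbers and therefore real.

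The only step that genuinely uses the single-sign hypothesis --- and the main, if modest, obstacle --- is to check that the weight $\int_\R V\,(\abs{\psi_1}^2+\abs{\psi_2}^2)\,\dr x$ does not vanish. Assume $V\ge0$ (the case $V\le0$ is identical). Then the integrand is pointwise nonnegative, so if the integral were zero we would have $V(\abs{\psi_1}^2+\abs{\psi_2}^2)=0$ almost everywhere, hence $V\psi_1=V\psi_2=0$ a.e., i.e.\ $V\psib=0$ as an element of $L^2$. But then $\Drc{0}\psib=\Drc{\gamma V}\psib-\gamma V\psib=0$ with $\psib\neq0$, contradicting $0$ being in the resolvent set of $\Drc{0}$. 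Thus the weight is nonzero, $\gamma\in\R$, and $\gspec{V}\subset\R$.
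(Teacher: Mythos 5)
Your argument is essentially identical to the paper's: take the inner product of $\Drc{0}\psib=-\gamma V\psib$ with $\psib$, observe that $\ipd{\Drc{0}\psib}{\psib}$ and $\ipd{V\psib}{\psib}$ are both real, and use the single-sign hypothesis together with the invertibility of $\Drc{0}$ (i.e.\ $0\notin\spec(\Drc{0})$) to rule out $\ipd{V\psib}{\psib}=0$. The only cosmetic difference is that the paper frames the last step as a contrapositive (``if $\gamma\notin\R$ then $\ipd{V\psib}{\psib}=0$, contradiction'') while you argue directly that $\gamma$ is a ratio of two reals with nonzero denominator; the content is the same.
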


By symmetry we can write $\gspec{V}=\{\pm\gamma_n:n\in\N\}$ where $0<\gamma_1<\gamma_2<\dots$ denotes the sequence of positive points in $\gspec{V}$, arranged in order of increasing size. The bound in Theorem \ref{thm:lowerasym} can be turned into an asymptotics: 

\begin{theorem}
\label{thm:singsigngvalasym}
Suppose $V\in\Vclass_1$ is single-signed. Then
\[
\#(\gspec{V}\cap[0,R])=\frac{R}{\pi}\left\lvert\int_{\R}V(x)\dr x\right\rvert +o(R)
=\frac{\norm{V}_{L^1}}{\pi}\,R+o(R)
\]
as $R\to\infty$. If $V$ is non-zero we can equivalently write
\[
\gamma_n=\frac{\pi}{\norm{V}_{L^1}}\,n+o(n)
\]
as $n\to\infty$.
\end{theorem}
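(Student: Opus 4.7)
The lower bound follows directly from Theorem \ref{thm:lowerasym}, since single-signedness of $V$ gives $\left|\int_\R V(x)\dr x\right| = \|V\|_{L^1}$. It remains to establish the matching upper bound $\#(\gspec{V}\cap[0,R]) \leq \frac{R}{\pi}\|V\|_{L^1} + o(R)$; the stated asymptotic for $\gamma_n$ then follows by inversion.

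I would use a Pr\"ufer angle argument, assuming $V \geq 0$ without loss of generality. For a real solution of $\Drc{\gamma V}\psib = 0$, set $\psi_1 = r\cos\theta$, $\psi_2 = r\sin\theta$; then $r$ decouples and $\theta$ satisfies
\[
\theta'(x) = \gamma V(x) + k\cos(2\theta(x)).
\]
Analysis of the asymptotic autonomous equation $\theta' = k\cos 2\theta$ (stable equilibria at $\pi/4 \pmod\pi$, unstable at $3\pi/4 \pmod\pi$) shows that $\psib \in L^2(\R)$ corresponds to $\theta(-\infty) \equiv \pi/4$ together with $\theta(+\infty) \equiv 3\pi/4$, both modulo~$\pi$. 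Fix the solution $\psib_-(\cdot;\gamma)$ decaying at $-\infty$, and normalize the continuous lift of its Pr\"ufer angle so that $\theta_-(-\infty;\gamma) = \pi/4$. Past the bulk of the potential $\theta_-(x;\gamma)$ is attracted to a stable equilibrium, so $\theta_-(+\infty;\gamma) = \pi/4 + n(\gamma)\pi$ exists; the integer-valued function $n(\gamma)$ is piecewise constant, jumping by $1$ precisely at the points of $\gspec{V}\cap(0,\infty)$, so $n(R) = \#(\gspec{V}\cap[0,R]) + O(1)$.

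Since $V\geq 0$, differentiating the Pr\"ufer ODE in $\gamma$ (and using $\partial_\gamma\theta_-(-\infty;\gamma) = 0$) gives the Duhamel formula
\[
\partial_\gamma\theta_-(x;\gamma) = \int_{-\infty}^x V(y)\exp\!\left(-2k\int_y^x \sin(2\theta_-(t;\gamma))\dr t\right)\dr y \;\geq\; 0,
\]
so $n$ is non-decreasing. Integrating the Pr\"ufer ODE over $\R$ yields
\[
n(R)\pi \;=\; \theta_-(+\infty;R) - \pi/4 \;=\; R\,\|V\|_{L^1} \;+\; k\int_\R \cos(2\theta_-(x;R))\dr x,
\]
and the upper bound reduces to showing that the remainder integral is $o(R)$ as $R\to\infty$. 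Heuristically this holds because on the bulk support of $V$ we have $\theta_-(x;R)\approx \pi/4 + R\int_{-\infty}^x V$, so $\cos(2\theta_-)$ oscillates with frequency $\sim R$ and averages to zero by Riemann--Lebesgue, while outside the support of $V$ the angle $\theta_-$ sits near a stable equilibrium where $\cos(2\theta_-)$ is exponentially small. The main technical obstacle is making this estimate uniform for a general $V\in\Vclass_1$ (which need be neither smooth nor bounded below on its support): a natural route is to approximate $V$ by smooth compactly supported potentials $V_\epsilon$ (for which the change of variable $W(x)=\int_{-\infty}^x V$ reduces the bulk contribution to a standard stationary-phase estimate) and control the perturbation error uniformly in $R$ via the $L^1$-smallness of $V-V_\epsilon$.
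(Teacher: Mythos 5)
Your strategy matches the paper's step for step: Prüfer angle \eqref{eq:PruferODE}, the $L^2$-criterion at $\pm\infty$ (Lemma \ref{lem:+thetalim}), the shooting/matching characterization of $\gspec{V}\cap\R$ (Proposition \ref{prop:gspeccharD}), and strict monotonicity in $\gamma$ via the Duhamel formula (Propositions \ref{prop:dthetadgamma+lim} and \ref{prop:positiveDinc}). The lower bound via Theorem \ref{thm:lowerasym} and the inversion to the $\gamma_n$ asymptotics are also fine. Where you diverge is the key asymptotic for the shooting angle, and here the route you sketch is both over-engineered and incomplete.

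No cancellation or stationary-phase argument is needed for the cosine integral, and the smoothing approximation you propose would not close cleanly: by Theorem \ref{thm:genVL1upbnd} the eigenvalue count is only Lipschitz in $\norm{V}_{L^1}$ with constant proportional to $R$, so replacing $V$ by a smooth $V_\epsilon$ incurs an error $O(\epsilon R)$, not $o(R)$, and letting $\epsilon\to0$ with $R$ would require additional uniformity you have not addressed. The paper's Proposition \ref{prop:Dgasym} obtains $\Delta_V(\gamma)=\gamma\int_\R V+o(\gamma)$ by a much cruder argument: truncate $[0,\infty)$ at a level $K_\gamma$, bound $\abs{\cos}$ by $1$ on $[0,K_\gamma]$ (cost $kK_\gamma$), control $\theta_{\gamma,+}$ beyond $K_\gamma$ by the tail estimate (Lemma \ref{lem:tailest}), then take $K_\gamma\to\infty$ with $K_\gamma/\abs{\gamma}\to0$. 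This handles any $V\in\Vclass_1$ with no smoothness. A further convenience of the paper's formulation is that $\Delta_V$ is assembled (equation \eqref{eq:Deltadefn}) from $\theta_{\gamma,+}$ on the right half-line and $\theta_{\gamma,-}$ on the left, so the tail estimate applies near each boundary condition; in your one-sided version you additionally need an argument that $\theta_-(+\infty;R)-\theta_-(K_R;R)$ stays under control (true, since it measures a bounded angle plus an $O\bigl(R\int_{K_R}^\infty\abs{V}\bigr)$ perturbation, but it is not free and is not covered by Lemma \ref{lem:tailest} as stated). Finally, your jump-counting argument for $n(\gamma)$ relies on continuity of the limit $\theta_-(+\infty;\gamma)$ in $\gamma$, which deserves justification for general $V\in\Vclass_1$; the paper sidesteps this entirely by evaluating the angles at the finite matching point.
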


\subsection{Anti-symmetric potentials} 
\label{sec:odd}

For potentials of variable sign the behaviour of the $\gamma$-spectrum may be different, in some cases quite drastically so. For anti-symmetric potentials we have the following:
\begin{theorem}
\label{thm:oddV0}
If $V\in\Vclass_0$ is anti-symmetric then $\gspec{V}\cap\R=\emptyset$.
\end{theorem}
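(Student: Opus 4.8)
The plan is to exploit the anti-symmetry of $V$ together with a parity symmetry of the Dirac operator to show that no real $\gamma$ can put $0$ in the spectrum of $\Drc{\gamma V}$. The key observation is that the system \eqref{eq:basiceqv1} with $\lambda=0$ has a reflection symmetry: if $\psib=(\psi_1,\psi_2)^T$ solves the zero-mode equation $\Drc{\gamma V}\psib=0$, consider the reflected function $(\mathsf{R}\psib)(x)$ built from $\psi_1(-x)$ and $\psi_2(-x)$ with an appropriate sign/swap so that, using $V(-x)=-V(x)$, it solves the same equation (with the \emph{same} $\gamma$). The correct intertwining is $\Drc{\gamma V}$ conjugated by the operator $(\psi_1(x),\psi_2(x))\mapsto(\psi_2(-x),\psi_1(-x))$ (i.e.\ spatial reflection composed with the off-diagonal Pauli matrix $\sigma_1$): under this map $\nabla\mapsto -\nabla$, $\sigma_3\mapsto -\sigma_3$, the off-diagonal $-i\sigma_2\nabla$ is preserved, and $V(x)\mapsto V(-x)=-V(x)$, so $\Drc{0}+\gamma V \mapsto -(\Drc{0}+\gamma V)$ up to the unitary; in particular $0\in\spec$ is preserved. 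This is not yet enough on its own, so I would combine it with complex conjugation.

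The real input should be a quadratic-form / Wronskian argument. First I would reduce: since $V\in\Vclass_0$ and $0\notin\Tess=\spec_{\mathrm{ess}}(\Drc{\gamma V})$ (as $k>0$), membership $0\in\spec(\Drc{\gamma V})$ for real $\gamma$ forces $0$ to be a genuine $L^2$-eigenvalue with eigenfunction $\psib$. Decouple the first-order system into a single second-order scalar equation for, say, $\psi_1$ (after solving algebraically for $\psi_2$ where $V-(\lambda\mp k)$ is nonvanishing; with $\lambda=0$ and $k>0$ this is where one must be careful, but $\gamma V\mp k$ is bounded away from $0$ at infinity and the reasoning can be carried out on the pieces). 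Then compute $\frac{\dr}{\dr x}\Im(\overline{\psi_1}\psi_2)$ or the analogous conserved quantity for the system and integrate over $\R$; the boundary terms vanish by the $L^2$ decay. Anti-symmetry of $V$ makes the bulk term have a definite sign — or rather, pair it up under $x\mapsto -x$ so that the integral forces $\psi_1\equiv 0$ (hence $\psib\equiv 0$), contradicting $\psib$ being an eigenfunction. Concretely: the parity map above sends an eigenfunction at $\gamma$ to another eigenfunction at $\gamma$, eigenvalue $0$ is simple (Remark/Lemma~\ref{lem:simeval}), so $\psib$ is (up to scalar) an eigenvector of $\mathsf{R}$; writing out what $\mathsf{R}\psib=c\,\psib$ means and feeding it back into the conserved bilinear quantity should yield that the quantity both is constant $=0$ at $\pm\infty$ and has integrand of one sign, forcing triviality.

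The cleanest route, and the one I would actually write up, is probably via the Prüfer-type / variation-of-argument picture used elsewhere in the paper for counting $\gspec{V}\cap\R$: track the argument of the solution vector of \eqref{eq:basiceqv1} with $\lambda=0$ as $x$ runs from $-\infty$ to $+\infty$, subject to the $L^2$ (decaying) boundary condition at $-\infty$; a real $\gamma$ lies in $\gspec{V}$ iff this solution also satisfies the decaying condition at $+\infty$, which is a crossing condition on the accumulated angle. Anti-symmetry of $V$ forces the angle accumulated on $(-\infty,0)$ and on $(0,\infty)$ to be related by reflection in such a way that the total can never hit the required value — essentially because the contributions from the $k\sigma_3$ term and the $V$ term combine with opposite parities, and the $k$-term alone (with $k>0$) already controls the sign of the rotation in a way that the anti-symmetric $V$ cannot cancel globally.

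The main obstacle is making the ``decoupling into a scalar second-order equation'' and the associated Wronskian/argument bookkeeping rigorous at the points where $\gamma V(x)=\pm k$, i.e.\ where the algebraic elimination of $\psi_2$ in terms of $\psi_1$ (or vice versa) degenerates; for $V\in\Vclass_0$ these are generically isolated but for rough $L^2$ potentials one needs to argue on the first-order system directly rather than passing to the scalar form. I expect the parity-symmetry reduction (the operator $\mathsf{R}$ above) plus the conserved bilinear quantity to be robust enough to avoid this issue, so I would lead with that and relegate the scalar-equation heuristic to a remark.
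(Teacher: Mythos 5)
There is a genuine gap, and it is located exactly where you announced the key intertwining. Your reflection operator $(\mathsf{R}\psib)(x)=\sigma_1\psib(-x)$ does \emph{not} anti-commute with $\Drc{\gamma V}$. Your own intermediate claims already contradict your conclusion: you correctly observe that under $\mathsf{R}$ the term $-\ir\sigma_2\nabla$ is \emph{preserved}, while $k\sigma_3\mapsto -k\sigma_3$ and $\gamma V\mapsto -\gamma V$; putting these together gives
\[
\mathsf{R}\,(\Drc{0}+\gamma V)\,\mathsf{R}=-\ir\sigma_2\nabla - k\sigma_3 - \gamma V,
\]
which is \emph{not} $-(\Drc{0}+\gamma V)=\ir\sigma_2\nabla-k\sigma_3-\gamma V$: the $\nabla$-term has the wrong sign. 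So $\mathsf{R}\psib$ is not another zero mode of $\Drc{\gamma V}$ and the simplicity argument that you want to invoke cannot get started. The operator that does work is $(S\psib)(x)=\sigma_2\psib(-x)$, for which $\sigma_2(-\ir\sigma_2)\sigma_2\cdot(-\nabla)=+\ir\sigma_2\nabla$ does flip, and indeed $S\Drc{\gamma V}S=-\Drc{\gamma V}$. That is the unitary used in the paper.

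The second, subtler, problem is that even if you grant yourself the anti-commutation, the route you sketch after that is not what closes the argument. The paper derives a purely algebraic contradiction: with $C\psib=\overline{\psib}$ one has $C\Drc{\gamma V}=\Drc{\gamma V}C$ for real $\gamma$, and, crucially, $CS=-SC$ because $\sigma_2$ has imaginary entries. Simplicity of the zero eigenvalue (Lemma~\ref{lem:simeval}) then forces $C\psib=\alpha\psib$, $S\psib=\beta\psib$ with $\abs{\alpha}=1$, $\beta^2=1$, and $\alpha\overline\beta=-\alpha\beta$ --- an immediate contradiction since $\beta\in\{\pm1\}$ is real. Your $\sigma_1$ is real, so it would \emph{commute} with complex conjugation ($C\sigma_1=\sigma_1 C$), and the analogous algebra would produce no contradiction at all. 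Thus the specific imaginary unitary is not incidental; it is where the contradiction comes from, and your ``conserved bilinear quantity'' step would have to supply an entirely different mechanism, which you do not carry out (and which is sensitive to the elimination issues you yourself flag near points where $\gamma V=\pm k$).

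On the Pr\"ufer alternative that you also sketch: this can in fact be made to work, but more concretely than you say and only under $V\in\Vclass_1$. One checks that if $\theta$ solves \eqref{eq:PruferODE} for an anti-symmetric $V$, then so does $\tilde\theta(x)=\tfrac{\pi}{2}+\theta(-x)$; matching boundary conditions \eqref{eq:bcthetapm} gives $\theta_{\gamma,-}(x)=\tfrac{\pi}{2}+\theta_{\gamma,+}(-x)$, whence $\Delta_V(\gamma)\equiv0$ by \eqref{eq:Deltadefn}, so $\Delta_V(\gamma)\notin\tfrac{\pi}{2}+\pi\Z$ and $\gspec{V}\cap\R=\emptyset$ by Proposition~\ref{prop:gspeccharD}. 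That argument is genuinely different from the paper's and is attractive in its own right, but it uses Lemma~\ref{lem:+thetalim} and hence only covers $V\in\Vclass_1$, whereas the theorem is stated for the larger class $\Vclass_0$; the paper's symmetry argument needs no integrability at all. So your proposal contains the germ of two workable proofs, but as written the unitary is wrong, the contradiction is not actually derived, and the Pr\"ufer version would prove a weaker statement.
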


Note that, the $\gamma$-spectrum may still contain an infinite number of complex eigenvalues; see Example \ref{ex: ex2} below.

The absence of real points in the $\gamma$-spectrum together with Theorem \ref{thm:singsigngvalasym} shows that the lower bound obtained in
Theorem \ref{thm:lowerasym} is quite sharp.

\begin{remark}
\label{rem:translateV}
It is easy to see that translating a potential $V$ changes the operator $\Drc{\gamma V}$ to something which is unitarily equivalent. In particular, 
Theorem \ref{thm:oddV0} also applies to potentials $V$ satisfying the condition $V(a+x)=-V(a-x)$ for some $a\in\R$ and all $x\in\R$. The translation invariance of our problem will also be used to simplify the presentation of some arguments in Section \ref{sec:argu}.
\end{remark}

\subsection{Potentials without gaps} 
\label{sec:gaps}
Let $\BV$ denote the class of compactly supported real valued functions of (totally) bounded variation. Clearly $\BV\subset\Vclass_1$ while $\BV$ contains compactly supported piecewise constant potentials with a finite number of pieces, as well as compactly supported functions in $C^1$. We say that a potential \emph{$V\in\BV$ has no gaps} if
\[
\bigabs{\co(\supp(V))\cap V^{-1}(0)}=0,
\]
where $\abs{S}$ and $\co(S)$ denote the Lebesgue measure and convex hull of a set $S\subseteq\R$ respectively.

\begin{theorem}
\label{thm:Vnogapaysm}
Suppose $V\in\BV$ has no gaps. Then
\[
\#(\gspec{V}\cap[0,R])=\frac{R}{\pi}\left\lvert\int_\R V(x)\dr x\right\rvert+O(1)
\]
as $R\to\infty$. The same estimate holds for $\#(\gspec{V}\cap[-R,0])$ (by symmetry).
\end{theorem}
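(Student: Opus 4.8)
\textbf{Proof proposal for Theorem \ref{thm:Vnogapaysm}.}

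The plan is to reduce the counting of points of $\gspec{V}$ to a rotation-number or Prüfer-type analysis of the zero-energy equation, and to extract an $O(1)$ error term from the no-gap hypothesis by controlling the monotonicity of the relevant phase. The starting point is the zero-energy system obtained by setting $\lambda=0$ in \eqref{eq:basiceqv1} with $V$ replaced by $\gamma V$: this is a first-order $2\times2$ linear ODE whose coefficients depend \emph{linearly} on $\gamma$. Outside $\co(\supp(V))$ the equation has constant coefficients $k\sigma_3$, so every solution is a combination of $\er^{kx}$ and $\er^{-kx}$ modes; the condition $\gamma\in\gspec{V}$ is exactly the condition that the solution decaying as $x\to-\infty$ is matched (up to scalar) to the solution decaying as $x\to+\infty$ by the transfer matrix $M(\gamma)$ across the support. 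Thus $\gspec{V}\cap\R$ is the zero set of a single real-analytic function of $\gamma$, namely the off-diagonal entry (in the hyperbolic basis) of $M(\gamma)$, and counting its real zeros in $[0,R]$ is what we must do.

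The key device I would use is a polar (Prüfer) substitution adapted to the hyperbolic structure at the ends: write the relevant solution as an amplitude times an ``angle'' variable $\theta(x;\gamma)$, and track how $\theta$ evolves across $\supp(V)$ as $\gamma$ increases. The count $\#(\gspec{V}\cap[0,R])$ should equal the net increment of a suitable winding/phase function over $\gamma\in[0,R]$, up to a bounded correction coming from the two endpoints. The leading term $\frac{R}{\pi}\lvert\int_\R V\rvert$ must come out as in Theorem \ref{thm:lowerasym} and Theorem \ref{thm:singsigngvalasym}: the average rate at which the phase rotates per unit $\gamma$ is governed by $\int_\R V$, essentially because, to leading order in large $\gamma$, the transfer matrix is dominated by the rapidly oscillating factor $\exp\bigl(\ir\gamma\int V\,\sigma_1\bigr)$ (the potential term $\gamma V\,\mathrm{Id}$ in the off-diagonal slots), whose phase advances at rate $\int_\R V$. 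So the content of the theorem over and above Theorem \ref{thm:lowerasym} is that the error is $O(1)$ rather than $o(R)$, and also an \emph{upper} bound matching the lower bound.

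For the sharp $O(1)$ error the no-gap hypothesis is essential and is where I expect the main work to lie. The natural statement to prove is that, after an appropriate change of the independent variable (replacing $x$ by the ``mass'' variable $s=\int_{-\infty}^x \lvert V(t)\rvert\,\dr t$, which is legitimate precisely because the no-gap condition means $V$ does not vanish on a positive-measure subset of its convex hull, so $s$ is a strictly increasing reparametrisation of $\co(\supp V)$), the phase $\theta$ becomes, for each fixed $x$, a \emph{monotone} function of $\gamma$ with derivative bounded above and below by positive constants times the local density $\lvert V\rvert$. Bounded variation of $V$ enters to control the ``error'' pieces of the transfer matrix (the non-oscillatory part, and the diagonal $k\sigma_3$ part) uniformly in $\gamma$ — one integrates by parts, using $\var(V)<\infty$ and compact support, to see that the correction to the pure-rotation picture is $O(1/\gamma)$ or at worst $O(1)$ in total. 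Combining the strict monotonicity (which prevents more than $O(1)$ ``extra'' crossings being created by fluctuations) with the computed average rate $\frac1\pi\lvert\int_\R V\rvert$ then yields the two-sided bound with $O(1)$ remainder. The symmetry statement for $[-R,0]$ is immediate from $\gspec{V}=-\gspec{V}$ (Remark following Theorem \ref{thm:disgspec}). The principal obstacle, I expect, is making the monotonicity-of-phase argument genuinely uniform in $\gamma$: one must show that the BV-bound on $V$ controls the accumulated non-monotone contributions to $\theta(x;\cdot)$ independently of how large $\gamma$ gets, and that the no-gap hypothesis is exactly what is needed to prevent the phase from ``stalling'' on a whole interval of $\gamma$ values (which, in the presence of a gap, is precisely what produces the extra $o(R)$-but-not-$O(1)$ terms seen in Examples \ref{ex:ex3} and \ref{ex:ex4}).
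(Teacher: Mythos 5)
Your overall framework -- Pr\"{u}fer variables, identifying $\gspec{V}\cap\R$ with crossings of $\tfrac{\pi}{2}+\pi\Z$ by a phase quantity, and reading off the rotation rate $\tfrac1\pi\lvert\int V\rvert$ -- is the right one and is what the paper does. However, the central technical claim on which your $O(1)$ error rests is not correct, and it is also not what the paper proves. You assert that, after reparametrising by $s=\int_{-\infty}^x\lvert V\rvert$, the phase $\theta(x;\gamma)$ becomes, for each fixed $x$, a monotone function of $\gamma$ with derivative pinned between positive constants times $\lvert V\rvert$. This cannot hold for variable-sign $V$: the exact formula (Proposition \ref{prop:dthetadgamma+lim}) gives $\omega_{\gamma,+}(a)=-\int_a^\infty \er^{2k\Psi_{[a,x]}}V(x)\,\dr x$, a \emph{weighted integral} of $V$ with positive weights, which can vanish or change sign whenever the positive and negative parts of $V$ balance. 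The monotonicity the paper exploits is strictly weaker and of a different kind: it is \emph{eventual} monotonicity of $\Delta_V(\gamma)$ for $\gamma\ge S$, coming from the asymptotic $\frac{\dr}{\dr\gamma}\Delta_V(\gamma)\to\int_\R V$ as $\gamma\to\infty$ (Corollary \ref{cor:derivDgasym}), valid precisely when $\int V\neq0$.

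The missing idea is the oscillatory cancellation lemma that produces this derivative asymptotic. The paper shows (Proposition \ref{prop:intcossinest}) that for $V\in\BV$ with no gaps the integrals $\Phi_J=\int_J\cos(2\theta_\gamma)\dr x$ and $\Psi_J=\int_J\sin(2\theta_\gamma)\dr x$ are $o(1)$ as $\gamma\to\infty$, uniformly over subintervals $J$. This is where both the bounded-variation hypothesis (to control the number and size of the pieces on which $V$ is bounded away from $0$, and to bound the non-oscillatory corrections) and the no-gap hypothesis (to make the measure of the bad set $\abs{\Ee}$ go to $0$) enter, and it is the genuinely hard step; your ``integrate by parts using $\var(V)<\infty$'' is a reasonable heuristic but does not by itself produce a \emph{uniform in $J$} $o(1)$ estimate, which is essential since one must then plug these estimates into the variation-of-parameters formula \eqref{diffwint:eq} for $\omega_\gamma$ and its second-order analogue. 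Finally, your sketch does not handle the case $\int_\R V\,\dr x=0$, where the theorem reduces to the qualitatively different statement that $\gspec{V}\cap\R$ is finite; the paper treats this case by a separate (shorter) argument using Corollary \ref{cor:derivDgasym} with $n=0$ only.
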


\begin{remark}
\label{rem:nogapint0}
When $\int_{\R}V(x)\dr x=0$ this result simply states that $\gspec{V}\cap\R$ is finite.
\end{remark}

\subsection{Discussion}
\label{sec:discussion}
Our results give information about the asymptotics of the counting function $\#(\gspec{V}\cap[0,R])$ as $R\to\infty$. 
For any $V\in\Vclass_1$ the results of Section \ref{sec:basic} give asymptotic upper and lower bounds of
\begin{equation}
\label{up&lowasybnd:eq}
C\frac{R}{\pi}\int_{\R}\abs{V(x)}\dr x
\quad\mbox{and}\quad
\frac{R}{\pi}\left\lvert\int_{\R}V(x)\dr x\right\rvert
\end{equation}
respectively. 
Using Theorem \ref{thm:genVL1upbnd} we can take $C=2\er$, in which case the upper bound is actually uniform for $R\ge0$. For an asymptotic upper bound the constant can be reduced to at least $C=\er$ in general (see Remark \ref{rem:otherests}). Theorem \ref{thm:singsigngvalasym} shows the constant can be reduced further to $C=1$ for single-signed potentials; in this case the asymptotic upper and lower bounds agree and an asymptotic formula for the points in $\gspec{V}$ is obtained. For variable-signed potentials the quantities $\int_\R\abs{V(x)}\dr x=\norm{V}_{L^1}$ and $\bigabs{\int_\R V(x)\dr x}$ differ, leading to differences in the upper and lower bounds in \eqref{up&lowasybnd:eq} even if we could take $C=1$. For no-gap potentials $V\in\BV$ Theorem \ref{thm:Vnogapaysm} shows that it is the lower bound that actually gives the leading order term in the asymptotics of $\#(\gspec{V}\cap[0,R])$ as $R\to\infty$.

The above results may lead to a hypothesis that, in fact, the lower bound always gives the leading order term in the asymptotics of the counting function of the $\gamma$-spectrum.  However, as we show in the next section, this is not the case. Moreover, the  precise asymptotic behaviour of $\#(\gspec{V}\cap[0,R])$ as $R\to\infty$ may depend on the properties of a variable--signed potential in a rather subtle way. In particular,  it is sensitive to the presence of  gaps, that is, intervals where $V\equiv0$, appearing between components of $\supp(V)$.  Even more surprisingly, the leading term of the asymptotics is affected by
the arithmetic properties of certain quantities determined by the potential, such as the rationality of the ratio $\bigabs{\int_\R V(x)\dr x}/\norm{V}_{L^1}$.

\subsection{One-gap potentials, zeros of trigonometric functions and arithmetic}
\label{sec:onegap}

Suppose $V\in\BV$. We say \emph{$V$ has one gap} if we can write $V=V_1+V_2$ for some non-zero $V_1,V_2\in\BV$ which have no gaps and disjoint supports. 
For $j=1,2$ the support of $V_j$ is a closed bounded interval; write $\supp(V_j)=[a_j,b_j]$.
Without loss of generality we may assume the support of $V_1$ lies to the left of that of $V_2$. 
Then $b_1<a_2$ and the gap is the interval $(b_1,a_2)$.
Set
\[
v_j=\int_{a_j}^{b_j} V(x)\,\dr x=\int_\R V_j(x)\,\dr x
\]
for $j=1,2$. Thus $\int_\R V(x)\,\dr x=v_1+v_2$ while $\abs{v_1}+\abs{v_2}\le\norm{V}_{L^1}$, with equality iff $V_1$ and $V_2$ are each single-signed.

\begin{theorem}
\label{thm:1gint0} 
If $\int_\R V(x)\dr x=0$ then $\gspec{V}\cap\R$ contains only finitely many points. 
\end{theorem}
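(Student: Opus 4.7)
My plan is to reduce the zero-mode equation for the pencil to a first-order ODE for a Pr\"ufer angle, and then exploit the cancellation forced by $v_1+v_2=0$ to show that the terminal phase stays bounded strictly below $\pi/2$ in modulus for all sufficiently large $|\gamma|$; combined with the discreteness from Theorem~\ref{thm:disgspec} this yields finiteness. Writing $\psi_1=r\cos\phi$, $\psi_2=r\sin\phi$ with $r>0$ converts $\Drc{\gamma V}\psib=0$ into
\[
\phi'(x)=k\cos(2\phi(x))+\gamma V(x).
\]
The $L^2$ decay at $-\infty$ forces $\phi(x)\equiv\pi/4\pmod{\pi}$ for $x\le a_1$ (where $\psi$ is proportional to $(1,1)^{\mathrm{T}}e^{kx}$), while the $L^2$ decay at $+\infty$ requires $\phi(x)\equiv 3\pi/4\pmod{\pi}$ for $x\ge b_2$ (corresponding to $\psi\propto(1,-1)^{\mathrm{T}}e^{-kx}$, which is the unstable fixed point of the free equation $\phi'=k\cos(2\phi)$). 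Fixing the branch $\phi(a_1;\gamma)=\pi/4$ gives the characterisation
\[
\gspec{V}\cap\R=\bigl\{\gamma\in\R:\Phi(\gamma):=\phi(b_2;\gamma)-\pi/4\in\pi/2+\pi\Z\bigr\}.
\]

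Next, I would split the evolution of $\phi$ across $[a_1,b_1]$, $[b_1,a_2]$ and $[a_2,b_2]$. Over each $[a_j,b_j]$, integrating the ODE gives
\[
\phi(b_j;\gamma)-\phi(a_j;\gamma)=\gamma v_j+\epsilon_j(\gamma),\qquad \epsilon_j(\gamma):=\int_{a_j}^{b_j}k\cos(2\phi(x;\gamma))\,\dr x,
\]
so that $|\epsilon_j(\gamma)|\le k(b_j-a_j)$. On the gap $[b_1,a_2]$ the equation reduces to $\phi'=k\cos(2\phi)$, whose explicit solution $\tan\phi(x)=\tanh(k(x-x_0))$ shows that $\phi$ is trapped in the basin of a single attractor $\pi/4+n\pi$, of width $\pi$, bounded by consecutive repellers at $3\pi/4+(n-1)\pi$ and $3\pi/4+n\pi$. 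Consequently $\phi(a_2;\gamma)-\phi(b_1;\gamma)=\Delta(\phi(b_1;\gamma),L)$ with $L=a_2-b_1>0$, where $\Delta(\cdot,L)$ is continuous, $\pi$-periodic, and takes values strictly inside $(-\pi/2,\pi/2)$; by compactness,
\[
M_L:=\sup_{\phi_0\in\R}\bigabs{\Delta(\phi_0,L)}<\pi/2.
\]
Summing the three contributions and using the hypothesis $v_1+v_2=\int_\R V\,\dr x=0$, the linear-in-$\gamma$ term cancels to leave
\[
\Phi(\gamma)=\epsilon_1(\gamma)+\Delta(\phi(b_1;\gamma),L)+\epsilon_2(\gamma).
\]

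The main analytical step is to prove $\epsilon_j(\gamma)\to 0$ as $|\gamma|\to\infty$. Because $V_j$ is nonzero almost everywhere on $[a_j,b_j]$, for large $|\gamma|$ the phase $\phi(\cdot;\gamma)$ sweeps rapidly through many periods of $\cos(2\phi)$ on each maximal sign-constant piece of $V_j$; changing variables from $x$ to $u=\phi(x;\gamma)$ on these pieces converts $\epsilon_j$ into a sum of oscillatory integrals $\int\cos(2u)f_\gamma(u)\,\dr u$ in which $f_\gamma$ is of bounded variation uniformly in $\gamma$ and $\|f_\gamma\|_\infty=O(|\gamma|^{-1})$, and a single integration by parts then yields $\epsilon_j(\gamma)=O(|\gamma|^{-1})$. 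This is the main obstacle, since $V_j$ may be variable-signed and may vanish at the endpoints of its support, so the sets where the $k\cos(2\phi)$ term fails to be dominated by $\gamma V_j$ must be handled with care---this is exactly the kind of estimate underlying Theorem~\ref{thm:Vnogapaysm}. Once $\epsilon_j\to 0$ is established, for all sufficiently large $|\gamma|$ one has $|\Phi(\gamma)|\le M_L+|\epsilon_1(\gamma)|+|\epsilon_2(\gamma)|<\pi/2$, so $\Phi(\gamma)\notin\pi/2+\pi\Z$ and hence $\gamma\notin\gspec{V}$. Therefore $\gspec{V}\cap\R$ is bounded; being also a discrete subset of $\R$ by Theorem~\ref{thm:disgspec}, it must be finite.
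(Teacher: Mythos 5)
Your proposal is correct, and the core strategy matches the paper's: convert the zero-mode condition to a Pr\"ufer ODE $\phi'=k\cos(2\phi)+\gamma V$, decompose the phase increment across the two supports and the gap, invoke the no-gap asymptotics (the paper's Proposition \ref{prop:nogapsDeltaVI}, built on Proposition \ref{prop:intcossinest}) to get $\phi(b_j)-\phi(a_j)=\gamma v_j+o(1)$, cancel the linear terms using $v_1+v_2=0$, bound the gap contribution, and conclude boundedness plus discreteness.

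Where you differ from the paper is in the treatment of the gap. The paper derives the exact trigonometric relation $\sin(\theta(b)-\theta(a))=\alpha\cos(\theta(b)+\theta(a))$ with $\alpha=\tanh(k(a_2-b_1))$ (Lemma \ref{lem:gaptheta}), then rewrites the eigenvalue condition as $\cos(\Delta_1+\Delta_2)+\alpha\cos(\Delta_1-\Delta_2)=0$ (Lemma \ref{lem:altspecchar1gap}), which under $v_1+v_2=0$ reduces to $1+\alpha\cos(2|v_1|\gamma)+o(1)=0$, impossible for large $\gamma$ since $\alpha<1$. You instead bound the gap phase increment directly by a constant $M_L<\pi/2$ using the attractor/repeller structure of $\phi'=k\cos(2\phi)$: the solution stays strictly between consecutive repellers $3\pi/4+\pi\Z$, so the change is bounded in modulus by $\pi/2$ with slack; continuity and $\pi$-periodicity in the initial datum give a uniform $M_L<\pi/2$. (In fact Lemma \ref{lem:gaptheta} gives the sharp value $M_L=\arcsin(\tanh(kL))$.) Your version is slightly more elementary and suffices for this theorem; the paper sets up the sharper identity because it is also needed for the full asymptotics of Theorem \ref{thm:1ggen}. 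The one substantive claim you leave to ``the same estimate underlying Theorem~\ref{thm:Vnogapaysm}''---namely $\epsilon_j(\gamma)\to0$---is indeed exactly the $n=0$ case of Proposition \ref{prop:nogapsDeltaVI}; this is a genuine dependency, not a gap, since the paper's proof relies on it in precisely the same way.
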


\begin{remark}
\label{rem:twogapzerointegral}
This result extends Remark \ref{rem:nogapint0} to one gap potentials. However the same result does \emph{not} extend to zero integral potentials with two gaps; see Example \ref{ex:ex4}.
\end{remark}

We now suppose $v_1+v_2=\int_{\R}V(x)\,\dr x\neq0$. Set
\[
\alpha=\tanh(k(a_2-b_1))
\quad\text{and}\quad
\beta=\lrabs{\frac{v_1-v_2}{v_1+v_2}}.
\]
Then $\alpha\in(0,1)$ gives a measure of the gap length, while $0\le\beta<1$ if $v_1v_2>0$ and $\beta>1$ if $v_1v_2<0$. In particular, if $V$ is single-signed then $\beta < 1$.
When $\alpha\beta>1$ we can further define
\begin{equation}
\label{eq:defncoeffu}
\nu_{\alpha,\beta}=\frac2\pi\left[\beta\arcsin\frac{\sqrt{\alpha^2\beta^2-1}}{\sqrt{\beta^2-1}}
+\arcsin\frac{\sqrt{1-\alpha^2}}{\alpha\sqrt{\beta^2-1}}\right].
\end{equation}
If we fix $\beta>1$ and allow $\alpha$ to vary from $1/\beta$ to $1$ it is easy to check that $\nu_{\alpha,\beta}$ varies continuously and monotonically from $1$ to $\beta$.

If $\beta$ is positive and rational write $\beta=p/q$ where $p,q\in\N$ are coprime. 
If $p$ and $q$ are both odd set $p_\beta=p$ and $q_\beta=q$; if $p$ and $q$ have opposite parity set $p_\beta=2p$ and $q_\beta=2q$. 

Set
\begin{equation}
\label{eq:Adef}
A(\alpha,\beta)
=\begin{cases}
1&\text{if $\alpha\beta<1$,}\\
\nu_{\alpha,\beta}&\text{if $\alpha\beta>1$ and $\beta\notin\Q$,}\\
\displaystyle\frac4{q_\beta}\left\lfloor\frac14(p_\beta+q_\beta\nu_{\alpha,\beta})\right\rfloor-\frac{p_\beta}{q_\beta}+\frac2{q_\beta}
&\text{if $\alpha\beta>1$ and $\beta\in\Q$;}
\end{cases}
\end{equation}
we are using $\lfloor x\rfloor$ to denote the largest integer which does not exceed $x$.

\begin{theorem}
\label{thm:1ggen}
Suppose $\alpha\beta\neq1$. If $\alpha\beta>1$ and $\beta\in\Q$ suppose additionally that $p_\beta+q_\beta\nu_{\alpha,\beta}\notin4\Z$.
Then
\[
\#(\gspec{V}\cap[0,R])=\frac{1}{\pi}\,A(\alpha,\beta)\,\abs{v_1+v_2}\,R+o(R)
\]
as $R\to\infty$. The same estimate holds for $\#(\gspec{V}\cap[-R,0])$ (by symmetry).
\end{theorem}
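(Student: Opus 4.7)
The plan is to reduce the zero-mode condition to a scalar transcendental equation in $\gamma$ via a Pr\"ufer-type shooting argument, and then count its real zeros separately in each of the three regimes prescribed by the definition of $A(\alpha,\beta)$.

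At $\lambda=0$, set $\eta_\pm=\psi_1\pm\psi_2$; the system \eqref{eq:basiceqv1} becomes $\eta_+'=k\eta_++\gamma V\eta_-$, $\eta_-'=-k\eta_--\gamma V\eta_+$. In polar form $(\eta_+,\eta_-)=\rho(\cos\theta,\sin\theta)$ this decouples into
\[
\theta'=-\gamma V-k\sin 2\theta,\qquad \rho'/\rho=k\cos 2\theta.
\]
The $L^2$-decay of $\psib$ becomes $\theta(a_1)\in\pi\Z$ at $-\infty$ and $\theta(b_2)\in\tfrac{\pi}{2}+\pi\Z$ at $+\infty$. On the gap $(b_1,a_2)$, where $V\equiv0$, the equation $\theta'=-k\sin 2\theta$ integrates exactly to the explicit map $\tan\theta(a_2)=\er^{-2kL}\tan\theta(b_1)$. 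On each piece $V_j$ the no-gap hypothesis enables stationary-phase cancellation (the same mechanism that produced Theorem \ref{thm:Vnogapaysm}), giving $\theta(b_j)-\theta(a_j)=-\gamma v_j+o(1)$ uniformly in the initial angle as $\gamma\to\infty$. Assembling the three segments and rewriting with $\alpha=\tanh(kL)$ reduces the zero-mode condition to the asymptotic equation
\[
\cos\bigl(\gamma(v_1+v_2)\bigr)+\alpha\cos\bigl(\gamma(v_1-v_2)\bigr)=o(1)\qquad\text{as }\gamma\to\infty.
\]

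Writing $v=v_1+v_2$, $w=v_1-v_2$ and $f(\gamma):=\cos(v\gamma)+\alpha\cos(w\gamma)$, the task becomes counting approximate zeros of $f$ in $[0,R]$. When $\alpha\beta<1$, the identity $f(\gamma)=\Re\bigl[\er^{\ir v\gamma}(1+\alpha\er^{\ir(w-v)\gamma})\bigr]$ together with $|1+\alpha\er^{\ir s}|\ge1-\alpha>0$ shows that $\arg[\cdot]=v\gamma+O(1)$, so zeros occur at density $|v|/\pi$ and $A=1$. When $\alpha\beta>1$ and $\beta\notin\Q$, the orbit $\gamma\mapsto(v\gamma,w\gamma)\bmod(2\pi\Z)^2$ equidistributes on $\mathbb{T}^2$, and a Rice/Crofton formula identifies the density of zeros as
\[
\frac{1}{4\pi^2}\int_{\{\cos x+\alpha\cos y=0\}}\bigl\lvert(v,w)\cdot\hat n\bigr\rvert\,\dr S,
\]
which, after parametrising the level curve by $y$ and splitting at $|\cos y|=\sqrt{(\alpha^2\beta^2-1)/(\alpha^2(\beta^2-1))}$, evaluates explicitly to $\nu_{\alpha,\beta}|v|/\pi$, giving $A=\nu_{\alpha,\beta}$. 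When $\alpha\beta>1$ and $\beta\in\Q$, $f$ is exactly periodic in $\gamma$ with period $2\pi q_\beta/|v|$, and the same geometric count over one period, now forced to round up integer endpoint contributions, yields the floor-function formula in \eqref{eq:Adef}; the side condition $p_\beta+q_\beta\nu_{\alpha,\beta}\notin 4\Z$ excludes the borderline case in which a zero of $f$ per period sits at a critical point.

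The principal technical hurdle is upgrading the $o(1)$ asymptotic equation to an honest counting statement with $o(R)$ error. One must show, uniformly for $\gamma\in[0,R]$, that each zero of the exact composite equation lies within $o(1)$ of a simple zero of $f$ and conversely, so that the perturbation neither creates nor destroys zeros. In the irrational case this also requires a quantitative form of equidistribution (for instance a discrepancy estimate for $\{n\beta\}$) to pass from a density to a counting asymptotic. In the rational case the arithmetic hypothesis is exactly what rules out a double zero of $f$; without it the $o(1)$ perturbation could add or remove one zero per fundamental period, shifting the leading coefficient by $2/q_\beta$.
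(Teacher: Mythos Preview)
Your reduction to the scalar equation is essentially the paper's approach: a Pr\"ufer angle (the paper uses $\nabla\theta=\gamma V+k\cos 2\theta$, a trivial shift of your convention), the no-gap asymptotics on each piece (your $\theta(b_j)-\theta(a_j)=-\gamma v_j+o(1)$ is the paper's Proposition~\ref{prop:nogapsDeltaVI}), and an explicit transfer across the gap (your $\tan\theta(a_2)=\er^{-2kL}\tan\theta(b_1)$ is equivalent to the paper's Lemma~\ref{lem:gaptheta}). Combining these yields exactly the equation $\cos\bigl((v_1{+}v_2)\gamma\bigr)+\alpha\cos\bigl((v_1{-}v_2)\gamma\bigr)+\phi(\gamma)=0$ with $\phi$ small (Lemma~\ref{lem:altspecchar1gap}). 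One point you should make explicit: the $o(1)$ remainder must decay together with its first \emph{and second} derivatives (condition~\eqref{eq:decayprop012}); this is what Proposition~\ref{prop:nogapsDeltaVI} actually provides, and the second-derivative control is indispensable for the irrational case below.

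Where your proposal diverges from the paper is the zero-counting step, which the paper packages as the separate Theorem~\ref{thm:coszeros}. For the unperturbed function $f(\gamma)=\cos(v\gamma)+\alpha\cos(w\gamma)$ your three cases are reasonable: the $\alpha\beta<1$ argument is the same as the paper's (Lemma~\ref{lem:zintab<1}); for $\beta\notin\Q$ your Rice/Crofton computation is a legitimate alternative to the paper's direct approach (Lemmas~\ref{lem:numzjnumt}--\ref{lem:altexpnumt} plus Weyl equidistribution), though you assert without argument that the level-set integral evaluates to $\nu_{\alpha,\beta}\abs{v}/\pi$; and the rational case is handled by periodicity in both accounts.

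There is, however, a genuine gap in your perturbation-stability step. You write that one must show each zero of the perturbed equation lies within $o(1)$ of a \emph{simple} zero of $f$ and conversely. In the irrational case this is simply false as a uniform statement: $f$ has zeros arbitrarily close to being tangential (points where $f$ and $f'$ are simultaneously small), and near such points an $o(1)$ perturbation of $f$ can create or destroy zeros regardless of how small it is. The paper's mechanism is different: one isolates the set $\stp$ of turning points of $f$ at which $f^2+(f')^2$ is small (relative to $\phi$), shows via Lemma~\ref{lem:lowbndf''} that $f''$ is bounded away from zero there, and then proves (Lemmas~\ref{lem:limdiffs+n}--\ref{lem:bnddiffbirr} and Corollary~\ref{cor:diststp}) that $\#(\stp\cap[0,R])=o(R)$ when $\beta\notin\Q$. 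On the complement the simple-zero matching you describe does work; on the exceptional intervals the second-derivative bound limits the damage to at most two zeros each. This sparseness argument uses only qualitative equidistribution, not a discrepancy estimate --- your suggestion that one needs quantitative equidistribution ``to pass from a density to a counting asymptotic'' conflates two separate issues. In the rational case your diagnosis of the arithmetic side condition is correct: it rules out exact tangential zeros of the periodic function $f$ (Lemma~\ref{lem:tangzsbrat}), after which $C^1$ decay of $\phi$ already suffices.
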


\begin{remark}
If $\alpha\beta>1$ and $\beta\in\Q$ then the bounds $x-1\le\lfloor x\rfloor\le x$ give
\[
\nu_{\alpha,\beta}-\frac{2}{q_\beta}\le A(\alpha,\beta)\le\nu_{\alpha,\beta}+\frac{2}{q_\beta}.
\]
Therefore, for any sequence of rational numbers $\beta_n$ converging to $\beta \notin \Q$, we have $A(\alpha, \beta_n) \to A(\alpha,\beta)$, and hence
$A(\alpha, \beta)$ is continuous at irrational values of $\beta$.   At the same time it is clear that  $A(\alpha,\beta)$ has discontinuities at many rational values of $\beta$. Let us note that continuity at irrational values and discontinuity at rational values of a parameter was observed for other physically meaningful quantities --- see, for instance, \cite{GGL} (where the mathematical setting is somewhat similar to ours),  as well as  \cite{AMS}, \cite{JM}.
\end{remark}

Theorem \ref{thm:1ggen}  comes almost directly from a result about the zeros of a perturbed trigonometric function.
Consider the equation
\begin{equation}
\label{eq:cospert0}
\cos(x)+\alpha\cos(\beta x)+\phi(x)=0
\end{equation}
where $\phi$ satisfies the decay condition
\begin{equation}
\label{eq:decayprop012}
\phi\in C^2(\R),\quad\text{$\phi^{(n)}(x)=o(1)$ as $x\to\infty$ for $n=0,1,2$.}
\end{equation}

\begin{theorem}
\label{thm:coszeros} 
Let $0\le \alpha <1$, $\beta\ge0$ and $\alpha\beta\neq1$. 
If $\alpha\beta>1$ and $\beta\in\Q$ let us additionally assume that $p_\beta+q_\beta\nu_{\alpha,\beta}\notin 4\Z$.
Also suppose that $\phi$ satisfies \eqref{eq:decayprop012} and the solutions of \eqref{eq:cospert0} form a discrete subset of $\R$.
Then
\[
\#\bigl\{x\in[0,R]:\text{$x$ satisfies \eqref{eq:cospert0}}\bigr\}=\frac1\pi\,A(\alpha,\beta)\,R+o(R)
\]
as $R\to\infty$. 
\end{theorem}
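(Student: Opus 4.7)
The plan is to reformulate \eqref{eq:cospert0} using a lifted phase. Set $g(x) := e^{ix} + \alpha e^{i\beta x}$ so that $\cos(x) + \alpha\cos(\beta x) = \Re g(x)$. Since $0\le\alpha<1$, one has $|g(x)|\ge 1-\alpha>0$, so $g$ admits a continuous $C^2$ argument $\Theta(x) := \arg g(x)$; then \eqref{eq:cospert0} is equivalent to $\cos\Theta(x) = -\phi(x)/|g(x)|$, with right-hand side tending to $0$ by \eqref{eq:decayprop012}. A direct calculation of $\Im(g'/g)$ gives
\[
\Theta'(x) = \frac{1 + (1+\beta)\alpha\cos((\beta-1)x) + \beta\alpha^2}{1 + 2\alpha\cos((\beta-1)x) + \alpha^2},
\]
a smooth function of the single phase $s := (\beta-1)x \bmod 2\pi$. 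The numerator is affine in $\cos s$, attaining its minimum $(1-\alpha)(1-\alpha\beta)$ at $\cos s = -1$, so $\Theta$ is globally monotone iff $\alpha\beta<1$; for $\alpha\beta>1$, $\Theta'$ changes sign at $\cos s = -(1+\beta\alpha^2)/((1+\beta)\alpha)$.

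The central step is the counting identity
\[
\#\{x \in [0,R] : x \text{ satisfies } \eqref{eq:cospert0}\} = \frac{1}{\pi}\int_0^R |\Theta'(x)|\,dx + o(R). \qquad (\ast)
\]
On each interval between consecutive critical points of $\Theta$ the function $\Theta$ is strictly monotone, so once $|\phi/|g||<1$ the zeros of $\cos\Theta + \phi/|g|$ there correspond bijectively to grid points of $\pi/2 + \pi\Z$ lying in $\Theta$'s range on the piece, contributing $\pi^{-1}\int|\Theta'| + O(1)$. Summing over the $O(R)$ pieces gives the main term plus an error $O(R)$; to sharpen to $o(R)$ one shows that only a vanishing fraction of critical points of $\Theta$ take a value within $O(\sup_{[x,\infty)}|\phi|)$ of the grid (near such a point a small $\phi$ can create or destroy a pair of zeros, whereas at a critical point whose value is bounded away from the grid the perturbed count is stable). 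At the $m$-th critical point of a given type, $\Theta$ takes the value $\Theta_0 + 2\pi m/(\beta-1)$; for $\beta\notin\Q$ this sequence is equidistributed modulo $\pi$ by Weyl's theorem, and splitting $[0,R]$ into an initial window and a tail on which $|\phi|<\varepsilon$ yields error $O(\varepsilon R) = o(R)$. For $\beta\in\Q$ the set of critical values modulo $\pi$ is finite, and the hypothesis $p_\beta + q_\beta\nu_{\alpha,\beta} \notin 4\Z$ is precisely what separates this finite set from the grid by a uniform positive distance, making the error $O(1)$.

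Once $(\ast)$ is established, the problem reduces to evaluating $\lim_{R\to\infty} R^{-1}\int_0^R |\Theta'|\,dx$. Substituting $s = (\beta-1)x$ and invoking Weyl equidistribution (for $\beta\notin\Q$) or periodicity (for $\beta\in\Q$) gives
\[
\frac{1}{2\pi}\int_0^{2\pi}\left|\frac{1 + (1+\beta)\alpha\cos s + \beta\alpha^2}{1 + 2\alpha\cos s + \alpha^2}\right|ds.
\]
For $\alpha\beta<1$ the integrand is positive and the integral evaluates to $1$ by expansion using the standard identity $\int_0^{2\pi} d\theta/(1+2\alpha\cos\theta+\alpha^2) = 2\pi/(1-\alpha^2)$ together with its $\alpha$-derivative, giving $A(\alpha,\beta)=1$. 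For $\alpha\beta>1$ one splits at the sign change of the numerator and evaluates each half by the same identities, producing $\nu_{\alpha,\beta}$ after trigonometric simplification. In the rational-$\beta$ case the exact per-period counting of grid crossings of $\Theta$ (rather than the continuous mean) yields the floor-function correction in the definition of $A(\alpha,\beta)$.

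I expect the main obstacle to be the sharpening $O(R) \to o(R)$ in $(\ast)$. The arithmetic hypothesis $p_\beta + q_\beta\nu_{\alpha,\beta} \notin 4\Z$ is indispensable here: without it, in the rational-$\beta$ case a positive density of critical points of $\Theta$ would lie exactly on the grid $\pi/2 + \pi\Z$, and at each such point the perturbed count of zeros becomes sensitive to the sign of $\phi$, so the leading-order coefficient $A(\alpha,\beta)$ would not be well defined; this also explains the discontinuity of $A$ at those rational parameter values.
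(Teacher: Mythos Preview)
Your phase-lift approach via $\Theta=\arg(e^{ix}+\alpha e^{i\beta x})$ is genuinely different from the paper's route, which works directly with $f$ and counts zeros through the auxiliary function $\numt(t)=\#\{x\in[0,\pi):\cos x+\alpha\cos(\beta x+t)=0\}$, then applies Weyl equidistribution (irrational $\beta$) or exact periodic counting (rational $\beta$) to the sequence $\numt(2\pi n\beta)$. Your reformulation is elegant and, for $\alpha\beta<1$ and for irrational $\beta$ with $\alpha\beta>1$, the sketch is workable. In particular, the equidistribution of critical values of $\Theta$ modulo $\pi$ (via the increment $2\pi/(\beta-1)$) plays the same role as the paper's equidistribution of $2\pi n\beta$ modulo $2\pi$, and the implicit second-derivative control you need (to bound by~$2$ the number of zeros created or destroyed near a ``dangerous'' critical point) is equivalent to the paper's Lemma~\ref{lem:lowbndf''}.

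The real gap is the rational case with $\alpha\beta>1$. Your identity $(\ast)$ is \emph{false} there: the right-hand side $\pi^{-1}\int_0^R|\Theta'|$ has asymptotic slope $\nu_{\alpha,\beta}$ (a continuous function of $\alpha,\beta$), whereas the correct coefficient is $A(\alpha,\beta)$, which involves floor functions and is generically $\neq\nu_{\alpha,\beta}$. The error you analyse in establishing $(\ast)$ is only the perturbation error from $\phi$; you have not accounted for the separate discrepancy between the grid-crossing count on each monotone piece and $\pi^{-1}\int|\Theta'|$ on that piece. For irrational $\beta$ these piecewise $O(1)$ discrepancies average to zero by equidistribution, but for rational $\beta$ the critical values of $\Theta$ modulo $\pi$ form a finite periodic set, so the discrepancies accumulate to a nonzero multiple of $R$. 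You do acknowledge at the end that ``exact per-period counting\ldots yields the floor-function correction'', but this is precisely the substantive computation---the paper carries it out via Lemmas~\ref{lem:altexpnumt} and~\ref{lem:numzRRseqlimbrat}---and you have not indicated how to perform it in your framework, nor have you verified that the arithmetic hypothesis $p_\beta+q_\beta\nu_{\alpha,\beta}\notin4\Z$ is exactly the condition that critical values of $\Theta$ miss the grid $\pi/2+\pi\Z$ (this is the paper's Lemma~\ref{lem:tangzsbrat}, rephrased). Without these two pieces, the rational case is incomplete.
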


Note that, $n=2$ in condition \eqref{eq:decayprop012} is only needed in the case that $\alpha\beta>1$ and $\beta\notin\Q$.

Consideration of Theorem \ref{thm:coszeros} in the case $\phi\equiv0$ goes back at least as far as \cite{St} where the irrational case was established (a somewhat different problem was considered in the rational case).

Using Theorem \ref{thm:1ggen} it is possible to show that we can obtain an asymptotic formula
\begin{equation}
\label{eq:genCasym}
\#(\gspec{V}\cap[0,R])=\frac{C}{\pi}\,R+o(R)
\end{equation}
as $R\to\infty$, where $C$ can indeed take any value (strictly) between $\bigabs{\int_\R V(x)\,\dr x}$ and $\norm{V}_{L^1}$; we state this as a separate result. 

\begin{theorem}
\label{thm:mrallasym}
Let $0<v<A<u$. Then there exists a piecewise constant one gap potential $V$ such that $\bigabs{\int_\R V(x)\,\dr x}=v$,  $\norm{V}_{L^1}=u$ and \eqref{eq:genCasym} holds with $C=A$.
\end{theorem}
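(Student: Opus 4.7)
The strategy is to construct a one-gap piecewise constant potential whose parameters $v_1, v_2, \alpha, \beta$ are tuned to hit the desired leading coefficient via the irrational-$\beta$ branch of Theorem~\ref{thm:1ggen}. The key conceptual point is that each component $V_j$ in the definition of ``one gap'' is only required to have no gaps on its own support, so in particular $V_j$ may change sign; consequently $\|V\|_{L^1}$ is not determined by $|v_1|+|v_2|$, which decouples the constraints $|\int_\R V\,\dr x|=v$ and $\|V\|_{L^1}=u$ and provides the extra degree of freedom needed to choose $\beta$.

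First I would pick a real $t$ with $(A-v)/2 < t \le (u-v)/2$ and $t/v \notin \Q$; such $t$ exists because $v<A<u$. Set $v_1 = v+t$ and $v_2 = -t$, so that $v_1+v_2 = v$ and
\[
\beta = \left|\frac{v_1-v_2}{v_1+v_2}\right| = 1 + \frac{2t}{v} \in (1,\infty) \setminus \Q,
\]
with $A/v \in (1,\beta)$. Next, using the stated fact that $\alpha \mapsto \nu_{\alpha,\beta}$ is a continuous monotone bijection from $[1/\beta, 1]$ onto $[1, \beta]$, I would let $\alpha \in (1/\beta, 1)$ be the unique value with $\nu_{\alpha,\beta} = A/v$, and fix the gap length $a_2 - b_1 = k^{-1} \tanh^{-1}(\alpha)$.

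To realize these integrals I would fix arbitrary disjoint support intervals $[a_j, b_j]$ separated by the prescribed gap, and define each $V_j$ on $[a_j, b_j]$ as a two-step function (degenerating to a single constant if the estimate below is tight) with a positive piece of height $c_j^+$ on a subinterval of length $L_j^+$ and a negative piece of height $-c_j^-$ on the complementary subinterval of length $L_j^-$, the four parameters solving $c_j^+ L_j^+ - c_j^- L_j^- = v_j$ and $c_j^+ L_j^+ + c_j^- L_j^- = w_j$, where $w_j = |v_j| + (u-|v_1|-|v_2|)/2 \ge |v_j|$, so that $w_1+w_2 = u$. Each such $V_j$ lies in $\BV$, has connected support, and its zero set meets $\co(\supp V_j)$ in at most a single point, so $V_j$ has no gaps; thus $V = V_1+V_2$ is a one-gap piecewise constant potential with $|\int_\R V\,\dr x| = v$ and $\|V\|_{L^1} = u$.

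Finally I would apply Theorem~\ref{thm:1ggen}: since $\alpha\beta > 1$ and $\beta \notin \Q$, we have $A(\alpha,\beta) = \nu_{\alpha,\beta} = A/v$, and the theorem yields
\[
\#(\gspec{V} \cap [0,R]) = \frac{1}{\pi} \cdot \frac{A}{v} \cdot v \cdot R + o(R) = \frac{A}{\pi} R + o(R),
\]
as required. The main (modest) obstacle is simply to notice that the sign flexibility within each component allows $\beta$ to range freely over $(1,\infty)$ independently of the $L^1$ norm and the net integral; once this is noted, the construction reduces to a direct interpolation using the previously established continuity and monotonicity of $\nu_{\alpha,\beta}$ in $\alpha$.
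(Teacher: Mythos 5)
Your proposal is correct and takes essentially the same approach as the paper's proof: choose an irrational $\beta>A/v$, solve $\nu_{\alpha,\beta}=A/v$ for $\alpha\in(1/\beta,1)$, fix the gap to realize that $\alpha$, and exploit the fact that each component $V_j$ may change sign to tune $\norm{V}_{L^1}=u$ independently of $\bigl\lvert\int_\R V\bigr\rvert=v$, then invoke the irrational branch of Theorem~\ref{thm:1ggen}. The only difference is a cosmetic re-parametrization: writing $w=v+2t$ identifies your $(v_1,v_2)=(v+t,-t)$ with the paper's $(v_2,v_1)=((v+w)/2,(v-w)/2)$, and you spread the extra $L^1$ mass over both components rather than placing it all in the second.
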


\subsection{Remarks on non-zero modes}
\label{sec:nzmodes}
If we consider the eigenvalues of $\Drc{\gamma V}$ as functions of $\gamma$ we can view $\gspec{V}$ as the set of points at which these curves cross $0$. One could equally consider crossings at any other point $\lambda$ belonging to $(-k,k)$ (the spectral gap of the operator $\Drc{0}$). This leads to consideration of the set 
\[
\gspec{\lambda,V}=\bigl\{\gamma\in\C: \lambda\in\spec(\Drc{\gamma V})\bigr\}.
\]
With some straightforward modifications most of our analysis for $\gspec{V}$ can be carried over to $\gspec{\lambda,V}$ for any $\lambda\in(-k,k)$. We now summarise the changes to the main results.

Theorem \ref{thm:disgspec} holds for $\gspec{\lambda,V}$. For $V\in\Vclass_0$, $\gspec{\lambda,V}$ is still symmetric under conjugation and unchanged if we replace $k$ with $-k$; however, we cannot expect $\gspec{\lambda,V}$ to be symmetric about $0$ in general (this symmetry generalises to $-\gspec{\lambda,V}=\gspec{-\lambda,V}$). Theorem \ref{thm:oddV0} does not generalise.

Theorems \ref{thm:genVL1upbnd}, \ref{thm:lowerasym}, \ref{thm:singsignreal} and \ref{thm:singsigngvalasym} hold for $\gspec{\lambda,V}$ with two adjustments; firstly, the constant $C$ in Theorem \ref{thm:lowerasym} may depend on $\lambda$, and secondly, the results for points in $\gspec{\lambda,V}\cap\R^-$ no longer follow ``by symmetry'' (but can be obtained by similar arguments).

The latter comment also applies to Theorem \ref{thm:Vnogapaysm}, which otherwise holds for $\gspec{\lambda,V}$ in the case that $\int_\R V(x)\dr x\neq0$. When $\int_\R V(x)\dr x=0$ we need to impose further conditions on $\lambda$ (to ensure we avoid limiting values of the eigenvalues of $\Drc{\gamma V}$ as $\gamma\to\pm\infty$; cf.\ \cite{GGHKSSV}, Theorem 8.2(i)). 

Theorem \ref{thm:1gint0} does not admit a straightforward generalisation to the case $\lambda \neq 0$. 
A generalisation of Theorem \ref{thm:1ggen} will require Theorem \ref{thm:coszeros}  to be extended to cover equations of the form $\cos(x)+\alpha\cos(\beta x+\delta)+\phi(x)=0$, where $\delta\in\R$ is an additional parameter (cf. \cite{St}  for the case $\phi\equiv 0$, $\beta\not\in\Q$).

\subsection{Organisation of the paper}

Section \ref{sec:egs} is devoted to examples. 
The main arguments, together with a number of auxiliary constructions and results, are collected in Section \ref{sec:argu}. 
Theorem \ref{thm:disgspec} is essentially standard; its proof appears in Section \ref{subsec:argugen}.
Some key ideas from the Pr\"{u}fer method are introduced in  Section \ref{subsec:boundargu}. In particular, we re-characterise the set $\gspec{V}\cap\R$ in terms of a quantity $\Delta_V$, which is closely related to the Pr\"{u}fer argument (see Proposition \ref{prop:gspeccharD}). The asymptotic behaviour of $\Delta_V$ is described  (Proposition \ref{prop:Dgasym}) and leads directly to Theorem \ref{thm:lowerasym}. Theorem \ref{thm:singsigngvalasym} follows from a related argument, together with the additional monotonicity of $\Delta_V$ for single-signed $V$ (as described in Proposition \ref{prop:positiveDinc}).
An alternative approach based on the Birman-Schwinger principle that could be used in the case  of single-signed potentials  is discussed in Remark \ref{BS}.

A uniform bound on $\Delta_V$ (given in Proposition \ref{prop:genDeltaVest}) leads through several intermediate results to the proof of Theorem \ref{thm:genVL1upbnd}.  
Derivatives of $\Delta_V$ are considered in Sections \ref{subsec:deriv} and \ref{subsec:nogaps}. The justification of the monotonicity result (Proposition \ref{prop:positiveDinc}) appears in Section \ref{subsec:deriv}, while in Section \ref{subsec:nogaps} the proof of Theorem \ref{thm:Vnogapaysm} is reduced to some technical estimates (given in Proposition \ref{prop:intcossinest}). Theorem  \ref{thm:1gint0} is established in Section \ref{sub:onegap} while Theorem \ref{thm:1ggen} is reduced to Theorem  \ref{thm:coszeros}; Theorem \ref{thm:mrallasym} is then obtained as a straightforward consequence of the former.

For the sake of clarity the proofs of the results in Section \ref{sec:argu} which require more technical arguments are deferred to Section \ref{sec:tech}. In Section \ref{subsec:asym} we consider Lemma \ref{lem:+thetalim}, Section \ref{subsec:DeltaV} deals with Propositions \ref{prop:Dgasym} and \ref{prop:genDeltaVest}, and in Section \ref{subsec:estPhiJPsiJ} we establish Proposition \ref{prop:intcossinest}.

The last part of the paper is devoted to the proof of Theorem  \ref{thm:coszeros}, which is a variation on a classical theme of independent interest (cf.\ \cite{St}, \cite{Kac}, \cite{KKW}).  Some preliminary lemmas are established in Section \ref{sec:zprelim}, Section \ref{sec:zf} contains the proof of Theorem \ref{thm:coszeros} in the unperturbed case $\phi\equiv0$, while the general case is completed in Section \ref{sec:zpert}.

\subsection*{Acknowledgments}  The authors are grateful to  R.\ Frank, S.\ Jitomirskaya, A.\ Laptev, M.\ Portnoi, A.\ Pushnitski, Z.\ Rudnick, M.\ Solomyak  and T.\ Weidl for useful discussions. 
The research of I.P.\ is partially supported by NSERC, FQRNT and Canada Research Chairs program.

\section{Examples}
\label{sec:egs}

\subsection{General description}

The main purpose of this section is to illustrate the results stated above. We restrict our attention mostly to piecewise constant potentials with compact support; these allow the easiest analysis and already demonstrate the full range of effects. Consider points $a_0<a_1<\dots<a_m$ which partition the real line into $m$ finite intervals  $I_j=(a_{j-1},a_j)$, $j=1,\dots,m$, and two semi-infinite intervals $I_-=(-\infty,a_0)$ and $I_+=(a_m, +\infty)$. Consider a potential 
\begin{equation}
\label{eq:Vpiece}
V(x)=W\bigl(x; [a_0,\dots,a_m]; \{v_1,\dots,v_m\}\bigr):= 
\begin{cases}
v_j&\text{if $x\in I_j$, $j=1,\dots,m$},\\ 
0&\text{if $x\in I_-\cup I_+$},
\end{cases}  
\end{equation}
with some given real constants $v_j$.  On each interval, we need to solve the equations 
\begin{equation}
\label{eq:odeV}
\begin{aligned}
\nabla\psi_1&=(k-\gamma V)\psi_2,\\
\nabla\psi_2&=(k+\gamma V)\psi_1,
\end{aligned}
\end{equation}
with $V(x)=v_j=\operatorname{const}$, and then match the solutions to ensure continuity at the points $a_j$.

The following result is straightforward.

\begin{lemma}
\label{lem:constV}
For a given constant potential $V(x)=v$ such that $k\ne\pm\gamma v$, the system \eqref{eq:odeV} has the general solution 
\[
\begin{pmatrix}
\psi_1\\
\psi_2
\end{pmatrix}(x)=
C^{(1)}\begin{pmatrix}\sin\bigl(\sqrt{\gamma^2 v-k^2}x\bigr)\\-\sqrt{\frac{\gamma v+k}{\gamma v-k}}\cos\bigl(\sqrt{\gamma^2 v-k^2}x\bigr)\end{pmatrix}+
C^{(2)} \begin{pmatrix}\cos\bigl(\sqrt{\gamma^2 v-k^2}x\bigr)\\\sqrt{\frac{\gamma v+k}{\gamma v-k}}\sin\bigl(\sqrt{\gamma^2 v-k^2}x\bigr)\end{pmatrix}.
\]
If $v=0$ this solution can be equivalently written as 
\begin{equation}
\label{eq:V0soln}
\begin{pmatrix}
\psi_1\\
\psi_2
\end{pmatrix}(x)=
C^{(1)}\begin{pmatrix}1\\1\end{pmatrix}\er^{kx}+
C^{(2)}\begin{pmatrix}1\\-1\end{pmatrix}\er^{-kx}.
\end{equation}
In both cases $C^{(1)}$, $C^{(2)}$ are arbitrary complex constants.
\end{lemma}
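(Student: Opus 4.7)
The proof is a direct constant-coefficient ODE calculation; the only subtlety is tidying the resulting square roots to match the stated form.

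First I would decouple the system \eqref{eq:odeV} by differentiating the first equation and substituting the second:
\[
\nabla^2\psi_1=(k-\gamma v)\nabla\psi_2=(k-\gamma v)(k+\gamma v)\psi_1=-(\gamma^2 v^2-k^2)\psi_1,
\]
and similarly for $\psi_2$. Thus each component satisfies a scalar second-order equation with constant coefficient $-\omega^2$, where $\omega^2:=\gamma^2 v^2-k^2$.

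For $v\neq 0$ (and $k\neq\pm\gamma v$, so $\omega\neq 0$), the general solution of $\nabla^2\psi_1=-\omega^2\psi_1$ can be written as $\psi_1(x)=C^{(1)}\sin(\omega x)+C^{(2)}\cos(\omega x)$. I would then recover $\psi_2$ algebraically from the first equation of \eqref{eq:odeV}, namely $\psi_2=(k-\gamma v)^{-1}\nabla\psi_1$. This gives a prefactor
\[
\frac{\omega}{k-\gamma v}=-\frac{\sqrt{(\gamma v-k)(\gamma v+k)}}{\gamma v-k}=-\sqrt{\frac{\gamma v+k}{\gamma v-k}},
\]
matching the formula in the statement once one multiplies out the derivatives of sine and cosine. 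A quick substitution back into the second equation of \eqref{eq:odeV} confirms no spurious solutions have been introduced.

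For $v=0$, the same strategy gives $\nabla^2\psi_1=k^2\psi_1$, hence $\psi_1(x)=\tilde C^{(1)}\er^{kx}+\tilde C^{(2)}\er^{-kx}$, and then $\psi_2=k^{-1}\nabla\psi_1=\tilde C^{(1)}\er^{kx}-\tilde C^{(2)}\er^{-kx}$, which is precisely \eqref{eq:V0soln} after relabelling the constants. Since the system is first-order linear of dimension two, the two-parameter family so obtained is the full general solution. No genuine obstacle arises; the only care needed is the sign convention in extracting $\sqrt{(\gamma v+k)/(\gamma v-k)}$ from $\omega/(k-\gamma v)$, which I would do in one line as shown above.
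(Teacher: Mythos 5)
Your argument is correct and uses the standard method: decouple the constant-coefficient first-order system into a scalar second-order equation, solve that, and recover the second component algebraically from the first equation of \eqref{eq:odeV} (the hypothesis $k\neq\pm\gamma v$ making both the decoupling frequency non-zero and the back-substitution well defined). The paper itself offers no proof of this lemma, calling it ``straightforward,'' so there is no alternative argument to compare against. One remark worth making explicit: your frequency $\omega^2=\gamma^2v^2-k^2$ is the correct one, whereas the lemma as printed has $\sqrt{\gamma^2 v-k^2}$ inside the trigonometric functions; this is evidently a typo in the paper (it disagrees with the $v=0$ formula \eqref{eq:V0soln}, and with the later examples where $\sqrt{\gamma^2-1}$ appears for $v=k=1$), and your derivation implicitly corrects it. Your final check that the second equation of \eqref{eq:odeV} is then automatically satisfied is indeed the one step that should not be skipped, since passing to a derived second-order equation can in principle introduce extraneous solutions, and you handle it correctly.
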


\begin{remark}
\label{rem:endofinterval}
Lemma \ref{lem:constV}, or more precisely equation \eqref{eq:V0soln}, immediately implies that for any compactly-supported (not necessarily piecewise-constant) potential $V(x)$,  any eigenfunction $\psib\in L^2(\R)$ satisfies, under the assumption $k>0$, 
\begin{equation}
\label{eq:endofpotconds}
\psi_1(\min\supp(V))=\psi_2(\min\supp(V)),\qquad\psi_1(\max\supp(V))=-\psi_2(\max\supp(V)),
\end{equation}
in order to match the $L^2$ solutions at $\pm\infty$.
\end{remark}

Let us return to  the case of a piecewise constant potential  \eqref{eq:Vpiece}. The solution on each interval $I_j$, $j=1,\dots, m$, can be written down using Lemma \ref{lem:constV} with $v=v_j$ and $C^{(\ell)}= C^{(\ell)}_j$, $\ell=1,2$. 
By Remark \ref{rem:endofinterval}, we have $\psi_1(a_0)=\psi_2(a_0)$, and $\psi_1(a_m)=-\psi_2(a_m)$. Together with continuity conditions at each $a_j$, $j=1,\dots,m-1$ this leads to the homogeneous linear system of $2m$ equations with respect to $2m$ unknowns  $C^{(\ell)}_j$, $\ell=1,2$, $j=1,\dots,m$. Denote the determinant of the corresponding matrix of coefficients by $D_V(\gamma)$. As we are looking for a non-trivial $L^2$-solution $\psib$, we have $\gamma\in \gspec{V}$ if and only if 
\begin{equation}\label{eq:DVis0}
D_V(\gamma)=0.
\end{equation} 
Thus, in each particular case our problem is reduced to constructing  $D_V(\gamma)$ and finding its real or complex roots.

\subsection{Calculations, graphs, and further observations}

We visualise the real roots of $D_V(\gamma)$ by simply plotting its graph for real arguments. In the complex case we use the phase plot method (see \cite{WS}) in which the value of $\displaystyle \arg D_V(\gamma)=-\ir\log\frac{D_V(\gamma)}{|D_V(\gamma)|}$ is plotted using colours from a periodic scale. 
The roots of $D_V(\gamma)$ are singularities of $\arg D_V(\gamma)$ and appear on the phase plot as points at which all of the colours converge. The colour scale which we use in all such plots is shown in Figure \ref{fig:colorbar}.

\begin{figure}[!thb]
\begin{center}
\includegraphics{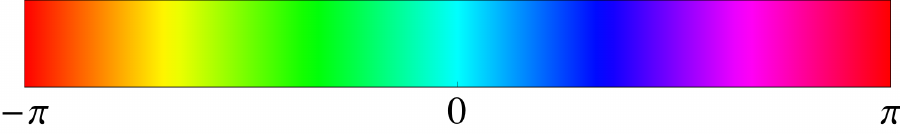}
\caption{\small Phase plot colour scale for the value of $ \arg D_V(\gamma)$.\label{fig:colorbar}}
\end{center}
\end{figure}

In the following examples it is convenient to set
\[
\tg=\sqrt{\gamma^2-1}.
\]
Also, we remark that our determinants $D_V(\gamma)$ are defined modulo a real or complex scaling constant, which we choose for convenience of presentation.  

\begin{example}[Illustration of Theorems \ref{thm:singsignreal} and \ref{thm:singsigngvalasym}]
\label{ex: ex1}
Set $V_1(x):=W(x; [-1, 1]; \{1\})$. Then 
\[
D_{V_1}(\gamma)=\frac{2(\tg\cos(2\tg)+\sin(2\tg))}{\gamma-1}
\]
As the potential is single-signed, the spectrum $\gspec{V}$ is real, as illustrated in Figures \ref{fig:examp1_plot} and  \ref{fig:examp1_phase}.

\begin{figure}[!hbt]
\begin{center}
\includegraphics{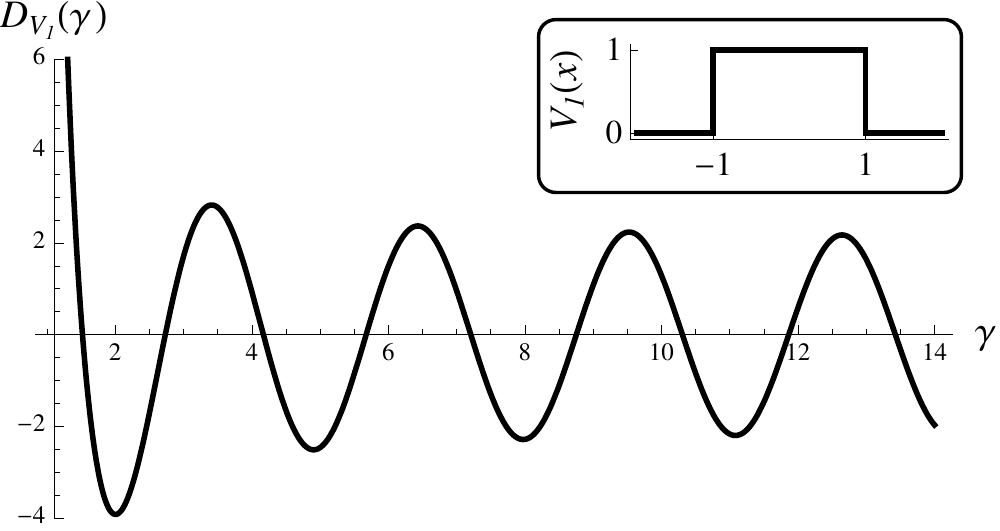}
\caption{\small The graph of $\displaystyle D_{V_1}(\gamma)=\frac{2(\tg\cos(2\tg)+\sin(2\tg))}{\gamma-1}$ against real $\gamma$ for the potential 
$V_1(x)=V(x; [-1, 1]; \{1\})$.\label{fig:examp1_plot}}
\end{center}
\end{figure}

\begin{figure}[!hbt]
\begin{center}
\includegraphics{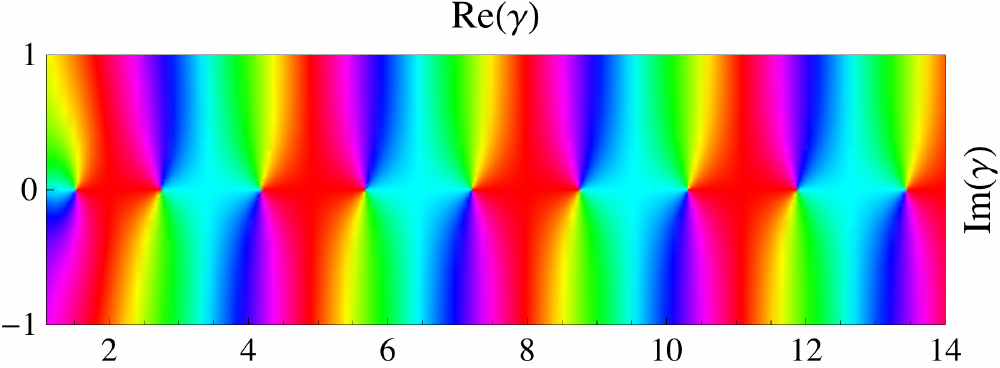}
\caption{\small The phase plot of $D_{V_1}(\gamma)$ for complex $\gamma$.\label{fig:examp1_phase}}
\end{center}
\end{figure}

For large (positive) values of $\gamma$, the solutions of \eqref{eq:DVis0} with $V=V_1$ are approximately those of $\cos(2\gamma) = 0$, i.e.,
\[
\gamma_n=\frac{\pi}{2}\,n+o(n)=\frac{\pi}{\norm{V_1}_{L^1}}\,n+o(n),
\] 
matching Theorem \ref{thm:singsigngvalasym}.
\end{example}

\begin{example}[Illustration of Theorem \ref{thm:oddV0}]\label{ex: ex2} 
Consider a class of anti-symmetric potentials $V_{2,g}(x):=W(x; [-1-g/2, -g/2, g/2,g/2+1]; \{-1, 0,1\})$ parametrised by the gap length $g\ge0$. Then, up to a multiplication by a non-zero constant, 
\[
D_{V_{2,g}}(\gamma)=
\frac{2\cosh(g)(\tg^2+1-\cos(2\tg)+\tg\sin(2\tg))+2\sinh(g)(\tg^2\cos(2\tg)+\tg\sin(\tg))}{\tg^2}.
\]
For any $g$, the potential $V_{2,g}$ is anti-symmetric; hence the spectrum $\gspec{V_{2,g}}$ is purely non-real and $D_{V_{2,g}}(\gamma)$ does not have any real roots. This is illustrated in Figure \ref{fig:examp2_plot} for $g=0$ and $g=1$.

\begin{figure}[!hbt]
\begin{center}
\includegraphics{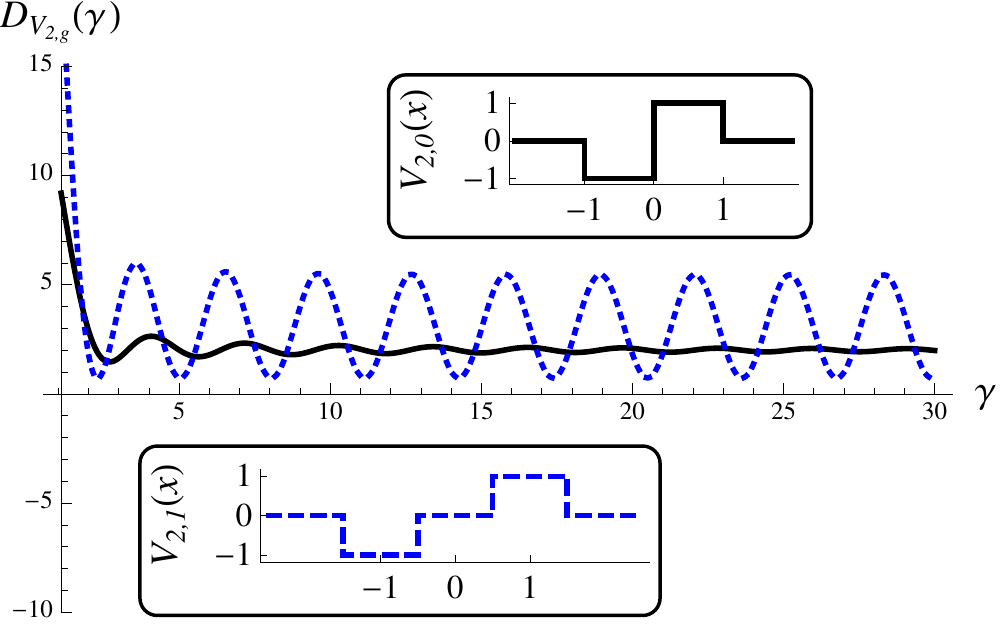}
\caption{\small The graphs of $D_{V_{2,g}}(\gamma)$ against real $\gamma$ for the potentials
$V_{2,g}(x):=W(x; [-1-g/2, -g/2, g/2,g/2+1]; \{-1, 0,1\})$ with $g=0$ (solid black line) and $g=1$ (dashed blue line).\label{fig:examp2_plot}}
\end{center}
\end{figure}

It turns out that the behaviour of complex eigenvalues for the potentials $V_{2,g}$ differs substantially for zero and non-zero gaps $g$. By a rather intricate asymptotic analysis of the corresponding transcendental equations (which in a sense extends Theorem \ref{thm:coszeros} to complex roots) we can show that the large eigenvalues with positive real parts are asymptotically located on the curves
\begin{equation}\label{eq:impart0}
\Im\gamma=\pm\frac{\ln\Re\gamma}{2}\qquad\text{if }g=0
\end{equation}
and on the straight lines
\begin{equation}\label{eq:impart1}
\Im\gamma=\pm\operatorname{arcsinh}\left(\frac{1}{\sinh g}\right)\qquad\text{if }g>0.
\end{equation}
 Figures  \ref{fig:examp2_phase0} and   \ref{fig:examp2_phase1} illustrate this behaviour of complex eigenvalues. For comparison, we also plot the corresponding curves \eqref{eq:impart0} and \eqref{eq:impart1}; one can see that the asymptotics is accurate even for the low eigenvalues.

\begin{figure}[!hbt]
\begin{center}
\includegraphics{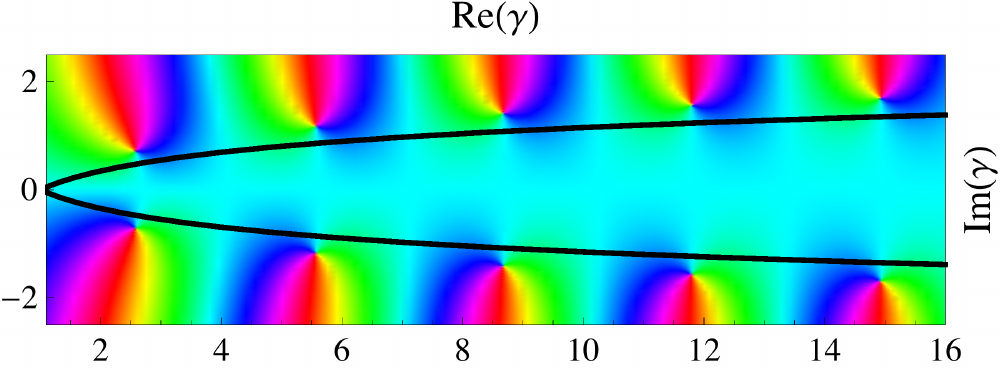}
\caption{\small The phase plot of $D_{V_{2,0}}(\gamma)$ for complex $\gamma$. The solid black curves \eqref{eq:impart0} illustrate the asymptotic behaviour of the imaginary parts of the eigenvalues.\label{fig:examp2_phase0}}
\end{center}
\end{figure}

\begin{figure}[!hbt]
\begin{center}
\includegraphics{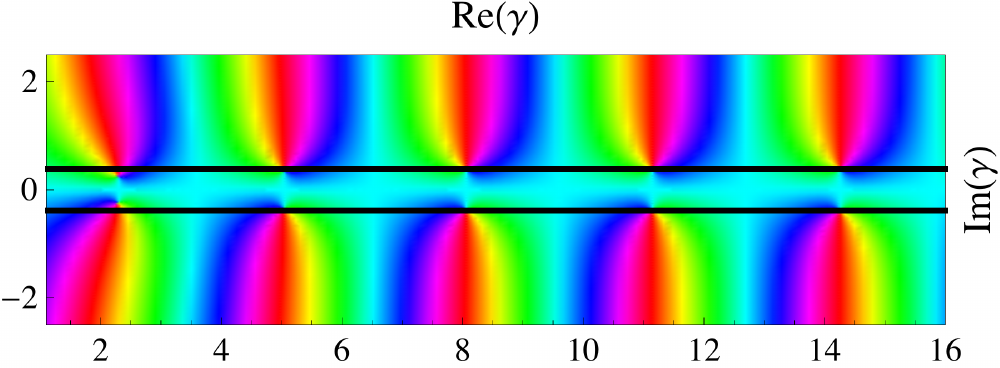}
\caption{\small The phase plot of $D_{V_{2,1}}(\gamma)$ for complex $\gamma$. The solid black lines \eqref{eq:impart1}, $g=1$, illustrate the asymptotic behaviour of the imaginary parts of the eigenvalues.\label{fig:examp2_phase1}}
\end{center}
\end{figure}

\end{example}

\begin{example}[Illustration of Theorem \ref{thm:1ggen}]
\label{ex:ex3}
Consider the one-gap potentials $V_{3,g,b}(x):=W(x;[-g-1,-g,0,b]; \{-1,0,1\})$ parametrised by the gap length $g\ge0$ and the maximum of the support $b>0$.  
For these potentials, $\int_\R V_{3,g,b}=b-1$ and
$\norm{V_{3,g,b}}_{L^1}=b+1$. Assume additionally $b\ne 1$. Explicit calculation gives, modulo multiplication by a constant,
\begin{align*}
D_{V_{3,g,b}}(\gamma)=\frac{2}{\tg^2}
&\left[\left((\tg^2+1)\cos((b-1)\tg) -\cos((b+1)\tg) + \tg\sin((b+1)\tg)\right)\cosh(g)\right.
\\
&\quad+\left.\left(\tg^2\cos((b+1)\tg)+\tg\sin((b+1)\tg)\right)\sinh(g)\right].
\end{align*}
The graphs of $D_{V_{3,g,2}}(\gamma)$ for real $\gamma$ and $g=0$ or $g=1$ are shown in Figure \ref{fig:examp3_plots}.
\begin{figure}[!hbt]
\begin{center}
\includegraphics{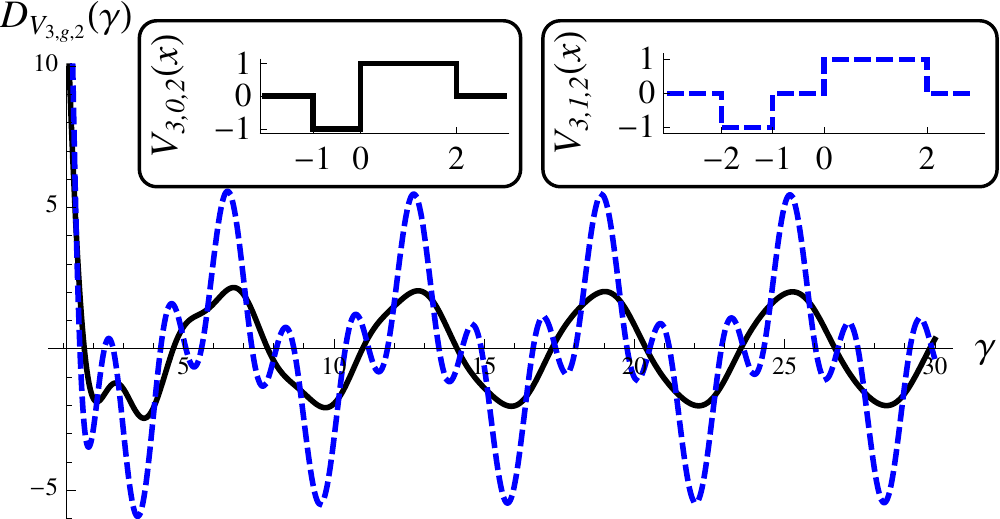}
\caption{\small The graphs of $D_{V_{3,g,2}}(\gamma)$ against real $\gamma$ for the potentials
$V_{3,g,2}(x):=W(x;[-g-1,-g,0,2]; \{-1,0,1\})$ with $g=0$ (solid black line) and $g=1$ (dashed blue line).\label{fig:examp3_plots}}
\end{center}
\end{figure}

We can expect asymptotics of the form
\[
\#(\gspec{V_{3,g,b}}\cap[0,R])=C_{g,b}\frac{R}{\pi}+o(R), 
\]
as $R\to\infty$. For the no-gap potential $V_{3,0,2}$, Theorem \ref{thm:Vnogapaysm} gives such an asymptotics with $C_{0,2}=1=\int_\R V_{3,0,2}$. On the hand, $D_{V_{3,1,2}}(\gamma)$ has three times as many real roots as $D_{V_{3,0,2}}(\gamma)$ (for sufficiently large $\gamma$). This leads to a constant $C_{1,2}=3=\norm{V_{3,1,2}}_{L^1}$ in the asymptotics for the one-gap potential $V_{3,1,2}$ as seen in Figure \ref{fig:examp3_plots}; c.f. the discussion in Sections~\ref{sec:gaps} and \ref{sec:discussion}.

This is just a partial case of a more complicated phenomenon, see Theorem \ref{thm:1ggen}. Set
\[
\alpha=\tanh(g)
\quad\text{and}\quad
\beta=\left|\frac{b+1}{b-1}\right|.
\]
After cancelling some non-zero factors equation \eqref{eq:DVis0} with $V=V_{3,g,b}$ takes the asymptotic form
\begin{equation}\label{eq:tildecos}
\cos((b-1)\tg)+\alpha\cos((b+1)\tg)+O\(\tg^{-1}\)=0
\end{equation}
as $\tg\to+\infty$ (where the first and second derivatives of the $O$-term are also $O\left(\tg^{-1}\right)$). 
Introducing the new variable $x=\abs{b-1}\tg$ leads to an equation in the form of \eqref{eq:cospert0}.
The asymptotics for the number of real zeros of \eqref{eq:tildecos} can then be obtained from Theorem \ref{thm:coszeros}. 
Alternatively, we can use  Theorem \ref{thm:1ggen} directly; both approaches give 
\[
C_{g,b}=A(\alpha,\beta)\,\abs{b-1}=A\(\tanh(g),\left|\frac{b+1}{b-1}\right|\)\abs{b-1},
\]
where $A(\alpha,\beta)$ is defined in \eqref{eq:Adef}.
\end{example}

\begin{example}[Illustration of a twin gap effect]
\label{ex:ex4}
The gap dependence illustrated in the previous Example can be made even more dramatic if we consider some special potentials. 
Introduce the symmetric twin gap potentials 
\[
V_{4,g}(x):=W\bigl(x; [-g-2, -g-1,-1,1,g+1,g+2]; \{-1,0,1,0,-1\}\bigr)
\]
parametrised by the gap length $g$.  
Note that $\int_\R V_{4,g}=0$ and $\norm{V_{4,g}}_{L^1}=4$ for any $g \ge 0$.
Figure  \ref{fig:examp4_plots}
shows the real curves $D_{V_{4,g}}(\gamma)$ for $g=0.5$ and $g=1$. One can see that there are only two real eigenvalues for the former, and an infinite number of real eigenvalues for the latter. 

\begin{figure}[!hbt]
\begin{center}
\includegraphics{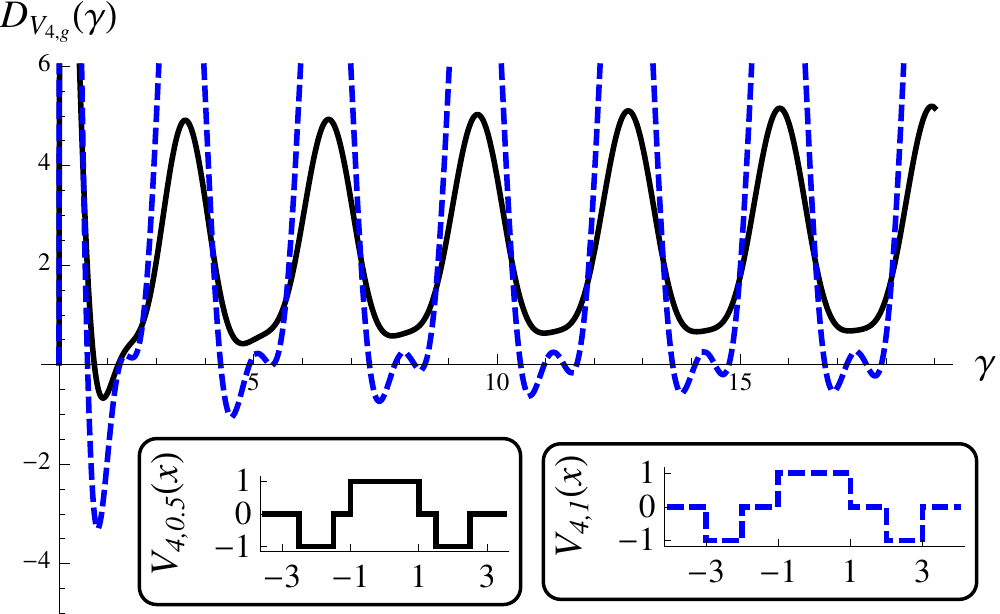}
\caption{\small The graphs of $D_{V_{4,g}}(\gamma)$ against real $\gamma$ for the potentials
$V_{4,g}(x):=W(x; [-g-2, -g-1,-1,1,g+1,g+2]; \{-1,0,1,0,-1\})$ with $g=0.5$ (solid black line) and $g=1$ (dashed blue line).\label{fig:examp4_plots}}
\end{center}
\end{figure}

To explain this phenomenon, we once more consider equation \eqref{eq:DVis0}, now with $V=V_{4,g}$. Although the explicit expression for the determinant 
$D_{V_{4,g}}$ is rather cumbersome, some simplifications lead to the asymptotic form
\begin{equation}
\label{eq:2gg}
\(1 + \sqrt{2}\tanh(g) \cos(2\tg-\pi/4)\)\(1 + \sqrt{2}\tanh(g) \cos(2\tg+\pi/4)\)+O\(\tg^{-1}\)=0
\end{equation} 
as $\tg\to\infty$ (where the derivative of the $O$-term is also $O\left(\tg^{-1}\right)$). 
The asymptotics of the number of zeros of \eqref{eq:2gg} reduces to consideration of a pair of elementary equations for cosine. 
It is then immediate that the asymptotics of 
$\#(\gspec{V_{4,g}}\cap[0,R])$ as $R\to\infty$ changes abruptly between $O(1)$ and $\displaystyle\frac{R}{\pi}\,\norm{V_{4,g}}_{L^1}+o(R)$, depending on whether $g<g_0:=\operatorname{arctanh}(1/\sqrt{2})\approx 0.8814$ or $g>g_0$, respectively.
This shows, as already announced in Remark \ref{rem:twogapzerointegral}, that unlike no-gap and one-gap potentials, two-gap potentials with zero integral may produce an infinite number of real eigenvalues.
\end{example}

\begin{example}[Potential from \cite{HRP}]
\label{ex:exPortnoi} 
Using a complicated explicit solution involving special functions, Hartmann, Robinson and Portnoi
found that, for the potential $\displaystyle V_\text{HRP}(x)=-1/{\cosh(x)}$ and any $k>0$, the positive part of the  spectrum $\gspec{V_\text{HRP}}$
coincides with the set $k-\frac{1}{2}+\N$. We treat this potential using the Pr\"ufer method and plot, for real $\gamma$, the quantity $\cos(\Delta_{V_\text{HRP}}(\gamma))$; by Proposition \ref{prop:gspeccharD}, $\gamma\in\gspec{{V_\text{HRP}}}\cap\R$ if and only if $\cos(\Delta_{V_\text{HRP}}(\gamma))=0$ (see Section \ref{subsec:boundargu} for the definition of $\Delta_{V_\text{HRP}}$ and further details). The curves in Figure \ref{fig:ex5} (drawn for $k=1$ and $k=1.5$) illustrate the result of \cite{HRP}.

\begin{figure}[!hbt]
\begin{center}
\includegraphics{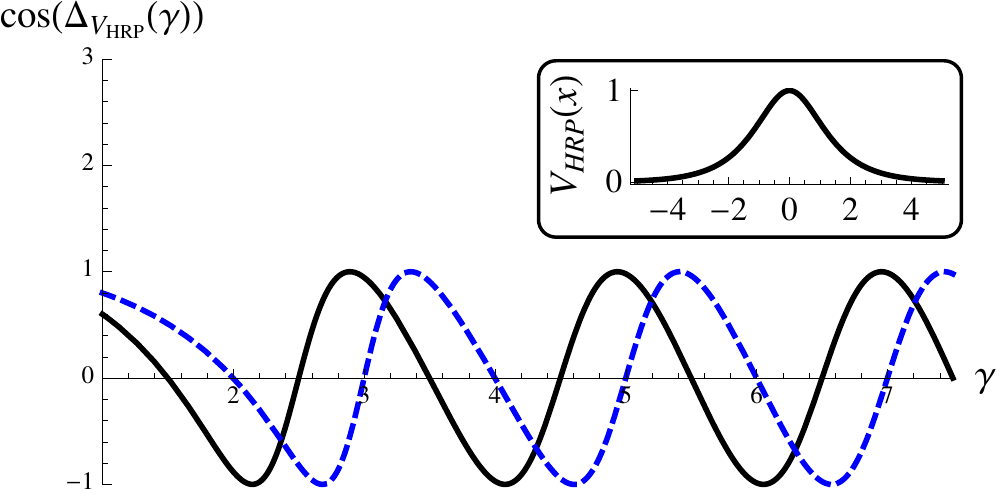}
\caption{\small The graphs of $\cos(\Delta_{V_\text{HRP}}(\gamma))$ for  $k=1$ (solid black line) and $k=1.5$ (dashed blue line).\label{fig:ex5}}
\end{center}
\end{figure}
\end{example}

\section{Main arguments}
\label{sec:argu}

In this section we give the arguments for the main theorems based on a series of more technical results; the proofs for the latter will be deferred to Section \ref{sec:tech}.

\subsection{General}
\label{subsec:argugen}

The unperturbed operator $\Drc{0}$ is an unbounded self-adjoint operator on $L^2$ whose domain is $\sob$, the Sobolev space of ($\C^2$ valued) functions on $\R$; that is
\[
\dom(\Drc{0})=\sob=\{\psib\in L^2\,:\,\nabla\psib\in L^2\}.
\]
In fact it is straightforward to check that $\norm{\Drc{0}\psib}^2=\norm{\nabla\psib}^2+k^2\norm{\psib}^2$, so $\norm{\Drc{0}\psib}$ is equivalent to $\norm{\psib}_{H^1}$. It follows that $\Drc{0}$ defines an isomorphism $\sob\to L^2$. 

Next we consider multiplication by an element of $\Vclass_0$. Firstly note that 
a norm can be defined on $\Vclass_0$ using the expression
\[
\norm{V}_{\Vclass_0}=\sup_{x\in\R}\norm{V}_{L^2(x-1,x+1)}.
\]
This norm makes $\Vclass_0$ a Banach space in which $C^\infty_0$ is a dense subset.

\begin{lemma}
\label{lem:Vcptop}
Multiplication by a fixed $V\in\Vclass_0$ defines a compact map $\sob\to L^2$.
\end{lemma}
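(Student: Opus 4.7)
My plan is to use density of $C^\infty_0$ in $\Vclass_0$, noted just above, together with the fact that compact operators form a closed subspace of $\mathcal{B}(\sob,L^2)$ in the operator-norm topology. Writing $M_V\psib:=V\psib$, I would split the argument into two pieces: (a)~the assignment $V\mapsto M_V$ is bounded as a map $\Vclass_0\to\mathcal{B}(\sob,L^2)$, and (b)~$M_V$ is compact whenever $V\in C^\infty_0$. Given (a) and (b), approximating $V\in\Vclass_0$ by $V_n\in C^\infty_0$ in the $\Vclass_0$-norm realises $M_V$ as the operator-norm limit of the compact operators $M_{V_n}$, and is therefore itself compact.

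For (a), I would cover $\R$ by the essentially disjoint unit intervals $I_n=[n,n+1]$, $n\in\Z$. The standard one-dimensional Sobolev embedding on an interval of length one gives a universal constant $C_0$ with $\|\psib\|_{L^\infty(I_n)}\le C_0\|\psib\|_{H^1(I_n)}$, and therefore
\begin{equation*}
\|V\psib\|_{L^2}^2=\sum_n\int_{I_n}|V|^2|\psib|^2\,\dr x\le C_0^2\sum_n\|V\|_{L^2(I_n)}^2\,\|\psib\|_{H^1(I_n)}^2.
\end{equation*}
Each $I_n$ sits inside a two-unit interval centred at $n+\tfrac12$, so $\|V\|_{L^2(I_n)}\le\|V\|_{\Vclass_0}$; meanwhile disjointness of the $I_n$ yields $\sum_n\|\psib\|_{H^1(I_n)}^2=\|\psib\|_{H^1}^2$. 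Combining these inequalities gives $\|M_V\|_{\sob\to L^2}\le C_0\|V\|_{\Vclass_0}$, which is precisely (a).

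For (b), if $V\in C^\infty_0$ with $\supp V$ contained in a compact interval $K\subset\R$, then $M_V\psib$ is supported in $K$ and a direct computation gives
\[
\|V\psib\|_{H^1(K)}\le\bigl(\|V\|_{L^\infty}+\|V'\|_{L^\infty}\bigr)\|\psib\|_{H^1(\R)}.
\]
Hence $M_V$ factors as $\sob\xrightarrow{M_V}H^1(K)\hookrightarrow L^2(K)\hookrightarrow L^2(\R)$, where the first arrow is bounded, the middle arrow is the Rellich--Kondrachov compact embedding on a bounded interval, and the last is extension by zero. The composition is compact, proving (b).

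There is no serious obstacle here --- the argument is essentially routine. The only care required is in the choice of covering: unit intervals together with the $H^1\hookrightarrow L^\infty$ embedding are precisely what is needed to couple the local $L^2$ norm of $V$ (which is controlled by $\|V\|_{\Vclass_0}$) with the squared $H^1$ pieces of $\psib$ (which additively reconstitute $\|\psib\|_{H^1}^2$). Everything else --- density, factorisation through Rellich--Kondrachov, closedness of the class of compact operators --- is standard functional-analytic machinery.
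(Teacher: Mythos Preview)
Your proof is correct and follows essentially the same approach as the paper: density of $C^\infty_0$ in $\Vclass_0$, a Sobolev-embedding estimate to get continuity of $V\mapsto M_V$, and compactness for compactly supported $V$ via Rellich--Kondrachov. The only cosmetic differences are that the paper uses a continuous averaging $\|\psib\|_{L^2}^2=\tfrac12\int_\R\|\psib\|_{L^2(x-1,x+1)}^2\,\dr x$ in place of your discrete partition by unit intervals, and factors the compact map as restrict--then--multiply rather than your multiply--then--embed; neither difference is substantive.
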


\begin{proof}
Initially suppose $V\in C^\infty_0$. Choose a bounded interval $I$ with $\supp(V)\subseteq I$. We can view multiplication by $V$ as a composition $\psib\mapsto\psib\vert_{I}\mapsto (V\psib)\vert_{I}\mapsto V\psib$ where we firstly restrict to $I$, then multiply by $V$ and finally extend by $0$. This gives a map $\sob\to L^2(I)\to L^2(I)\to L^2$, where the last two steps are continuous and the first step is compact (by the Rellich-Kondrachov Theorem; see \cite{Adams}, for example).

Since $C^\infty_0$ is dense in $\Vclass_0$ and the set of compact maps is closed, it now suffices to show that multiplication defines a continuous bilinear map $\Vclass_0\times\sob\to L^2$. To this end firstly note that the Sobolev Embedding Theorem (\emph{ibid.}) gives
\[
\norm{\psib}_{L^\infty(x-1,x+1)}\le C\norm{\psib}_{\sob(x-1,x+1)}
\]
for some constant $C$ (which is independent of $x$). Thus
\[
\norm{V\psib}_{L^2(x-1,x+1)}
\le\norm{V}_{L^2(x-1,x+1)}\,\norm{\psib}_{L^\infty(x-1,x+1)}
\le C\,\norm{V}_{L^2(x-1,x+1)}\,\norm{\psib}_{\sob(x-1,x+1)}.
\]
On the other hand 
\[
\norm{\psib}_{L^2}^2=\frac12\int_{\R}\norm{\psib}_{L^2(x-1,x+1)}^2\dr x,
\]
with a similar expression holding for $\norm{\cdot}_{\sob}$. Combining the above then gives
\begin{align*}
\norm{V\psib}_{L^2}^2
&\le\frac12\,C^2\int_\R\norm{V}_{L^2(x-1,x+1)}^2\,\norm{\psib}_{\sob(x-1,x+1)}^2\dr x\\
&\le\frac12\,C^2\sup_{x\in\R}\norm{V}_{L^2(x-1,x+1)}^2\int_\R\norm{\psib}_{\sob(x-1,x+1)}^2\dr x
=C^2\,\norm{V}_{\Vclass_0}^2\,\norm{\psib}_{\sob}^2,
\end{align*}
for any $V\in\Vclass_0$ and $\psib\in\sob$.
\end{proof}

Since $\sob=\dom(\Drc{0})$, Lemma \ref{lem:Vcptop} is equivalent to the statement that (multiplication by) $V\in\Vclass_0$ is a relatively compact perturbation of $\Drc{0}$. It follows that the sum $\Drc{0}+V=\Drc{V}$ defines a self-adjoint operator with $\dom(\Drc{V})=\dom(\Drc{0})=\sob$ (see \cite{RSI}). 

Although we're interested in real valued potentials it is helpful to consider some basic results for more general complex valued potentials as well. Let $\CVclass_0=\Vclass_0\otimes_\R\C$ denote the complex valued version of $\Vclass_0$ (in other words, $\CVclass_0$ consists of functions of the form $U+iW$ where $U,W\in\Vclass_0$). Note that $\gamma V\in\CVclass_0$ for any $V\in\Vclass_0$ and $\gamma\in\C$. Now Lemma \ref{lem:Vcptop} clearly extends to $\CVclass_0$, so (multiplication by) some $\mathcal{V}\in\CVclass_0$ is still a relatively compact perturbation of $\Drc{0}$. Thus the sum $\Drc{0}+\mathcal{V}=\Drc{\mathcal{V}}$ defines a closed operator with $\dom(\Drc{\mathcal{V}})=\dom(\Drc{0})=\sob$. Although the operator $\Drc{\mathcal{V}}$ will not be self-adjoint (unless $\mathcal{V}\in\Vclass_0$), the essential spectrum is still given by \eqref{eq:TVessspec}, while any spectrum of $\Drc{\mathcal{V}}$ in $\C\setminus\Tess$ consists of isolated eigenvalues of finite (algebraic) multiplicity.

\begin{proof}[Proof of Theorem \ref{thm:disgspec}]
We have $\gamma\in\gspec{V}$ iff $(\Drc{0}+\gamma V)\psib=0$ for some non-trivial $\psib\in\sob$. In turn this is equivalent to $V\Drc{0}^{-1}\phib=\mu\phib$ where $\phib=\Drc{0}\psib\in L^2$ and $\mu=-1/\gamma$; in other words, $-1/\gamma$ should be an eigenvalue of the operator $V\Drc{0}^{-1}$. However  $V\Drc{0}^{-1}$ is a compact operator by Lemma \ref{lem:Vcptop}, so must have discrete spectrum away from $0$. The result follows.
\end{proof}

It is helpful to have a more symmetric version of the idea just used in the proof of Theorem \ref{thm:disgspec}. Fix $V\in\Vclass_0$ and let $J_V$ be the operator on $L^2$ given as multiplication by $\sgn(V)$ (where, for definiteness, we can set $\sgn(V)(x)=+1$ whenever $V(x)=0$). Then $J_V^*=J_V^{-1}=J_V$ while
\[
V=\sqrt{\abs{V}}J_V\sqrt{\abs{V}}.
\]
Define an operator $A_V$ by 
\[
A_V=\sqrt{\abs{V}}\Drc{0}^{-1}\sqrt{\abs{V}}.
\]
Using similar ideas to those in the proofs of Theorem \ref{thm:disgspec} and Lemma \ref{lem:Vcptop} we get the following:

\begin{lemma}
\label{lem:symmspecequiv}
The operator $A_V$ is a compact self-adjoint operator on $L^2$. Furthermore, we have $\gamma\in\gspec{V}$ iff $-1/\gamma\in\spec(J_VA_V)$.
\end{lemma}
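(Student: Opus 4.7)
Plan.

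The argument is a Birman--Schwinger-style symmetrization, bootstrapping off the identity $\gamma\in\gspec V\Leftrightarrow -1/\gamma\in\spec(V\Drc{0}^{-1})$ used in the proof of Theorem \ref{thm:disgspec}. The key preliminary is that $\sqrt{|V|}\in\Vclass_0$: by Cauchy--Schwarz,
\[
\|\sqrt{|V|}\|_{L^2(x-1,x+1)}^2 = \int_{x-1}^{x+1}|V(y)|\,\dr y \le \sqrt{2}\,\|V\|_{L^2(x-1,x+1)} \to 0
\]
as $|x|\to\infty$. Lemma \ref{lem:Vcptop} therefore produces a compact multiplication operator $M\colon\sob\to L^2$, $Mu=\sqrt{|V|}\,u$, whose Banach-space adjoint $M^*\colon L^2\to (\sob)^*$ is also compact.

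Next I would interpret $A_V$ rigorously as an operator on $L^2$. Since $\Drc{0}\colon\sob\to L^2$ is an isomorphism, by duality it extends to an isomorphism $L^2\to (\sob)^*$, giving a bounded extension $\Drc{0}^{-1}\colon (\sob)^*\to L^2$. Then $A_V$ is naturally defined by the symmetric sandwich $A_V = M\circ \Drc{0}^{-1}\circ M^*$, which coincides with the literal formula $\sqrt{|V|}\,\Drc{0}^{-1}(\sqrt{|V|}\,\cdot)$ on the dense subspace $\{f\in L^2: \sqrt{|V|}f\in L^2\}$ and is extended by continuity. Compactness of $A_V$ is inherited from that of $M$, and self-adjointness follows from $(\Drc{0}^{-1})^* = \Drc{0}^{-1}$ combined with the $M$/$M^*$ symmetry: $(M\Drc{0}^{-1}M^*)^* = M\Drc{0}^{-1}M^*$.

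For the spectral equivalence I would invoke the classical ``$XY$ and $YX$ have the same non-zero spectrum'' observation with the (formal) choices $X=\sqrt{|V|}$ and $Y = J_V\sqrt{|V|}\,\Drc{0}^{-1}$, so that $XY = V\Drc{0}^{-1}$ and $YX = J_VA_V$. Concretely, given $V\Drc{0}^{-1}\psib = \mu\psib$ with $\mu\ne 0$ and $\psib\in L^2\setminus\{0\}$, set $\chi := J_V\sqrt{|V|}\,\Drc{0}^{-1}\psib\in L^2$; using $\sqrt{|V|}\chi = V\Drc{0}^{-1}\psib = \mu\psib$ one checks by direct substitution that $J_VA_V\chi = \mu\chi$ and $\chi\ne 0$. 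Conversely, given $\chi\in L^2\setminus\{0\}$ with $J_VA_V\chi = \mu\chi$, $\mu\ne 0$, the eigenvalue equation forces $\chi\in\Ran(M)$, so $\chi = \sqrt{|V|}h$ for some $h\in\sob$ and hence $\sqrt{|V|}\chi = |V|h\in L^2$ by Lemma \ref{lem:Vcptop}; setting $\psib := \mu^{-1}\sqrt{|V|}\chi$ recovers an eigenvector of $V\Drc{0}^{-1}$. Combining with Theorem \ref{thm:disgspec} yields the claim.

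The principal obstacle I expect is the careful handling of the unbounded multiplier $\sqrt{|V|}$ in the definition of $A_V$: naive composition fails because $\sqrt{|V|}f$ need not lie in $L^2$ for $f\in L^2$, so boundedness on $L^2$ must be extracted from the duality/density argument above. An alternative that cleanly avoids this is the polar factorization $\Drc{0}^{-1} = |\Drc{0}|^{-1/2}\sgn(\Drc{0})|\Drc{0}|^{-1/2}$ combined with an interpolation argument, between $\sqrt{|V|}\colon\sob\to L^2$ and its adjoint $\sqrt{|V|}\colon L^2\to (\sob)^*$, showing that $\sqrt{|V|}\,|\Drc{0}|^{-1/2}\colon L^2\to L^2$ is compact; then $A_V$ is manifestly the product of a compact operator, a unitary involution and the adjoint of a compact operator. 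Either way, once $A_V$ is set up properly, compactness, self-adjointness, and the spectral correspondence with $J_VA_V$ follow as above.
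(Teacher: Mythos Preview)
The paper does not give a proof, remarking only that the lemma follows from the ideas behind Theorem~\ref{thm:disgspec} and Lemma~\ref{lem:Vcptop}. Your proposal supplies those details and is in the spirit the paper intends: the observation $\sqrt{|V|}\in\Vclass_0$ (so Lemma~\ref{lem:Vcptop} applies) together with the $XY$/$YX$ spectral correspondence linking $J_VA_V$ to $V\Drc{0}^{-1}$ are exactly the right ingredients, and your eigenvector-transfer argument for the spectral equivalence is correct.

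There is, however, a type mismatch in your primary duality route. You extend $\Drc{0}^{-1}$ to a map $(\sob)^*\to L^2$, but in the composition $A_V=M\circ\Drc{0}^{-1}\circ M^*$ you must then apply $M$ (which you defined as a map $\sob\to L^2$) to an element of $L^2$. Since $\Drc{0}$ is first order, the extended $\Drc{0}^{-1}$ lands only in $L^2$, not in $\sob$, so the outer application of $M$ does not make sense as written. (The sandwich would typecheck for a second-order operator whose inverse maps $(\sob)^*\to\sob$.)

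Your stated alternative via the polar factorization $\Drc{0}^{-1}=|\Drc{0}|^{-1/2}\sgn(\Drc{0})\,|\Drc{0}|^{-1/2}$ fixes this cleanly. Setting $K=\sqrt{|V|}\,|\Drc{0}|^{-1/2}$ one has $A_V=K\,\sgn(\Drc{0})\,K^*$, which is manifestly self-adjoint once $K:L^2\to L^2$ is bounded. Boundedness and compactness of $K$ follow by the same pattern as Lemma~\ref{lem:Vcptop}, replacing the local embedding $\sob\hookrightarrow L^\infty$ by $H^{1/2}\hookrightarrow L^4$: one obtains the form bound $\int_\R|V|\,|u|^2\le C\,\|V\|_{\Vclass_0}\|u\|_{H^{1/2}}^2$, and then compactness by approximating $V$ in $\Vclass_0$ by $C_0^\infty$ potentials (using $\bigl||V|^{1/2}-|V_n|^{1/2}\bigr|^2\le|V-V_n|$ together with the form bound) and Rellich for $H^{1/2}$ on bounded intervals. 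With $A_V$ thus realised, the remainder of your argument goes through unchanged.
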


If $V$ is single-signed we can choose $J_V=\pm I$ ($+I$ if $V\ge0$ or $-I$ if $V\le0$). Then $\gamma\in\gspec{V}$ iff $-1/\gamma$ is in the spectrum of the compact self-adjoint operator $\pm A_V$. This gives a justification of Theorem \ref{thm:singsignreal}, although a more elementary argument is also possible (see Section \ref{subsec:symmargu}). 
\begin{remark}
\label{BS}
Lemma \ref{lem:symmspecequiv} can be viewed as a Birman-Schwinger principle for the Dirac operator $\Drc{V}$. This wide ranging principle has been used to obtain a number of results related to those presented here, both in the single-sign case where the associated Birman-Schwinger operator $A_V$ is self-adjoint (see \cite{K,BL}), and in the variable sign case where we need to consider the non-self-adjoint operator $J_VA_V$ (see \cite{Sa}). Approaches based on the Birman-Schwinger principle rely on obtaining spectral information about the operator $A_V$ or $J_VA_V$. Potential sources of information include eigenvalue or singular value estimates (such as \cite{Cw}), or pseudo-differential techniques leading to eigenvalue asymptotics (such as \cite{BS}). In Section \ref{subsec:boundargu} we take a different approach, based on Pr\"ufer techniques; this is convenient in our one-dimensional setting, and allows for slightly less restrictive assumptions on the potential (see Remark \ref{rem:Cwandp=1} in particular).
\end{remark}

\medskip

For any $V\in\Vclass_0$ note that $\abs{V}$ is a single-signed potential which is also in $\Vclass_0$. Using the association with compact operators given by Lemma \ref{lem:symmspecequiv} we are able to link points in $\gspec{V}$ and $\gspec{\abs{V}}$ with the eigenvalues and singular values of a single operator, and thus estimate the former using the latter (via Weyl's Inequality). Let $0<\mu_1<\mu_2<\dots$ be the positive points in $\gspec{\abs{V}}$, ordered by size. Also let $\gamma_1,\gamma_2,\dots$ denote the points in $\gspec{V}\cap\{z\in\C:\text{$z\in\R^+$ or $\Im z>0$}\}$ ordered so as to have non-decreasing modulus (and counted according to algebraic multiplicity). 

\begin{lemma}
\label{lem:Weylineq}
For any $N\in\N$ we have
\[
\prod_{n=1}^N\abs{\gamma_n}\ge\prod_{n=1}^N\mu_n.
\]
\end{lemma}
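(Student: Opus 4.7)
The plan is to prove Lemma \ref{lem:Weylineq} by applying Weyl's multiplicative inequality for compact operators to the Birman--Schwinger operator $T:=J_V A_V$ introduced in Lemma \ref{lem:symmspecequiv}. Recall that Weyl's inequality asserts $\prod_{j=1}^M|\lambda_j(T)|\le\prod_{j=1}^M s_j(T)$ for every $M$, where the non-zero eigenvalues $\lambda_j(T)$ are arranged in non-increasing modulus (with algebraic multiplicity) and $s_j(T)$ are the singular values in non-increasing order.

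First I would read off the singular values of $T$. Since $J_V$ is unitary, $s_j(T)=s_j(A_V)$. Because $A_V=\sqrt{|V|}\Drc{0}^{-1}\sqrt{|V|}$ involves only $|V|$, we have $A_V=A_{|V|}$; moreover $J_{|V|}=I$, so Lemma \ref{lem:symmspecequiv} applied to $|V|$ gives a bijection $\gamma\leftrightarrow -1/\gamma$ between the non-zero points of $\gspec{|V|}$ and the non-zero eigenvalues of $A_V$. Applying Theorem \ref{thm:singsignreal} to $|V|$ together with the symmetry $\gspec{|V|}=-\gspec{|V|}$ yields $\gspec{|V|}\setminus\{0\}=\{\pm\mu_n:n\in\N\}$. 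Combined with the self-adjointness of $A_V$, its non-zero eigenvalues are therefore $\{\pm 1/\mu_n\}$, and the singular values in non-increasing order satisfy $s_{2n-1}(T)=s_{2n}(T)=1/\mu_n$.

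Next I would identify the first $2N$ eigenvalues of $T$ in terms of the list $\gamma_1,\gamma_2,\dots$. Lemma \ref{lem:symmspecequiv} identifies the non-zero eigenvalues of $T$, with algebraic multiplicities, as $\{-1/\gamma:\gamma\in\gspec{V}\}$. The symmetries $\gspec{V}=-\gspec{V}=\overline{\gspec{V}}$ imply that for each $\gamma_n$ in the list both $\gamma_n$ and $-\gamma_n$ lie in $\gspec{V}$ with the same multiplicity, contributing two eigenvalues $\pm 1/\gamma_n$ of $T$, each of modulus $1/|\gamma_n|$. For real or purely imaginary $\gamma_n$ this exhausts the full symmetry orbit of $\gamma_n$, while for generic complex $\gamma_n$ the four-point orbit $\{\gamma_n,-\gamma_n,\bar{\gamma}_n,-\bar{\gamma}_n\}$ splits into two list entries ($\gamma_n$ and $-\bar{\gamma}_n$), each of which independently supplies its own pair of eigenvalues of $T$ of modulus $1/|\gamma_n|$. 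In every case each list entry contributes exactly two such eigenvalues of $T$, and since the $\gamma_n$ are ordered by non-decreasing modulus, these constitute the first $2N$ eigenvalues of $T$ in non-increasing modulus.

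Weyl's inequality with $M=2N$ then gives
\[
\prod_{n=1}^N|\gamma_n|^{-2}=\prod_{j=1}^{2N}|\lambda_j(T)|\le\prod_{j=1}^{2N}s_j(T)=\prod_{n=1}^N\mu_n^{-2},
\]
and the lemma follows by rearrangement. The main obstacle is the multiplicity bookkeeping in the second step: one has to verify that the set-theoretic bijection $\gamma\mapsto-1/\gamma$ from Lemma \ref{lem:symmspecequiv} actually preserves algebraic multiplicities, and then check case by case that the discrete symmetry orbits of $\gspec{V}$ yield the uniform ``two eigenvalues of $T$ per list entry'' accounting in the real, purely imaginary and generic complex situations.
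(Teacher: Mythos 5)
Your proposal is correct and follows essentially the same route as the paper: both apply Weyl's multiplicative inequality with $M=2N$ to the compact operator $J_VA_V$, obtaining the eigenvalues from $\gspec{V}$ via Lemma~\ref{lem:symmspecequiv} and the singular values as the eigenvalues of $|A_V|$, namely $1/\mu_n$ each with multiplicity $2$. Your step ``$J_V$ unitary implies $s_j(J_VA_V)=s_j(A_V)$'' is the same fact the paper states via the explicit identity $(J_VA_V)^*J_VA_V=A_VJ_V^2A_V=A_V^2$, and your extended discussion of four-point symmetry orbits is an unnecessary refinement of the paper's simpler observation that, by the symmetry $\gspec{V}=-\gspec{V}$, the full list of $\gamma$-spectrum points is $\{\pm\gamma_n\}$.
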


\begin{proof}
By symmetry we know that the points in $\gspec{V}$ are just $\pm\gamma_n$, $n=1,2,\dots$, while the points in $\gspec{\abs{V}}$ are just $\pm\mu_n$, $n=1,2,\dots$. Lemma \ref{lem:symmspecequiv} now implies $\pm1/\gamma_n$, $n=1,2,\dots$ are the non-zero eigenvalues of $J_VA_V$, while $\pm1/\mu_n$, $n=1,2,\dots$ are the non-zero eigenvalues of $J_{\abs{V}}A_{\abs{V}}=A_V$ (note that, we can take $J_{\abs{V}}=I$). However $(J_VA_V)^*J_VA_V=A_VJ_V^2A_V=A_V^2$ so the singular values of $J_VA_V$ are just the eigenvalues of $\abs{A_V}$; that is, $1/\mu_n$, $n=1,2,\dots$, where each eigenvalue has multiplicity $2$. Given $N\in\N$ we can now use Weyl's Inequality (see \cite{Weyl}) to compare the largest $2N$ eigenvalues and singular values of the compact operator $J_VA_V$; this gives
\[
\prod_{n=1}^N\Bigl(\frac1{\abs{\gamma_n}}\Bigr)^2\le\prod_{n=1}^N\Bigl(\frac1{\mu_n}\Bigr)^2.
\]
The result follows.
\end{proof}

\subsection{Symmetries}

\label{subsec:symmargu}

Our unperturbed operator is
\[
\Drc{0}=-i\sigma_2\nabla+k\sigma_3
=\begin{pmatrix}k&-\nabla\\\nabla&-k\end{pmatrix}.
\]
Since $V$ is real valued we immediately get
\begin{equation}
\label{eq:cxconjsymm}
\overline{(\Drc{0}+\gamma V)\psib}=(\Drc{0}+\overline{\gamma}V)\overline{\psib}.
\end{equation}
It follows that $\gamma\in\gspec{V}$ iff $\overline{\gamma}\in\gspec{V}$. On the other hand, the commutator properties of the Pauli matrices (namely $\sigma_j\sigma_k=-\sigma_k\sigma_j$ if $j\neq k$) give us
\[
\sigma_1(\Drc{0}+\gamma V)=-(\Drc{0}-\gamma V)\sigma_1
\quad\text{and}\quad
\sigma_2(\Drc{0}+\gamma V)=(-i\sigma_2\nabla-\sigma_3k+\gamma V)\sigma_2.
\]
From the first identity we get $\gamma\in\gspec{V}$ iff $-\gamma\in\gspec{V}$, while the second shows that $\gspec{V}$ is invariant if we replace $k$ with $-k$ in the definition of $\Drc{0}$. 

\begin{remark}
The symmetry corresponding to $\sigma_3$ can be used to help study even potentials (compare with our consideration of anti-symmetric potentials below).
\end{remark}

\begin{proof}[Proof of Theorem \ref{thm:singsignreal}]
Suppose $\Drc{\gamma V}\psib=(\Drc{0}+\gamma V)\psib=0$ for some $\gamma\in\C$ and $\psib\in\sob$. Then $\ipd{\Drc{0}\psib}{\psib}=-\gamma\ipd{V\psib}{\psib}$ while $\ipd{\Drc{0}\psib}{\psib},\ipd{V\psib}{\psib}\in\R$. If $\gamma\notin\R$ we must therefore have $\ipd{V\psib}{\psib}=0$. Since $V$ is single-signed it follows that $V\psib=0$, leading to $\Drc{0}\psib=-\gamma V\psib=0$ and thus $\psib=0$ (recall that $\Drc{0}:\sob\to L^2$ is an isomorphism).
\end{proof}

\begin{proof}[Proof of Theorem \ref{thm:oddV0}]
We consider two symmetries of the operator $\Drc{\gamma V}$; define an anti-linear operator $C$ and a unitary operator $S$ on $L^2$ by
\[
C\psib=\overline{\psib}
\quad\text{and}\quad
(S\psib)(x)=\sigma_2\psib(-x),\ x\in\R.
\]
These operators map $\sob$ (isometrically) onto $\sob$, and satisfy $C^2=I=S^2$ and $CS=-SC$. Furthermore, \eqref{eq:cxconjsymm} can be rewritten as $C\Drc{\gamma V}=\Drc{\overline{\gamma} V}C$, while $\nabla S=-S\nabla$ and $SV=-VS$ (as $V$ is anti-symmetric) which leads to $S\Drc{\gamma V}=-\Drc{\gamma V}S$. 

Now suppose $\gamma\in\gspec{V}\cap\R$ and choose $0\neq\psib\in\sob$ which satisfies $\Drc{\gamma V}\psib=0$. Then
\[
\Drc{\gamma V}C\psib=C\Drc{\gamma V}\psib=0
\quad\text{and}\quad
\Drc{\gamma V}S\psib=-S\Drc{\gamma V}\psib=0.
\]
However $0$ is a simple eigenvalue of the operator $\Drc{\gamma V}$ by Lemma \ref{lem:simeval}, so we must have $C\psib=\alpha\psib$ and $S\psib=\beta\psib$ for some $\alpha,\beta\in\C$. Then
\[
\abs{\alpha}^2\psib=C^2\psib=\psib,
\qquad
\beta^2\psib=S^2\psib=\psib
\qquad\text{and}\qquad
\alpha\overline{\beta}\psib=CS\psib=-SC\psib=-\alpha\beta\psib,
\]
so $\abs{\alpha}^2=1=\beta^2$ and $\alpha\overline{\beta}=-\alpha\beta$ (recall that $\psib\neq0$). These equations clearly have no solution, so we must have $\gspec{V}\cap\R=\emptyset$.
\end{proof}

\subsection{General bounds and asymptotics}

\label{subsec:boundargu}

Suppose $V\in L^1_\loc$. We can view our basic equation $\Drc{\gamma V}\psib=0$ as the $2\times 2$ system of first order ordinary differential equations on $\R$ given by
\begin{equation}
\label{eq:l0ode}
\begin{aligned}
\nabla\psi_1&=(k-\gamma V)\psi_2,\\
\nabla\psi_2&=(k+\gamma V)\psi_1.
\end{aligned}
\end{equation}
The basic theory for such equations is well established 
(see \cite{Hartman}; here, and in subsequent references to \cite{Hartman}, some straightforward modifications of the results are needed in order to cover $L^1_\loc$ coefficients).
In particular, if $x_0\in\R$ and $\alpha_1,\alpha_2\in\C$, then there exists a unique absolutely continuous solution to \eqref{eq:l0ode} on $\R$ with $\psi_j(x_0)=\alpha_j$ for $j=1,2$. Furthermore, for given $x\in\R$, this solution depends continuously on $\alpha_1,\alpha_2$, $\lambda$ and $V$ (the latter as a function in $L^1(I)$ where $I$ is the interval between $x_0$ and $x$). A consequence of the uniqueness of solutions is that for any non-trivial solution $\psib$ of \eqref{eq:l0ode} we have $\psib(x)\neq0$ for all $x\in\R$.

\medskip

Now suppose $\gamma\in\R$. It follows that all coefficients in \eqref{eq:l0ode} are real, so we may restrict our attention to solutions which are also real valued. Since any non-trivial solution $\psib$ is absolutely continuous and satisfies $\psib(x)\neq0$ for all $x\in\R$ we can define an absolutely continuous function $S:\R\to\cir$ by
\[
S(x)=\frac{\psi_1(x)+\ir\psi_2(x)}{(\psi_1^2(x)+\psi_2^2(x))^{1/2}}
\]
(here $\cir$ denotes the unit circle in $\C$). By lifting to $\R$ (the universal cover of $\cir$) we can define a further absolutely continuous function $\theta:\R\to\R$ so that 
\begin{equation}
\label{eq:thetafrompsi}
\er^{\ir\theta}=S=\frac{\psi_1+\ir\psi_2}{\abs{\psib}};
\end{equation}
this function is the Pr\"{u}fer argument associated to $\psib$ and is unique up to the addition of a constant in $2\pi\Z$. We note that Pr\"ufer coordinates
are a standard tool  for problems of this kind (see, for example, \cite{Schm}).

A straightforward calculation gives
\[
\ir\nabla\theta\,\er^{\ir\theta}
=\nabla S
=\ir\,\frac{\psi_1\nabla\psi_2-\psi_2\nabla\psi_1}{\psi_1^2+\psi_2^2}\,\er^{\ir\theta}.
\]
Using \eqref{eq:l0ode} it follows that
\[
\nabla\theta
=\frac{\psi_1\nabla\psi_2-\psi_2\nabla\psi_1}{\psi_1^2+\psi_2^2}
=\gamma V+k\,\frac{\psi_1^2-\psi_2^2}{\psi_1^2+\psi_2^2}.
\]
However
\[
\frac{\psi_1^2-\psi_2^2}{\psi_1^2+\psi_2^2}
=\Re\er^{2\ir\theta}
=\cos(2\theta)
\]
so $\theta$ satisfies the first order non-linear equation
\begin{equation}
\label{eq:PruferODE}
\nabla\theta=\gamma V+k\cos(2\theta).
\end{equation}

Conversely, when $V\in L^1_\loc$ the differential equation \eqref{eq:PruferODE} has a unique absolutely continuous solution valid on $\R$ for a given value of $\theta(x_0)$, $x_0\in\R$ (see \cite{Hartman}).
If we have one solution $\theta$ of \eqref{eq:PruferODE} then $\theta+n\pi$ provides a further solution for any $n\in\Z$ (note that, for $\theta$ given by \eqref{eq:thetafrompsi} the solution $\theta+\pi$ corresponds to taking $-\psib$ as a solution of \eqref{eq:l0ode}). 

\medskip

Now suppose $V\in\Vclass_1$ and $\psib$ is a non-trivial solution of \eqref{eq:l0ode}. Let $\theta$ be given by \eqref{eq:thetafrompsi}. The fact that $\int_\R\abs{V(x)}\dr x<+\infty$ means that, to leading order as $x\to\pm\infty$, $\psib$ behaves like a solution to \eqref{eq:l0ode} with $V\equiv0$. The corresponding asymptotic behaviour of $\theta$ can be summarised as follows (see Section \ref{subsec:asym} for more details):

\begin{lemma}
\label{lem:+thetalim}
The quantity $\theta(x)$ has well defined limits as $x\to\pm\infty$ which satisfy
\begin{equation}
\label{eq:+thetalim}
\theta(\pm\infty):=\lim_{x\to\pm\infty}\theta(x)\in\frac{\pi}4+\frac{\pi}2\,\Z.
\end{equation}
Furthermore, upon restriction we have $\psib\in L^2(\R^\pm)$ iff
\[
\theta(\pm\infty)\in\mp\frac{\pi}4+\pi\Z.
\]
\end{lemma}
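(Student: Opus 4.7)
The plan is to exploit the integrability of $V$ to render the Pr\"ufer equation \eqref{eq:PruferODE} asymptotically autonomous. Its limiting form $\nabla\theta=k\cos(2\theta)$ has equilibrium set precisely $\pi/4+(\pi/2)\Z$, and I will show $\theta(x)$ converges to one of these equilibria as $x\to\pm\infty$. The $L^2$ dichotomy then follows by tracking $|\psib|^2$, whose growth rate is governed by $\sin(2\theta(\pm\infty))$.

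The first step is to show $\cos(2\theta(x))\to 0$ as $x\to+\infty$. Differentiating along a solution of \eqref{eq:PruferODE} yields $\bigl(\sin(2\theta)\bigr)'=2k\cos^2(2\theta)+2\gamma V\cos(2\theta)$; integrating from $x_0$ to $x$ and using $|\sin(2\theta)|\le 1$ together with $V\in L^1$ gives the uniform bound $\int_{x_0}^\infty\cos^2(2\theta)<\infty$, that is, $\cos(2\theta)\in L^2(x_0,+\infty)$. Meanwhile $\bigl(\cos(2\theta)\bigr)'=-2\sin(2\theta)\nabla\theta$ is dominated by $2|\gamma V|+2k$, and absolute continuity of the Lebesgue integral applied to $V\in L^1$ makes $\cos(2\theta)$ uniformly continuous on $[x_0,+\infty)$; the standard fact that a uniformly continuous $L^2$ function on a half-line tends to zero then forces $\cos(2\theta(x))\to 0$. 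For any $\delta\in(0,\pi/4)$ this traps $\theta(x)$ eventually in the disjoint union $\bigcup_{n\in\Z}\bigl(\pi/4+n\pi/2-\delta,\pi/4+n\pi/2+\delta\bigr)$; continuity of $\theta$ and connectedness of its tail force it to lie in a single component, and letting $\delta\to 0$ through a nested family isolates a unique limit in $\pi/4+(\pi/2)\Z$. The case $x\to-\infty$ is entirely parallel.

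For the square-integrability assertion, using \eqref{eq:l0ode} together with $\psi_1=|\psib|\cos\theta$, $\psi_2=|\psib|\sin\theta$ I compute $(|\psib|^2)'=4k\psi_1\psi_2=2k|\psib|^2\sin(2\theta)$, hence $|\psib(x)|^2=|\psib(x_0)|^2\exp\bigl(2k\int_{x_0}^x\sin(2\theta(t))\,\dr t\bigr)$. Since $\cos(2\theta(\pm\infty))=0$, the limit $\sin(2\theta(\pm\infty))$ equals $+1$ precisely when $\theta(\pm\infty)\in+\pi/4+\pi\Z$ and $-1$ precisely when $\theta(\pm\infty)\in-\pi/4+\pi\Z$; thus $|\psib|^2$ grows or decays exponentially at rate $2k$. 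Square-integrability on $\R^+$ means decay at $+\infty$, forcing $\theta(+\infty)\in-\pi/4+\pi\Z$; on $\R^-$ the direction of integration flips the sign in the exponent, so decay as $x\to-\infty$ requires $\theta(-\infty)\in+\pi/4+\pi\Z$. Both cases combine to $\theta(\pm\infty)\in\mp\pi/4+\pi\Z$.

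The principal conceptual step is the passage from the pointwise limit $\cos(2\theta)\to 0$ to convergence of $\theta$ itself, ruling out oscillation between distinct equilibria. The connectedness argument above handles this cleanly once uniform continuity of $\cos(2\theta)$ is in hand, which is where the $L^1$ hypothesis on $V$ enters in a mildly delicate way.
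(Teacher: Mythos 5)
Your proof is correct and takes a genuinely different route from the paper's. The paper proves Lemma~\ref{lem:+thetalim} by appealing to the asymptotic theory of linear systems with $L^1$ perturbations (Levinson-type results from Hartman, Ch.~X): it fixes a basis $\asol{+}{\pm}$ of solutions to \eqref{eq:l0ode} with prescribed exponential asymptotics \eqref{eq:inftyasylim}, expands $\psib=\alpha_+\asol{+}{+}+\alpha_-\asol{+}{-}$, reads off $\lim\psi_1/\psi_2\in\{+1,-1\}$ according to whether $\alpha_+\neq 0$ or $\alpha_+=0$, and then transfers to $\theta$ via $\tan\theta=\psi_1/\psi_2$, with $L^2$-membership characterised by $\alpha_+=0$. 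You instead work entirely at the Pr\"ufer level: the identity $(\sin 2\theta)'=2k\cos^2(2\theta)+2\gamma V\cos(2\theta)$ together with boundedness of $\sin 2\theta$ and $V\in L^1$ yields $\cos(2\theta)\in L^2(\R^\pm)$; uniform continuity (via absolute continuity of $\int|V|$) and a Barbalat-type lemma then give $\cos(2\theta(x))\to 0$; a connectedness/nested-intervals argument pins down a single equilibrium; and the $L^2$ dichotomy follows from the exact Wronskian-type identity $(|\psib|^2)'=2k|\psib|^2\sin(2\theta)$. Your approach is more self-contained and avoids invoking the asymptotic-integration machinery, at the cost of not producing the explicit asymptotic basis $\asol{+}{\pm}$, which the paper reuses elsewhere (e.g.\ in Lemma~\ref{lem:simeval} and in making sense of $\alpha_+$). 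Both are sound; one minor point to make explicit in your version is that the estimate $\int_{x_0}^{x}\cos^2(2\theta)\le\frac{1}{k}\bigl(1+|\gamma|\,\norm{V}_{L^1}\bigr)$ requires $k>0$, which the paper has fixed throughout.
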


For any $\gamma\in\R$ we can uniquely specify two solutions $\theta_{\gamma,+}$ and $\theta_{\gamma,-}$ to \eqref{eq:PruferODE} by imposing the boundary conditions
\begin{equation}
\label{eq:bcthetapm}
\theta_{\gamma,\pm}(\pm\infty)=\mp\frac{\pi}4.
\end{equation}
Lemma \ref{lem:+thetalim} shows that $\theta_{\gamma,\pm}$ correspond to solutions of \eqref{eq:l0ode} which are in $L^2$ on $\R^\pm$. We get an $L^2$ solution on the whole of $\R$ precisely when these solutions `match' at one (or, equivalently, any) point of $\R$. Choosing $0$ as the point at which we check, the matching condition is simply that $\theta_{\gamma,+}(0)$ and $\theta_{\gamma,-}(0)$ must differ by a multiple of $\pi$. Define a function $\Delta_V:\R\to\R$ by setting
\begin{align}
\Delta_V(\gamma)
&=\bigl(\theta_{\gamma,+}(+\infty)-\theta_{\gamma,+}(0)\bigr)+\bigl(\theta_{\gamma,-}(0)-\theta_{\gamma,-}(-\infty)\bigr)
\nonumber\\
\label{eq:Deltadefn}
&=-\frac\pi2-\theta_{\gamma,+}(0)+\theta_{\gamma,-}(0).
\end{align}
We can thus characterise points in $\gspec{V}\cap\R$ as follows:

\begin{proposition}
\label{prop:gspeccharD}
We have $\gamma\in\gspec{V}\cap\R$ iff $\Delta_V(\gamma)\in\dfrac{\pi}2+\pi\Z$.
\end{proposition}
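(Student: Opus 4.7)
The plan is to reduce the statement to the geometric observation that $\gamma\in\gspec{V}\cap\R$ holds precisely when there is a common non-trivial $L^2$ solution on $\R^+$ and $\R^-$, and to translate this matching condition into a condition on the Pr\"ufer angles $\theta_{\gamma,\pm}$ at $0$. Unpacking the definition \eqref{eq:Deltadefn}, one sees immediately that $\Delta_V(\gamma)\in\tfrac{\pi}{2}+\pi\Z$ is equivalent to
\[
\theta_{\gamma,-}(0)-\theta_{\gamma,+}(0)\in\pi\Z,
\]
so the entire proof reduces to showing that this last condition characterises the existence of a non-trivial element of $\ker\Drc{\gamma V}\subset\sob$.

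For the forward direction, suppose $\gamma\in\gspec{V}\cap\R$. Since $\Drc{\gamma V}$ has real coefficients, taking real and imaginary parts of any nontrivial $\psib\in\sob$ with $\Drc{\gamma V}\psib=0$ yields a non-zero real-valued solution, which I continue to call $\psib$. Define its Pr\"ufer argument $\theta$ by \eqref{eq:thetafrompsi}; then $\theta$ satisfies the ODE \eqref{eq:PruferODE}, and by Lemma~\ref{lem:+thetalim} together with $\psib\in L^2(\R^\pm)$ we have $\theta(\pm\infty)\in\mp\tfrac{\pi}{4}+\pi\Z$. Because any integer multiple of $\pi$ added to a solution of \eqref{eq:PruferODE} remains a solution, and because solutions are uniquely determined by their limit at $\pm\infty$ (this will need the uniqueness established just before Lemma~\ref{lem:+thetalim} together with the continuity of $\theta$ up to $\pm\infty$), there exist $n_+,n_-\in\Z$ with
\[
\theta_{\gamma,+}=\theta-n_+\pi,\qquad \theta_{\gamma,-}=\theta-n_-\pi.
\]
Evaluating at $0$ gives $\theta_{\gamma,-}(0)-\theta_{\gamma,+}(0)=(n_+-n_-)\pi\in\pi\Z$, as required.

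For the reverse direction, assume $\theta_{\gamma,-}(0)-\theta_{\gamma,+}(0)=n\pi$ for some $n\in\Z$. Recover from $\theta_{\gamma,\pm}$ corresponding real-valued solutions $\psib_{\gamma,\pm}$ of the linear first-order system \eqref{eq:l0ode} by choosing any non-zero amplitudes; the key point is that $\er^{\ir\theta_{\gamma,\pm}(0)}=\pm\er^{\ir\theta_{\gamma,\mp}(0)}$ in $\cir$, which means the vectors $\psib_{\gamma,+}(0)$ and $\psib_{\gamma,-}(0)$ are parallel in $\R^2$. After rescaling $\psib_{\gamma,-}$ by a suitable real scalar $c\neq 0$ we obtain $c\psib_{\gamma,-}(0)=\psib_{\gamma,+}(0)$, and by the uniqueness of solutions to \eqref{eq:l0ode} with prescribed initial data we conclude $c\psib_{\gamma,-}\equiv\psib_{\gamma,+}$ on all of $\R$. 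This common solution $\psib$ lies in $L^2(\R^+)$ because $\theta_{\gamma,+}(+\infty)=-\tfrac{\pi}{4}$ together with Lemma~\ref{lem:+thetalim}, and similarly in $L^2(\R^-)$, hence $\psib\in L^2(\R)$. Regularity $\psib\in\sob$ follows because \eqref{eq:l0ode} forces $\nabla\psib=k\sigma_1\psib-\gamma V\sigma_1\psib$, and the argument from Lemma~\ref{lem:Vcptop} applied to $\psib\in L^2$ and $V\in\Vclass_0$ shows $V\psib\in L^2$ (this is essentially the relative compactness used to define $\Drc{\gamma V}$ on $\sob$). Thus $\psib$ is a nontrivial element of $\ker\Drc{\gamma V}$, and so $\gamma\in\gspec{V}\cap\R$.

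The main technical subtlety is really in the forward direction, where I need to justify that every solution $\theta$ of the Pr\"ufer ODE whose limit at $+\infty$ lies in $-\tfrac{\pi}{4}+\pi\Z$ is obtained from $\theta_{\gamma,+}$ by adding an integer multiple of $\pi$. This is a uniqueness-of-asymptotic-data statement for a non-linear ODE, and it depends on the limits in \eqref{eq:+thetalim} being attained; this is exactly the content of Lemma~\ref{lem:+thetalim}. Once that is in hand, everything else is bookkeeping with the definition of $\Delta_V$ and a routine use of uniqueness for \eqref{eq:l0ode}.
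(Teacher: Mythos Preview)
Your approach is essentially the same as the paper's: the paper treats this proposition as a direct consequence of the paragraph preceding it (the matching of $\theta_{\gamma,+}$ and $\theta_{\gamma,-}$ at $0$ modulo $\pi\Z$), without giving a separate formal proof. Your write-up simply expands that sketch into a careful two-directional argument.

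One small technical slip to fix: in the reverse direction you invoke Lemma~\ref{lem:Vcptop} to conclude $V\psib\in L^2$ from $\psib\in L^2$, but that lemma gives a map $\sob\to L^2$, not $L^2\to L^2$, so the appeal is circular (you are trying to establish $\psib\in\sob$). The repair is immediate: by the asymptotics \eqref{eq:inftyasylim} in Section~\ref{subsec:asym}, your matched solution decays exponentially at $\pm\infty$, hence lies in $L^\infty$, and then finiteness of $\norm{V}_{\Vclass_0}$ gives $V\psib\in L^2$ directly. Also, the explicit formula you write for $\nabla\psib$ has a sign error in one component (compare \eqref{eq:l0ode}), though this does not affect the conclusion that $\nabla\psib\in L^2$.
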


We have $\Delta_V(0)=0$ (note that $\theta_{0,\pm}$ are constant functions) while $\Delta_V(\gamma)$ depends continuously on $\gamma$ (this essentially follows from  standard results for the continuous dependence on parameters of solutions of ordinary differential equations; see \cite{Hartman}). For large $\gamma$ we have the following asymptotic behaviour (the proof is given in Section \ref{subsec:DeltaV}):

\begin{proposition}
\label{prop:Dgasym}
If $V\in\Vclass_1$ then 
\[
\Delta_V(\gamma)=\gamma\int_{\R}V(x)\dr x+o(\gamma)
\]
as $\abs{\gamma}\to\infty$.
\end{proposition}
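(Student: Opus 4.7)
The plan is to rewrite $\Delta_V(\gamma)$ as $\gamma\int V$ plus an oscillatory remainder by integrating the Pr\"ufer ODE, show the remainder is $O(1)$ when $V$ is compactly supported, and then extend to $V\in\Vclass_1$ by approximation.

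Integrating \eqref{eq:PruferODE} for $\theta_{\gamma,+}$ on $(0,T)$ and letting $T\to+\infty$, using $\theta_{\gamma,+}(+\infty)=-\pi/4$, yields $-\pi/4-\theta_{\gamma,+}(0)=\gamma\int_0^{+\infty}V\dr x + k\,I_+(\gamma)$, where $I_+(\gamma):=\lim_{T\to\infty}\int_0^T\cos(2\theta_{\gamma,+}(x))\dr x$ exists as an improper integral (rearranging, it equals a finite combination of terms with limits, since $V\in L^1$). The analogous identity holds for $\theta_{\gamma,-}$ on $(-\infty,0)$; substituting both into \eqref{eq:Deltadefn} gives the decomposition
\[
\Delta_V(\gamma)=\gamma\int_\R V(x)\dr x + k\bigl[I_+(\gamma)+I_-(\gamma)\bigr],
\]
so the claim reduces to $I_+(\gamma)+I_-(\gamma)=o(\gamma)$ as $|\gamma|\to\infty$.

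If $V$ is compactly supported in $[-L,L]$, then on $[L,+\infty)$ the Pr\"ufer ODE becomes autonomous, $\theta'=k\cos(2\theta)$, with unstable equilibria at $-\pi/4+n\pi$. The only trajectory converging to such an equilibrium is the stationary one, so $\theta_{\gamma,+}(x)\equiv-\pi/4$ on $[L,+\infty)$ and hence $\cos(2\theta_{\gamma,+})\equiv0$ there, giving $|I_+(\gamma)|\le L$ uniformly in $\gamma$. The same argument controls $I_-(\gamma)$, yielding the stronger statement $\Delta_V(\gamma)=\gamma\int V+O(1)$ whenever $V$ is compactly supported. For a general $V\in\Vclass_1$, I would truncate: set $V_L:=V\mathbf{1}_{[-L,L]}$, so $\|V-V_L\|_{L^1}\to0$ as $L\to\infty$. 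Then
\[
\Delta_V(\gamma)-\gamma\int_\R V=\bigl[\Delta_V(\gamma)-\Delta_{V_L}(\gamma)\bigr]+\bigl[\Delta_{V_L}(\gamma)-\gamma\textstyle\int V_L\bigr]+\gamma\int(V_L-V),
\]
whose middle bracket is $O_L(1)$ by the previous paragraph and whose last term has modulus at most $|\gamma|\|V-V_L\|_{L^1}$. Provided one establishes a uniform-in-$\gamma$ Lipschitz-type estimate of the form $|\Delta_V(\gamma)-\Delta_W(\gamma)|\le C|\gamma|\,\|V-W\|_{L^1}$ with $C=C(k)$, dividing by $|\gamma|$ and letting first $|\gamma|\to\infty$ then $L\to\infty$ finishes the proof.

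The main obstacle is this uniform Lipschitz bound. Setting $\delta:=\theta^V_{\gamma,+}-\theta^W_{\gamma,+}$, one finds $\delta'=\gamma(V-W)-2k\sin(\theta^V_{\gamma,+}+\theta^W_{\gamma,+})\sin(\delta)$ with $\delta(+\infty)=0$. Because $\theta^V_{\gamma,+},\theta^W_{\gamma,+}\to-\pi/4$ at $+\infty$, the linearization $\delta'\approx 2k\delta+\gamma(V-W)$ near infinity is backward-stable, and a backward Duhamel representation weighted by $e^{-2k(y-x)}$ controls $|\delta(x)|$ by $|\gamma|\|V-W\|_{L^1(x,\infty)}$. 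Propagating this estimate away from the terminal equilibrium, where the modulating coefficient $-2k\sin(\theta^V_{\gamma,+}+\theta^W_{\gamma,+})$ may change sign, is the delicate step; the global bound $|\sin\delta|\le1$ together with a bootstrap argument exploiting that the nonlinearity remains uniformly bounded should nonetheless close the estimate, with the analogous analysis applied to $\theta_{\gamma,-}$.
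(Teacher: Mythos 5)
Your integral decomposition $\Delta_V(\gamma)=\gamma\int_\R V\dr x+k[I_+(\gamma)+I_-(\gamma)]$ and the treatment of compactly supported potentials are both correct, and your truncation plan is sound. The gap is the reduction to the uniform-in-$\gamma$ Lipschitz estimate $|\Delta_V(\gamma)-\Delta_W(\gamma)|\le C(k)\,|\gamma|\,\|V-W\|_{L^1}$. Your proposed route to it --- linearisation near the terminal equilibrium plus a backward Duhamel, then a ``bootstrap'' to propagate the control back to $x=0$ --- does not close as written: once $\theta^V_{\gamma,+}+\theta^W_{\gamma,+}$ leaves a neighbourhood of $-\pi/2$, the coefficient $-2k\sin(\theta^V_{\gamma,+}+\theta^W_{\gamma,+})$ multiplying $\sin\delta$ can take either sign, and a naive Gronwall estimate backward over a length-$L$ region gives expansion by a factor of order $e^{2kL}$; the bootstrap is unspecified, and it is unclear whether the Lipschitz bound is even true with a $\gamma$-independent constant.

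Fortunately, you do not need it. On $[0,L]$ you have $V=V_L$, so the forcing term $\gamma(V-V_L)$ vanishes and the crude bound $|\sin\delta|\le 1$ already gives $|\delta'|\le 2k$ there, hence $|\delta(0)|\le|\delta(L)|+2kL$. The term $|\delta(L)|=|\theta^V_{\gamma,+}(L)+\pi/4|$ is controlled by $2|\gamma|\int_L^\infty|V|$ by Lemma \ref{lem:tailest} of the paper, whose proof exploits the sign of $k\cos(2\theta)$ directly rather than linearisation (you would in any case need something of this kind to make the Duhamel step rigorous, since the size of $\delta$ is not known a priori). Arguing similarly on $(-\infty,0]$ one obtains $|\Delta_V(\gamma)-\Delta_{V_L}(\gamma)|\le C|\gamma|\,\|V-V_L\|_{L^1}+CkL$, whose extra $CkL$ is harmless because your final step divides by $|\gamma|$ and takes $\limsup_{|\gamma|\to\infty}$ before sending $L\to\infty$. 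With that adjustment your argument is complete. The paper reaches the same $o(\gamma)$ more directly by making the cutoff $\gamma$-dependent, $K_\gamma\sim|\gamma|^\mu$ with $0<\mu<1$, so that $kK_\gamma=o(\gamma)$ and $|\gamma|\int_{K_\gamma}^\infty|V|=o(\gamma)$ simultaneously; the two routes are essentially the same argument.
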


Together Propositions \ref{prop:gspeccharD} and \ref{prop:Dgasym} allow us to establish Theorem \ref{thm:lowerasym}.

\begin{proof}[Proof of Theorem \ref{thm:lowerasym}]
Let $I_R\subset\R$ denote the closed interval with endpoints $\Delta_V(0)=0$ and $\Delta_V(R)$. By the Intermediate Value Theorem $\Delta_V(\gamma)$ takes each value in $I_R$ at least once for some $\gamma\in[0,R]$. From Proposition \ref{prop:gspeccharD} we then get
\begin{align}
\#(\gspec{V}\cap[0,R])
&=\#\bigl\{\gamma\in[0,R]:\Delta_V(\gamma)\in(\Z+1/2)\pi\bigr\}
\nonumber\\
\label{eq:estcross}
&\ge\#(I_R\cap(\Z+1/2)\pi\bigr)
\ge\frac{\abs{I_R}}{\pi}-\frac12,
\end{align}
where $\abs{I_R}$ is the length of $I_R$. On the other hand, Proposition \ref{prop:Dgasym} gives
\begin{equation}
\label{estof|IR|:eq}
\abs{I_R}=\abs{\Delta_V(R)}
=R\left\lvert\int_{\R}V(x)\dr x\right\rvert+o(R)
\end{equation}
as $R\to\infty$.
\end{proof}
\begin{remark}
\label{Schmidt}
For compactly supported potentials, Theorem  \ref{thm:lowerasym} admits a simpler proof,  similarly to \cite[Theorem 3]{Schm}. 
The details of this argument have been written down by Michael~Morin (supported by the NSERC Undegraduate Summer Research Award, 2011).
\end{remark}

If $V\in\Vclass_1$ is single-signed then $\Delta_V$ is monotonic (the proof is given in Section \ref{subsec:deriv}):

\begin{proposition}
\label{prop:positiveDinc}
Suppose $V\in\Vclass_1$ is single-signed and non-trivial. Then $\Delta_V$ is strictly increasing if $V\ge0$ and strictly decreasing if $V\le0$.
\end{proposition}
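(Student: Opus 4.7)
The plan is to compute the derivative $d\Delta_V/d\gamma$ explicitly via the variational equation for the Pr\"ufer argument and to show that its sign is determined by the sign of $V$.

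\textbf{Step 1: Variational equation.} By standard smooth dependence of ODE solutions on a parameter, together with the fact that the boundary conditions \eqref{eq:bcthetapm} single out a smooth branch, the functions $\theta_{\gamma,\pm}(x)$ are $C^1$ in $\gamma$. Set $u_\pm(x)=\partial_\gamma\theta_{\gamma,\pm}(x)$. Differentiating the Pr\"ufer ODE \eqref{eq:PruferODE} in $\gamma$ yields the linear first-order equation
\[
\nabla u_\pm + 2k\sin(2\theta_{\gamma,\pm})\,u_\pm = V.
\]
Since the boundary values $\theta_{\gamma,\pm}(\pm\infty)=\mp\pi/4$ do not depend on $\gamma$, we have $u_\pm(\pm\infty)=0$.

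\textbf{Step 2: Integrating factor.} Introduce
\[
\mu_\pm(x)=\exp\!\left(\int_0^x 2k\sin(2\theta_{\gamma,\pm}(s))\,\dr s\right),
\]
so that $(u_\pm\mu_\pm)'=V\mu_\pm$. Integrating from $0$ to $\pm\infty$ and using $\mu_\pm(0)=1$, together with the boundary behaviour $u_\pm(x)\mu_\pm(x)\to 0$ at $\pm\infty$ (see obstacle below), we obtain
\[
u_+(0)=-\int_0^{+\infty}V(s)\mu_+(s)\,\dr s,\qquad
u_-(0)=\int_{-\infty}^0 V(s)\mu_-(s)\,\dr s.
\]

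\textbf{Step 3: Sign of the derivative.} From \eqref{eq:Deltadefn},
\[
\frac{\dr\Delta_V}{\dr\gamma}=-u_+(0)+u_-(0)=\int_0^{+\infty}\!\!V\mu_+\,\dr s+\int_{-\infty}^0\!\!V\mu_-\,\dr s.
\]
Since $\mu_\pm>0$ everywhere, this quantity is non-negative when $V\ge 0$ and non-positive when $V\le 0$. Strict inequality follows from non-triviality: if $V\ge 0$ is not a.e.\ zero, then $V$ fails to vanish on a set of positive measure contained in at least one of the two half-lines, so the corresponding integral is strictly positive.

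\textbf{Main obstacle and verification.} The subtle point is justifying $u_\pm(x)\mu_\pm(x)\to 0$ at $\pm\infty$, and the integrability of $V\mu_\pm$ on the relevant half-line. Near $+\infty$ we have $\theta_{\gamma,+}\to-\pi/4$, so $\sin(2\theta_{\gamma,+})\to -1$, whence $\mu_+$ decays exponentially like $\er^{-2kx}$; on $[0,+\infty)$ the function $\mu_+$ is therefore continuous, positive, bounded above (by its maximum on the half-line), making $V\mu_+\in L^1$ since $V\in\Vclass_1\subset L^1$. The corresponding analysis for $\mu_-$ on $(-\infty,0]$ is symmetric. The vanishing $u_\pm\mu_\pm\to 0$ follows because $u_\pm$ is selected as the unique solution of the linear ODE for which no exponentially growing component appears at $\pm\infty$; this is equivalent to the smooth selection of $\theta_{\gamma,\pm}$ by the boundary condition at infinity. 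As a cleaner alternative that sidesteps the smoothness in $\gamma$ altogether, one can apply the argument directly to the difference $w=\theta_{\gamma_2,+}-\theta_{\gamma_1,+}$ for $\gamma_1<\gamma_2$: linearising via the mean value theorem gives the same integrating factor identity with $(\gamma_2-\gamma_1)V$ in place of $V$, yielding $\theta_{\gamma_2,+}(0)\le\theta_{\gamma_1,+}(0)$ and, symmetrically, $\theta_{\gamma_2,-}(0)\ge\theta_{\gamma_1,-}(0)$, hence the monotonicity of $\Delta_V$.
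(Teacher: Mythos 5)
Your proposal matches the paper's argument essentially step for step: both compute $\dr\Delta_V/\dr\gamma = -\omega_{\gamma,+}(0)+\omega_{\gamma,-}(0)$ via the variational equation $\nabla\omega + 2k\sin(2\theta_\gamma)\omega = V$, integrate it using the same positive integrating factor $\er^{2k\Psi_{[0,x]}}$, and invoke $\omega_{\gamma,\pm}(\pm\infty)=0$ (which the paper records as Proposition \ref{prop:dthetadgamma+lim}, with its justification likewise deferred as "straightforward if somewhat lengthy") to conclude that the derivative is an integral of $V$ against a strictly positive kernel, hence has the sign of $V$. Your closing remark — replacing the $\gamma$-derivative by the finite difference $\theta_{\gamma_2,+}-\theta_{\gamma_1,+}$ to sidestep the smoothness-in-$\gamma$ question — is a legitimate simplification not taken in the paper, but you'd still owe the same decay-at-infinity estimate for the difference times the integrating factor, so it doesn't avoid the genuinely technical part.
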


Using this monotonicity we can sharpen the argument given for Theorem \ref{thm:lowerasym} to establish Theorem \ref{thm:singsigngvalasym}.

\begin{proof}[Proof of Theorem \ref{thm:singsigngvalasym}]
If $V\in\Vclass_1$ is single-signed it follows from Proposition \ref{prop:positiveDinc} that there is at most one $\gamma$ with $\Delta_V(\gamma)=(n+1/2)\pi$ for any given $n\in\Z$. Thus  \eqref{eq:estcross} in the proof of Theorem \ref{thm:lowerasym} can be modified to give
\begin{align*}
\#(\gspec{V}\cap[0,R])
&=\#\bigl\{\gamma\in[0,R]:\Delta_V(\gamma)\in(\Z+1/2)\pi\bigr\}\\
&=\#(I_R\cap(\Z+1/2)\pi\bigr)
\le\frac{\abs{I_R}}{\pi}+\frac12.
\end{align*}
The required upper asymptotic bound on $\#(\gspec{V}\cap[0,R])$ can then be obtained using \eqref{estof|IR|:eq}. 
\end{proof}

\medskip

To give uniform estimates for $\Delta_V(\gamma)$ we firstly introduce an auxiliary function. For any $a\ge0$ let $\lfloor a\rfloor$ denote the largest integer not exceeding $a$; that is,
\[
\lfloor a\rfloor=\max\{n\in\Z:n\le a\}.
\]
Now define a function $h:[0,+\infty)\to[0,+\infty)$ by
\[
h(a)=a+\frac{\pi}2\Bigl\lfloor\frac{2}{\pi}a\Bigr\rfloor.
\]
In particular, $h$ is strictly increasing, $h(a)=a$ for $a<\pi/2$, $a\le h(a)\le 2a$ for all $a\ge0$, and $h(a)+h(b)\le h(a+b)$ for all $a,b\ge0$ (note that $\lfloor a\rfloor+\lfloor b\rfloor\le\lfloor a+b\rfloor$).

As a general bound on $\Delta_V(\gamma)$ we have the following (the proof is given in Section \ref{subsec:DeltaV}):

\begin{proposition}
\label{prop:genDeltaVest}
If $V\in\Vclass_1$ and $\gamma\in\R$ then $\abs{\Delta_V(\gamma)}\le h\bigl(\abs{\gamma}\,\norm{V}_{L^1}\bigr)$.
\end{proposition}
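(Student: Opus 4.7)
The plan is to decompose $\Delta_V(\gamma)$ into contributions from each half-line and bound each separately. Write $\Delta_V(\gamma) = \alpha + \beta$, where
\[
\alpha = \theta_{\gamma,-}(0) - \tfrac{\pi}{4}, \qquad \beta = -\tfrac{\pi}{4} - \theta_{\gamma,+}(0),
\]
so that, in view of \eqref{eq:bcthetapm} and \eqref{eq:Deltadefn}, $\alpha$ and $\beta$ measure the total changes of $\theta_{\gamma,-}$ on $(-\infty, 0]$ and of $\theta_{\gamma,+}$ on $[0, +\infty)$ respectively. The key algebraic fact I would exploit is the superadditivity
\[
h(s) + h(t) \le h(s+t), \qquad s,t \ge 0,
\]
an immediate consequence of $\lfloor s \rfloor + \lfloor t \rfloor \le \lfloor s+t \rfloor$. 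Given this, the proposition reduces to a \emph{half-line rotation estimate}: if $\theta$ solves \eqref{eq:PruferODE} on $[a, +\infty)$ with $\theta(+\infty) \in \pi/4 + (\pi/2)\Z$, then $|\theta(a) - \theta(+\infty)| \le h(|\gamma| \int_a^{+\infty} |V| \, \dr x)$, plus its mirror on left half-lines.

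For the half-line estimate I would exploit the sector structure of the autonomous vector field $k\cos(2\theta)$. A direct linearisation shows that its equilibria lie at $\pi/4 + (\pi/2)\Z$, alternating between stable ($\pi/4 + \pi\Z$) and unstable ($-\pi/4 + \pi\Z$). On each open sector of width $\pi/2$ bounded by consecutive equilibria, $\cos(2\theta)$ has fixed sign, so the free dynamics pushes $\theta$ in a definite direction. If $\theta$ monotonically traverses a sector in the direction opposing the free dynamics (a \emph{hindering} sector), then $|\nabla\theta|\le|\gamma V|$ on the corresponding $x$-interval, and integration forces at least $\pi/2$ of $L^1$-mass of $\gamma V$ to be spent there. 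Meanwhile, each \emph{helping} sector in between contributes up to an additional $\pi/2$ of rotation at no $L^1$-cost. Pairing each $\pi/2$ unit of spent potential mass with a subsequent $\pi/2$ of free rotation is exactly the bookkeeping encoded by $h(a) = a + (\pi/2)\lfloor 2a/\pi\rfloor$.

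The main obstacle is to make this heuristic rigorous in the presence of non-monotonic excursions of $\theta$, which may enter a sector, turn around, and cancel. My proposed approach is induction on $n = \lfloor 2|\theta(+\infty) - \theta(a)|/\pi \rfloor$. The base case $n = 0$, with $|\theta(+\infty) - \theta(a)| < \pi/2$, should follow from linearising the Pr\"ufer ODE near the critical value $\theta(+\infty)$: setting $\psi = \theta - \theta(+\infty)$ gives $\nabla\psi = \gamma V \pm 2k\psi + O(\psi^2)$, and variation of parameters together with the boundary condition $\psi(+\infty)=0$ yields the linear bound $|\psi(a)| \le |\gamma|\int_a^{+\infty}|V|\,\dr x$, which matches $h$ in the regime $|\gamma|\int|V| < \pi/2$. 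For the inductive step I would partition $[a, +\infty)$ at the first $x_0$ where $\theta$ reaches a critical value in $\pi/4 + (\pi/2)\Z$ distinct from $\theta(+\infty)$; the monotonic sector argument on $[a, x_0]$ accounts for at least $\pi/2$ of $L^1$-mass against at most $\pi$ of rotation (one hindering crossing plus the adjacent helping basin), while the inductive hypothesis applies on $[x_0, +\infty)$, and the two estimates combine via superadditivity of $h$ to close the induction.
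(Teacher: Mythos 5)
Your decomposition of $\Delta_V$ into two half-line contributions, the use of superadditivity $h(s)+h(t)\le h(s+t)$, and the reduction to a half-line rotation estimate (essentially the paper's Lemma \ref{lem:tailest}) all match the paper exactly, and the heuristic picture of hindering versus helping sectors and the $\pi/2$-per-hindering-crossing bookkeeping is precisely the right intuition. However, the inductive mechanism you propose has two genuine gaps.

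First, the inductive step does not close as stated. You define $x_0$ as the \emph{first} time $\theta$ reaches a critical value in $\pi/4+(\pi/2)\Z$ distinct from $\theta(+\infty)$, and claim the stretch $[a,x_0]$ accounts for at least $\pi/2$ of $L^1$-mass. This fails if $\theta(a)$ is already close to such a critical value: e.g.\ with $\theta(+\infty)=-\pi/4$ and $\theta(a)=-3\pi/4-\epsilon$, the Pr\"ufer flow can reach $-3\pi/4$ in an arbitrarily short time with arbitrarily small $\int_a^{x_0}|\gamma V|$. Worse, $\theta(x_0)$ may be \emph{farther} from $\theta(+\infty)$ than $\theta(a)$ (the trajectory can descend to a critical value before turning around), so $n(x_0)$ need not be smaller than $n(a)$ and the induction does not decrease.

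Second, the base case is not sound as written. Linearising $\nabla\psi=\gamma V+k\sin(2\psi)$ to $\nabla\psi=\gamma V+2k\psi$ and applying variation of constants produces the bound $|\psi(a)|\le\int_a^\infty e^{-2k(t-a)}|\gamma V|\,\dr t$, but only for the linear equation. For the true equation the $O(\psi^2)$ corrections accumulate over the half-line $[a,+\infty)$ and are not obviously controllable; and the argument tacitly assumes $\theta$ stays inside the sector $(-3\pi/4,-\pi/4)$ for all $t\ge a$, which is a condition on the entire forward trajectory rather than on the initial point $\theta(a)$. Establishing that $\theta$ stays in the sector when $|\gamma|\int|V|<\pi/2$ is exactly the kind of statement the sector-crossing estimate is meant to deliver, so invoking it inside the base case is circular.

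The paper's Lemma \ref{lem:tailest} avoids both issues by being direct rather than inductive: it exhibits a finite collection of disjoint intervals $[a_j,b_j]$ on each of which $\theta$ stays in one hindering sector $[-(j+3/4)\pi,-(j+1/4)\pi]$ and crosses it completely, and then uses the \emph{pointwise} inequality $\nabla\theta\le\gamma V$ there (no linearisation) to deduce $\pi/2\le\int_{a_j}^{b_j}\gamma V$. Summing over $j$ immediately gives the bound matching the definition of $h$. A fractional last interval is handled in the same sum, so there is no separate base case. This is both more elementary and more robust than the linearisation route; if you want to keep an inductive flavour, you would need to choose the split point as the \emph{last exit} from the sector adjacent to $\theta(+\infty)$ rather than the first hit of any critical value, and to replace the linearised variation-of-constants estimate by the monotonicity of $\theta$ against the sign of $\cos(2\theta)$ on the sector.
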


This result leads to uniform lower bounds for points in $\gspec{V}$, and in turn helps to justify Theorem \ref{thm:genVL1upbnd}. We firstly deal with the case when $V$ is single-signed (Proposition \ref{prop:ssVevalupbnd}), and then consider arbitrary potentials (Proposition \ref{prop:genVevalupbnd}) by using Lemma \ref{lem:Weylineq} to reduce this to the single-sign case.

\begin{proposition}
\label{prop:ssVevalupbnd}
Suppose $V\in\Vclass_1$ is single-signed and non-trivial. Let $0<\gamma_1<\gamma_2<\dots$ denote the sequence of positive points in $\gspec{V}$, arranged in order of increasing size. Then
\[
\gamma_n\ge\frac{\pi}{2\norm{V}_{L^1}}\,n,\quad n=1,2,\dots.
\]
\end{proposition}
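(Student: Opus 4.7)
The plan is to combine the monotonicity of $\Delta_V$ from Proposition \ref{prop:positiveDinc}, the characterisation of $\Sigma_V\cap\mathbb{R}$ from Proposition \ref{prop:gspeccharD}, and the uniform bound from Proposition \ref{prop:genDeltaVest}, and then invert the function $h$.

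First, I would assume without loss of generality that $V\geq 0$ (the case $V\leq 0$ will follow by an analogous argument, or directly from the observation that $\Delta_{-V}(\gamma)=\Delta_V(-\gamma)$, together with the $\gamma\mapsto-\gamma$ symmetry of $\Sigma_V$). Under this assumption, Proposition \ref{prop:positiveDinc} gives that $\Delta_V$ is strictly increasing on $\mathbb{R}$, and we know $\Delta_V(0)=0$. Proposition \ref{prop:Dgasym} combined with the fact that $\int_\mathbb{R} V > 0$ shows $\Delta_V(\gamma)\to+\infty$ as $\gamma\to+\infty$. Hence $\Delta_V$ maps $[0,\infty)$ bijectively and continuously onto $[0,\infty)$, and in particular $\Delta_V$ attains each value in $\frac\pi2+\pi\mathbb{Z}_{\geq 0}$ exactly once on $[0,\infty)$. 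Using Proposition \ref{prop:gspeccharD} together with strict monotonicity, the positive points of $\Sigma_V$ are precisely the solutions $\gamma_n$ of $\Delta_V(\gamma_n)=(n-\tfrac12)\pi$ for $n=1,2,\dots$, and they are automatically listed in increasing order.

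Next, Proposition \ref{prop:genDeltaVest} yields
\[
\bigl(n-\tfrac12\bigr)\pi \;=\;\Delta_V(\gamma_n)\;\leq\; h\bigl(\gamma_n\,\|V\|_{L^1}\bigr),
\]
so it suffices to show that for any $a\geq 0$, $h(a)\geq(n-\tfrac12)\pi$ forces $a\geq n\pi/2$. This is the only slightly delicate part of the argument, but it is purely a computation with the explicit formula $h(a)=a+\tfrac\pi2\lfloor 2a/\pi\rfloor$. Indeed, on each interval $a\in[k\pi/2,(k+1)\pi/2)$ one has $\lfloor 2a/\pi\rfloor=k$, whence $h(a)=a+k\pi/2\in[k\pi,(2k+1)\pi/2)$. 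If $k\leq n-1$, then $h(a)<(2k+1)\pi/2\leq(2n-1)\pi/2=(n-\tfrac12)\pi$, contradicting the lower bound on $h(a)$. Therefore $k\geq n$, giving $a\geq n\pi/2$, i.e.\ $\gamma_n\|V\|_{L^1}\geq n\pi/2$, as required.

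The main (though mild) obstacle is the inversion of $h$ in the last step: one might be tempted to use the cruder bound $h(a)\leq 2a$, which would only give $\gamma_n\geq(n-\tfrac12)\pi/(2\|V\|_{L^1})$ and is off by an additive constant. Exploiting the ``staircase'' structure of $h$ — specifically the fact that the values $(n-\tfrac12)\pi$ lie in the gaps of the range of $h$ on $[0,n\pi/2)$ — is what gives the sharper bound $\gamma_n\geq n\pi/(2\|V\|_{L^1})$ stated in the proposition.
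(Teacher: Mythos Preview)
Your proof is correct and follows essentially the same approach as the paper: reduce to $V\ge0$, use Propositions \ref{prop:positiveDinc} and \ref{prop:gspeccharD} to get $\Delta_V(\gamma_n)=(n-\tfrac12)\pi$, apply the bound $\Delta_V(\gamma_n)\le h(\gamma_n\|V\|_{L^1})$ from Proposition \ref{prop:genDeltaVest}, and then invert $h$ via its staircase structure. Your inversion step (showing $a<n\pi/2\Rightarrow h(a)<(n-\tfrac12)\pi$ directly) is marginally cleaner than the paper's version, which reaches the same conclusion by a limiting argument $a\to(n\pi/2)^-$.
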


Note that the constant in the bound is only half the asymptotic value (see Theorem \ref{thm:singsigngvalasym}).

\begin{proof}
Suppose $V\ge0$ (the case $V\le0$ can be treated similarly). By Propositions \ref{prop:gspeccharD} and \ref{prop:positiveDinc} we have $\Delta_V(\gamma_n)=(n-1/2)\pi$ for $n=1,2,\dots$. Now if $a<n\pi/2$ then $\lfloor 2a/\pi\rfloor<n$ so 
\[
h(a)\le a+(n-1)\frac{\pi}2\le\bigl(n-\frac12\bigr)\pi
=\Delta_V(\gamma_n)
\le h\bigl(\gamma_n\,\norm{V}_{L^1}\bigr)
\]
by Proposition \ref{prop:genDeltaVest}. However $h$ is strictly increasing so we must have $a\le\gamma_n\,\norm{V}_{L^1}$. Taking $a\to(n\pi/2)^-$ now gives the result.
\end{proof}

\begin{proposition}
\label{prop:genVevalupbnd}
Suppose $V\in\Vclass_1$ is non-trivial and let $\gamma_1,\gamma_2,\dots$ denote the points in $\gspec{V}\cap\bigl\{z\in\C:\text{$z\in\R^+$ or $\Im z>0$}\bigr\}$ ordered so as to have non-decreasing modulus (and counted according to algebraic multiplicity). Then
\[
\abs{\gamma_n}\ge\frac{\pi}{2\er\norm{V}_{L^1}}\,n,\quad n=1,2,\dots.
\]
\end{proposition}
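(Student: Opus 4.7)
The plan is to reduce the bound to the single-signed case via the Birman--Schwinger / Weyl framework already established. First, observe that $\abs{V}\in\Vclass_1$ is single-signed and non-trivial with $\norm{\abs{V}}_{L^1}=\norm{V}_{L^1}$, so Proposition \ref{prop:ssVevalupbnd} applies to $\abs{V}$. Writing $0<\mu_1<\mu_2<\dots$ for the positive points of $\gspec{\abs{V}}$, we get the lower bound
\[
\mu_n\ge\frac{\pi}{2\norm{V}_{L^1}}\,n,\qquad n=1,2,\dots.
\]

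Next, invoke Lemma \ref{lem:Weylineq} to transfer this to a product estimate for the $\abs{\gamma_n}$:
\[
\prod_{n=1}^N\abs{\gamma_n}\ \ge\ \prod_{n=1}^N\mu_n\ \ge\ \left(\frac{\pi}{2\norm{V}_{L^1}}\right)^N N!,\qquad N\in\N.
\]
Since the sequence $\abs{\gamma_n}$ is non-decreasing we have $\abs{\gamma_N}^N\ge\prod_{n=1}^N\abs{\gamma_n}$, which yields
\[
\abs{\gamma_N}\ \ge\ \frac{\pi}{2\norm{V}_{L^1}}\,(N!)^{1/N}.
\]

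Finally, I would apply the elementary Stirling-type inequality $N!\ge(N/\er)^N$ (which follows from $\er^N=\sum_{k\ge 0}N^k/k!\ge N^N/N!$) to conclude that $(N!)^{1/N}\ge N/\er$, giving $\abs{\gamma_N}\ge\pi N/(2\er\norm{V}_{L^1})$, as desired.

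There is no real obstacle here: the substantive work is in Lemma \ref{lem:Weylineq} (the Weyl-inequality reduction to singular values of the symmetrised Birman--Schwinger operator $J_V A_V$) and in Proposition \ref{prop:ssVevalupbnd} (the single-signed bound derived from the Pr\"ufer-argument estimate of Proposition \ref{prop:genDeltaVest}), both of which are already available. The only point of care is keeping the direction of the Stirling inequality straight and noting that the loss of the factor $\er^{-1}$ compared with the single-signed bound of Proposition \ref{prop:ssVevalupbnd} is precisely the price paid by passing from an eigenvalue bound to a singular-value bound through a geometric-mean argument.
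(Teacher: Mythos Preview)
Your proof is correct and follows essentially the same route as the paper: apply Proposition~\ref{prop:ssVevalupbnd} to $\abs{V}$, feed the resulting bound $\mu_n\ge \pi n/(2\norm{V}_{L^1})$ into the product inequality of Lemma~\ref{lem:Weylineq}, use the ordering of the $\abs{\gamma_n}$ to bound $\abs{\gamma_N}^N$ below by the product, and finish with $N!\ge(N/\er)^N$. Your justification of the last step is slightly more explicit than the paper's, but otherwise the arguments coincide.
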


\begin{proof}
Let $0<\mu_1<\mu_2<\dots$ denote the positive points in $\gspec{\abs{V}}$, arranged in order of increasing size. Noting that $\abs{V}\in\Vclass_1$ is single-signed and non-trivial, Proposition \ref{prop:ssVevalupbnd} gives $\mu_n\ge\nu n$ for $n=1,2,\dots$, where $\nu=\pi/(2\norm{V}_{L^1})$. Using Lemma \ref{lem:Weylineq} and the ordering on the $\gamma_n$'s we now get
\[
\abs{\gamma_N}^N\ge\prod_{n=1}^N\abs{\gamma_n}\ge\prod_{n=1}^N\mu_n\ge\nu^N N!\ge\nu^N\Bigl(\frac{N}{\er}\Bigr)^N
\]
for any $N\in\N$. 
\end{proof}

\begin{proof}[Proof of Theorem \ref{thm:genVL1upbnd}]
If $V=0$ then $\gspec{V}=\emptyset$ and there is nothing to prove. Now suppose $V$ is non-trivial. Let $R\ge0$, set $N=\#\bigl(\gspec{V}\cap\{z\in\C:\abs{z}\le R\}\bigr)$ and suppose $N>0$. Using the symmetry $\gspec{V}=-\gspec{V}$ we know that $N=2M$ for some $M\in\N$. With $\gamma_1,\gamma_2,\dots$ defined as in  Proposition \ref{prop:genVevalupbnd} it follows that $\abs{\gamma_M}\le R$. We then obtain
\[
N=2M\le 2\frac{2\er}{\pi}\,\norm{V}_{L^1}\abs{\gamma_M}\le\frac{4\er}{\pi}\,\norm{V}_{L^1}R
\]
using this result.
\end{proof}

\begin{remark}
\label{rem:otherests}
Various other estimates can obtained from straightforward modifications to the proof of Theorem \ref{thm:genVL1upbnd} presented above. For example, if $V\in\Vclass_1$ is single-signed we can use Proposition \ref{prop:ssVevalupbnd} in place of Proposition \ref{prop:genVevalupbnd} to obtain the uniform upper bound
\[
\#(\gspec{V}\cap[0,R])\le\frac{2}{\pi}\,\norm{V}_{L^1}R
\]
for any $R\ge0$. Alternatively, for any $V\in\Vclass_1$ we can estimate $\mu_n$ in the proof of Proposition \ref{prop:genVevalupbnd} using Theorem \ref{thm:singsigngvalasym} instead of Proposition \ref{prop:ssVevalupbnd}; this leads to the asymptotic bound
\[
\#\bigl(\gspec{V}\cap\{z\in\C:\abs{z}\le R\}\bigr)\le\frac{2\er}{\pi}\,\norm{V}_{L^1}R+o(R)
\]
as $R\to\infty$.
\end{remark}

As a direct application of the main result in \cite{EltonTa} we can extend the general upper bound given by Theorem \ref{thm:genVL1upbnd} to potentials in $\Vclass_0\cap L^p$ for $1<p<\infty$.

\begin{theorem}
\label{thm:cfnupperbnd}
Suppose $V\in\Vclass_0\cap L^p$ for some $1<p<\infty$. Then
\[
\#\bigl(\gspec{V}\cap\{z\in\C:\abs{z}\le R\}\bigr)\le C_p\,\norm{V}_{L^p}^pR^p
\]
for any $R\ge 0$, where $C_p$ is a constant depending only on $p$.
\end{theorem}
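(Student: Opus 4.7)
The proof reduces to a singular value estimate of Cwikel type, applied through the Birman--Schwinger framework already set up in Section \ref{subsec:argugen}. First, by Lemma \ref{lem:symmspecequiv}, $\gamma\in\gspec{V}$ if and only if $-1/\gamma$ is an eigenvalue of the compact operator $J_VA_V$, where $A_V=\sqrt{|V|}\,\Drc{0}^{-1}\,\sqrt{|V|}$. Counting with algebraic multiplicity,
\[
\#\bigl(\gspec{V}\cap\{z\in\C:|z|\le R\}\bigr)
\le \#\bigl\{\mu\in\spec(J_VA_V):|\mu|\ge 1/R\bigr\}.
\]

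Next, I would apply the Weyl inequality in its $p$-th Schatten form, $\sum_n|\mu_n(T)|^p\le\sum_n s_n(T)^p$, valid for any compact operator $T$ and any $p\ge1$. Taking $T=J_VA_V$ and using the identity $(J_VA_V)^*(J_VA_V)=A_VJ_V^2A_V=A_V^2$, the singular values of $J_VA_V$ coincide with those of $|A_V|$. Combining this with the elementary estimate
$\#\{n:|\mu_n(T)|\ge 1/R\}\cdot(1/R)^p\le\sum_n|\mu_n(T)|^p$ gives
\[
\#\bigl\{\mu\in\spec(J_VA_V):|\mu|\ge 1/R\bigr\}
\le R^p\sum_{n} s_n(A_V)^p,
\]
and thus the task is reduced to the Schatten-type bound $\sum_n s_n(A_V)^p\le C_p\|V\|_{L^p}^p$.

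Finally, $A_V$ has the structure $M_f\,\Drc{0}^{-1}\,M_f$, where $f=\sqrt{|V|}\in L^{2p}$ and $\Drc{0}^{-1}$ is a bounded Fourier multiplier whose matrix-valued symbol $(i\sigma_2\xi+k\sigma_3)^{-1}$ has operator norm $(\xi^2+k^2)^{-1/2}$. This is precisely the setting treated in the main result of \cite{EltonTa}, which in one dimension yields a Cwikel-type bound $\sum_n s_n(A_V)^p\le C_p\|V\|_{L^p}^p$ for every $1<p<\infty$. Combining with the two previous steps produces the claimed inequality.

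The principal technical obstacle is verifying the hypotheses of \cite{EltonTa}, which is formulated for scalar Fourier multipliers, in our matrix-valued setting. Since each of the four entries of $(i\sigma_2\xi+k\sigma_3)^{-1}$ decays like $(1+|\xi|)^{-1}$ and satisfies the required regularity, one may either apply the scalar result entrywise and combine using the Schatten-class triangle inequality, or invoke a matrix-valued version of the estimate; either way the constant $C_p$ in the final bound depends only on $p$ (the dependence on $k$ can be absorbed since $k>0$ is fixed throughout).
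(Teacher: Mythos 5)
The paper's own ``proof'' of Theorem~\ref{thm:cfnupperbnd} is a one-line citation: the result is described as ``a direct application of the main result in \cite{EltonTa},'' which (per its title and Remark~\ref{rem:Cwandp=1}) is itself an eigenvalue \emph{counting} estimate for linear spectral pencils, derived in turn from Cwikel's singular value bounds in \cite{Cw}. So what the paper uses from \cite{EltonTa} is the counting estimate for the pencil, already in the form needed; only the Birman--Schwinger reformulation of Lemma~\ref{lem:symmspecequiv} is required to apply it. Your first and last steps (the reduction to eigenvalues of $J_V A_V$, the identification of the singular values of $J_V A_V$ with those of $A_V$, and the recognition of the $M_f \Drc{0}^{-1} M_f$ structure) are therefore on the mark.

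The middle of your argument, however, has a genuine gap. You pass from singular values to eigenvalue counting via the additive Weyl--Horn majorization $\sum_n |\mu_n(T)|^p \le \sum_n s_n(T)^p$ followed by Chebyshev, which requires the \emph{strong} Schatten bound $\sum_n s_n(A_V)^p \le C_p \norm{V}_{L^p}^p$. But the Cwikel estimate that Remark~\ref{rem:Cwandp=1} identifies as underlying \cite{EltonTa} produces only the \emph{weak} Schatten bound $\sup_n n^{1/p} s_n(A_V) \le C \norm{V}_{L^p}$; for a sequence with $s_n \sim n^{-1/p}$ the sum $\sum_n s_n^p$ diverges, so your Chebyshev step yields at best $\#\{|\gamma_n|\le R\}\lesssim R^p\log R$, off by a logarithm. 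The decay $s_n\lesssim n^{-1/p}$ does imply $|\mu_n|\lesssim n^{-1/p}$ (no log), but not via the additive inequality: one needs the \emph{multiplicative} Weyl inequality $\prod_{n\le N}|\mu_n|\le\prod_{n\le N}s_n$, exactly as in Lemma~\ref{lem:Weylineq}, followed by the Stirling bound $N!\ge(N/e)^N$ --- this is precisely the mechanism the paper uses in the proof of Theorem~\ref{thm:genVL1upbnd} via Propositions~\ref{prop:ssVevalupbnd} and~\ref{prop:genVevalupbnd}. Thus your proposal would be repaired either by replacing the additive Weyl/Chebyshev step with the multiplicative Weyl argument, or more simply by invoking \cite{EltonTa} for the counting estimate itself rather than for a Schatten-norm bound that it does not supply.
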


\begin{remark}
\label{rem:Cwandp=1}
The main result in \cite{EltonTa} is based on singular value estimates from \cite{Cw}. We can't extend the argument to cover $p=1$ as it corresponds to an excluded boundary case in \cite{Cw}.
\end{remark}

\subsection{Derivatives}

\label{subsec:deriv}

To study the derivatives of $\Delta_V(\gamma)$ we need to obtain more information about the $\gamma$ dependence of solutions to \eqref{eq:PruferODE}. Firstly consider any closed bounded interval $I=[a,b]\subset\R$ and potential $V\in L^1_\loc$ on $I$. For each $\gamma\in\R$ suppose we have a solution $\theta_\gamma$ of \eqref{eq:PruferODE} where $\theta_\gamma(b)$ depends twice differentiably on $\gamma$.
Standard results for ordinary differential equations (see \cite{Hartman}) then imply $\theta_\gamma(x)$ is twice differentiable in $\gamma$ for each $x\in I$ (in fact $\theta_\gamma$ will depend analytically on $\gamma$ provided $\theta_\gamma(b)$ does), so we can set
\[
\omega_\gamma(x)=\frac{\dr}{\dr\gamma}\,\theta_\gamma(x)
\quad\text{and}\quad
\rho_\gamma(x)=\frac{\dr^2}{\dr\gamma^2}\,\theta_\gamma(x).
\]
From \eqref{eq:PruferODE} we immediately get
\[
\nabla\omega_\gamma=V-2k\sin(2\theta_\gamma)\,\omega_\gamma
\quad\text{and}\quad
\nabla\rho_\gamma(x)=-4k\cos(2\theta_\gamma)\,\omega_\gamma^2-2k\sin(2\theta_\gamma)\,\rho_\gamma.
\]
Then $\nabla(\er^G\omega_\gamma)=\er^GV$, where $G$ is any function satisfying $\nabla G(x)=2k\sin(2\theta_\gamma(x))$. 
Thus
\begin{align}
\omega_\gamma(x)&=\er^{G(b)-G(x)}\omega_\gamma(b)-\int_x^b\er^{G(t)-G(x)}V(t)\dr t\nonumber\\
\label{diffwint:eq}
&=\er^{2k\Psi_{[x,b]}}\omega_\gamma(b)-\int_x^b\er^{2k\Psi_{[x,t]}}V(t)\dr t,
\end{align}
where, for any interval $J\subseteq I$, 
\begin{equation}
\label{eq:defnPsiJ}
\Psi_J:=\int_J\sin(2\theta_\gamma(x))\dr x.
\end{equation}

We need to consider \eqref{diffwint:eq} with $x=a$ and $b\to+\infty$ when we take $\theta_\gamma=\theta_{\gamma,+}$. Let $\omega_{\gamma,+}$ denote the corresponding derivative in this case. Since $\theta_{\gamma,+}(+\infty)=-\pi/4$ is constant (recall \eqref{eq:bcthetapm}) we would expect $\omega_{\gamma,+}(+\infty)=0$. Furthermore $\sin(2\theta_{\gamma,+}(t))\to-1$ as $t\to\infty$, so $\er^{2k\Psi_{[a,t]}}\to0$ as $t\to\infty$. The precise properties that we require are given in the next result; these can be justified by straightforward if somewhat lengthy arguments.

\begin{proposition}
\label{prop:dthetadgamma+lim}
The solution $\theta_{\gamma,+}$ to \eqref{eq:PruferODE} depends on $\gamma$ differentiably, while the derivative satisfies
\[
\omega_{\gamma,+}(a):=\frac{\dr}{\dr\gamma}\,\theta_{\gamma,+}(a)
=-\int_a^\infty\er^{2k\Psi_{[a,x]}}V(x)\dr x
\]
for all $a\in\R$.
\end{proposition}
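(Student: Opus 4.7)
I would approximate $\theta_{\gamma,+}$ by solutions of \eqref{eq:PruferODE} on finite intervals and then pass to the limit, using the finite-interval identity \eqref{diffwint:eq} already derived in the excerpt. For each $b>a$ let $\tilde\theta_{\gamma,b}$ denote the unique solution of \eqref{eq:PruferODE} on $(-\infty,b]$ satisfying the terminal condition $\tilde\theta_{\gamma,b}(b)=-\pi/4$. Since this terminal value is independent of $\gamma$, standard ODE parameter-dependence theory (cf.\ \cite{Hartman}) shows that $\tilde\theta_{\gamma,b}(x)$ depends analytically on $\gamma$ for every fixed $x\le b$, and applying \eqref{diffwint:eq} at $x=a$ (with the boundary contribution $\omega_\gamma(b)=0$) yields
\[
\tilde\omega_{\gamma,b}(a):=\frac{\dr}{\dr\gamma}\tilde\theta_{\gamma,b}(a)=-\int_a^b \er^{2k\tilde\Psi^b_{[a,t]}}V(t)\dr t,
\]
where $\tilde\Psi^b_J=\int_J\sin(2\tilde\theta_{\gamma,b}(x))\dr x$.

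The next step is to let $b\to\infty$. To show $\tilde\theta_{\gamma,b}(x)\to\theta_{\gamma,+}(x)$ on compact subsets of $\R$ (locally uniformly in $\gamma$), set $\delta=\tilde\theta_{\gamma,b}-\theta_{\gamma,+}$; this difference satisfies a linear homogeneous equation $\nabla\delta=-2k\sin(2\bar\theta)\delta$ for some intermediate $\bar\theta$, with terminal value $\delta(b)=-\pi/4-\theta_{\gamma,+}(b)\to0$ by Lemma \ref{lem:+thetalim}. Integrability of $V$ combined with the limit $\theta_{\gamma,+}(+\infty)=-\pi/4$ supplies a threshold $T$ beyond which $\sin(2\theta_{\gamma,+})\le -1/2$; a Gr\"{o}nwall-type a priori estimate then confines $\tilde\theta_{\gamma,b}$ to a similar neighbourhood of $-\pi/4$ on $[T,b]$, giving the contraction $|\delta(T)|\lesssim \er^{-k(b-T)}|\delta(b)|\to 0$. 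Continuous dependence on initial data propagates this convergence leftward to $[a,T]$.

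For the integral, pointwise convergence $\tilde\Psi^b_{[a,t]}\to\Psi_{[a,t]}$ is immediate from the above; what is needed to pass to the limit is a uniform integrable majorant. The same contraction analysis near $-\pi/4$ yields a uniform-in-$b$ bound of the form $\er^{2k\tilde\Psi^b_{[a,t]}}\le C\er^{-k(t-a)/2}$ for $t\ge T$, and a uniform bound on the compact portion $t\in[a,T]$ from continuous dependence. Combined with $V\in L^1$, dominated convergence gives $\tilde\omega_{\gamma,b}(a)\to-\int_a^\infty\er^{2k\Psi_{[a,t]}}V(t)\dr t$. Finally, since both $\tilde\theta_{\gamma,b}(a)\to\theta_{\gamma,+}(a)$ and its $\gamma$-derivative $\tilde\omega_{\gamma,b}(a)$ can be arranged to converge uniformly on compact subsets of $\gamma\in\R$, the standard theorem on differentiating a uniform limit yields the differentiability of $\theta_{\gamma,+}(a)$ in $\gamma$ with the stated value.

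\textbf{Main obstacle.} The delicate point is the uniform-in-$b$ exponential decay bound for $\er^{2k\tilde\Psi^b_{[a,t]}}$ needed to dominate the integrand. One must establish simultaneously, uniformly in large $b$ and locally uniformly in $\gamma$, that $\tilde\theta_{\gamma,b}$ is trapped near the sink $-\pi/4$ on the entire tail $[T,b]$, which requires an appropriate choice of $T$ depending both on the small-tail norm $\|V\|_{L^1(T,\infty)}$ and on the size of $|\gamma|$. Once this stability estimate is in place, the rest of the argument is routine.
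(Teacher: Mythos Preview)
Your proposal is correct and, in fact, supplies considerably more detail than the paper itself does: the paper omits the proof entirely, merely remarking that the result ``can be justified by straightforward if somewhat lengthy arguments'' after indicating the heuristic of letting $b\to+\infty$ in \eqref{diffwint:eq} with $\theta_\gamma=\theta_{\gamma,+}$, using $\omega_{\gamma,+}(+\infty)=0$ and $\er^{2k\Psi_{[a,b]}}\to0$.

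The one methodological difference worth noting is that the paper's hint works directly with $\theta_{\gamma,+}$ and passes to the limit in \eqref{diffwint:eq}, which tacitly presumes differentiability in $\gamma$ has already been established by some other means (e.g.\ via the asymptotic solutions $\asol{+}{-}$ of the linear system in Section~\ref{subsec:asym}). Your route via the finite-interval approximants $\tilde\theta_{\gamma,b}$ with terminal value $-\pi/4$ is cleaner in this respect: differentiability of the approximants is immediate from standard ODE theory, and differentiability of the limit then comes for free once you have locally uniform convergence of both $\tilde\theta_{\gamma,b}(a)$ and $\tilde\omega_{\gamma,b}(a)$. The ``main obstacle'' you flag---the uniform-in-$b$ trapping of $\tilde\theta_{\gamma,b}$ near the backward-stable equilibrium $-\pi/4$ on the tail $[T,b]$, with $T$ chosen locally uniformly in $\gamma$---is exactly the right technical point, and your outline for handling it (small tail of $\|V\|_{L^1}$ plus a Gr\"onwall/bootstrap argument) is the standard and correct one.
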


There is a corresponding result for $\theta_{\gamma,-}$. Proposition \ref{prop:positiveDinc} is now an easy corollary of these results.

\begin{proof}[Proof of Proposition \ref{prop:positiveDinc}]
From \eqref{eq:Deltadefn} we get
\[
\frac{\dr}{\dr\gamma}\,\Delta_V(\gamma)=-\omega_{\gamma,+}(0)+\omega_{\gamma,-}(0).
\]
Now suppose $V\ge0$ (the case $V\le0$ can be handled similarly). By Proposition \ref{prop:dthetadgamma+lim} we have
\[
-\omega_{\gamma,+}(0)=\int_0^\infty\er^{2k\Psi_{[0,x]}}V(x)\dr x.
\]
Since $\er^{2k\Psi_J}>0$ for any (bounded) interval $J\subseteq\R$ the right hand side is non-negative and equal to $0$ only if $V=0$ on $\R^+$ (as an $L^1$ function). A similar argument shows that $\omega_{\gamma,-}(0)$ is also non-negative and equal to $0$ only if $V=0$ on $\R^-$. The result follows.
\end{proof}

\subsection{No gaps}
\label{subsec:nogaps}

For a potential $V\in L^1_\loc$ and interval $I\subseteq\R$ let $\var_I(V)$ denote the total variation of $V$ on $I$. We also say that \emph{$V$ has no gaps on $I$} if
\[
\bigabs{I\cap V^{-1}(0)}=0.
\]

To work with no-gap potentials we need estimates for the integrals of $\cos(2\theta_\gamma(x))$ and $\sin(2\theta_\gamma(x))$; to complement $\Psi_J$ (see \eqref{eq:defnPsiJ}) set 
\[
\Phi_J=\int_J\cos(2\theta_\gamma(x))\dr x
\]
for any interval $J\subseteq\R$. For large $\gamma$ equation \eqref{eq:PruferODE} suggests $\theta_\gamma(x)$ should be changing rapidly wherever $V(x)\neq0$; it follows that $\cos(2\theta_\gamma(x))$ and $\sin(2\theta_\gamma(x))$ should be rapidly oscillating, leading to cancellation in the integrals defining $\Phi_J$ and $\Psi_J$. This idea lies at the heart of the following result (the proof is given in Section \ref{subsec:estPhiJPsiJ}):

\begin{proposition}
\label{prop:intcossinest}
Let $I\subset\R$ be a closed bounded interval, and suppose a potential $V$ satisfies $\var_I(V)<+\infty$ and has no gaps on $I$. Also suppose $\theta_\gamma$ satisfies \eqref{eq:PruferODE} on $I$. For any sub-interval $J\subseteq I$ we have $\Phi_{J},\,\Psi_{J}=o(1)$ as $\gamma\to\infty$, uniformly in $J$ and possible choices of (the initial condition for) the solution $\theta_\gamma$.  
\end{proposition}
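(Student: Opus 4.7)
The plan is a stationary phase / integration by parts argument. On the portion of $I$ where $|V|$ is bounded away from zero, the ODE gives $|\theta_\gamma'|=|\gamma V+k\cos(2\theta_\gamma)|\gtrsim\gamma$, so $\cos(2\theta_\gamma)$ and $\sin(2\theta_\gamma)$ are rapidly oscillating and have small integrals; on the remaining portion the no-gaps condition forces the Lebesgue measure to be small.

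Fix $\eps>0$. The no-gaps hypothesis $|V^{-1}(0)\cap I|=0$ together with monotonicity of measure yields $\mu>0$ with $|\{x\in I:|V(x)|<2\mu\}|<\eps/2$. Since $V\in BV(I)$, it is Riemann integrable, so there is a finite partition $I=\bigcup_{k=1}^{N}I_{k}$ into closed sub-intervals (with disjoint interiors) such that $\sum_{k}\mathrm{osc}_{I_{k}}(V)\cdot|I_{k}|<\mu\eps/4$. On every $I_{k}$ whose oscillation is below $\mu$, an elementary case analysis shows that either (a) $V$ is of constant sign with $|V|>\mu$ throughout $I_{k}$, or (b) $|V|<2\mu$ throughout $I_{k}$. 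Call $I_{k}$ \emph{good} when (a) holds and \emph{bad} otherwise (including all sub-intervals of oscillation at least $\mu$, whose total length is $<\eps/4$ by construction). The union of bad sub-intervals has total length $<\eps/4+\eps/2<\eps$, so for any $J\subseteq I$
\[
\sum_{I_{k}\text{ bad}}\Bigl|\int_{J\cap I_{k}}\cos(2\theta_\gamma)\,\dr x\Bigr|\le\sum_{I_{k}\text{ bad}}|I_{k}|<\eps.
\]

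For each good sub-interval $I_k$ and any $\gamma>2k/\mu$ we have $|\theta_\gamma'|\ge\gamma\mu/2$ on $I_{k}$, so $\cos(2\theta_\gamma)=(\sin(2\theta_\gamma))'/(2\theta_\gamma')$, and Riemann-Stieltjes integration by parts on $J_{k}:=J\cap I_{k}$ yields
\[
\int_{J_{k}}\cos(2\theta_\gamma)\,\dr x=\left[\frac{\sin(2\theta_\gamma)}{2\theta_\gamma'}\right]_{\partial J_{k}}-\int_{J_{k}}\sin(2\theta_\gamma)\,\dr\!\left(\frac{1}{2\theta_\gamma'}\right).
\]
The boundary term is at most $2/(\gamma\mu)$ in absolute value. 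Using the decomposition $\dr(\theta_\gamma')=\gamma\,\dr V-2k\sin(2\theta_\gamma)\theta_\gamma'\,\dr x$, the bound $|\theta_\gamma'|\ge\gamma\mu/2$, and the estimates $\int_{I_k}|\dr V|=\var_{I_k}(V)$ and $\int_{I_k}|\theta_\gamma'|\,\dr x\le\gamma\norm{V}_{L^1(I_k)}+k|I_k|$, one obtains that the Stieltjes term is bounded by $C_k/\gamma$, where $C_k$ depends only on $\mu$, $\var_{I_k}(V)$, $\norm{V}_{L^1(I_k)}$, $k$ and $|I_k|$. Summing over the finitely many good sub-intervals yields a total good-interval contribution bounded by $C/\gamma$ with $C=C(V,\mu,k,I,N)$ independent of both $J$ and the initial data for $\theta_\gamma$. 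Taking $\gamma$ large enough makes this $<\eps$, so $|\Phi_J|<2\eps$ uniformly, which proves $\Phi_J=o(1)$. Starting instead from $\sin(2\theta_\gamma)=-(\cos(2\theta_\gamma))'/(2\theta_\gamma')$ and repeating the argument handles $\Psi_J$.

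The main technical obstacle I expect is the proper Riemann-Stieltjes integration by parts when $V$ has a singular (jump) part. What saves the argument is that on each good $I_k$ the function $V$ has constant sign with $|V|>\mu$, so $\theta_\gamma'=\gamma V+k\cos(2\theta_\gamma)$ retains a definite sign with $|\theta_\gamma'|\ge\gamma\mu/2$ across the entire support of $\dr V$ on $I_k$; thus $1/(2\theta_\gamma')$ is a function of bounded variation on $I_k$ and the Stieltjes calculus is legitimate. The rest is routine bookkeeping of constants.
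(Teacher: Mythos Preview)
Your argument is correct. The overall strategy matches the paper's---split $I$ into ``good'' pieces where $|V|$ is bounded below with fixed sign and a ``bad'' set of small measure, estimate trivially on the bad set, and use rapid oscillation on the good pieces---but the two implementations differ in both steps.

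For the decomposition, the paper (Lemma~\ref{lem:intervals}) builds the good intervals $I_{\epsilon,n}$ by hand from $\epsilon$-neighbourhoods of sublevel sets of $V$, and crucially bounds their \emph{number} by $\nu\epsilon^{-1}$ via the total variation; this quantitative control is then fed into a scheme where $\epsilon=\epsilon_\gamma\to0$ with $\gamma\epsilon_\gamma^2\to\infty$. You instead fix $\mu$ once (for each target accuracy $\eps$) and invoke Riemann integrability of the $BV$ function $V$ to produce a single finite partition, never needing to track how the number of pieces grows with $\mu$. This is cleaner, though it only yields the qualitative $o(1)$ rather than the explicit bounds of Proposition~\ref{prop:unifbndIest}.

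For the good-interval estimate, the paper changes variables to $u=2\theta(x)$ and performs a careful period-by-period cancellation analysis (Lemmas~\ref{lem:piint}--\ref{lem:oscillsinint}), obtaining separate sharp bounds for $\Phi_J$ and $\Psi_J$. Your Riemann--Stieltjes integration by parts against $1/(2\theta_\gamma')$ is the standard stationary-phase shortcut and is more direct; the only point requiring care (which you identify) is that $V$, and hence $\theta_\gamma'$, may jump, but since $\sin(2\theta_\gamma)$ is continuous and $1/\theta_\gamma'$ is $BV$ with $|\theta_\gamma'|\ge\gamma\mu/2$ on each good piece, the Stieltjes formula is valid and $\var(1/\theta_\gamma')\le 4\var(\theta_\gamma')/(\gamma\mu)^2$ gives the required $O(\gamma^{-1})$ bound after summing $\var_{J_k}(V)$, $\|V\|_{L^1(J_k)}$ and $|J_k|$ over the fixed partition. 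Both routes give uniformity in $J$ and in the initial condition for $\theta_\gamma$, since none of the constants involved depend on either. (One cosmetic point: you use $k$ for both the partition index and the constant in the Pr\"ufer equation.)
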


These estimates for $\Phi_J$ and $\Psi_J$ lead directly to the following asymptotic information about the change in the value of $\theta_\gamma$ across $I$:

\begin{proposition}
\label{prop:nogapsDeltaVI}
Let $I$, $V$ and $\theta_\gamma$ be as in Proposition \ref{prop:intcossinest}. Write $I=[a,b]$. 
Also suppose $\dr^n\theta_\gamma(b)/\dr\gamma^n$ exists and is bounded in $\gamma$ for $n=1,2$. 
Then, for $n=0,1,2$, 
\begin{equation}
\label{eq:nogapsDeltakest}
\frac{\dr^n}{\dr\gamma^n}\left(\theta_\gamma(b)-\theta_\gamma(a)-\gamma\int_a^b V(x)\dr x\right)=o(1)
\end{equation}
as $\gamma\to\infty$; in particular, $\dr^n\theta_\gamma(a)/\dr\gamma^n$ also exists and is bounded in $\gamma$ for $n=1,2$.
\end{proposition}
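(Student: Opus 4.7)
\emph{Plan of proof.} My approach is to reduce the three cases $n=0,1,2$ to applications of Proposition \ref{prop:intcossinest}, with an integration-by-parts argument handling $n=1,2$.

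\emph{Case $n=0$.} Integrating \eqref{eq:PruferODE} from $a$ to $b$ gives
\[
\theta_\gamma(b)-\theta_\gamma(a)-\gamma\int_a^b V(x)\,\dr x = k\,\Phi_I,
\]
and $\Phi_I=o(1)$ as $\gamma\to\infty$ directly by Proposition \ref{prop:intcossinest}.

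\emph{Case $n=1$.} Differentiating \eqref{eq:PruferODE} in $\gamma$ gives the linear ODE $\nabla\omega_\gamma=V-2k\sin(2\theta_\gamma)\,\omega_\gamma$; integrating from $a$ to $b$,
\[
\omega_\gamma(b)-\omega_\gamma(a)-\int_a^b V(x)\,\dr x=-2k\int_a^b \sin(2\theta_\gamma(x))\,\omega_\gamma(x)\,\dr x,
\]
so I must show the right-hand side is $o(1)$. First, from \eqref{diffwint:eq} with $x\in I$ and the crude estimate $|\Psi_{[x,t]}|\le b-a$, the boundedness of $\omega_\gamma(b)$ and of $\|V\|_{L^1(I)}$ give a bound on $\omega_\gamma$ that is uniform in $x\in I$ and in $\gamma$. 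Next, set $U(x)=\Psi_{[a,x]}$, so $U(a)=0$ and, by Proposition \ref{prop:intcossinest}, $\sup_{x\in I}|U(x)|=:\epsilon_\gamma=o(1)$. Integration by parts yields
\[
\int_a^b \sin(2\theta_\gamma)\,\omega_\gamma\,\dr x = U(b)\omega_\gamma(b)-\int_a^b U(x)V(x)\,\dr x+2k\int_a^b U(x)\sin(2\theta_\gamma)\,\omega_\gamma\,\dr x,
\]
after substituting $\nabla\omega_\gamma=V-2k\sin(2\theta_\gamma)\omega_\gamma$. Each term is $O(\epsilon_\gamma)=o(1)$ thanks to the uniform bound on $\omega_\gamma$.

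\emph{Case $n=2$.} Differentiating once more gives $\nabla\rho_\gamma=-4k\cos(2\theta_\gamma)\,\omega_\gamma^2-2k\sin(2\theta_\gamma)\,\rho_\gamma$. Solving this linear ODE backwards from $b$ (using the integrating factor $\er^{G}$ with $G'=2k\sin(2\theta_\gamma)$) produces
\[
\rho_\gamma(x)=\er^{2k\Psi_{[x,b]}}\rho_\gamma(b)+4k\int_x^b \er^{2k\Psi_{[x,t]}}\cos(2\theta_\gamma(t))\,\omega_\gamma(t)^2\,\dr t,
\]
which, combined with boundedness of $\rho_\gamma(b)$ (by hypothesis) and the already-established uniform boundedness of $\omega_\gamma$, yields a uniform bound on $\rho_\gamma$ on $I$ for all large $\gamma$. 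Integrating the ODE for $\rho_\gamma$ from $a$ to $b$ reduces matters to showing
\[
\int_a^b \sin(2\theta_\gamma)\,\rho_\gamma\,\dr x = o(1)\quad\text{and}\quad \int_a^b \cos(2\theta_\gamma)\,\omega_\gamma^2\,\dr x = o(1).
\]
The first estimate follows by the same integration by parts as in the $n=1$ case with $\rho_\gamma$ in place of $\omega_\gamma$. For the second, I use the primitive $W(x)=\Phi_{[a,x]}$, again $o(1)$ uniformly by Proposition \ref{prop:intcossinest}, and integrate by parts:
\[
\int_a^b \cos(2\theta_\gamma)\,\omega_\gamma^2\,\dr x = W(b)\omega_\gamma(b)^2-2\int_a^b W(x)\,\omega_\gamma(x)\,\nabla\omega_\gamma(x)\,\dr x,
\]
with $\nabla\omega_\gamma=V-2k\sin(2\theta_\gamma)\omega_\gamma$; uniform boundedness of $\omega_\gamma$ and $V\in L^1(I)$ finish the estimate. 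The final ``in particular'' statement then follows by reading off $\omega_\gamma(a)$ and $\rho_\gamma(a)$ from the integrated identities, both being bounded in $\gamma$.

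\emph{Main obstacle.} The only delicate point is circular-looking: the integration-by-parts argument requires uniform control of $\omega_\gamma$ (and then $\rho_\gamma$) on all of $I$, not just at the endpoint $b$. Breaking the circle is precisely what the integral representation \eqref{diffwint:eq} (and its analogue for $\rho_\gamma$) is designed to do, propagating the hypothesised bound at $b$ across $I$ at the modest price of a crude exponential factor $\er^{2k|\Psi_{[x,b]}|}$ that is harmless because $|\Psi_{[x,b]}|\le b-a$.
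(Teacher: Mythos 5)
Your proposal is correct and uses the same two essential inputs as the paper's proof: (i) the integral representation \eqref{diffwint:eq} (and its $\rho_\gamma$-analogue) with the crude bound $\abs{\Psi_J}\le\abs{J}$ to get uniform boundedness of $\omega_\gamma$ and $\rho_\gamma$ from the hypothesised bound at $b$, and (ii) Proposition \ref{prop:intcossinest} to supply the uniform $o(1)$ estimates. Where you differ is in how you extract the $n=1,2$ estimates once boundedness is in hand. For $n=1$ the paper avoids integration by parts altogether: it reads the error directly off \eqref{diffwint:eq} as $(1-\er^{2k\Psi_{[a,b]}})\omega_\gamma(b)-\int_a^b(1-\er^{2k\Psi_{[a,t]}})V\,\dr t$, with $1-\er^{2k\Psi_J}=o(1)$ uniformly. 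You instead integrate the variational ODE $\nabla\omega_\gamma=V-2k\sin(2\theta_\gamma)\omega_\gamma$ across $I$ and show $\int_a^b\sin(2\theta_\gamma)\omega_\gamma=o(1)$ by integrating by parts against the primitive $\Psi_{[a,\cdot]}$. For $n=2$ the paper introduces the comparison function $W(x)=\omega_\gamma(b)-\int_x^b V$, shows $W^2-\omega_\gamma^2=o(1)$ uniformly, and then does the IBP with $W^2$ (which has the clean derivative $2WV$); you dispense with $W$ entirely and do the IBP directly on $\omega_\gamma^2$, absorbing the extra $\sin(2\theta_\gamma)\omega_\gamma^2$ piece of $\nabla\omega_\gamma^2$ using the $o(1)$ factor $\Phi_{[a,\cdot]}$. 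Your version is slightly more uniform in its treatment of $n=1$ and $n=2$ (both handled by integrating the ODE and IBP against a primitive from Proposition \ref{prop:intcossinest}) and saves the bookkeeping of introducing $W$; the paper's $n=1$ argument is shorter because no IBP is needed there. Both are correct and of comparable length; neither generalises in a way the other does not.
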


\begin{proof}
Integrating \eqref{eq:PruferODE} gives
\[
\theta_\gamma(b)-\theta_\gamma(a)
=\int_a^b\nabla\theta_\gamma(x)\dr x
=\gamma\int_a^b V(x)\dr x+k\Phi_I,
\]
so \eqref{eq:nogapsDeltakest} for $n=0$ follows directly from Proposition \ref{prop:intcossinest}. 

From \eqref{diffwint:eq} we get
\[
\omega_\gamma(b)-\omega_\gamma(x)-\int_x^b V(t)\dr t
=\bigl(1-\er^{2k\Psi_{[x,b]}}\bigr)\omega_\gamma(b)-\int_x^b\bigl(1-\er^{2k\Psi_{[x,t]}}\bigr)V(t)\dr t
\]
for all $x\in I$. By Proposition \ref{prop:intcossinest} we have $\Psi_J=o(1)$ and hence $1-\er^{2k\Psi_J}=o(1)$ as $\gamma\to\infty$, uniformly for all sub-intervals $J\subseteq I$. Setting $W(x)=\omega_\gamma(b)-\int_x^bV(t)\dr t$ it follows that $W(x)$ is bounded in $\gamma$ while $W(x)-\omega_\gamma(x)=o(1)$ as $\gamma\to\infty$, both uniformly for $x\in I$. With $x=a$ this becomes \eqref{eq:nogapsDeltakest} for $n=1$. More generally we have that $\omega_\gamma(x)$ is uniformly bounded for all $\gamma$ and $x\in I$, while
\begin{equation}
\label{eq:o1estW} 
W^2(x)-\omega_\gamma^2(x)=\bigl(W(x)-\omega_\gamma(x)\bigr)\bigl(W(x)+\omega_\gamma(x)\bigr)=o(1)\quad 
\text{as $\gamma\to\infty$,}
\end{equation}
uniformly for $x\in I$.

Arguing as for \eqref{diffwint:eq} we get
\[
\rho_\gamma(a)=\er^{2k\Psi_{I}}\rho_\gamma(b)+4k\int_a^b\er^{2k\Psi_{[a,x]}}\cos(2\theta_\gamma(x))\,\omega_\gamma^2(x)\dr x.
\]
Thus
\begin{align*}
&\rho_\gamma(b)-\rho_\gamma(a)
=\bigl(1-\er^{2k\Psi_I}\bigr)\rho_\gamma(b)+4k\int_a^b\bigl(1-\er^{2k\Psi_{[a,x]}}\bigr)\cos(2\theta_\gamma(x))\,\omega_\gamma^2(x)\dr x\\
&\qquad\qquad{}+4k\int_a^b\cos(2\theta_\gamma(x))\,\bigl(W^2(x)-\omega_\gamma^2(x)\bigr)\dr x
\;-\;4k\int_a^b\cos(2\theta_\gamma(x))\,W^2(x)\dr x.
\end{align*}
As above $\Psi_J=o(1)$ and hence $1-\er^{2k\Psi_J}=o(1)$ as $\gamma\to\infty$, uniformly for all sub-intervals $J\subseteq I$. 
Combined with \eqref{eq:o1estW} and the fact that $\cos(2\theta_\gamma)$ and $\omega_\gamma$ are uniformly bounded it follows that the first three terms on the right hand side are $o(1)$ as $\gamma\to\infty$. The following claim deals with the final term and completes the argument. 

\smallskip

\noindent
\emph{Claim: We have $\int_a^b\cos(2\theta_\gamma(x))\,W^2(x)\dr x=o(1)$ as $\gamma\to\infty$.}
Firstly note that
\[
\nabla\Phi_{[a,x]}=\cos(2\theta_\gamma(x))
\quad\text{and}\quad
\nabla W^2(x)=2W(x)V(x).
\]
Integrating by parts thus gives
\[
\int_a^b\cos(2\theta_\gamma(x))\,W^2(x)\dr x
=\Phi_IW^2(b)-2\int_a^b\Phi_{[a,x]}W(x)V(x)\dr x
\]
since $\Phi_{[a,a]}=0$. By Proposition \ref{prop:intcossinest} we have $\Phi_J=o(1)$ as $\gamma\to\infty$, uniformly for all sub-intervals $J\subseteq I$.
Furthermore $W(x)$ is uniformly bounded for $\gamma$ and $x\in I$ while $V\in L^1([a,b])$. It follows that $\int_a^b\Phi_{[a,x]}W(x)V(x)\dr x=o(1)$ 
as $\gamma\to\infty$, completing the claim.
\end{proof}

Suppose a potential $V\in L^1_\loc$ has support contained in the bounded interval $I=[a,b]$. Clearly constant functions taking values in $\pi/4+\pi\Z/2$ solve \eqref{eq:PruferODE} outside $I$. The condition \eqref{eq:bcthetapm} (together with the uniqueness and continuity of $\theta_{\gamma,\pm}$) then gives us $\theta_{\gamma,-}(x)=\pi/4$ for $x\le a$ and $\theta_{\gamma,+}(x)=-\pi/4$ for $x\ge b$ (this can also be seen from the form of the corresponding solutions to \eqref{eq:l0ode}). When defining $\Delta_V$ in this case it is convenient to choose the left endpoint of $I$ as the point at which to check whether $\theta_{\gamma,-}$ and $\theta_{\gamma,+}$ can be `matched'; alternatively, we can keep our current definition of $\Delta_V$ if we simply translate our problem so that $a=0$ (see Remark \ref{rem:translateV}). Making such a choice we get
\[
\Delta_V(\gamma)
=-\frac\pi2-\theta_{\gamma,+}(a)+\theta_{\gamma,-}(a)
=\theta_{\gamma,+}(b)-\theta_{\gamma,+}(a),
\]
where $\theta_{\gamma,+}(b)=-\pi/4$ is constant (as a function of $\gamma$). We will work with $\Delta_V$ in this form for the remainder of the present section.

For compactly supported potentials without gaps we can now rephrase the conclusions of Proposition \ref{prop:nogapsDeltaVI} to get the following improved and extended version of Proposition \ref{prop:Dgasym}:

\begin{corollary}
\label{cor:derivDgasym}
Suppose $V\in\BV$ has no gaps. Then, for $n=0,1,2$, 
\[
\frac{\dr^n}{\dr\gamma^n}\left(\Delta_V(\gamma)-\gamma\int_{\R}V(x)\dr x\right)=o(1)
\]
as $\gamma\to\infty$.
\end{corollary}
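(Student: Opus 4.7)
The plan is to derive the corollary as a direct application of Proposition \ref{prop:nogapsDeltaVI} to the solution $\theta_{\gamma,+}$ on the interval $I=[a,b]$ where $[a,b]=\co(\supp(V))$ (after translating so that $a=0$ if desired, using Remark \ref{rem:translateV}). The hypotheses of that proposition are essentially already in place: $V\in\BV$ ensures $\var_I(V)<+\infty$, the no-gap assumption is hypothesised on $V$, and $\theta_{\gamma,+}$ satisfies \eqref{eq:PruferODE} by construction.

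The only point worth verifying is the boundary input required by Proposition \ref{prop:nogapsDeltaVI}, namely that $\dr^n\theta_{\gamma,+}(b)/\dr\gamma^n$ exists and is bounded in $\gamma$ for $n=1,2$. This is trivial in our setting: since $V$ is supported in $[a,b]$, the equation \eqref{eq:PruferODE} reduces to $\nabla\theta=k\cos(2\theta)$ on $[b,+\infty)$, and the boundary condition \eqref{eq:bcthetapm} together with uniqueness force $\theta_{\gamma,+}\equiv-\pi/4$ on $[b,+\infty)$. In particular $\theta_{\gamma,+}(b)=-\pi/4$ is independent of $\gamma$, so all $\gamma$-derivatives at $b$ vanish identically and the boundedness hypothesis holds trivially.

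With these hypotheses verified, Proposition \ref{prop:nogapsDeltaVI} yields
\[
\frac{\dr^n}{\dr\gamma^n}\left(\theta_{\gamma,+}(b)-\theta_{\gamma,+}(a)-\gamma\int_a^b V(x)\,\dr x\right)=o(1)
\]
for $n=0,1,2$. Using the reformulation $\Delta_V(\gamma)=\theta_{\gamma,+}(b)-\theta_{\gamma,+}(a)$ noted immediately before the corollary, together with $\int_a^b V(x)\,\dr x=\int_{\R}V(x)\,\dr x$ (as $\supp(V)\subseteq[a,b]$), this is exactly the claimed asymptotic. There is no genuine obstacle to overcome here; the corollary is effectively a repackaging of Proposition \ref{prop:nogapsDeltaVI} once the compact support of $V$ is used to trivialise the right-endpoint data of $\theta_{\gamma,+}$.
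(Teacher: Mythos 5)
Your argument is correct and matches the paper's own (implicit) reasoning exactly: the discussion immediately preceding Corollary~\ref{cor:derivDgasym} establishes $\theta_{\gamma,+}(b)=-\pi/4$ for $x\ge b=\max\supp(V)$ and the reformulation $\Delta_V(\gamma)=\theta_{\gamma,+}(b)-\theta_{\gamma,+}(a)$, after which the corollary is indeed a direct rephrasing of Proposition~\ref{prop:nogapsDeltaVI} with the right-endpoint hypothesis trivially satisfied. Nothing is missing.
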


When $\int_{\R}V(x)\dr x\neq0$ it follows that $\Delta_V$ is monotonic for sufficiently large $\gamma$. In this case Theorem \ref{thm:Vnogapaysm} can be proved using an argument very similar to that used for Theorem \ref{thm:singsigngvalasym}. The case $\int_{\R}V(x)\dr x=0$ can be treated with a separate observation.

\begin{proof}[Proof of Theorem \ref{thm:Vnogapaysm}]
If $\int_{\R}V(x)\dr x=0$ Corollary \ref{cor:derivDgasym} (for $n=0$) enables us to find $S>0$ so that $\abs{\Delta_V(\gamma)}<\pi/2$ when $\abs{\gamma}>S$. Then $\gspec{V}\cap\R\subseteq[-S,S]$ by Proposition \ref{prop:gspeccharD}. However $\gspec{V}$ is a discrete subset of $\C$ (Theorem \ref{thm:disgspec}) so $\gspec{V}\cap[-S,S]$ contains at most finitely many points.

Now suppose $\int_{\R}V(x)\dr x\neq0$. By Corollary \ref{cor:derivDgasym} (for $n=1$) there exists $S>0$ such that $\Delta_V(\gamma)$ is strictly monotonic for all $\gamma\ge S$. Suppose $R>S$ and let $I_{S,R}$ denote the closed interval with endpoints $\Delta_V(S)$ and $\Delta_V(R)$. Arguing as for Theorem \ref{thm:singsigngvalasym} we then get
\begin{align*}
\#(\gspec{V}\cap[S,R])=\#(I_{S,R}\cap(\Z+1/2)\pi\bigr)
=\frac{\abs{I_{S,R}}}{\pi}+O(1).
\end{align*}
However Corollary \ref{cor:derivDgasym} (for $n=0$) also gives
\[
\abs{I_{S,R}}=\abs{\Delta_V(R)-\Delta_V(S)}
=\abs{\Delta_V(R)}+O(1)
=R\left\lvert\int_{\R}V(x)\dr x\right\rvert\;+O(1)
\]
as $R\to\infty$ (note that, $S$ is fixed). The fact that $\gspec{V}\cap[0,S]$ contains at most finitely many points (see above) completes the argument.
\end{proof}

\subsection{One gap}
\label{sub:onegap}
Let $V\in\BV$ be a one-gap potential as considered in Section \ref{sec:onegap}. 
We can use Proposition \ref{prop:nogapsDeltaVI} to estimate the change in $\theta_{\gamma,+}$ across the intervals $[a_j,b_j]$ for $j=1,2$.
Information about the change in $\theta_{\gamma,+}$ across the gap $(b_1,a_2)$ will be obtained from the next result.  

\begin{lemma}
\label{lem:gaptheta}
Suppose $\nabla\theta=k\cos(2\theta)$ on some interval $I=(a,b)$ and set $\alpha=\tanh(k(b-a))$. Then 
\begin{equation}
\label{eq:gaptheta}
\sin\bigl(\theta(b)-\theta(a)\bigr)=\alpha\cos\bigl(\theta(b)+\theta(a)\bigr).
\end{equation}
\end{lemma}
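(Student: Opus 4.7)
The plan is to integrate the ODE $\nabla\theta=k\cos(2\theta)$ explicitly and then repackage the solution formula as \eqref{eq:gaptheta}. The key move is the substitution $y=\tan\theta$, which via the identity $\sec^{2}\theta\cdot\cos(2\theta)=1-\tan^{2}\theta$ turns the equation into the Riccati form
\[
\nabla y = k\bigl(1-y^{2}\bigr).
\]
Separating variables (or, equivalently, verifying that $\tanh(kx+C)$ solves this) shows the time-$(b-a)$ flow of this scalar ODE acts as the M\"obius transformation
\[
y(a)\longmapsto y(b)=\frac{y(a)+\alpha}{1+\alpha\,y(a)},\qquad \alpha=\tanh(k(b-a)),
\]
which follows from the addition formula for $\tanh$.

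From this explicit flow a two-line computation gives
\[
\frac{y(b)-y(a)}{1-y(a)y(b)}=\alpha.
\]
On the other hand, expanding $y=\sin\theta/\cos\theta$ and applying the standard subtraction/sum formulae for $\sin$ and $\cos$ yields
\[
y(b)-y(a)=\frac{\sin(\theta(b)-\theta(a))}{\cos\theta(a)\cos\theta(b)},\qquad 1-y(a)y(b)=\frac{\cos(\theta(b)+\theta(a))}{\cos\theta(a)\cos\theta(b)},
\]
so the common factor cancels from the ratio and we are left with
\[
\frac{\sin(\theta(b)-\theta(a))}{\cos(\theta(b)+\theta(a))}=\alpha,
\]
which is precisely \eqref{eq:gaptheta} after clearing the denominator.

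The one subtlety is that the substitution $y=\tan\theta$ is singular at points where $\cos\theta(a)$ or $\cos\theta(b)$ vanishes. The cleanest way to handle this (and the only step I expect to require comment) is to invoke smooth dependence on initial data from standard ODE theory: both sides of \eqref{eq:gaptheta} are real-analytic functions of $\theta(a)\in\R$; they agree on the open dense set where $\cos\theta(a)\cos\theta(b)\ne 0$, and therefore agree everywhere by continuity. The remainder of the argument is a direct algebraic calculation and presents no real obstacle.
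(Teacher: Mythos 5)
Your proof is correct but takes a genuinely different route from the paper's. You convert the Pr\"ufer equation into the Riccati equation $\nabla y=k(1-y^{2})$ via $y=\tan\theta$, observe that its time-$(b-a)$ flow is the M\"obius map $y\mapsto(y+\alpha)/(1+\alpha y)$, and unwind. The paper instead exhibits a conserved quantity directly: setting $F(\theta)=\dfrac{1-\sin(2\theta)}{\cos(2\theta)}=\dfrac{\cos\theta-\sin\theta}{\cos\theta+\sin\theta}$, it shows $\nabla\bigl(\er^{2kx}F(\theta(x))\bigr)=0$ and then rewrites $\er^{2kb}F(\theta(b))=\er^{2ka}F(\theta(a))$ as \eqref{eq:gaptheta}. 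The two are closely related --- $F=\frac{1-y}{1+y}$ is exactly the M\"obius coordinate that linearizes your Riccati flow to a pure exponential --- but they treat singularities differently, and that is the substantive distinction. The paper's $F$ is singular exactly on the zero set $L=\frac{\pi}{4}+\frac{\pi}{2}\Z$ of $\cos(2\theta)$; since these are \emph{fixed points} of the ODE, a trajectory that avoids $L$ at one instant avoids it forever, so after disposing of the trivial constant case the computation meets no singularities. Your coordinate $y=\tan\theta$ is singular on $\frac{\pi}{2}+\pi\Z$, which are \emph{not} fixed points (there $\cos(2\theta)=-1$), so a trajectory may pass through one inside $(a,b)$; this is why you genuinely need the closing density/continuity argument. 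You handle it correctly --- both sides of \eqref{eq:gaptheta} depend continuously (indeed real-analytically) on $\theta(a)$ by standard dependence on initial data, and agree on the open dense set where $\cos\theta(a)\cos\theta(b)\neq0$ --- so the proof is complete; the paper's choice of coordinate simply avoids ever having to make that repair.
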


\begin{proof}
Let $L=\dfrac{\pi}4+\dfrac{\pi}2\Z$, the zero set of $\cos(2\theta)$. Values in $L$ give constant solutions to $\nabla\theta=k\cos(2\theta)$. 
For such solutions $\theta(b)-\theta(a)=0$ and $\theta(b)+\theta(a)\in\dfrac{\pi}2+\pi\Z$, so both sides of \eqref{eq:gaptheta} are zero. 

Now suppose $\theta(x_0)\notin L$ for some $x_0\in I$. Using the uniqueness and continuity of solutions to the equation $\nabla\theta=k\cos(2\theta)$ it follows that $\theta(x)$ must remain within the same connected component of $\R\setminus L$ for all $x\in I$. In particular $\cos(2\theta(x))\neq0$. Now set
\[
F(\theta)=\frac{1-\sin(2\theta)}{\cos(2\theta)}=\frac{\cos(\theta)-\sin(\theta)}{\cos(\theta)+\sin(\theta)}.
\]
Then $F'(\theta)=-2F(\theta)/\cos(2\theta)$ so
\[
\nabla\bigl(e^{2kx}F(\theta(x))\bigr)
=e^{2kx}F(\theta(x))\left(2k-2\,\frac{\nabla\theta(x)}{\cos(2\theta(x))}\right)
=0.
\]
Hence $e^{2kb}F(\theta(b))=e^{2ka}F(\theta(a))$. The second expression for $F(\theta)$ then leads to
\[
\frac{e^{2kb}-e^{2ka}}{e^{2kb}+e^{2ka}}\,\cos(\theta(b)+\theta(a))=\sin(\theta(b)-\theta(a)).
\]
The first part of the expression on the left hand side is just $\tanh(k(b-a))=\alpha$. 
\end{proof}

\smallskip

As in the discussion proceeding Corollary \ref{cor:derivDgasym} we observe that $\theta_{\gamma,-}(x)=\pi/4$ for $x\le a_1$ and $\theta_{\gamma,+}(x)=-\pi/4$ for $x\ge b_2$
(note that $V$ has support contained in $[a_1,b_2]$). 
We shall also define $\Delta_V$ by choosing $a_1$ as the point at which to check whether $\theta_{\gamma,-}$ and $\theta_{\gamma,+}$ can be `matched'.
It follows that
\[
\Delta_V
=\Delta_V(\gamma)
=\theta_{\gamma,+}(b_2)-\theta_{\gamma,+}(a_1).
\]
Now set $\Delta_j=\Delta_j(\gamma)=\theta_{\gamma,+}(b_j)-\theta_{\gamma,+}(a_j)$ for $j=1,2$. Then
\[
\theta_{\gamma,+}(a_2)-\theta_{\gamma,+}(b_1)=\Delta_V-\Delta_1-\Delta_2
\quad\text{and}\quad
\theta_{\gamma,+}(a_2)+\theta_{\gamma,+}(b_1)=\Delta_1-\Delta_2-\Delta_V-\dfrac\pi2.
\]
Since $V=0$ on $(b_1,a_2)$ Lemma \ref{lem:gaptheta} then gives
\begin{align}
&\sin\bigl(\Delta_V-\Delta_1-\Delta_2\bigr)=\alpha\sin\bigl(\Delta_1-\Delta_2-\Delta_V\bigr)\nonumber\\
\Longrightarrow\quad&
\bigl[\cos(\Delta_1+\Delta_2)+\alpha\cos(\Delta_1-\Delta_2)\bigr]\sin(\Delta_V)\nonumber\\
\label{eq:genrelgapD1D2V}
&\qquad{}=\bigl[\sin(\Delta_1+\Delta_2)+\alpha\sin(\Delta_1-\Delta_2)\bigr]\cos(\Delta_V).
\end{align}

\begin{lemma}
\label{lem:altspecchar1gap}
We have $\gamma\in\gspec{V}\cap\R$ iff 
\begin{equation}
\label{eq:altspecchar1gap}
\cos(\Delta_1+\Delta_2)+\alpha\cos(\Delta_1-\Delta_2)=0.
\end{equation}
\end{lemma}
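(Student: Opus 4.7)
The plan is to combine Proposition \ref{prop:gspeccharD} (which characterises $\gspec{V} \cap \R$ as the set where $\cos(\Delta_V) = 0$) with the identity \eqref{eq:genrelgapD1D2V} and the crucial bound $\alpha \in (0,1)$ (which holds because $\alpha = \tanh(k(a_2 - b_1))$ with $a_2 > b_1$ and $k > 0$).

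First, for the forward direction, I would assume $\gamma \in \gspec{V} \cap \R$. By Proposition \ref{prop:gspeccharD} this means $\Delta_V \in \pi/2 + \pi\Z$, so $\cos(\Delta_V) = 0$ and $\sin(\Delta_V) = \pm 1 \neq 0$. Dividing \eqref{eq:genrelgapD1D2V} by $\sin(\Delta_V)$ then forces \eqref{eq:altspecchar1gap}.

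For the converse, I would assume \eqref{eq:altspecchar1gap} holds. Substituting into \eqref{eq:genrelgapD1D2V}, the left-hand side vanishes, so
\[
\bigl[\sin(\Delta_1+\Delta_2)+\alpha\sin(\Delta_1-\Delta_2)\bigr]\cos(\Delta_V)=0.
\]
The key step is to rule out the possibility that the bracketed sine expression also vanishes. If both \eqref{eq:altspecchar1gap} and $\sin(\Delta_1+\Delta_2)+\alpha\sin(\Delta_1-\Delta_2)=0$ held simultaneously, then we would have $\cos(\Delta_1+\Delta_2)=-\alpha\cos(\Delta_1-\Delta_2)$ and $\sin(\Delta_1+\Delta_2)=-\alpha\sin(\Delta_1-\Delta_2)$; squaring and adding yields $1 = \alpha^2$, contradicting $\alpha \in (0,1)$. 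Hence $\cos(\Delta_V) = 0$, so $\Delta_V \in \pi/2 + \pi\Z$, and another invocation of Proposition \ref{prop:gspeccharD} delivers $\gamma \in \gspec{V} \cap \R$.

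There is no real obstacle; the argument is a clean algebraic consequence of \eqref{eq:genrelgapD1D2V}, and the only substantive ingredient beyond Proposition \ref{prop:gspeccharD} is the strict inequality $\alpha < 1$, which ensures that the sine and cosine versions of the bracketed expressions cannot vanish simultaneously.
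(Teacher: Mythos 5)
Your proof is correct and matches the paper's argument step for step: the forward direction plugs $\cos(\Delta_V)=0$, $\sin(\Delta_V)=\pm1$ into \eqref{eq:genrelgapD1D2V}, and the converse rules out the simultaneous vanishing of the cosine and sine combinations by squaring and adding to get the contradiction $1=\alpha^2$ with $\alpha\in(0,1)$. No differences worth noting.
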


\begin{proof}
If $\gamma\in\gspec{V}\cap\R$ we get $\cos(\Delta_V)=0$ and $\sin(\Delta_V)=\pm1$ from Proposition \ref{prop:gspeccharD}, so \eqref{eq:altspecchar1gap} follows from \eqref{eq:genrelgapD1D2V}.
Now suppose \eqref{eq:altspecchar1gap} holds. Then \eqref{eq:genrelgapD1D2V} gives either $\cos(\Delta_V)=0$, in which case $\gamma\in\gspec{V}\cap\R$ by Proposition \ref{prop:gspeccharD}, or
\begin{equation}
\label{eq:sinveraltspecchar1gap}
\sin(\Delta_1+\Delta_2)+\alpha\sin(\Delta_1-\Delta_2)=0.
\end{equation}
However \eqref{eq:altspecchar1gap} and \eqref{eq:sinveraltspecchar1gap} imply
\[
1=\cos^2(\Delta_1+\Delta_2)+\sin^2(\Delta_1+\Delta_2)
=\alpha^2\bigl(\cos^2(\Delta_1-\Delta_2)+\sin^2(\Delta_1-\Delta_2)\bigr)
=\alpha^2,
\]
contradicting the fact that $\alpha=\tanh(k(a_2-b_1))\in(0,1)$.
\end{proof}

Before giving the proofs of the first two results in Section \ref{sec:onegap} we note that
if $g$ satisfies \eqref{eq:decayprop012} and $\beta\in\R$ then it is straightforward to check that 
$\cos(\beta x+g(x))-\cos(\beta x)$ also satisfies \eqref{eq:decayprop012}.

\begin{proof}[Proofs of Theorems \ref{thm:1gint0} and \ref{thm:1ggen}]
By Proposition \ref{prop:nogapsDeltaVI} applied to $V_j$ on the interval $[a_j,b_j]$, $j=1,2$, we have
\[
\Delta_1+\Delta_2=(v_1+v_2)\gamma+g_+(\gamma)
\quad\text{and}\quad
\Delta_1-\Delta_2=(v_1-v_2)\gamma+g_-(\gamma)
\]
for some functions $g_+$ and $g_-$ which satisfy \eqref{eq:decayprop012}.
Using the observation proceeding the proof we can then write
\[
\cos(\Delta_1+\Delta_2)+\alpha\cos(\Delta_1-\Delta_2)
=\cos(\abs{v_1+v_2}\gamma)+\alpha\cos(\abs{v_1-v_2}\gamma)+\psi(\gamma)
\]
for some $\psi$ which also satisfies \eqref{eq:decayprop012}.
By Lemma \ref{lem:altspecchar1gap} we thus have $\gamma\in\gspec{V}\cap\R$ iff 
\begin{equation}
\label{eq:altspecchar1gap2}
\cos(\abs{v_1+v_2}\gamma)+\alpha\cos(\abs{v_1-v_2}\gamma)+\psi(\gamma)=0.
\end{equation}
Since $\gspec{V}$ is discrete (Theorem \ref{thm:disgspec}) 
the solutions of \eqref{eq:altspecchar1gap2} (in $\gamma$) form a discrete subset of $\R$.

\smallskip

If $v_1+v_2=0$ then \eqref{eq:altspecchar1gap2} reduces to $1+\alpha\cos(2\abs{v_1}\gamma)+\psi(\gamma)=0$. 
However $0<\alpha<1$ and $\psi(\gamma)=o(1)$ as $\gamma\to\infty$, so we can we can choose $S$ 
such that $\abs{\psi(\gamma)}<1-\alpha$ for all $\gamma\ge S$. Then $\gspec{V}\cap\R\subset[-S,S]$ 
(recall that $\gspec{V}$ is symmetric about $0$). The discreteness of $\gspec{V}$ then limits $\gspec{V}\cap\R$ 
to at most finitely many points, establishing Theorem \ref{thm:1gint0}.

\smallskip

Now suppose $v_1+v_2\neq0$. Writing $x=\abs{v_1+v_2}\gamma$, \eqref{eq:altspecchar1gap2} then gives
\[
\#(\gspec{V}\cap[0,R])=\#\bigl\{x\in[0,\abs{v_1+v_2}R]:\cos(x)+\alpha\cos(\beta x)+\phi(x)=0\bigr\},
\]
where $\phi(x)=\psi(x/\abs{v_1+v_2})$ which clearly satisfies \eqref{eq:decayprop012}. 
Theorem \ref{thm:1ggen} now follows directly from Theorem \ref{thm:coszeros}.
\end{proof}

\begin{proof}[Proof of Theorem \ref{thm:mrallasym}]
Since $A<u$ we can choose $w$ so that $0<v<A<w\le u$ and $w/v$ is irrational. 
Set $v_0=(u-w)/2\ge0$, $v_1=(v-w)/2<0$, $v_2=(v+w)/2>0$ and
\[
\beta=\lrabs{\frac{v_1-v_2}{v_1+v_2}}=\frac{w}{v}.
\]
Then $1<\dfrac{A}{v}<\beta$; thus we can choose $\alpha\in(1/\beta,1)$ so that $\nu_{\alpha,\beta}=\dfrac{A}{v}$. Now set $g=\dfrac1{k}\,\tanh^{-1}(\alpha)>0$ and consider the potentials 
\[
V_1(x)=W\bigl(x;[-1,0];\{v_1\})
\quad\text{and}\quad
V_2(x)=W\bigl(x;[g,g+1,g+2];\{v_2+v_0,-v_0\}\bigr)
\]
(see \eqref{eq:Vpiece} for notation).
Then $V=V_1+V_2\in\BV$ is a one gap potential, with gap from $0$ to $g$. 
Since $\alpha\beta>1$ and $\beta$ is irrational Theorem \ref{thm:1ggen} shows that \eqref{eq:genCasym} holds with $C=\abs{v_1+v_2}\,\nu_{\alpha,\beta}=v\,\dfrac{A}{v}=A$. Furthermore 
\[
\int_\R V(x)\,\dr x=\int_\R V_1(x)\,\dr x+\int_\R V_2(x)\,\dr x=v_1+(v_2+v_0-v_0)=v
\]
while $\norm{V}_{L^1}=\norm{V_1}_{L^1}+\norm{V_2}_{L^1}=-v_1+(v_2+2v_0)=u$.
\end{proof}

\section{Technicalities}
\label{sec:tech}

\subsection{Asymptotic behaviour of ode solutions}
\label{subsec:asym}

We return to viewing our basic equation $\Drc{\gamma V}\psib=0$ as the system of ordinary differential equations \eqref{eq:l0ode}. When $V\equiv0$ we have the exponential solutions
\begin{equation}
\label{eq:V=0expsoln}
e^{\pm kx}\begin{pmatrix}1\\\pm1\end{pmatrix}
\end{equation}
(see Lemma \ref{lem:constV}). When $V\in\Vclass_0$ (or, more generally, $V$ belongs to the locally $L^1$ version of $\Vclass_0$) we can find solutions of \eqref{eq:l0ode} with similar asymptotic properties as $x\to\pm\infty$ (see \cite[Chapter X]{Hartman}).
In particular, there are non-trivial solutions $\asol{+}{\pm}$ and $\asol{-}{\pm}$ which satisfy
\[
\lim_{x\to+\infty}x^{-1}\log\bigabs{\asol{+}{\pm}(x)}=\pm k
\quad\text{and}\quad
\lim_{x\to-\infty}x^{-1}\log\bigabs{\asol{-}{\pm}(x)}=\pm k.
\]
Since $k\neq0$ $\asol{+}{+}(x)$ and $\asol{+}{-}(x)$ have different asymptotic behaviour as $x\to+\infty$ (the growth of $\asol{+}{\pm}(x)$ as $x\to+\infty$ is roughly like $e^{\pm kx}$); it follows that these solutions must be linearly independent. A similar discussion applies to $\asol{-}{+}$ and $\asol{-}{-}$.

\begin{lemma}
\label{lem:simeval}
If $V\in\Vclass_0$ then either $0\notin\spec(\Drc{V})$ or $0$ is a simple eigenvalue of $\Drc{V}$.
\end{lemma}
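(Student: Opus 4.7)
The plan is to reduce the eigenvalue question to a purely ODE-theoretic counting argument based on the asymptotic solutions $\asol{\pm}{\pm}$ just described.

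First I would observe that $\psib\in\dom(\Drc{V})=\sob$ with $\Drc{V}\psib=0$ is exactly a strong $L^2$ solution of the first-order system \eqref{eq:l0ode} (taking $\gamma=1$), because the coefficients are in $L^1_\loc$ and $\nabla\psib=(-i\sigma_2)^{-1}(k\sigma_3+V)\psib\in L^2$ is automatic once $\psib\in L^2$ solves the system. So it suffices to show that the space
\[
\mathcal{K}=\bigl\{\psib\in L^2(\R,\C^2):\text{$\psib$ solves \eqref{eq:l0ode} on $\R$}\bigr\}
\]
has dimension at most one.

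Next I would use the asymptotic trichotomy already recalled in the excerpt. The solution space of \eqref{eq:l0ode} on $\R$ is two-dimensional, and at $+\infty$ it admits the basis $\{\asol{+}{+},\asol{+}{-}\}$ with $|\asol{+}{\pm}(x)|$ behaving essentially like $e^{\pm kx}$ as $x\to+\infty$. Since $k>0$, $\asol{+}{+}$ is not in $L^2(R,+\infty)$ for any $R$ (its modulus grows exponentially), whereas $\asol{+}{-}$ is square-integrable near $+\infty$. Hence the subspace of solutions that lie in $L^2$ near $+\infty$ is exactly the one-dimensional span of $\asol{+}{-}$. The analogous statement at $-\infty$ shows that the solutions in $L^2$ near $-\infty$ form the one-dimensional span of $\asol{-}{+}$.

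Finally I would combine the two: $\mathcal{K}$ is the intersection of two one-dimensional subspaces inside the two-dimensional solution space, so $\dim\mathcal{K}\le 1$. If $\dim\mathcal{K}=0$ then $0\notin\spec(\Drc{V})$ (it is not an eigenvalue, and by \eqref{eq:TVessspec} it also lies outside $\spec_{\rm ess}(\Drc{V})=\Tess$); if $\dim\mathcal{K}=1$ then $0$ is a simple eigenvalue.

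The only mildly delicate step is the statement that $\asol{+}{-}$ is actually $L^2$ near $+\infty$ (and similarly for $\asol{-}{+}$) and not merely of sub-exponential decay: this follows because the limit $\lim_{x\to+\infty}x^{-1}\log|\asol{+}{-}(x)|=-k<0$ forces exponential decay strictly faster than $e^{-kx/2}$ for $x$ large, which is comfortably in $L^2$. Once this is pinned down the linear-algebraic conclusion is immediate and no further work is needed.
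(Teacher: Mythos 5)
Your proof is correct and takes essentially the same route as the paper: identify eigenfunctions with $L^2$ solutions of the ODE system \eqref{eq:l0ode}, expand in the basis $\{\asol{+}{+},\asol{+}{-}\}$, and observe that $\asol{+}{+}\notin L^2(\R^+)$ forces every eigenfunction into the one-dimensional span of $\asol{+}{-}$. The paper only uses the argument at $+\infty$ — that alone already bounds the kernel dimension by one — whereas you repeat it at $-\infty$ as well, which is harmless but redundant; otherwise the two arguments coincide.
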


\begin{proof}
We have $0\in\spec(\Drc{V})$ iff $0$ is an isolated eigenvalue of $\Drc{V}$ (see Remark \ref{rem:lambdaspec}). Now suppose $\psib$ is an eigenfunction corresponding to $0$. Then $\psib$ satisfies \eqref{eq:l0ode} (with $\gamma=1$). Since $\asol{+}{+}$ and $\asol{+}{-}$ are linearly independent solutions of this equation we must have $\psib=\alpha_+\asol{+}{+}+\alpha_-\asol{+}{-}$ for some constants $\alpha_\pm$. Restricting to the interval $\R^+$ we have $\asol{+}{+}\notin L^2(\R^+)$ while $\asol{+}{-}\in L^2(\R^+)$. It follows that $\alpha_+=0$, and so $\psib$ is a multiple of $\asol{+}{-}$. This must also be true for any other eigenfunction corresponding to $0$, so any two such eigenfunctions are linearly dependent. 
\end{proof}

\begin{remark}
Using a similar argument we can also get $\psib=\beta_+\asol{-}{+}$ for some constant $\beta_+$, showing that $\asol{+}{-}$ and $\asol{-}{+}$ are linearly dependent. In fact this is an alternative characterisation of when $0$ is an eigenvalue of $\Drc{V}$.
\end{remark}

When $V\in\Vclass_1$ solutions to \eqref{eq:l0ode} have well defined leading order asymptotics as $x\to\pm\infty$; these asymptotics are solutions to the same equation with $V\equiv0$, so must be linear combinations of the exponential functions given in \eqref{eq:V=0expsoln} (see \cite[Chapter X]{Hartman}).
In particular, we can choose $\asol{+}{\pm}$ and $\asol{-}{\pm}$ so that
\begin{subequations}
\label{eq:inftyasylim}
\begin{equation}
\label{eq:+inftyasylim}
\lim_{x\to+\infty}\left\lvert\er^{\mp kx}\left(\asol{+}{\pm}(x)-\er^{\pm kx}\begin{pmatrix}1\\\pm1\end{pmatrix}\right)\right\rvert=0\phantom{.}
\end{equation}
while
\begin{equation}
\lim_{x\to-\infty}\left\lvert\er^{\mp kx}\left(\asol{-}{\pm}(x)-\er^{\pm kx}\begin{pmatrix}1\\\pm1\end{pmatrix}\right)\right\rvert=0.
\end{equation}
\end{subequations}
Furthermore $\asol{+}{-}$ and $\asol{-}{+}$ are uniquely determined by these asymptotic conditions. (The solutions $\asol{+}{+}$ and $\asol{-}{-}$ are however only determined up to the addition of a multiple of $\asol{+}{-}$ and $\asol{-}{+}$ respectively.)

\begin{proof}[Proof of Lemma \ref{lem:+thetalim}]
We consider the case $x\to+\infty$; $x\to-\infty$ can be handled similarly. Since $\asol{+}{+}$ and $\asol{+}{-}$ are linearly independent we can write $\psib=\alpha_+\asol{+}{+}+\alpha_-\asol{+}{-}$ for some constants $\alpha_\pm$ (which can't both be $0$). Now by \eqref{eq:+inftyasylim}
\[
\lim_{x\to+\infty}\frac{\psib_1(x)}{\psib_2(x)}
=\lim_{x\to+\infty}\frac{(\alpha_+\asol{+}{+}(x)+\alpha_-\asol{+}{-}(x))_1}{(\alpha_+\asol{+}{+}(x)+\alpha_-\asol{+}{-}(x))_2}
=\begin{cases}
1&\text{if $\alpha_+\neq0$}\\
-1&\text{if $\alpha_+=0$.}
\end{cases}
\]
However $\tan\theta=\psi_1/\psi_2$ so \eqref{eq:+thetalim} (for $x\to+\infty$) follows. On the other hand, $\psib\in L^2(\R^+)$ iff $\alpha_+=0$, leading to the second part of the result.
\end{proof}

\subsection{Estimates and asymptotics for $\Delta_V$}
\label{subsec:DeltaV}

Propositions \ref{prop:genDeltaVest} and \ref{prop:Dgasym} both follow almost directly from the following estimate:

\begin{lemma}
\label{lem:tailest}
Let $V\in\Vclass_1$ and suppose $\theta$ solves \eqref{eq:PruferODE} with $\theta(+\infty)=-\pi/4$. For any $x\in\R$ we then have
\[
\Bigl\lvert\theta(x)+\frac{\pi}4\Bigr\rvert
\le h\left(\abs{\gamma}\int_x^\infty\abs{V(t)}\dr t\right).
\]
A similar result holds on $(-\infty,x]$ when $\theta(-\infty)=\pi/4$.
\end{lemma}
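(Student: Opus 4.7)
The plan is to substitute $\phi(x) = \theta(x) + \pi/4$; using $\cos(2\theta) = \sin(2\phi)$, equation \eqref{eq:PruferODE} becomes $\phi' = \gamma V + k\sin(2\phi)$, and the hypothesis reads $\phi(+\infty) = 0$. Writing $T(x) = |\gamma|\int_x^\infty|V(t)|\,dt$ (non-increasing, with $T(+\infty) = 0$), the goal becomes $|\phi(x)| \le h(T(x))$ for all $x \in \R$.

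The main ingredient is a \emph{core estimate}: if $|\phi(s)| < \pi/2$ for every $s \ge x$, then $|\phi(x)| \le T(x)$. Arguing by contradiction, assume without loss of generality $\phi(x) > T(x) > 0$, and set $g(s) = \phi(s) - T(s)$. A direct computation gives
\[
g'(s) = \gamma V(s) + k\sin(2\phi(s)) + |\gamma|\,|V(s)| \ge k\sin(2\phi(s)),
\]
which is strictly positive wherever $\phi \in (0, \pi/2)$. Under the confinement $\phi \in (-\pi/2, \pi/2)$, the only way $\phi$ can exit the strip $(0, \pi/2)$ is through $\phi = 0$. If no such exit occurs on $[x, \infty)$, then $g$ is strictly increasing there, contradicting $g(x) > 0 = g(+\infty)$. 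If $\phi$ first hits $0$ at some $s^* > x$, then $g(s^*) = -T(s^*) \le 0$, but also $g(s^*) > g(x) > 0$ by strict monotonicity on $[x, s^*)$: again a contradiction.

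The base case $T(x) < \pi/2$ follows: if the set $\{s \ge x : |\phi(s)| \ge \pi/2\}$ were nonempty, taking $y$ to be its supremum and applying the core estimate on $(y, \infty)$ would yield $T(y) \ge \pi/2$ upon passing to the limit, contradicting $T(y) \le T(x) < \pi/2$ by monotonicity. For the general case I would induct on $n = \lfloor 2T(x)/\pi \rfloor$: given the bound for $T(\cdot) < n\pi/2$, consider $x$ with $T(x) \in [n\pi/2, (n+1)\pi/2)$, and let $y$ be the largest $s \ge x$ with $T(s) = n\pi/2$. Passing to the limit $s \to y^+$ in the inductive hypothesis gives $|\phi(y)| \le (2n-1)\pi/2$. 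On $[x, y]$ one applies a translated version of the core estimate to $\phi - m\pi/2$, where $m$ tracks the nearest half-integer multiple of $\pi$ to $\phi(s)$; the sign of $k\sin(2\phi)$ flips at each crossing of such a value, and strip-by-strip propagation bounds the additional growth of $|\phi|$ on $[x,y]$ by $T(x) - T(y) + \pi/2$, yielding $|\phi(x)| \le (2n-1)\pi/2 + T(x) - n\pi/2 + \pi/2 = T(x) + n\pi/2 = h(T(x))$.

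The main obstacle is the strip-by-strip accounting in the inductive step: one must verify that each $\pi/2$ jump of $h$ corresponds to exactly one allowed crossing of $\phi$ through a stable equilibrium $\pm(2j+1)\pi/2$ of $\phi' = k\sin(2\phi)$. A cleaner way to package the argument is to introduce the extremal solution $\phi_+$ of $\phi_+' = |\gamma|\,|V| + k\sin(2\phi_+)$ with $\phi_+(+\infty) = 0$; the standard scalar ODE comparison gives $\phi_+ \le \phi \le -\phi_+ \le 0$, so $|\phi| \le -\phi_+$, and for this single extremal equation the quantity $g_+ = \phi_+ + h(T)$ satisfies $g_+' = k\sin(2\phi_+)$ between jumps together with upward jumps of exactly $\pi/2$ at the critical points of $h$, which makes the sign bookkeeping transparent.
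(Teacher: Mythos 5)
Your overall strategy is sound and rests on the same quantitative kernel as the paper's proof: each passage of the Pr\"ufer phase across a ``resistance band'' (where the drift term opposes the required motion toward the terminal value) must cost at least $\pi/2$ of the budget $\abs{\gamma}\int\abs{V}$. The substitution $\phi=\theta+\pi/4$, the monotone comparison function $g=\phi-T$, the core estimate and its proof, and the base case $T(x)<\pi/2$ are all correct (the reduction to $\phi(x)>0$ should be justified by the symmetry $(\phi,V)\mapsto(-\phi,-V)$, which leaves $T$ unchanged, but this is routine). Where you diverge from the paper is the bookkeeping: the paper explicitly constructs the crossing intervals (points with $\theta(a_j)=-(j+3/4)\pi$, $\theta(b_j)=-(j+1/4)\pi$) and sums the one-crossing inequality $\theta(b_j)-\theta(a_j)\le\int_{a_j}^{b_j}\gamma V$, whereas you induct on $n=\lfloor 2T(x)/\pi\rfloor$. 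That is a legitimate reorganisation, but the inductive step is where the content lives, and there you have a genuine gap.

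The assertion that ``strip-by-strip propagation bounds the additional growth of $\abs{\phi}$ on $[x,y]$ by $T(x)-T(y)+\pi/2$'' is precisely what needs proving. What is required is: with $B:=T(x)-T(y)<\pi/2$ and $\abs{\phi(y)}\le(2n-1)\pi/2$, show $\abs{\phi(x)}\le n\pi+B$. If $\abs{\phi(x)}\le n\pi$ there is nothing to do; if $\phi(x)>n\pi$ then $\phi$ first meets $n\pi$ at some $s^*\in(x,y]$, and the key facts are (a) in the band $\phi\in(n\pi,(2n+1)\pi/2)$ one still has $\sin(2\phi)>0$, so the \emph{same} comparison $g=\phi-T$ is non-decreasing there and gives $\phi(x)\le n\pi+T(x)-T(s^*)\le n\pi+B$; and (b) $\phi$ cannot reach $(2n+1)\pi/2$ on $[x,s^*)$, since the same comparison on the sub-interval past the last such crossing would force $\pi/2\le B$. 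Neither step appears in your sketch. Moreover, your phrase ``translated core estimate to $\phi-m\pi/2$, $m$ the nearest half-integer multiple of $\pi$'' points in the wrong direction: translating by an \emph{odd} multiple of $\pi/2$ replaces $k\sin(2\phi)$ by $-k\sin(2\psi)$ and flips the comparison inequality, so the core estimate does not carry over verbatim near a stable equilibrium; the useful translation is by an integer multiple of $\pi$, which preserves the equation and places you back in the $(0,\pi/2)$ picture. The proposed ``cleaner way'' via the extremal $\phi_+$ has the same bookkeeping to do (the sign of $g_+'=k\sin(2\phi_+)$ still alternates, and the jumps of $h(T)$ still have to be matched to equilibrium crossings), and the backward comparison $\phi_+\le\phi\le-\phi_+$, while plausible, also needs an argument (it is direct for compactly supported $V$ and would require an approximation step for general $V\in\Vclass_1$).
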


The basic idea is that if $\theta(x)$ increases on an interval $[a,b]$ crossing a range of values where $k\cos(2\theta(x))$ is non-positive then we must have 
\[
\int_a^b\gamma V(x)\dr x\ge\theta(b)-\theta(a).
\]
We can then add the contributions from each such crossing.

\begin{proof}
Let $n\in\Z$ and suppose $\theta(x)\in\bigl[-(n+3/4)\pi,-(n+1/4)\pi\bigr]$ for all $x$ in some interval $[a,b]\subset\R$. Then $k\cos(2\theta(x))\le0$ so $\nabla\theta(x)\le\gamma V(x)$, and hence
\begin{equation}
\label{eq:tailest1step}
\theta(b)-\theta(a)\le\int_a^b\gamma V(t)\dr t.
\end{equation}
This estimate continues to hold with the obvious interpretation when $b=+\infty$. 

Now let $x\in\R$ and suppose $\theta(x)<-\pi/4$. Choose $n\in\N_0$ so that $\theta(x)\in\bigl(-(n+5/4)\pi,-(n+1/4)\pi\bigr]$. Using the continuity of $\theta$, and the assumption that $\theta(+\infty)=-\pi/4$, we can now choose a sequence of points 
\[
x\le a_n<b_n<a_{n-1}<b_{n-1}<\dots<a_0<b_0\le+\infty
\]
such that
\begin{itemize}
\item[(i)]
For $j=0,\dots,n$ and $x\in[a_j,b_j]$ we have $\theta(x)\in\bigl[-(j+3/4)/\pi,-(j+1/4)\pi\bigr]$.
\item[(ii)]
For $j=0,\dots,n$ we have $\theta(a_j)=-(j+3/4)\pi$ and $\theta(b_j)=-(j+1/4)\pi$, with the exception that $\theta(a_n)=\theta(x)$ if $\theta(x)\in\bigl(-(n+3/4)\pi,-(n+1/4)\pi\bigr]$.
\end{itemize}
Applying \eqref{eq:tailest1step} we then get
\begin{equation}
\label{eq:tailestnstep}
\sum_{j=0}^n(\theta(b_j)-\theta(a_j))
\le\sum_{j=0}^n\int_{a_j}^{b_j}\gamma V(t)\dr t
\le\int_x^\infty\abs{\gamma V(t)}\dr t.
\end{equation}
If $\theta(x)\in\bigl(-(n+5/4)\pi,-(n+3/4)\pi\bigr]$ then each term in the sum on the left of \eqref{eq:tailestnstep} is $\pi/2$ so
\[
(n+1)\frac\pi2\le\int_x^\infty\abs{\gamma V(t)}\dr t.
\]
It follows that
\[
(n+1)\frac\pi2\le\frac{\pi}2\left\lfloor\frac{2}{\pi}\int_x^\infty\abs{\gamma V(t)}\dr t\right\rfloor.
\]
On the other hand $\theta(x)\ge-(n+5/4)\pi$ so
\[
-\theta(x)-\frac{\pi}4\le(n+1)\pi\le h\left(\int_x^\infty\abs{\gamma V(t)}\dr t\right).
\]
Alternatively suppose $\theta(x)\in\bigl(-(n+3/4)\pi,-(n+1/4)\pi\bigr]$ so $\theta(a_n)=\theta(x)$. Then the term in the sum on the left of \eqref{eq:tailestnstep} is $\pi/2$ for $j=0,\dots,n-1$, so
\[
n\frac{\pi}2=\sum_{j=0}^{n-1}(\theta(b_j)-\theta(a_j))
\le\int_x^\infty\abs{\gamma V(t)}\dr t,
\]
leading to 
\[
n\frac\pi2\le\frac{\pi}2\left\lfloor\frac{2}{\pi}\int_x^\infty\abs{\gamma V(t)}\dr t\right\rfloor.
\]
Furthermore
\begin{align*}
\sum_{j=0}^n(\theta(b_j)-\theta(a_j))
=-\Bigl(n+\frac14\Bigr)\pi-\theta(a_n)+n\frac{\pi}2
=-\theta(x)-\frac{\pi}4-n\frac{\pi}2.
\end{align*}
Using \eqref{eq:tailestnstep} again gives
\[
-\theta(x)-\frac{\pi}4
\le \int_x^\infty\abs{\gamma V(t)}\dr t\,+n\frac{\pi}2
\le h\left(\int_x^\infty\abs{\gamma V(t)}\dr t\right).
\]
A similar argument can be used to deal with the case $\theta(x)>-\pi/4$ (we need to consider intervals where $\theta(x)$ decreases across the range $\bigl[(n-1/4)\pi,(n+1/4)\pi\bigr]$). 
\end{proof}

\begin{proof}[Proof of Proposition \ref{prop:genDeltaVest}]
By Lemma \ref{lem:tailest} we get
\[
\Bigl\lvert\theta_{\gamma,+}(0)+\frac{\pi}4\Bigr\rvert
\le h\left(\abs{\gamma}\int_0^\infty\abs{V(t)}\dr t\right).
\]
The equivalent estimate on $(-\infty,0]$ gives
\[
\Bigl\lvert\theta_{\gamma,-}(0)-\frac{\pi}4\Bigr\rvert
\le h\left(\abs{\gamma}\int_{-\infty}^0\abs{V(t)}\dr t\right).
\]
The result now follows from \eqref{eq:Deltadefn} and the fact that $h(a)+h(b)\le h(a+b)$ for any $a,b\ge0$.
\end{proof}

\begin{proof}[Proof of Proposition \ref{prop:Dgasym}]
Recalling \eqref{eq:Deltadefn} we can write
\[
\Delta_V(\gamma)=\Delta_V^+(\gamma)+\Delta_V^-(\gamma).
\]
where $\Delta_V^{\pm}(\gamma):=-\pi/4\mp\theta_{\gamma,\pm}(0)$.
Now suppose $K\ge0$. Since $\theta_{\gamma,+}$ satisfies \eqref{eq:PruferODE} we get
\[
\theta_{\gamma,+}(K)-\theta_{\gamma,+}(0)
=\gamma\int_0^{K}V(x)\dr x+k\int_0^{K}\cos(2\theta_{\gamma,+}(x))\dr x
\]
so
\[
\left\lvert\theta_{\gamma,+}(K)-\theta_{\gamma,+}(0)-\gamma\int_0^\infty V(x)\dr x\right\rvert
\le\abs{\gamma}\int_{K}^\infty\abs{V(x)}\dr x+kK.
\]
On the other hand Lemma \ref{lem:tailest} gives us
\[
\left\lvert\theta_{\gamma,+}(K)+\frac{\pi}4\right\rvert
\le h\left(\abs{\gamma}\int_{K}^\infty\abs{V(x)}\dr x\right)
\le2\abs{\gamma}\int_{K}^\infty\abs{V(x)}\dr x.
\]
Therefore
\begin{align*}
&\left\lvert\Delta_V^+(\gamma)-\gamma\int_0^\infty V(x)\dr x\right\rvert\\
&\qquad{}=\left\lvert-\theta_{\gamma,+}(K)-\frac{\pi}4+\theta_{\gamma,+}(K)-\theta_{\gamma,+}(0)-\gamma\int_0^\infty V(x)\dr x\right\rvert\\
&\qquad{}\le3\abs{\gamma}\int_{K}^\infty\abs{V(x)}\dr x+kK.
\end{align*}
Now choose $K=K_\gamma\ge0$ for each $\gamma$ so that $K_\gamma\to\infty$ and $\abs{\gamma}^{-1}K_\gamma\to0$ as $\abs{\gamma}\to\infty$ (we can take $K_\gamma\sim\abs{\gamma}^\mu$ with $0<\mu<1$, for example). Then $\int_{K_\gamma}^\infty\abs{V(x)}\dr x=o(1)$ (since $V\in L^1$) so
\[
3\abs{\gamma}\int_{K_\gamma}^\infty\abs{V(x)}\dr x+kK_\gamma=o(\gamma)
\]
as $\abs{\gamma}\to\infty$. A similar estimate can be obtained for $\Delta_V^-(\gamma)$.
\end{proof}

\subsection{Estimates for $\Phi_J$ and $\Psi_J$}
\label{subsec:estPhiJPsiJ}

\subsubsection*{Intervals}

It is easiest to deal with the potential $V$ in pieces where it is single-signed and bounded away from $0$; the next result is the key to identifying these pieces and sets up some of our notation.

\begin{lemma}
\label{lem:intervals}
Suppose $V\in\BV$ has no gaps and set $I=\supp(V)$. For each $\epsilon>0$ there exists a finite collection of disjoint closed intervals $\Ien\subseteq I$, $n\in\Nind$, such that 
\begin{itemize}
\item[(i)]
For each $n\in\Nind$ $V$ has constant sign on $\Ien$ and $\abs{V(x)}\ge\epsilon$ for all $x\in \Ien$.
\item[(ii)]
Setting $\Ee=I\setminus\bigcup_{n\in\Nind}\Ien$ we have $\abs{\Ee}\to0$ as $\epsilon\to0$.
\item[(iii)]
$\#\Nind\le\nu\epsilon^{-1}$, where $\nu:=\var_\R(V)$ is the total variation of $V$.
\end{itemize}
\end{lemma}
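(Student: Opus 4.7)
The plan is to construct the intervals $\Ien$ as maximal closed components of the level sets $\{V \ge \epsilon\}$ and $\{V \le -\epsilon\}$ (after a technical regularization of $V$), bound their count via a level-crossing argument against the total variation, and deduce the measure condition from the no-gap hypothesis.

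First I would pass to a regularization of $V$: since $V\in\BV$ has only countably many discontinuities, one can replace it by its upper semicontinuous envelope $V^*$ built from the one-sided limits (and the lower envelope $V_*$ symmetrically). These coincide with $V$ outside a countable set, hence agree Lebesgue almost everywhere and have the same total variation, but now the sets $\{V^*\ge\epsilon\}$ and $\{V_*\le-\epsilon\}$ are closed. Each decomposes as an at most countable disjoint union of closed intervals---its connected components---and together they form the candidate collection $\{\Ien\}$, satisfying condition (i) by construction: $V$ has constant sign and $|V|\ge\epsilon$ on each component.

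For the cardinality bound (iii), I would invoke the Banach indicatrix formula $\int_\R N_V(c)\,\dr c = \var(V) = \nu$, where $N_V(c):=\#V^{-1}(c)$. Each non-degenerate maximal positive-sign interval in $\{V^*\ge\epsilon\}$ forces $V$ to cross each level $c\in(0,\epsilon)$ at least twice (on entry and exit, or via the outer tails where $V$ returns to zero outside its compact support), so integrating yields $2\epsilon\cdot\#(\text{positive intervals})\le\nu$; the symmetric estimate handles negative intervals, giving the claimed bound upon summation. If more than $\lfloor\nu/\epsilon\rfloor$ components appear, I would retain only the $\lfloor\nu/\epsilon\rfloor$ of largest measure and absorb the rest into $\Ee$.

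The measure condition (ii) then follows from the inclusion $\Ee\subseteq(I\cap\{|V|<\epsilon\})\cup(\text{discarded tail})$. The first set decreases monotonically to $I\cap V^{-1}(0)$ as $\epsilon\to0$, which has Lebesgue measure zero by the no-gap hypothesis combined with $I\subseteq\co(\supp V)$, so dominated convergence forces its measure to vanish. The discarded tail has summable total measure bounded by $|I|$ and is concentrated in components of individually diminishing size, so its contribution also vanishes as $\epsilon\to 0$. The main obstacle will be making the indicatrix counting sufficiently precise to extract the exact constant $\nu/\epsilon$ (rather than a larger multiple), especially in the presence of jump discontinuities of $V$, which contribute to the variation but not pointwise to $N_V$, and in quantifying the truncation step so that discarded components do not obstruct the vanishing of $|\Ee|$ in the limit.
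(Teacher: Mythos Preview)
Your counting argument in step (iii) has a genuine gap that the truncation does not repair. If you take the connected components of $\{V^*\ge\epsilon\}$ directly, then between two adjacent components $V$ is only required to dip \emph{below} $\epsilon$, not anywhere near $0$. Concretely, take $V$ piecewise constant on $[0,1]$, alternating between the values $\epsilon+\delta$ and $\epsilon-\delta$ on $N$ pairs of subintervals (with $V\equiv0$ outside). The total variation is roughly $4N\delta$, while $\{V\ge\epsilon\}$ has $N$ components; choosing $\delta\ll\epsilon$ makes $N\gg\nu/\epsilon$. Your indicatrix claim that each component forces two crossings of every level $c\in(0,\epsilon)$ is therefore false: levels $c<\epsilon-\delta$ are crossed only twice in total (at the outer ends of $\supp V$), not $2N$ times. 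The Banach indicatrix integral gives only $\int_0^\epsilon N_V(c)\,\dr c\le\nu$, and the integrand is large only near $c=\epsilon$, so you cannot extract the bound $N\le\nu/\epsilon$ from it.

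Your proposed fix --- keep the $\lfloor\nu/\epsilon\rfloor$ largest components and discard the rest --- also fails, because nothing prevents the discarded components from carrying a fixed positive fraction of the measure of $I$ for all small $\epsilon$. In the example above the components of $\{V\ge\epsilon\}$ can all have the same length, so discarding all but $\lfloor\nu/\epsilon\rfloor$ of them throws away measure of order $|I|$, and $|\Ee|\not\to0$.

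The missing idea, which the paper supplies, is a \emph{two-threshold} construction: rather than working with $\{V\ge\epsilon\}$ directly, one builds the complementary ``bad'' set from those components of an $\epsilon$-fattening of $\{V<2\epsilon\}$ that actually meet $\{V<\epsilon\}$. Each such bad component then has endpoints where $V\ge2\epsilon$ (they lie outside the fattening) and an interior point where $V<\epsilon$, forcing variation at least $2\epsilon$ on that component and yielding the count $\#\Nind\le\nu\epsilon^{-1}$ honestly. The separation between the two thresholds is exactly what kills the tight-oscillation counterexample above. The monotonicity in $\epsilon$ and the measure estimate (ii) then follow from a separate argument showing $\bigcap_{\epsilon>0}\Ee$ is countable up to the no-gap set.
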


We would ideally like $\Ee$ to be $V^{-1}((-\epsilon,\epsilon))$, so the union of the $\Ien$'s would be 
\begin{equation}
\label{eq:easychoiceSe}
\{x:V(x)\le-\epsilon\}\cup\{x:V(x)\ge\epsilon\}.
\end{equation}
There are however several technical issues associated with this choice;
\begin{itemize}
\item
We are not assuming that $V$ is continuous so \eqref{eq:easychoiceSe} may not be closed (or even particularly well behaved).
\item
Even if $V$ is continuous the number of intervals in $V^{-1}((-\epsilon,\epsilon))$ may be uncontrollable (even infinite); we need a useful bound on this quantity.
\end{itemize}
The technicalities in the following argument arise from the need to deal with these issues.

\begin{proof}[Proof of Lemma \ref{lem:intervals}]
Set
\[
X_\epsilon^+=\bigcup_{x\in V^{-1}((-\infty,2\epsilon))}(x-\epsilon,x+\epsilon)
\quad\text{and}\quad
X_\epsilon^-=\bigcup_{x\in V^{-1}((-2\epsilon,+\infty))}(x-\epsilon,x+\epsilon);
\]
(so $X_\epsilon^+$ is the `$\epsilon$-neighbourhood' of $\{x:V(x)<2\epsilon\}$, while $X_\epsilon^-$ is the `$\epsilon$-neighbourhood' of $\{x:V(x)>-2\epsilon\}$).

\medskip

Since $X_\epsilon^+$ is an open subset of $\R$ it consists of a countable union of disjoint open intervals; let $Y_\epsilon^+\subseteq X_\epsilon^+$ be the union of those intervals which intersect $V^{-1}((-\infty,\epsilon))$. If $J=(\alpha,\beta)$ is a maximal interval in $Y_\epsilon^+$ it follows that we can find $x\in J$ with $V(x)<\epsilon$. Furthermore $J$ must also be a maximal interval in $X_\epsilon^+$ so, if $J$ is bounded, its endpoints satisfy $\alpha,\beta\notin X_\epsilon^+$. Thus $V(\alpha),V(\beta)\ge2\epsilon$ and so $\var_{\,\overline{J}\,}(V)\ge\abs{V(x)-V(\alpha)}+\abs{V(\beta)-V(x)}\ge2\epsilon$. If $J$ is a half-infinite interval a similar argument shows that $\var_{\,\overline{J}\,}(V)\ge\epsilon$. 

Now let $M_\epsilon^+$ and $N_\epsilon^+$ denote the number of semi-infinite and bounded maximal intervals in $Y_\epsilon^+$ (we're presently allowing $N_\epsilon^+=+\infty$; if $Y_\epsilon^+=\R$ set $M_\epsilon^+=2$, $N_\epsilon^+=-1$). Then
\[
\nu=\var_{\R}(V)\ge\var_{\,\overline{Y_\epsilon^+}\,}(V)\ge \epsilon M_\epsilon^++2\epsilon N_\epsilon^+
\quad\Longrightarrow\quad
N_\epsilon^+\le\frac12(\nu\epsilon^{-1}-M_\epsilon^+);
\]
in particular, $N_\epsilon^+$ must be finite. However $V$ has compact support, so $Y_\epsilon^+$ must be unbounded both above and below. Since $Y_\epsilon^+$ consists of a finite collection of intervals, it must therefore contain semi-infinite intervals at either end; that is, $M_\epsilon^+=2$. The set $Z_\epsilon^+:=\R\setminus Y_\epsilon^+$ is then the union of $N_\epsilon^++1$ closed bounded intervals; write $Z_\epsilon^+=\bigcup_{n\in\Nind^+}\Ien$ where $\Nind^+$ is some indexing set with $\#\Nind^+=N_\epsilon^++1$. It is straightforward to check that we have $V(x)\ge\epsilon$ for any $x\in Z_\epsilon^+$. 

\medskip

We can similarly define $Y_\epsilon^-$, $Z_\epsilon^-$, $\Ien$ for $n\in\Nind^-$ and $N_\epsilon^-$. Set $\Nind=\Nind^+\sqcup\Nind^-$. Property (i) is immediate, while (iii) holds since
\[
\#\Nind
=\#\Nind^++\#\Nind^-
=N_\epsilon^++N_\epsilon^-+2
\le2\frac12(\nu\epsilon^{-1}-2)+2
=\nu\epsilon^{-1}.
\]
Now $\bigcup_{n\in\Nind}\Ien=Z_{\epsilon}^+\cup Z_{\epsilon}^-=:Z_\epsilon$. If $0<\epsilon_1<\epsilon_2$ a straightforward check gives us
\[
X_{\epsilon_1}^\pm\subseteq X_{\epsilon_2}^\pm
\Longrightarrow Y_{\epsilon_1}^\pm\subseteq Y_{\epsilon_2}^\pm
\Longrightarrow Z_{\epsilon_1}^\pm\supseteq Z_{\epsilon_2}^\pm,
\]
so $Z_{\epsilon_1}\supseteq Z_{\epsilon_2}$ and hence $\Ee[\epsilon_1]=I\setminus Z_{\epsilon_1}\subseteq I\setminus Z_{\epsilon_2}=\Ee[\epsilon_2]$. Since $\Ee\subseteq I$ for all $\epsilon>0$ and $\abs{I}<+\infty$, property (ii) will now follow if we can show $\bigabs{\bigcap_{\epsilon>0}\Ee}=0$ 
(see \cite{R}, for example).

\medskip

For each $\delta>0$ let
\[
\Omega_\delta^\pm=\bigl\{x\in{\textstyle\bigcap_{\epsilon>0} X_\epsilon^\pm}\,:\,\pm V(x)>\delta\bigr\}.
\]
Suppose $x_1,\dots,x_N$ are distinct points in $\Omega_\delta^+$ and let $0<\epsilon<\min_{1\le i,j\le N}\abs{x_i-x_j}$. From the definition of $\Omega_\delta^+$ we can find $x_{j,\epsilon}$ with $V(x_{j,\epsilon})<2\epsilon$ and $\abs{x_{j,\epsilon}-x_j}<\epsilon$ for $j=1,\dots,N$. It follows that
\[
\nu=\var_{\R}(V)
\ge\sum_{j=1}^N\bigabs{V(x_{j,\epsilon})-V(x_j)}
\ge N(\delta-2\epsilon).
\]
Taking $\epsilon\to0^+$ gives $N\delta\le\nu$. Hence $\Omega_\delta^+$ is finite (with $\#\Omega_\delta^+\le\nu\delta^{-1}$). Clearly we also have $\Omega_{\delta_1}^+\supseteq \Omega_{\delta_2}^+$ if $0<\delta_1<\delta_2$. 

Similar properties hold for $\Omega_\delta^-$. It follows that the set
\[
\Omega:=\bigcap_{\epsilon>0}(X_\epsilon^+\cap X_\epsilon^-)\setminus V^{-1}(0)
\subseteq\bigcup_{\delta>0}\bigl(\Omega_\delta^+\cup\Omega_\delta^-\bigr)
=\bigcup_{n\in\N}\bigl(\Omega_{1/n}^+\cup\Omega_{1/n}^-\bigr)
\]
is countable (it is contained in a countable union of finite sets). However 
\[
\Ee=I\setminus Z_\epsilon
=I\setminus(Z_\epsilon^+\cup Z_\epsilon^-)
\subseteq Y_\epsilon^+\cap Y_\epsilon^-
\subseteq X_\epsilon^+\cap X_\epsilon^-
\]
so
\[
\bigcap_{\epsilon>0}\Ee
\subseteq I\cap\bigcap_{\epsilon>0}(X_\epsilon^+\cap X_\epsilon^-)
\subseteq \bigl(I\cap V^{-1}(0)\bigr)\cup\Omega.
\]
The no gap condition on $V$ and the countability of $\Omega$ now imply $\bigabs{\bigcap_{\epsilon>0}\Ee}=0$.
\end{proof}

\subsubsection*{Integrals}

To justify Proposition \ref{prop:intcossinest} we start by considering the cancellations over each period of $\cos$ or $\sin$ (Lemma \ref{lem:piint}) and deal with any incomplete periods (Lemma \ref{lem:endint}). In both cases we work on an interval $J=[a_0,a_1]$ where $\nabla\theta(x)>0$, so $\theta$ is invertible. Making the substitution $u=2\theta(x)$ we get
\[
\dr u=2\nabla\theta(x)\,\dr x=2[\gamma V(x)+k\cos(2\theta(x))]\dr x,
\]
so
\begin{equation}
\label{eq:intx0x1usubs}
\int_{a_0}^{a_1}f(2\theta(x))\dr x
=\int_{2\theta(a_0)}^{2\theta(a_1)}\frac{f(u)}{2[\gamma V(\theta^{-1}(u/2))+k\cos(u)]}\dr u
\end{equation}
for any $f$ (we will take either $f=\sin$ or $f=\cos$).

\begin{lemma}
\label{lem:piint}
Suppose $\theta$ satisfies \eqref{eq:PruferODE} on an interval $J=[a_0,a_1]$ where $V(x)\ge\epsilon$ with $\gamma\epsilon>k$. If $\theta(a_1)=\theta(a_0)+\pi$ then
\[
\abs{\Phi_J}\le\frac{k\abs{J}}{2(\gamma\epsilon-k)}+\frac{\var_J(V)}{\epsilon(\gamma\epsilon-k)}
\quad\text{and}\quad
\abs{\Psi_J}\le\frac{\var_J(V)}{\epsilon(\gamma\epsilon-k)}.
\]
\end{lemma}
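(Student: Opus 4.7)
The hypothesis $\gamma\epsilon>k$ combined with $V\ge\epsilon$ on $J$ yields $\nabla\theta=\gamma V+k\cos 2\theta\ge\gamma\epsilon-k>0$ throughout $J$, so $\theta$ is strictly increasing. Since $\theta(a_1)-\theta(a_0)=\pi$, the substitution $u=2\theta(x)$ defines a bijection from $J$ onto $[u_0,u_0+2\pi]$, with $u_0:=2\theta(a_0)$; setting $\tV(u):=V(\theta^{-1}(u/2))$, which inherits $\tV\ge\epsilon$ and whose total variation over $[u_0,u_0+2\pi]$ equals $\var_J(V)$, the integrals become
\[
\Phi_J=\frac12\int_{u_0}^{u_0+2\pi}\frac{\cos u\,\dr u}{\gamma\tV(u)+k\cos u},\qquad \Psi_J=\frac12\int_{u_0}^{u_0+2\pi}\frac{\sin u\,\dr u}{\gamma\tV(u)+k\cos u}.
\]

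For $\Psi_J$, the key observation is that if $\tV$ were replaced by a constant $V_0$, with $F_0:=\gamma V_0$, then the integrand $\sin u/(F_0+k\cos u)$ admits the $2\pi$-periodic antiderivative $-k^{-1}\log(F_0+k\cos u)$, so its integral over a full period vanishes. Choosing $V_0:=V(a_0)$ and subtracting this null baseline, I rewrite
\[
\Psi_J=\frac12\int_{u_0}^{u_0+2\pi}\sin u\cdot\frac{F_0-\gamma\tV(u)}{(\gamma\tV(u)+k\cos u)(F_0+k\cos u)}\,\dr u
\]
using the identity $1/P-1/Q=(Q-P)/(PQ)$, and then apply Stieltjes integration by parts via $\sin u\,\dr u=-\dr\cos u$. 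The boundary contribution at $u_0$ vanishes because $F_0-\gamma\tV(u_0)=0$, while the remaining boundary term and the Stieltjes integral against $\dr\tV$ are both controlled by $\gamma\var_J(V)$. Estimating the denominators by using the stronger lower bound $F_0\ge\gamma\epsilon$ on one factor (rather than the cruder $\gamma\epsilon-k$) yields the claimed bound $\var_J(V)/[\epsilon(\gamma\epsilon-k)]$.

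For $\Phi_J$, I would decompose $\cos u/(\gamma\tV+k\cos u)=k^{-1}-\gamma\tV/[k(\gamma\tV+k\cos u)]$ to obtain
\[
\Phi_J=\frac\pi k-\frac1{2k}\int_{u_0}^{u_0+2\pi}\frac{\gamma\tV(u)\,\dr u}{\gamma\tV(u)+k\cos u}.
\]
For constant $\tV=V_0$ the remaining integral can be computed explicitly and, when combined with $\pi/k$, contributes a term of size $k|J|/[2(\gamma\epsilon-k)]$ (after using that $|J|=\int\dr u/[2(\gamma V_0+k\cos u)]$). The correction due to variation of $\tV$ is then handled by the same identity-plus-IBP technique as for $\Psi_J$, producing the second term $\var_J(V)/[\epsilon(\gamma\epsilon-k)]$. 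The main technical obstacle throughout is to avoid an unnecessary extra factor $(\gamma\epsilon-k)^{-1}$ in the denominator; this forces one to carefully exploit the lower bound $\tV\ge\epsilon$, rather than only $\gamma\tV+k\cos u\ge\gamma\epsilon-k$, on one factor of each product in the denominator.
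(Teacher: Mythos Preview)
Your approach is genuinely different from the paper's, and worth comparing.  After the substitution $u=2\theta$, the paper does \emph{not} subtract a baseline.  Instead it splits $[u_0,u_0+2\pi]$ according to the sign of $\sin u$ (for $\Psi_J$) or $\cos u$ (for $\Phi_J$): on each half the integrand is monotone in $V$, so it can be squeezed between the values obtained by replacing $V$ with $m=\inf_J V$ and $M=\sup_J V$.  Those constant-$V$ integrals are computed explicitly (as logarithms for $\Psi_J$), and the difference between the upper and lower bounds simplifies to $\gamma(M-m)/[(\gamma m-k)(\gamma M+k)]$.  The point is that one factor in the denominator carries $+k$ and the other $-k$; bounding $\gamma M+k\ge\gamma\epsilon$ and $\gamma m-k\ge\gamma\epsilon-k$ is precisely what yields $\epsilon(\gamma\epsilon-k)$.

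Your baseline-subtraction produces the denominator $(\gamma\tV+k\cos u)(\gamma V_0+k\cos u)$ in which \emph{both} factors carry the same $+k\cos u$.  When $\cos u<0$, neither factor is $\ge\gamma\epsilon$; both are only $\ge\gamma\epsilon-k$.  So your stated remedy---``use $\tV\ge\epsilon$ on one factor''---does not apply to these factors as written.  If instead you bound $(\gamma\tV+k\cos u)(\gamma V_0+k\cos u)\ge(\gamma\epsilon+k\cos u)^2$ and compute $\int_0^{2\pi}\lvert\sin u\rvert\,(\gamma\epsilon+k\cos u)^{-2}\dr u=4/[(\gamma\epsilon)^2-k^2]$ exactly, you get $\lvert\Psi_J\rvert\le 2\var_J(V)/[\epsilon(\gamma\epsilon-k)]$: the right shape but twice the stated constant.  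The Stieltjes integration by parts you propose does not obviously recover this factor, because $dG$ picks up contributions from $\cos u$ in the denominator as well as from $d\tV$, and those extra terms are not visibly smaller.  The $\Phi_J$ argument inherits the same gap in the ``variation'' part (the constant-$V$ computation is fine).

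In short: your route gives bounds of the correct order---indeed sufficient for the $o(1)$ conclusion this lemma ultimately feeds into---but the argument as written does not establish the exact constants, and the key claim about the denominator is not substantiated.  The paper's sign-splitting/squeezing device is what produces the asymmetric product $(\gamma m-k)(\gamma M+k)$, and that asymmetry is exactly where the sharper constant comes from.
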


Note that we get a better estimate for the $\sin$ integral; the extra term in the estimate for the $\cos$ integral is needed to cope with the fact that this integral is non-zero even when $V$ is constant.

\begin{proof}
Since $\theta(a_1)=\theta(a_0)+\pi$ we can define $\chi:[0,2\pi)\to[2\theta(a_0),2\theta(a_1))$ to be the unique bijection with $\chi(u)-u\in2\pi\Z$ for all $u\in[0,2\pi)$; $\chi$ is piecewise affine with at most one jump ($\chi$ will have no jumps iff $\theta(a_0)\in2\pi\Z$). Using \eqref{eq:intx0x1usubs} we can then write
\[
\Psi_J=\int_{a_0}^{a_1}\sin(2\theta(x))\dr x
=\int_0^{2\pi}\frac{\sin(u)}{2[\gamma V(\theta^{-1}(\chi(u)/2))+k\cos(u)]}\dr u
=\Psi_++\Psi_-
\]
where
\[
\Psi_+=\int_0^{\pi}\frac{\sin(u)}{2[\gamma V(\xi_+(u))+k\cos(u)]}\dr u
\]
with $\xi_+(u)=\theta^{-1}(\chi(u)/2)$, and 
\[
\Psi_-=\int_0^{\pi}\frac{\sin(u+\pi)}{2\bigl[\gamma V\bigl(\theta^{-1}(\chi(u+\pi)/2)\bigr)+k\cos(u+\pi)\bigr]}\dr u
=-\int_0^{\pi}\frac{\sin(u)}{2[\gamma V(\xi_-(u))-k\cos(u)]}\dr u
\]
with $\xi_-(u)=\theta^{-1}(\chi(u+\pi)/2)$; $\xi_+$ and $\xi_-$ are bounded piecewise continuous functions $[0,\pi]\to\R$ with ranges contained in $J=[a_0,a_1]$ (in fact the range of $\xi_\pm$ is just the set of $x\in J$ where $\pm\sin(2\theta(x))\ge0$). 

We now seek bounds on $\Psi_\pm$ using (constant) bounds on $V$. 
Set $m=\inf\{V(x):x\in J\}$ and $M=\sup\{V(x):x\in J\}$ so $\epsilon\le m\le M\le+\infty$ and $m\le V(x)\le M$ for all $x\in J$. Thus
\[
0<\gamma m\pm k\cos(u)\le\gamma V(\xi_\pm(u))\pm k\cos(u)\le\gamma M\pm k\cos(u),\quad u\in[0,\pi],
\]
so
\[
\int_0^\pi\frac{\sin(u)}{2[\gamma M\pm k\cos(u)]}\dr u
\le\pm\Psi_\pm\le\int_0^\pi\frac{\sin(u)}{2[\gamma m\pm k\cos(u)]}\dr u.
\]
The integrals appearing in these bounds can be calculated explicitly, leading to
\[
\frac{1}{2k}\log\left(\frac{\gamma M+k}{\gamma M-k}\right)
\le\pm\Psi_\pm\le\frac{1}{2k}\log\left(\frac{\gamma m+k}{\gamma m-k}\right).
\]
Hence
\begin{align}
\abs{\Psi_J}
&=\abs{\Psi_+-(-\Psi_-)}\nonumber\\
&\le\frac{1}{2k}\left[\log\left(\frac{\gamma m+k}{\gamma m-k}\right)-\log\left(\frac{\gamma M+k}{\gamma M-k}\right)\right]\nonumber\\
&=\frac{1}{2k}\log\left(1+2k\frac{\gamma(M-m)}{(\gamma m-k)(\gamma M+k)}\right)\nonumber\\
\label{eq:branchptest}
&\le\frac{\gamma(M-m)}{(\gamma m-k)(\gamma M+k)}
\end{align}
(note that, $\log(1+t)\le t$ for any $t\ge0$). 
Since $(\gamma m-k)(\gamma M+k)\ge(\gamma\epsilon-k)\gamma\epsilon>0$ and $M-m\le\var_J(V)$ the required estimate for $\Psi_J$ now follows.

\bigskip

In a similar manner we can write
\[
\Phi_J=\int_{a_0}^{a_1}\cos(2\theta(x))\dr x
=\Phi_++\Phi_-
\]
where
\[
\Phi_\pm=\pm\int_{-\pi/2}^{\pi/2}\frac{\cos(u)}{2[\gamma V(\eta_\pm(u))\pm k\cos(u)]}\dr u
\]
for some bounded piecewise continuous functions $\eta_\pm:[-\pi/2,\pi/2]\to\R$ whose ranges are contained in $J=[a_0,a_1]$ (the range of $\eta_\pm$ is just the set of $x\in J$ where $\pm\cos(2\theta(x))\ge0$). For $u\in[-\pi/2,\pi/2]$ we have $0\le\cos(u)$ and
\[
0<\gamma m-k\cos(u)\le\gamma V(\eta_\pm(u))\pm k\cos(u)\le\gamma M+k\cos(u),
\]
so
\[
\int_{-\pi/2}^{\pi/2}\frac{\cos(u)}{2[\gamma M+k\cos(u)]}\dr u
\le\pm\Phi_\pm
\le\int_{-\pi/2}^{\pi/2}\frac{\cos(u)}{2[\gamma m-k\cos(u)]}\dr u.
\]
Now, for any $0\le c\le1$, $(\gamma m-kc)(\gamma M+kc)\ge(\gamma m-k)(\gamma M+k)>0$
(recall that $M\ge m$ while $\gamma m\ge\gamma\epsilon>k$); hence
\[
0\le\frac1{\gamma m-kc}-\frac1{\gamma M+kc}
=\frac{2kc+\gamma(M-m)}{(\gamma m-kc)(\gamma M+kc)}
\le\frac{2kc+\gamma(M-m)}{(\gamma m-k)(\gamma M+k)}.
\]
Putting these estimates together gives
\begin{align}
\abs{\Phi_J}
&=\abs{\Phi_+-(-\Phi_-)}\nonumber\\
&\le\int_{-\pi/2}^{\pi/2}\frac{\cos(u)}{2[\gamma m-k\cos(u)]}\dr u-\int_{-\pi/2}^{\pi/2}\frac{\cos(u)}{2[\gamma M+k\cos(u)]}\dr u\nonumber\\
&\le\frac{k}{(\gamma m-k)(\gamma M+k)}\int_{-\pi/2}^{\pi/2}\cos^2(u)\dr u
  +\frac{\gamma(M-m)}{2(\gamma m-k)(\gamma M+k)}\int_{-\pi/2}^{\pi/2}\cos(u)\dr u\nonumber\\
\label{eq:cos1stest}
&=\frac{k\pi}{2(\gamma m-k)(\gamma M+k)}+\frac{\gamma(M-m)}{(\gamma m-k)(\gamma M+k)}.
\end{align}
The second term can be estimated as for \eqref{eq:branchptest}. On the other hand, \eqref{eq:PruferODE} implies $\abs{\nabla\theta}\le\gamma M+k$ on $J$, so 
\[
\pi=\theta(a_1)-\theta(a_0)\le(\gamma M+k)(a_1-a_0)=(\gamma M+k)\abs{J}.
\]
The required estimate for the first term in \eqref{eq:cos1stest} follows.
\end{proof}

\begin{lemma}
\label{lem:endint}
Suppose $\theta$ satisfies \eqref{eq:PruferODE} on an interval $J=[a_0,a_1]$ where $V(x)\ge\epsilon$ with $\gamma\epsilon>k$. If $\abs{\theta(a_1)-\theta(a_0)}\le\pi$ then
\[
\abs{\Phi_J},\;\lrabs{\Psi_J}\le\frac{2}{\gamma\epsilon-k}.
\]
\end{lemma}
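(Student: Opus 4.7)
The plan is to adapt the substitution strategy from the proof of Lemma \ref{lem:piint}, in a simpler form because we no longer need to exploit any cancellation between adjacent half-periods. First I would apply the Pr\"ufer equation \eqref{eq:PruferODE}: under the hypotheses $V\ge\epsilon$ and $\gamma\epsilon>k$ we have $\nabla\theta\ge\gamma V-k\ge\gamma\epsilon-k>0$ on $J$, so $\theta$ is strictly increasing there and, in particular, invertible.

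Next I would perform the change of variable $u=2\theta(x)$, exactly as in \eqref{eq:intx0x1usubs}, to rewrite
\[
\Phi_J=\int_{2\theta(a_0)}^{2\theta(a_1)}\frac{\cos u}{2\bigl[\gamma V(\theta^{-1}(u/2))+k\cos u\bigr]}\,\dr u,
\]
and similarly for $\Psi_J$ with $\sin u$ in the numerator. The key pointwise estimate is that the denominator is bounded below by $2(\gamma\epsilon-k)>0$ uniformly in $u$, since $\gamma V\ge\gamma\epsilon$ and $k\cos u\ge-k$.

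It then remains to estimate the numerators. The hypothesis $\abs{\theta(a_1)-\theta(a_0)}\le\pi$ means the interval of $u$-integration has length at most $2\pi$, and over any such interval both $\int\abs{\sin u}\,\dr u$ and $\int\abs{\cos u}\,\dr u$ are bounded by $4$. Combining these observations yields $\abs{\Phi_J},\abs{\Psi_J}\le 4/\bigl(2(\gamma\epsilon-k)\bigr)=2/(\gamma\epsilon-k)$, as required. There is no real obstacle in this argument; the only point worth flagging is that the assumption on $\theta$ is used precisely to keep the $u$-interval short enough for the elementary bound on $\int\abs{\sin}$ or $\int\abs{\cos}$, since without it the length of the image interval could grow without control as $V$ becomes large.
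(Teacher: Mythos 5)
Your proof is correct and takes essentially the same approach as the paper: the change of variables $u=2\theta(x)$ from \eqref{eq:intx0x1usubs}, the uniform lower bound $\gamma V(\theta^{-1}(u/2))+k\cos u\ge\gamma\epsilon-k$ on the denominator, and the observation that the $u$-interval has length at most $2\pi$ so $\int\abs{f(u)}\,\dr u\le 4$ for $f\in\{\sin,\cos\}$.
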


\begin{proof}
For any $u\in[2\theta(a_0),2\theta(a_1)]$ we have 
\[
\gamma V(\theta^{-1}(u/2))+k\cos(u)
\ge\gamma\epsilon-k>0
\]
(note that $\theta^{-1}(u/2)\in J$). For either $f=\sin$ or $f=\cos$ \eqref{eq:intx0x1usubs} now leads to the estimate
\[
\lrabs{\int_{a_0}^{a_1}f(2\theta(x))\dr x}
\le\int_{2\theta(a_0)}^{2\theta(a_1)}\frac{\abs{f(u)}}{2(\gamma\epsilon-k)}\dr u
\le\frac1{2(\gamma\epsilon-k)}\int_{2\theta(a_0)}^{2\theta(a_0)+2\pi}\abs{f(u)}\dr u
=\frac2{\gamma\epsilon-k},
\]
where the middle step follows since $\theta(a_1)\le\theta(a_0)+\pi$.
\end{proof}

The previous Lemmas can be combined in a straightforward way to deal with more general intervals:

\begin{lemma}
\label{lem:oscillsinint}
Suppose $\theta$ satisfies \eqref{eq:PruferODE} on an interval $J=[a,b]$ where $V(x)\ge\epsilon$ with $\gamma\epsilon>k$. Then
\[
\abs{\Phi_J}\le\frac{4+k\abs{J}}{2(\gamma\epsilon-k)}+\frac{\var_J(V)}{\epsilon(\gamma\epsilon-k)}
\quad\text{and}\quad
\abs{\Psi_J}\le\frac{2}{\gamma\epsilon-k}+\frac{\var_J(V)}{\epsilon(\gamma\epsilon-k)}.
\]
\end{lemma}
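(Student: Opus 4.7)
The plan is to reduce Lemma \ref{lem:oscillsinint} to the two preceding lemmas by partitioning $J=[a,b]$ into full $\pi$-periods of $\theta$ plus a single incomplete piece at the end. The essential observation is that on $J$ we have $\nabla\theta=\gamma V+k\cos(2\theta)\ge\gamma\epsilon-k>0$, so $\theta$ is strictly increasing on $J$; in particular $\theta:J\to[\theta(a),\theta(b)]$ is a homeomorphism.

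First, I would set $N=\lfloor(\theta(b)-\theta(a))/\pi\rfloor$ and use the Intermediate Value Theorem to choose $a=a_0<a_1<\dots<a_N\le b$ with $\theta(a_j)=\theta(a)+j\pi$ for $j=0,\dots,N$. Setting $a_{N+1}=b$, we have $\theta(a_{N+1})-\theta(a_N)\in[0,\pi)$. Now split
\[
\Phi_J=\sum_{j=1}^{N+1}\Phi_{[a_{j-1},a_j]},\qquad \Psi_J=\sum_{j=1}^{N+1}\Psi_{[a_{j-1},a_j]}.
\]
On each $[a_{j-1},a_j]$ with $j\le N$ the hypotheses of Lemma \ref{lem:piint} hold (since $V\ge\epsilon$ and $\gamma\epsilon>k$), and on $[a_N,a_{N+1}]$ the hypotheses of Lemma \ref{lem:endint} hold.

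Applying Lemma \ref{lem:piint} to each full-period piece and Lemma \ref{lem:endint} to the leftover piece, I would obtain
\[
\abs{\Phi_J}\le\sum_{j=1}^N\left[\frac{k\abs{[a_{j-1},a_j]}}{2(\gamma\epsilon-k)}+\frac{\var_{[a_{j-1},a_j]}(V)}{\epsilon(\gamma\epsilon-k)}\right]+\frac{2}{\gamma\epsilon-k}
\]
and
\[
\abs{\Psi_J}\le\sum_{j=1}^N\frac{\var_{[a_{j-1},a_j]}(V)}{\epsilon(\gamma\epsilon-k)}+\frac{2}{\gamma\epsilon-k}.
\]
Using additivity of interval lengths ($\sum_{j=1}^N\abs{[a_{j-1},a_j]}\le\abs{J}$) and subadditivity of total variation ($\sum_{j=1}^N\var_{[a_{j-1},a_j]}(V)\le\var_J(V)$), these collapse to the stated bounds, with the extra $2/(\gamma\epsilon-k)$ from the endpoint piece combining with the trivial factor to give $(4+k\abs{J})/(2(\gamma\epsilon-k))$ in the $\Phi_J$ estimate.

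There is essentially no obstacle here: the lemma is purely a bookkeeping consequence of Lemmas \ref{lem:piint} and \ref{lem:endint}. The only point requiring a moment's care is verifying that the partition points $a_j$ exist, which follows from continuity and strict monotonicity of $\theta$, and that the remaining piece $[a_N,a_{N+1}]$ indeed satisfies the hypothesis $\theta(a_{N+1})-\theta(a_N)\le\pi$ of Lemma \ref{lem:endint} by the defining property of $N=\lfloor(\theta(b)-\theta(a))/\pi\rfloor$.
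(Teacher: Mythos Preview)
Your proof is correct and follows essentially the same approach as the paper: partition $J$ into full $\pi$-periods of $\theta$ plus one leftover piece, apply Lemma~\ref{lem:piint} to each full period and Lemma~\ref{lem:endint} to the remainder, then sum using additivity of interval lengths and subadditivity of total variation. The paper's argument is identical in structure, differing only in notation.
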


A similar result holds in the case that $V(x)\le-\epsilon$ for all $x\in J$.

\begin{proof}
Since $\theta$ is absolutely continuous (as is any solution of \eqref{eq:PruferODE} when $V\in L^1_\loc$) and $\nabla\theta(x)\ge\gamma\epsilon-k>0$ we can choose $n\ge0$ and points $a=a_0<a_1<\dots<a_n\le b$ such that $\theta(a_j)=\theta(a_{j-1})+\pi$ for $j=1,\dots,n$ and $\theta(b)<\theta(a_n)+\pi$. Set $J_j=[a_{j-1},a_j]$ for $j=1,\dots,n$ and $\widetilde{J}=[a_n,b]$ so $J_1,\dots,J_n,\widetilde{J}$ have disjoint interiors while $J=J_1\cup\dots\cup J_n\cup\widetilde{J}$. Now
\[
\abs{\Psi_J}
\le\sum_{j=1}^n\abs{\Psi_{J_j}}\;+\abs{\Psi_{\widetilde{J}}}
\le\sum_{j=1}^n\frac{\var_{J_j}(V)}{\epsilon(\gamma\epsilon-k)}\;+\frac{2}{\gamma\epsilon-k}
\]
using Lemmas \ref{lem:piint} and \ref{lem:endint}. The $\Psi_J$ estimate now follows from the fact that $\var_{J_1}(V)+\dots+\var_{J_n}(V)\le\var_{J}(V)$. A similar argument leads to the $\Phi_J$ estimate.
\end{proof}

We can now combine the previous result with Lemma \ref{lem:intervals} to deal with arbitrary sub-intervals of $I=\supp(V)$:

\begin{proposition}
\label{prop:unifbndIest}
Suppose $V\in\BV$ has no gaps. Let $\epsilon>0$ and consider the notation of Lemma \ref{lem:intervals}. If $\theta$ satisfies \eqref{eq:PruferODE} on $I$ and $\gamma\epsilon>k$ then for any sub-interval $J\subseteq I$ we have
\[
\abs{\Phi_J}\le\frac{6\nu\epsilon^{-1}+k\abs{J}}{2(\gamma\epsilon-k)}+\abs{\Ee}
\quad\text{and}\quad
\abs{\Psi_J}
\le\frac{3\nu\epsilon^{-1}}{\gamma\epsilon-k}+\abs{\Ee}.
\]
\end{proposition}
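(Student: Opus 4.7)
The plan is to decompose $J$ using the interval decomposition from Lemma \ref{lem:intervals} and then apply Lemma \ref{lem:oscillsinint} piece by piece. Since $J\subseteq I$ we may write
\[
J=(J\cap\Ee)\;\sqcup\;\bigsqcup_{n\in\Nind}(J\cap\Ien),
\]
where the union is disjoint and only finitely many $n\in\Nind$ contribute. Each non-empty intersection $J\cap\Ien$ is a closed sub-interval on which $V$ has constant sign and $\abs{V(x)}\ge\epsilon$, so Lemma \ref{lem:oscillsinint} (or its sign-reversed analogue) applies. On $J\cap\Ee$ no oscillation estimate is available, but the trivial bound $\abs{\cos(2\theta)},\abs{\sin(2\theta)}\le 1$ gives a contribution of at most $\abs{J\cap\Ee}\le\abs{\Ee}$ to either $\abs{\Phi_J}$ or $\abs{\Psi_J}$.

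For the remaining pieces, the triangle inequality and Lemma \ref{lem:oscillsinint} yield
\[
\sum_{n\in\Nind}\abs{\Phi_{J\cap\Ien}}
\le\sum_{n\in\Nind}\left(\frac{4+k\abs{J\cap\Ien}}{2(\gamma\epsilon-k)}+\frac{\var_{J\cap\Ien}(V)}{\epsilon(\gamma\epsilon-k)}\right),
\]
and similarly for $\Psi_J$. The key observations needed to collapse these sums are: (a) $\#\Nind\le\nu\epsilon^{-1}$ by Lemma \ref{lem:intervals}(iii); (b) $\sum_{n}\abs{J\cap\Ien}\le\abs{J}$ since the $\Ien$'s are disjoint; and (c) $\sum_{n}\var_{J\cap\Ien}(V)\le\var_J(V)\le\nu$ by the subadditivity of total variation over disjoint sub-intervals. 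Applying these gives
\[
\sum_{n\in\Nind}\abs{\Phi_{J\cap\Ien}}
\le\frac{4\nu\epsilon^{-1}+k\abs{J}}{2(\gamma\epsilon-k)}+\frac{\nu}{\epsilon(\gamma\epsilon-k)}
=\frac{6\nu\epsilon^{-1}+k\abs{J}}{2(\gamma\epsilon-k)},
\]
where the final equality is obtained by rewriting the second term over the common denominator $2(\gamma\epsilon-k)$. An analogous computation yields
\[
\sum_{n\in\Nind}\abs{\Psi_{J\cap\Ien}}
\le\frac{2\nu\epsilon^{-1}}{\gamma\epsilon-k}+\frac{\nu}{\epsilon(\gamma\epsilon-k)}
=\frac{3\nu\epsilon^{-1}}{\gamma\epsilon-k}.
\]
Adding the $\abs{\Ee}$ contribution from $J\cap\Ee$ to each of these sums delivers the two claimed bounds.

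There is no serious obstacle here; the proof is essentially bookkeeping. The only point that deserves care is that Lemma \ref{lem:oscillsinint} is stated under the hypothesis $V\ge\epsilon$, whereas the intervals $\Ien$ may carry $V\le-\epsilon$; one needs to verify (by the symmetric argument remarked on in the statement of that lemma) that the same constants govern the estimate in the negative-sign case, so that the summation above is uniform across the pieces. The only arithmetic subtlety is matching the numerator $6\nu\epsilon^{-1}$ in the stated bound for $\abs{\Phi_J}$, which arises from combining the $4\nu\epsilon^{-1}$ coming from the $\#\Nind$ factor with the $2\nu\epsilon^{-1}$ obtained from bringing $\nu/\epsilon(\gamma\epsilon-k)$ to a common denominator.
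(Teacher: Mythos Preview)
Your proof is correct and follows essentially the same approach as the paper's own argument: decompose $J$ into the pieces $J\cap\Ien$ (where Lemma \ref{lem:oscillsinint} applies) and the leftover $J\cap\Ee$ (handled by the trivial bound), then sum using $\#\Nind\le\nu\epsilon^{-1}$, $\sum_n\abs{J\cap\Ien}\le\abs{J}$, and $\sum_n\var_{J\cap\Ien}(V)\le\nu$. The paper introduces the sub-index set $\Nind'=\{n:\Jen\neq\emptyset\}$ to make the application of Lemma \ref{lem:oscillsinint} cleaner, but since it then uses $\#\Nind'\le\#\Nind$ anyway, the arithmetic is identical to yours.
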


\begin{proof}
Set $\Jen=\Ien\cap J$ for $n\in\Nind$, and $\Nind'=\{n\in\Nind:\Jen\neq\emptyset\}$. In particular
\begin{equation}
\label{eq:Ne'est}
\#\Nind'\le\#\Nind\le\nu\epsilon^{-1}
\end{equation}
by Lemma \ref{lem:intervals}(iii), while
\[
\Fe:=J\setminus\bigcup_{n\in\Nind'}\Jen
\subseteq I\setminus\bigcup_{n\in\Nind}\Ien
=\Ee,
\]
so
\begin{equation}
\label{eq:FeEe}
\abs{\Fe}\le\abs{\Ee}.
\end{equation}
Now
\[
\Psi_J=\sum_{n\in\Nind'}\Psi_{\Jen}\;
+\int_{\Fe}\sin(2\theta(x))\dr x.
\]
Using \eqref{eq:FeEe} the modulus of the final term is clearly bounded by $\abs{\Ee}$. On the other hand, by Lemma \ref{lem:intervals}(i) $V$ has constant sign on $\Jen\subseteq\Ien$ and satisfies $\abs{V(x)}\ge\epsilon$ for all $x\in\Jen$. Lemma \ref{lem:oscillsinint} thus gives
\[
\abs{\Psi_{\Jen}}
\le\frac{2+\epsilon^{-1}\var_{\Jen}(V)}{\gamma\epsilon-k}.
\]
However the $\Jen$ for $n\in\Nind'$ are disjoint subintervals of $J\subseteq I$, so
\[
\sum_{n\in\Nind'}\var_{\Jen}(V)
\le\var_J(V)
\le\var_{\R}(V)
=\nu.
\]
Together with \eqref{eq:Ne'est} we then get
\[
\sum_{n\in\Nind'}\bigl(2+\epsilon^{-1}\var_{\Jen}(V)\bigr)
\le2\,\#\Nind'+\nu\epsilon^{-1}
\le3\nu\epsilon^{-1}.
\]
The estimate for $\Psi_J$ follows. Since 
\[
\sum_{n\in\Nind'}\abs{\Jen}
=\biggl\lvert{\bigcup_{n\in\Nind'}\Jen}\biggr\rvert
\le\abs{J}
\]
a similar argument leads to the estimate for $\Phi_J$.
\end{proof}

\begin{proof}[Proof of Proposition \ref{prop:intcossinest}]
Choose $\epsilon=\epsilon_\gamma>0$ for each $\gamma>0$ so that $\epsilon_\gamma\to0$ and $\epsilon_\gamma^2\gamma\to\infty$ as $\gamma\to\infty$ (we can take $\epsilon_\gamma\sim\gamma^{-\mu}$ with $0<\mu<1/2$, for example). Letting $\gamma\to\infty$ it follows that $\nu\epsilon_\gamma^{-1}(\gamma\epsilon_\gamma-k)^{-1}\to0$ and $\abs{\Ee[\epsilon_\gamma]}\to0$ (from Lemma \ref{lem:intervals}(ii)); both limits are independent of $J$. On the other hand, $\abs{J}\le\abs{I}$ (since $J\subseteq I$) so we also have $\abs{J}(\gamma\epsilon_\gamma-k)^{-1}\to0$ at a rate that can be bounded uniformly in $J$. The result now follows directly from the estimates in Proposition \ref{prop:unifbndIest}.
\end{proof}

\section{Real zeros of a perturbed trigonometric function}
\label{sec:rzcos}

This section is devoted to the proof of Theorem \ref{thm:coszeros}. For notational convenience define a function $\f:\R\to\R$ by
\[
\f(x)=\cos(x)+\alpha\cos(\beta x).
\]
For any function $\phi:\R\to\R$ we also set $\f[\phi]=\f+\phi$; thus \eqref{eq:cospert0} becomes $\f[\phi](x)=0$. 

The case $\alpha\beta<1$ is straightforward; an elementary argument shows that $\f[\phi]$ is a ``small'' perturbation of $\cos(x)$ leading to a one-to-one association between points in the corresponding zero sets (Lemma \ref{lem:zintab<1}). 

The case $\alpha\beta>1$ is dealt with in two main steps; firstly the result is obtained directly when $\phi\equiv0$ (Section \ref{sec:zf}) and secondly we show that the addition of $\phi$ cannot change the leading order asymptotic of the number of zeros (Section \ref{sec:zpert}). In both steps the arguments for $\beta\in\Q$ and $\beta\notin\Q$ are unified only in the initial stages.

When $\beta\in\Q$ $\f$ is periodic and an exact count of the number of zeros can be made. 
In order to deal with the perturbation $\phi$ we simply need to avoid cases where $\f$ has tangential zeros; this leads to the extra condition $p_\beta+q_\beta\nu_{\alpha,\beta}\notin4\Z$ in this case (see Lemma \ref{lem:tangzsbrat}). Since we are then only dealing with the perturbation of transversal zeros we do not require $n=2$ in condition \eqref{eq:decayprop012}; we will establish the result using the weaker decay condition
\begin{equation}
\label{eq:decayprop01}
\phi\in C^1(\R),\quad\text{$\phi^{(n)}(x)=o(1)$ as $x\to\infty$ for $n=0,1$.}
\end{equation}

When $\beta\notin\Q$ $\f$ is no longer periodic but we can appeal to ergodicity to determine the asymptotic distribution of zeros. However dealing with the perturbation $\phi$ is more subtle in this case as we must consider points where $\f$ comes arbitrarily close to having a tangential zero. The number of such points is limited (Corollary \ref{cor:diststp}) while condition \eqref{eq:decayprop012} ensures that the addition of $\phi$ will alter the number of zeros by at most $2$ near each such point (Corollary \ref{cor:dzIstp}).

\subsection{Some preliminaries}
\label{sec:zprelim}

Let $\numz$ denote the counting function given by
\[
\numz(I)=\#\bigl\{x\in I:\f[\phi](x)=0\bigr\}\in\N\cup\{0,\infty\}
\]
for any interval $I\subseteq\R$; if $R\ge0$ we'll abuse notation slightly and write $\numz(R)$ for $\numz([0,R])$.
We need to determine the limit of $\numz(R)/R$ as $R\to\infty$. 
To do this it will be convenient to work with sequences of increasing values of $R$; 
the next result is straightforward (note that $\numz[\phi](R)$ is an non-decreasing function of $R$).

\begin{lemma}
\label{lem:numzRRseqlim}
The quantity $\numz[\phi](R)/R$ has a limit as $R\to\infty$ iff the quantity $\numz[\phi](R_n)/R_n$ has a limit as $n\to\infty$ for some (equivalently, any) positive increasing sequence $(R_n)_{n\ge1}$ with $R_n\to\infty$ and $R_{n-1}/R_n\to1$ as $n\to\infty$. When the limits exist they are equal.
\end{lemma}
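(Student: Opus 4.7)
The plan is to exploit the monotonicity of $\numz[\phi]$ to squeeze the ratio $\numz[\phi](R)/R$ between consecutive sequence values. The forward implication is trivial: if $\numz[\phi](R)/R$ has limit $L$ as $R\to\infty$, then its restriction to any sequence $R_n\to\infty$ has the same limit. So the content is in the reverse direction, which will give the ``equivalently, any'' clause automatically.

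For the reverse direction, suppose $(R_n)_{n\ge 1}$ is a positive increasing sequence with $R_n\to\infty$, $R_{n-1}/R_n\to 1$, and $\numz[\phi](R_n)/R_n\to L$ for some $L\in[0,\infty]$. For each $R\ge R_1$ I would choose $n=n(R)$ so that $R_{n-1}\le R\le R_n$; as $R\to\infty$ we have $n(R)\to\infty$. Since $\numz[\phi]$ is a non-decreasing function of $R$ (it counts zeros in $[0,R]$), one gets the sandwich
\[
\frac{\numz[\phi](R_{n-1})}{R_n}\le\frac{\numz[\phi](R)}{R}\le\frac{\numz[\phi](R_n)}{R_{n-1}},
\]
which can be rewritten as
\[
\frac{\numz[\phi](R_{n-1})}{R_{n-1}}\cdot\frac{R_{n-1}}{R_n}\le\frac{\numz[\phi](R)}{R}\le\frac{\numz[\phi](R_n)}{R_n}\cdot\frac{R_n}{R_{n-1}}.
\]
Both outer quantities tend to $L\cdot 1=L$ as $R\to\infty$, by the hypotheses on the sequence. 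Hence $\numz[\phi](R)/R\to L$, as required.

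To finish the ``some $\Longleftrightarrow$ any'' equivalence, note that once we know $\numz[\phi](R)/R\to L$ along the continuous parameter $R$, every subsequence (in particular any other sequence $(R'_n)$ satisfying the stated conditions) inherits the same limit $L$ by the forward direction. There is no genuine technical obstacle here; the only mild care is that $L$ is allowed to be $+\infty$, but the sandwich argument goes through in that case with the obvious interpretation (both outer ratios diverge to $+\infty$).
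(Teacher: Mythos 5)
Your proof is correct and is precisely the sandwich argument the paper has in mind; the paper simply calls the lemma ``straightforward'' and hints at the key point (monotonicity of $\numz[\phi]$) without writing out the details. Nothing to add.
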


\subsubsection*{Perturbation of zeros}

For any function $h\in C^1(\R)$ set $\Eng{h}=(h)^2+(h')^2$. 

\begin{lemma}
\label{lem:genpertz1}
Suppose $g,\psi\in C^1(\R)$ satisfy $\Eng{\psi}<\Eng{g}$ on an interval $I$ where $g'$ doesn't change sign (that is, either $g'\ge0$ or $g'\le0$). Then $g+\psi$ can have at most one zero on $I$.
\end{lemma}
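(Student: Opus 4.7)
The plan is to show that at any zero of $h:=g+\psi$ lying in $I$, the derivative $h'$ has the same strict sign as $g'$; once this is established, the conclusion is immediate by an elementary sign argument.

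Fix a zero $x_0\in I$ of $h$, so $g(x_0)=-\psi(x_0)$ and in particular $g(x_0)^2=\psi(x_0)^2$. The hypothesis $E_\psi(x_0)<E_g(x_0)$ then gives
\[
g'(x_0)^2=E_g(x_0)-g(x_0)^2>E_\psi(x_0)-\psi(x_0)^2=\psi'(x_0)^2,
\]
so $\lvert g'(x_0)\rvert>\lvert\psi'(x_0)\rvert\ge0$. In particular $g'(x_0)\neq0$, and $h'(x_0)=g'(x_0)+\psi'(x_0)$ has the same (strict) sign as $g'(x_0)$.

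Now suppose, for contradiction, that $h$ has two zeros $x_1<x_2$ in $I$. By the previous paragraph, $g'(x_1)$ and $g'(x_2)$ are both non-zero; since $g'$ does not change sign on $I$, they must share a common strict sign, and hence so do $h'(x_1)$ and $h'(x_2)$. Replacing $g$ and $\psi$ by their negatives if necessary (which changes neither $E_g$, $E_\psi$, nor the zero sets of $h$ and $g'$), we may assume $h'(x_1)>0$ and $h'(x_2)>0$.

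The condition $h'(x_1)>0$ combined with $h(x_1)=0$ forces $h(x)>0$ for all $x$ slightly to the right of $x_1$, while $h'(x_2)>0$ combined with $h(x_2)=0$ forces $h(x)<0$ for all $x$ slightly to the left of $x_2$. Therefore $h$ must change from positive to negative somewhere on $(x_1,x_2)$, contradicting the assumption $h(x_1)=h(x_2)=0$ and the continuity of $h$ in the trivial ordering of signs. More precisely, letting $x_*=\sup\{x\in[x_1,x_2]:h>0\text{ on }(x_1,x)\}$ we obtain $h(x_*)=0$ and $h(x)\ge0$ on $[x_1,x_*]$; but the analysis at the zero $x_*$ then forces $h'(x_*)\le0$, contradicting what was proved in the first paragraph.

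The only step that requires care is the first paragraph — the inference that the inequality $E_\psi<E_g$, together with the algebraic identity $g(x_0)^2=\psi(x_0)^2$ at a zero of $h$, is exactly what is needed to control the size of $g'(x_0)$ relative to $\psi'(x_0)$. Once this pointwise comparison is in hand, the geometric/Rolle-type finish is routine.
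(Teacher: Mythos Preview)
Your proof is correct and follows essentially the same approach as the paper: use the energy inequality at a zero of $h=g+\psi$ together with $g(x_0)^2=\psi(x_0)^2$ to force $|g'(x_0)|>|\psi'(x_0)|$, so that $h'$ has the strict sign of $g'$ at every zero, and then obtain a contradiction from a Rolle-type argument if two zeros exist. The paper runs the same idea in the contrapositive direction (two zeros give one with $(g+\psi)'\le0$, leading to $E_\psi\ge E_g$) and is terser about the Rolle step, which you spell out more carefully.
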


\begin{proof}
Assume $g'\ge0$ (the case $g'\le0$ can be handled similarly). 
Now suppose $g+\psi$ has (at least) two zeros on $I$. 
Then (at least) one of these zeros, say $x_0$, satisfies $(g+\psi)'(x_0)\le0$, so
\[
\psi(x_0)=-g(x_0)
\quad\text{and}\quad
\psi'(x_0)\le-g'(x_0)\le0.
\]
This leads to the contradiction $\Eng{\psi}(x_0)\ge\Eng{g}(x_0)$.
\end{proof}

\begin{lemma}
\label{lem:genpertz2}
Let $g,\psi\in C^1(\R)$ and suppose $I=[s,t]$ is a closed interval with $g'(s)=0=g'(t)$ and $g'(x)\neq0$ for $x\in(s,t)$. Also suppose $\Eng{\psi}<\Eng{g}$ on $I$. Then $g+\psi$ and $g$ have the same number of zeros on $I$. Furthermore the endpoints of $I$ can't be zeros of either function.
\end{lemma}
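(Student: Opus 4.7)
The plan is to combine Lemma \ref{lem:genpertz1} with boundary information extracted from the strict inequality $E_\psi < E_g$ at the endpoints, where $g'$ vanishes.

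First I would exploit the endpoints. At $s$, the hypothesis $g'(s)=0$ reduces $E_g(s)$ to $g(s)^2$, so $E_\psi(s) < E_g(s)$ becomes $\psi(s)^2 + \psi'(s)^2 < g(s)^2$. This forces $g(s)\neq0$ and $|\psi(s)| < |g(s)|$, which in turn gives that $g+\psi$ does not vanish at $s$ either, and that $(g+\psi)(s)$ has the same sign as $g(s)$. The same analysis at $t$ settles the second (endpoint) claim of the lemma.

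Next, since $g'$ is continuous and nonzero on $(s,t)$, it has constant sign there, and extends to a constant-sign function on all of $I$ (the endpoints add only the value $0$). Thus $g$ is monotonic on $I$, hence has at most one zero, and Lemma \ref{lem:genpertz1} applies on $I$ to give that $g+\psi$ also has at most one zero on $I$.

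To match the zero counts, I would split on the signs of $g(s)$ and $g(t)$. If they differ, then by the endpoint step above $(g+\psi)(s)$ and $(g+\psi)(t)$ also have opposite signs, so the intermediate value theorem gives exactly one zero of each of $g$ and $g+\psi$ on $I$. If $g(s)$ and $g(t)$ share a sign, monotonicity rules out any zero of $g$; and for $g+\psi$ I argue by contradiction: a single zero $x_0$ of $g+\psi$ with same-sign endpoint values would have to be a local extremum of $g+\psi$, forcing $(g+\psi)'(x_0)=0$, whence $\psi(x_0)=-g(x_0)$ and $\psi'(x_0)=-g'(x_0)$, yielding $E_\psi(x_0)=E_g(x_0)$ and contradicting the hypothesis. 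The only real (but minor) obstacle is this last tangential-zero case, which is exactly what the strict energy inequality is designed to exclude.
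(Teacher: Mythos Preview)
Your proposal is correct and follows essentially the same route as the paper's proof: endpoint analysis via $g'(s)=g'(t)=0$ to get sign agreement of $g$ and $g+\psi$ at the endpoints, the single-sign property of $g'$ to invoke Lemma~\ref{lem:genpertz1}, a case split on $\sgn g(s)\cdot\sgn g(t)$, and the tangential-zero contradiction $E_\psi(x_0)=E_g(x_0)$ in the same-sign case. The only cosmetic difference is that for ruling out a zero of $g$ itself in the same-sign case you appeal directly to monotonicity, while the paper recycles the turning-point argument (a zero of $g$ with $g(s)g(t)>0$ would force $g(x_0)=g'(x_0)=0$, hence $E_g(x_0)=0$, contradicting $E_g>E_\psi\ge0$); both are fine.
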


\begin{proof}
Since $g'(x)\neq0$ for $x\in(s,t)$, $g$ can have at most one zero on $I$. Also $g'\ge0$ or $g'\le0$ on $I$ (by continuity), so $g+\psi$ has at most one zero on $I$ by Lemma \ref{lem:genpertz1}.

We have $0\le\psi^2(s)\le\Eng{\psi}(s)<\Eng{g}(s)=g^2(s)$. Thus $g$ and $g+\psi$ are non-zero and have the same sign at $s$. A similar result applies at $t$.

If $g(s)g(t)<0$ then the same is true for $g+\psi$. In this case both functions have at least one, and hence exactly one, zero on $I$. 

If $g(s)g(t)>0$ (equivalently $(g+\psi)(s)\,(g+\psi)(t)>0$) then $g$ (respectively $g+\psi$) either has no zeros on $I$ or a single zero $x_0\in I$ which is also a turning point. It follows that $\Eng{g}(x_0)=0$ (respectively $\psi(x_0)=-g(x_0),\,\psi'(x_0)=-g'(x_0)$) which leads to a contradiction since $\Eng{g}>0$ on $I$ (respectively $\Eng{\psi}(x_0)\neq\Eng{g}(x_0)$).
Hence neither $g$ nor $g+\psi$ have any zeros on $I$ when $g(s)g(t)>0$.
\end{proof}

\subsubsection*{The case $\alpha\beta<1$}

The case $\alpha\beta<1$ in Theorem \ref{thm:coszeros} follows easily from the next result.

\begin{lemma}
\label{lem:zintab<1}
Suppose $0\le\alpha,\alpha\beta<1$ and $\phi$ satisfies \eqref{eq:decayprop01}. Then there exists $N\in\N$ such that $\f[\phi]$ has exactly one zero in $[n\pi,(n+1)\pi]$ for any $n\ge N$; furthermore this zero cannot occur at an endpoint of the interval.
\end{lemma}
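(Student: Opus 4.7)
The plan is to apply Lemma \ref{lem:genpertz2} with $g(x)=\cos(x)$ and $\psi(x)=\alpha\cos(\beta x)+\phi(x)$, viewing $\f[\phi]=g+\psi$. On any interval $[n\pi,(n+1)\pi]$ the derivative $g'(x)=-\sin(x)$ vanishes exactly at the two endpoints, which gives the structural hypothesis of that lemma; moreover $g$ itself has exactly one zero on such an interval, located at $(n+1/2)\pi$. So the whole task reduces to verifying the energy inequality $\Eng{\psi}(x)<\Eng{g}(x)\equiv 1$ uniformly for $x\ge N\pi$ with $N$ sufficiently large.

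To establish the energy inequality, expand
\[
\Eng{\psi}(x)=\bigl(\alpha\cos(\beta x)+\phi(x)\bigr)^2+\bigl(-\alpha\beta\sin(\beta x)+\phi'(x)\bigr)^2.
\]
The ``unperturbed'' part is
\[
\alpha^2\cos^2(\beta x)+\alpha^2\beta^2\sin^2(\beta x)\le\max(\alpha^2,\alpha^2\beta^2)=:c,
\]
and by the hypotheses $0\le\alpha<1$ and $\alpha\beta<1$ we have $c<1$. The remaining cross terms $2\alpha\phi\cos(\beta x)$, $-2\alpha\beta\phi'\sin(\beta x)$, $\phi^2$ and $(\phi')^2$ are each bounded by a constant times $\abs{\phi(x)}+\abs{\phi'(x)}$ (for large $x$), and they tend to $0$ as $x\to\infty$ by \eqref{eq:decayprop01}. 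Pick $N\in\N$ large enough that the sum of these remaining four terms is less than $1-c$ for every $x\ge N\pi$; then $\Eng{\psi}(x)<1$ on $[N\pi,\infty)$.

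For any $n\ge N$, applying Lemma \ref{lem:genpertz2} on $I=[n\pi,(n+1)\pi]$ now shows that $\f[\phi]=g+\psi$ has the same number of zeros on $I$ as $g=\cos$ does, namely exactly one, and neither function can vanish at the endpoints. This is precisely the statement of the lemma.

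There is no real obstacle here; the argument is short once the correct split $\f[\phi]=\cos(x)+\bigl(\alpha\cos(\beta x)+\phi(x)\bigr)$ is chosen. The only mild subtlety is the uniformity of the energy bound, which is immediate from the strict inequalities $\alpha<1$ and $\alpha\beta<1$ combined with the $C^1$ decay assumption on $\phi$.
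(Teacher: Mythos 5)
Your proof is correct and follows essentially the same route as the paper: the same decomposition $g=\cos$, $\psi=\alpha\cos(\beta x)+\phi$, the same uniform bound $\Eng{\psi}\le\max\{\alpha^2,\alpha^2\beta^2\}+o(1)<1=\Eng{g}$ for large $x$ via \eqref{eq:decayprop01}, and the same final appeal to Lemma~\ref{lem:genpertz2} on $[n\pi,(n+1)\pi]$. No gaps.
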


\begin{proof}
Set $\psi(x)=\alpha\cos(\beta x)+\phi(x)$ and $\epsilon=\frac12\min\{1-\alpha^2,1-\alpha^2\beta^2\}>0$. Now
\begin{align*}
\Eng{\psi}(x)
&=\alpha^2\cos^2(\beta x)+\alpha^2\beta^2\sin^2(\beta x)
+2\alpha\cos(\beta x)\phi(x)+2\alpha\beta\sin(\beta x)\phi'(x)+\Eng{\phi}(x)\\
&\le\alpha^2\max\{1,\beta^2\}+2\alpha\abs{\phi(x)}+2\alpha\beta\abs{\phi'(x)}+\Eng{\phi}(x)\\
&=1-2\epsilon+2\alpha\abs{\phi(x)}+2\alpha\beta\abs{\phi'(x)}+\Eng{\phi}(x).
\end{align*}
Our assumptions on $\phi$ then allow us to find $N\in\N$ such that 
$\Eng{\psi}(x)\le1-\epsilon$ for all $x\ge N\pi$. 
If $n\ge N$ then Lemma \ref{lem:genpertz2} (with $g=\cos$ so $\Eng{g}=1$) shows that $\cos$ and $\cos+\psi=\f[\phi]$ have the same number of zeros in $[n\pi,(n+1)\pi]$ and any zeros lie in the interior. 
The result follows.
\end{proof}

\subsection{The unperturbed function}
\label{sec:zf}

Throughout this section we shall assume $\alpha\beta>1$ (although several of the results can be extended to cover other cases). 
Since $\alpha<1$ it follows that $\beta>1$.
Define $\xi,\eta\in(0,\pi/2)$ by
\begin{equation}
\label{eq:alphabetadef}
\xi=\arcsin\frac{\sqrt{\alpha^2\beta^2-1}}{\sqrt{\beta^2-1}}
\quad\text{and}\quad
\eta=\arcsin\frac{\sqrt{1-\alpha^2}}{\alpha\sqrt{\beta^2-1}}.
\end{equation}
The complementary angles satisfy
\begin{equation}
\label{eq:alpha'beta'def}
\xi'=\frac\pi2-\xi=\arcsin\frac{\beta\sqrt{1-\alpha^2}}{\sqrt{\beta^2-1}}
\quad\text{and}\quad
\eta'=\frac\pi2-\eta=\arcsin\frac{\sqrt{\alpha^2\beta^2-1}}{a\sqrt{\beta^2-1}}.
\end{equation}
If we fix $\beta>1$ and vary $\alpha$ from $1/\beta$ to $1$ it is easy to check that $\xi$ increases from $0$ to $\pi/2$ and $\eta$ decreases from $\pi/2$ to $0$. Also note that $\nu_{\alpha,\beta}=\dfrac2{\pi}\bigl(\beta\xi+\eta\bigr)$ (recall \eqref{eq:defncoeffu}). 

\subsubsection*{Zeros of $f$}

The aim of this section is to establish Theorem \ref{thm:coszeros} in the case that $\phi\equiv0$.
For any $n\in\Z$ set $\numzj=\numz[0](2\pi n+[0,2\pi))$; in particular
\begin{equation}
\label{eq:numz0Jnumzj}
\numz[0](2\pi N)=\sum_{n=0}^{N-1}\numzj
\end{equation}
for any $N\in\N$. For any $t\in\R$ let 
\[
\numt(t)=\#\bigl\{x\in[0,\pi):\cos(x)+\alpha\cos(\beta x+t)=0\bigr\}.
\]
Clearly $\numt(t)$ is $2\pi$-periodic in $t$ while
\[
\cos 0+\alpha\cos(\beta.0+t)\ge 1-\alpha>0
\quad\text{and}\quad\cos\pi+\alpha\cos(\beta\pi+t)\le-1+\alpha<0
\]
so the inclusion/exclusion of the possibility $x=0$ or $x=\pi$ does not alter the definition of $\numt$. 

\begin{lemma}
\label{lem:numzjnumt}
For all $n\in\Z$ we have
\[
\numzj=\numt(2\pi n\beta)+\numt(-2\pi(n+1)\beta).
\]
\end{lemma}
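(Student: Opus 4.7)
The plan is to split the interval $[2\pi n,2\pi(n+1))$ into the two half-period sub-intervals $[2\pi n,2\pi n+\pi)$ and $[2\pi n+\pi,2\pi(n+1))$ and use a suitable change of variable on each one to recast the counting problem in the form of $\numt$.

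On the first half I would substitute $y=x-2\pi n$, so that $y$ ranges over $[0,\pi)$. By $2\pi$-periodicity of $\cos$ we have $\cos(x)=\cos(y)$ and $\cos(\beta x)=\cos(\beta y+2\pi n\beta)$, and therefore the zeros of $\f$ in $[2\pi n,2\pi n+\pi)$ are in bijection with the solutions of $\cos(y)+\alpha\cos(\beta y+2\pi n\beta)=0$ for $y\in[0,\pi)$; by definition this number is $\numt(2\pi n\beta)$.

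On the second half I would use the reflection $y=2\pi(n+1)-x$, which maps $x\in[2\pi n+\pi,2\pi(n+1))$ bijectively to $y\in(0,\pi]$. Here $\cos(x)=\cos(2\pi(n+1)-y)=\cos(y)$ while $\cos(\beta x)=\cos(2\pi(n+1)\beta-\beta y)=\cos(\beta y-2\pi(n+1)\beta)$ by evenness of $\cos$, so the zero count equals
\[
\#\bigl\{y\in(0,\pi]:\cos(y)+\alpha\cos(\beta y+t)=0\bigr\},\qquad t=-2\pi(n+1)\beta.
\]
The only thing left to reconcile is that $\numt$ is defined on $[0,\pi)$ rather than on $(0,\pi]$; but the remark preceding the lemma (namely $\cos 0+\alpha\cos t\ge 1-\alpha>0$ and $\cos\pi+\alpha\cos(\beta\pi+t)\le-1+\alpha<0$ for every $t$) shows that neither endpoint $y=0$ nor $y=\pi$ can be a zero of $\cos(y)+\alpha\cos(\beta y+t)$, so the two counts agree and this contribution equals $\numt(-2\pi(n+1)\beta)$.

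There is no genuine obstacle; the only point requiring a bit of care is the bookkeeping of endpoints, which is taken care of by the observation that the endpoints $x=2\pi n$, $2\pi n+\pi$, $2\pi(n+1)$ cannot themselves be zeros of $\f$ (for the same sign reason, using $\alpha<1$), so the half-open interval $[2\pi n,2\pi(n+1))$ may safely be partitioned as above without losing or double-counting any root. Summing the two contributions gives the claimed identity.
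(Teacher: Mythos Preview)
Your proof is correct and follows essentially the same approach as the paper: the paper introduces $x_1=x-2\pi n$ and $x_2=2\pi-x_1=2\pi(n+1)-x$, which are exactly your two substitutions $y$, and then splits according to whether $x_1$ or $x_2$ lies in $(0,\pi)$. The only cosmetic difference is that the paper writes the two changes of variable as simultaneous equivalent forms of $\f(x)=0$ before splitting, whereas you split the interval first and then substitute on each half; the endpoint bookkeeping is identical.
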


\begin{proof}
Let $x_1=x-2\pi n$, $x_2=2\pi-x_1$ (so $\cos(x)=\cos(x_1)=\cos(x_2)$). Then
\begin{align}
\f(x)=0
\label{eq:x1cond}\quad
&\Longleftrightarrow\quad\cos(x_1)+\alpha\cos(\beta x_1+2\pi n\beta)=0\\
\label{eq:x2cond}
&\Longleftrightarrow\quad\cos(x_2)+\alpha\cos(\beta x_2-2\pi(n+1)\beta)=0.
\end{align}
Clearly $x_2\in(0,\pi)$ iff $x_1\in(\pi,2\pi)$. Thus
\begin{align*}
\numzj&=\#\bigl\{x_1\in(0,\pi):\text{\eqref{eq:x1cond} holds}\bigr\}+\#\bigl\{x_2\in(0,\pi):\text{\eqref{eq:x2cond} holds}\bigr\}\\
&=\numt(2\pi n\beta)+\numt(-2\pi(n+1)\beta)
\end{align*}
(from the definition of $\numt$). 
\end{proof}

Set $\mu=\beta\xi-\eta'$ (recall \eqref{eq:alphabetadef} and \eqref{eq:alpha'beta'def}) and define an open interval by
\[
J=-\beta\,\frac{\pi}2+\frac{3\pi}2+(-\mu,\mu);
\]
in particular
\begin{equation}
\label{eq:uv|J|}
\nu_{\alpha,\beta}=1+\dfrac{2}{\pi}\,\mu
\quad\text{and}\quad
\abs{J}=2\mu.
\end{equation}
For any open interval $I$ set 
\[
\mcf{I}=\frac12\bigl(\chi_{I}+\chi_{\o{I}}\bigr)
\]
(so $\mcf{I}$ is $1$ on $I$, takes the value $1/2$ at the end points of $I$, and is $0$ elsewhere).

\begin{lemma}
\label{lem:altexpnumt}
For any $t$ we have
\[
\numt(t)=1+2\sum_{n\in\Z}\mcf{J}(t-2\pi n).
\]
\end{lemma}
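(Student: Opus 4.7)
The plan is to parametrize the zero set of $F_t(x) := \cos x + \alpha\cos(\beta x + t)$ by two explicit branches and then count intersections with horizontal lines $t = t_0$. Any zero forces $|\cos x| = \alpha|\cos(\beta x + t)| \le \alpha$, so $x \in [\delta, \pi-\delta]$ with $\delta := \arccos\alpha$. On this range $F_t(x)=0$ rearranges to $\cos(\beta x + t) = -\alpha^{-1}\cos x$, whence the zero set is the union of the two graphs $t \equiv \sigma_\pm(x) \pmod{2\pi}$ with $\sigma_\pm(x) := -\beta x \pm \arccos(-\alpha^{-1}\cos x)$.

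Next I would analyse these branches. Direct differentiation shows $\sigma_+'(x) = -\beta - \sin x/\sqrt{\alpha^2-\cos^2 x} < 0$, so $\sigma_+$ is strictly decreasing, with range $I_+ := [-\beta(\pi-\delta),\, -\beta\delta + \pi]$. For $\sigma_-$, the critical equation $\sin^2 x = \beta^2(\alpha^2-\cos^2 x)$ has precisely two solutions $x=\xi'$ and $x=\pi-\xi'$ in $(\delta, \pi-\delta)$, by \eqref{eq:alphabetadef}--\eqref{eq:alpha'beta'def}. Using the identities $\sin\xi/\alpha = \cos\eta$ and $\cos\xi/(\alpha\beta) = \sin\eta$ (immediate consequences of those definitions), the critical values are $\sigma_-(\xi') = -\beta\pi/2 - \pi/2 + \mu$ (local max) and $\sigma_-(\pi-\xi') = -\beta\pi/2 - \pi/2 - \mu$ (local min), where $\mu = \beta\xi + \eta - \pi/2 = \beta\xi - \eta'$. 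Splitting $\sigma_-$ into its three monotone pieces yields ranges $I_1 := [-\beta\delta-\pi,\, -\beta\pi/2-\pi/2+\mu]$, $I_2 := [-\beta\pi/2-\pi/2-\mu,\, -\beta\pi/2-\pi/2+\mu]$ and $I_3 := [-\beta\pi/2-\pi/2-\mu,\, -\beta(\pi-\delta)]$.

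Setting $\#_\bullet(t_0) := \#\{k \in \Z : t_0 + 2\pi k \in I_\bullet\}$, we have $\numt(t_0) = \#_+(t_0) + \#_1(t_0) + \#_2(t_0) + \#_3(t_0)$. Direct inspection shows four endpoint coincidences: $\sup I_+ = \inf I_1 + 2\pi$, $\inf I_+ = \sup I_3$, $\sup I_1 = \sup I_2$, and $\inf I_3 = \inf I_2$. Each critical $t_0$ triggers simultaneous unit jumps in exactly two of the four counts, and in each case these jumps cancel in the combination $D(t_0) := \#_+ + \#_1 + \#_3 - \#_2$. Hence $D$ is locally constant off a discrete set; its average over a single $2\pi$-period equals
\[
\frac{|I_+| + |I_1| + |I_3| - |I_2|}{2\pi} = \frac{(\beta+2)\pi - 2\beta(\delta + \arcsin\alpha)}{2\pi} = 1
\]
using $\delta + \arcsin\alpha = \pi/2$. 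Therefore $D \equiv 1$, and $\numt(t_0) = 1 + 2\#_2(t_0)$. Finally $I_2 \equiv J \pmod{2\pi}$ (since $-\beta\pi/2 - \pi/2 + 2\pi = -\beta\pi/2 + 3\pi/2$), so $\#_2(t_0) = \sum_n \mcf{J}(t_0 - 2\pi n)$, where the $1/2$-weighting of $\mcf J$ at $\partial J$ captures the tangential case of two simple zeros of $F_t$ merging into a double zero at a critical point of $\sigma_-$.

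The main obstacle is verifying the jump cancellations rigorously, particularly when $2\mu \geq 2\pi$ so that multiple $2\pi$-shifts of $I_2$ overlap at the same $t_0$ and several boundary events may coincide. The accounting still works because every endpoint of $I_1$, $I_3$, or $I_+$ is paired (possibly after a $2\pi$-shift) with an endpoint of a single partner in $\{I_+, I_1, I_3, I_2\}$, producing exactly cancelling unit jumps in $D$ at each joint event, independently of the $k$-index of the shift involved.
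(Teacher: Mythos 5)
Your parametrization of the zero set by the two branches $\sigma_\pm$ and the split of $\sigma_-$ into its three monotone pieces mirrors the paper's decomposition of $\omega_\pm$ exactly: your $I_+$, $I_1$, $I_2$, $I_3$ coincide (up to a $-2\pi$ shift) with the paper's $I_-$, $I_+^-$, $\o{J}$, $I_+^+$. Where you genuinely depart from the paper is in the final extraction step: you average the combination $D = \#_+ + \#_1 + \#_3 - \#_2$ over a period and argue that its jumps cancel, whereas the paper manipulates characteristic functions directly via the identity $\chi_{[p,r)}+\chi_{(q,s)}=\chi_{[p,s)}+\chi_{(q,r)}$. The averaging idea has genuine appeal and your length computation $|I_+|+|I_1|+|I_3|-|I_2|=2\pi$ is correct.

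However, there is a real gap at the boundary values of $t$, which you acknowledge but do not close. Taking all four ranges to be \emph{closed} intervals makes the identity $\numt = \#_+ + \#_1 + \#_2 + \#_3$ false on a discrete set: whenever some $t-2\pi k$ lands on the value of $\sigma_\pm$ at one of the shared points $x\in\{\delta,\xi',\pi-\xi',\pi-\delta\}$, the single zero at such an $x$ is registered in two of the four counts. In particular, tracking what happens near $t\equiv\sup I_+$ or $t\equiv\inf I_+$ shows that $D$ actually spikes to $2$ there (it equals $1$ only off a discrete set), and the formula you ultimately obtain, $\numt = 1 + 2\sum_n\chi_{\o{J}}(t-2\pi n)$, overcounts by $1$ on $\partial J + 2\pi\Z$ relative to the lemma's $1 + 2\sum_n\mcf{J}(t-2\pi n)$. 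The paper avoids all of this by defining the second branch only on the \emph{open} interval $K$, so that the analogues of your $I_1$ and $I_3$ come out open; then the characteristic-function identity already produces $\chi_J + \chi_{\o{J}} = 2\mcf{J}$, and no separate treatment of the tangential case is needed. Your closing remark about $2\mu\ge2\pi$ is also a legitimate worry: when several $2\pi$-shifts of $I_2$ overlap, multiple boundary events can coincide at a single $t$, and the pairwise jump-cancellation you invoke has to be re-justified rather than taken for granted.
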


\begin{remark}
If $\alpha\beta<1$ a simplified version of the following argument can be used to show $\numt(t)=1$ for all $t$. 
\end{remark}

\begin{proof}[Proof of Lemma \ref{lem:altexpnumt}]
Set $\zeta=\arcsin(\alpha)\in\Bigl(0,\dfrac{\pi}2\Bigr)$ and $K=\dfrac{\pi}{2}+(-\zeta,\zeta)$.
Now suppose 
\begin{equation}
\label{eq:cxct2}
\cos(\beta x+t)=-\frac1{\alpha}\,\cos(x).
\end{equation}
for some $x\in[0,\pi)$ and $t\in\R$. Firstly observe that $\abs{\cos(x)}\le\alpha$ so $x\in\o{K}$. 
Define $\theta_-:\o{K}\to\R$ and $\theta_+:K\to\R$ by
\[
\theta_-(x)=\arccos\Bigl(-\frac1{\alpha}\,\cos(x)\Bigr)
\quad\text{and}\quad
\theta_+(x)=2\pi-\theta_-(x).
\]
[These functions correspond to the two basic branches of the inverse of $\cos$; the remaining branches can be obtained by adding multiples of $2\pi$.] 
Also define $\omega_-:\o{K}\to\R$ and $\omega_+:K\to\R$ by $\omega_{s}(x)=-\beta x+\theta_{s}(x)$ for $s\in\{-,+\}$. 

Now \eqref{eq:cxct2} is equivalent to the existence of unique $s\in\{-,+\}$ and $n\in\Z$ such that $\beta x+t=\theta_{s}(x)+2\pi n$. In turn, this is equivalent to
\begin{equation}
\label{eq:cxct3}
\text{there exists unique $s\in\{-,+\}$, $n\in\Z$ with $t-2\pi n=\omega_s(x)$}
\end{equation}
(note that, if $x=\pi/2\pm\zeta$ the we must take $s=-$). We can determine $\numt(t)$ using \eqref{eq:cxct3} if we know the ranges of $\omega_-$ and $\omega_+$, together with the multiplicity of covering. 

\smallskip

\noindent
\emph{Range of $\omega_-$:}
The function $\theta_-$ is monotonically decreasing while $\beta>0$ so $\omega_-$ is also monotonically decreasing on $\o{K}$. Thus $\Ran\omega_-=I_-$ where
\[
I_-=\Bigl[\omega_-\Bigl(\frac{\pi}2+\zeta\Bigr),\,\omega_-\Bigl(\frac{\pi}2-\zeta\Bigr)\Bigr]
=\Bigl[-\beta\Bigl(\frac{\pi}2+\zeta\Bigr),\,-\beta\Bigl(\frac{\pi}2-\zeta\Bigr)+\pi\Bigr].
\]
The multiplicity of covering is 1.

\smallskip

\noindent
\emph{Range of $\omega_+$:}
The turning points of $\omega_+$ (on $K$) satisfy
\[
\theta_+'(x)=\beta
\quad\Longleftrightarrow\quad
\frac{\sin x}{\sqrt{\alpha^2-\cos^2(x)}}=\beta
\quad\Longleftrightarrow\quad
\cos^2(x)=\frac{\alpha^2\beta^2-1}{\beta^2-1}.
\]
This gives precisely two turning points, at $x_\pm=\dfrac{\pi}2\pm\xi$. 
Furthermore $\omega_+$ is monotonically increasing on $\Bigl(\dfrac{\pi}2-\zeta,\,x_-\Bigr)$ and $\Bigl(x_+,\,\dfrac{\pi}2+\zeta\Bigr)$, and monotonically decreasing on $[x_-,x_+]$. 
Now
\[
\theta_+(x_\pm)=2\pi-\arccos\Bigl(-\frac1{\alpha}\,\cos(x_\pm)\Bigr)
=2\pi-\frac{\pi}2+\arcsin\left(\pm\frac{\sqrt{\alpha^2\beta^2-1}}{\alpha\sqrt{\beta^2-1}}\right)
=\frac{3\pi}2\pm\eta',
\]
so 
\[
\omega_+(x_\pm)=-\beta\,\frac{\pi}2\mp\beta\xi+\frac{3\pi}2\pm\eta'=-\beta\,\frac{\pi}2+\frac{3\pi}2\mp\mu.
\]
Hence 
\begin{align*}
\Ran\omega_+&=\Bigl(\omega_+\Bigl(\frac\pi2-\zeta\Bigr),\,\omega_+(x_-)\Bigr)
\cup\Bigl(\omega_+(x_+),\,\omega_+\Bigl(\frac\pi2+\zeta\Bigr)\Bigr)\cup[\omega_+(x_+),\,\omega_+(x_-)]\\
&=I_+^-\cup I_+^+\cup\o{J},
\end{align*}
where
\[
I_+^-=\Bigl(-\beta\Bigl(\frac\pi2-\zeta\Bigr)+\pi,\,-\beta\,\frac\pi2+\frac{3\pi}2+\mu\Bigr)
\quad\text{and}\quad
I_+^+=\Bigl(-\beta\,\frac\pi2+\frac{3\pi}2-\mu,\,-\beta\Bigl(\frac\pi2+\zeta\Bigr)+2\pi\Bigr).
\]
Each interval has a multiplicity of 1.

\smallskip

To complete the proof note that the definition of $\numt$ and \eqref{eq:cxct3} give
\[
\numt(t)=\sum_{n\in\Z}\bigl(\chi_{I_-}+\chi_{I_+^-}+\chi_{I_+^+}+\chi_{\o{J}}\bigr)(t-2\pi n).
\]
Now $I_-$ and $I_+^-$ are disjoint sets with union
\[
\widetilde{I}=\Bigl[-\beta\Bigl(\frac\pi2+\zeta\Bigr),\,-\beta\,\frac\pi2+\frac{3\pi}2+\mu\Bigr). 
\]
It follows that
\[
\chi_{I_-}+\chi_{I_+^-}+\chi_{I_+^+}
=\chi_{\widetilde{I}}+\chi_{I_+^+}
=\chi_{-\beta(\pi/2+\zeta)+[0,2\pi)}+\chi_{J},
\]
where the last step uses the identity $\chi_{[p,r)}+\chi_{(q,s)}=\chi_{[p,s)}+\chi_{(q,r)}$ which holds whenever $p,q<r,s$. The result follows.
\end{proof}

Ergodicity can now be used in the case $\beta\notin\Q$. A convenient ergodic theorem gives us
\begin{equation}
\label{eq:Weylequidist}
\lim_{N\to\infty}\frac1{N}\,\sum_{n=0}^{N-1}g(2\pi n\beta)=\frac1{2\pi}\int_0^{2\pi}g(x)\dr x
\end{equation}
whenever $\beta\notin\Q$ and $g:\R\to\R$ is a $2\pi$-periodic Riemann integrable function 
(this is a version of Weyl equidistribution; see \cite{StSh} for example).

\begin{proof}[Proof of Theorem \ref{thm:coszeros} when $\phi\equiv0$, $\alpha\beta>1$ and $\beta\notin\Q$]
Combine \eqref{eq:numz0Jnumzj} with Lemmas \ref{lem:numzjnumt} and \ref{lem:numzRRseqlim} to get
\[
2\pi\lim_{R\to\infty}\frac{\numz[0](R)}{R}
=\lim_{N\to\infty}\frac1{N}\sum_{n=0}^{N-1}\numzj
=\lim_{N\to\infty}\frac1{N}\sum_{n=0}^{N-1}\bigl(\numt(2\pi n\beta)+\numt(-2\pi(n+1)\beta)\bigr).
\]
Since $\beta\notin\Q$ and $\numt$ is a $2\pi$-periodic piecewise constant function \eqref{eq:Weylequidist} and Lemma \ref{lem:altexpnumt} then give
\begin{align*}
&2\pi\lim_{R\to\infty}\frac{\numz[0](R)}{R}
=\frac1{2\pi}\int_0^{2\pi}\bigl(\numt(x)+\numt(-x-2\pi\beta)\bigr)\dr x
=\frac1\pi\int_0^{2\pi}\numt(x)\dr x\\
&\qquad{}=\frac1\pi\Biggl(2\pi+2\sum_{n\in\Z}\int_0^{2\pi}\mcf{J}(x-2\pi n)\dr x\Biggr)
=\frac1\pi\left(2\pi+2\int_{\R}\mcf{J}\dr x\right)
=\frac1\pi(2\pi+2\abs{J}).
\end{align*}
The result now follows from \eqref{eq:uv|J|}.
\end{proof}

\medskip

Now suppose $\beta\in\Q$. Write $\beta=p/q$ where $p,q\in\N$ are coprime.

\begin{lemma}
\label{lem:numzRRseqlimbrat}
We have
\[
\lim_{R\to\infty}\frac{\numz[0](R)}{R}
=\frac1\pi\Biggl(1+\frac2{q}\sum_{n\in\Z}\mcf{J}(2\pi n/q)\Biggr).
\]
\end{lemma}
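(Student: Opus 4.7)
The plan is to exploit the fact that when $\beta = p/q$ with $\gcd(p,q) = 1$, the function $\f(x) = \cos(x) + \alpha\cos(\beta x)$ is periodic. Indeed $\f(x + 2\pi q) = \cos(x) + \alpha\cos(\beta x + 2\pi p) = \f(x)$, so $\f$ has period $2\pi q$. This periodicity transfers to $\numzj$: since the zeros of $\f$ in $[2\pi(n+q), 2\pi(n+q+1))$ are just translates of those in $[2\pi n, 2\pi(n+1))$, we have $\numzj[n+q] = \numzj$ for every $n\in\Z$. Note also that $\f(2\pi n) = 1 + \alpha\cos(2\pi np/q) \ge 1 - \alpha > 0$, so no zero of $\f$ occurs at an endpoint of the intervals defining $\numzj$ and the choice of half-open interval is immaterial.

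Applying Lemma \ref{lem:numzRRseqlim} along the sequence $R_N = 2\pi q N$, and combining with \eqref{eq:numz0Jnumzj} and the periodicity above,
\[
\lim_{R\to\infty}\frac{\numz[0](R)}{R} = \lim_{N\to\infty}\frac{1}{2\pi q N}\sum_{n=0}^{qN-1}\numzj
= \frac{1}{2\pi q}\sum_{n=0}^{q-1}\numzj.
\]
Next I would substitute Lemma \ref{lem:numzjnumt}, giving
\[
\sum_{n=0}^{q-1}\numzj = \sum_{n=0}^{q-1}\bigl[\numt(2\pi n p/q) + \numt(-2\pi(n+1)p/q)\bigr].
\]
Because $\gcd(p,q) = 1$ the maps $n \mapsto np \bmod q$ and $n \mapsto -(n+1)p \bmod q$ are bijections of $\Z/q\Z$, and $\numt$ is $2\pi$-periodic, so each of the two inner sums equals $\sum_{m=0}^{q-1}\numt(2\pi m/q)$. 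Hence
\[
\sum_{n=0}^{q-1}\numzj = 2\sum_{m=0}^{q-1}\numt(2\pi m/q).
\]

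Finally I would apply Lemma \ref{lem:altexpnumt} and interchange the order of summation. Writing $2\pi m/q - 2\pi n = 2\pi(m - qn)/q$ and noting that as $(m,n)$ ranges over $\{0,\dots,q-1\}\times\Z$, the integer $m - qn$ runs over every element of $\Z$ exactly once,
\[
\sum_{m=0}^{q-1}\sum_{n\in\Z}\mcf{J}(2\pi m/q - 2\pi n) = \sum_{k\in\Z}\mcf{J}(2\pi k/q).
\]
Combining these gives $\sum_{n=0}^{q-1}\numzj = 2q + 4\sum_{k\in\Z}\mcf{J}(2\pi k/q)$, and dividing by $2\pi q$ yields the claimed formula. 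The only mildly delicate point is verifying the two reindexing bijections; the sum $\sum_{k\in\Z}\mcf{J}(2\pi k/q)$ is automatically finite since $J$ is bounded, so no convergence issues arise.
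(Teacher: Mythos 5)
Your proof is correct and follows essentially the same route as the paper's: reduce via Lemma~\ref{lem:numzRRseqlim} and \eqref{eq:numz0Jnumzj} to the average of $\numzj$ over one period, use Lemma~\ref{lem:numzjnumt} and the coprimality bijections, then apply Lemma~\ref{lem:altexpnumt} with the reindexing $n=k-lq$. The one cosmetic difference is that you derive the reduction to a $q$-term sum by observing directly that $\f$ has period $2\pi q$ (hence $\numzj[n+q]=\numzj$), whereas the paper obtains the same reduction by first applying Lemma~\ref{lem:numzjnumt} and then reindexing $n=k+qj$ using the $2\pi$-periodicity of $\numt$.
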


\begin{proof}
For $N\in\N$ Lemma \ref{lem:numzjnumt} gives
\begin{align*}
\sum_{n=0}^{qN-1}\numzj
&=\sum_{j=0}^{N-1}\sum_{k=0}^{q-1}\left[\numt\Bigl(2\pi(k+qj)\,\frac{p}{q}\Bigr)+\numt\Bigl(-2\pi(k+qj+1)\,\frac{p}{q}\Bigr)\right]\\
&=N\sum_{k=0}^{q-1}\left[\numt\Bigl(2\pi k\,\frac{p}{q}\Bigr)+\numt\Bigl(-2\pi(k+1)\,\frac{p}{q}\Bigr)\right]
\end{align*}
since $\numt$ is $2\pi$-periodic. 
Now the mappings
\[
k\mapsto kp\mod q
\quad\text{and}\quad
k\mapsto -(k+1)p\mod q
\]
give bijections on $\{0,1,\dots,q-1\}$ (since $p$ and $q$ are coprime). 
Together with Lemma \ref{lem:numzRRseqlim} and \eqref{eq:numz0Jnumzj} we then get
\[
\lim_{R\to\infty}\frac{\numz[0](R)}{R}
=\frac1{2\pi}\lim_{N\to\infty}\frac{1}{qN}\sum_{n=0}^{qN-1}\numzj
=\frac1{\pi q}\sum_{k=0}^{q-1}\numt(2\pi k/q).
\]
On the other hand, Lemma \ref{lem:altexpnumt} gives
\[
\sum_{k=0}^{q-1}\numt(2\pi k/q)
=\sum_{k=0}^{q-1}\Biggl(1+2\sum_{l\in\Z}\mcf{J}\Bigl(2\pi(k-lq)\,\frac{1}{q}\Bigr)\Biggr)
=q+2\sum_{n\in\Z}\mcf{J}(2\pi n/q),
\]
with $n=k-lq$.
\end{proof}

\begin{proof}[Proof of Theorem \ref{thm:coszeros} when $\phi\equiv0$, $\alpha\beta>1$ and $\beta\in\Q$]
Let $n\in\Z$. Then
\begin{align}
\mcf{J}(2\pi n/q)=1
\quad&\Longleftrightarrow\quad \frac{2\pi n}{q}\in J\nonumber\\
\quad&\Longleftrightarrow\quad -\frac{\beta}4+\frac34-\frac14\,\frac{2}\pi\,\mu<\frac{n}{q}
<-\frac{\beta}4+\frac34+\frac14\,\frac{2}\pi\,\mu\nonumber\\
\label{eq:equiv2pijkinJ}
\quad&\Longleftrightarrow\quad -\frac14\,(p+q\nu_{\alpha,\beta})+q<n<\frac14\,(p+q\nu_{\alpha,\beta})-\frac{p+q}{2}+q.
\end{align}
We get $\mcf{J}(2\pi n/q)=1/2$ iff $n$ is equal to one of the endpoints, and $\mcf{J}(2\pi n/q)=0$ iff $n$ lies beyond the given range. 
To ensure we miss the endpoints we require $p+q\nu_{\alpha,\beta}\notin4\Z$ (left endpoint) and $(p+q\nu_{\alpha,\beta})-2(p+q)\notin4\Z$ (right endpoint). 
If $p+q$ is even these conditions are equivalent; otherwise they combine as the requirement $p+q\nu_{\alpha,\beta}\notin2\Z$. We will now assume this condition is satisfied. 
From \eqref{eq:equiv2pijkinJ} we then get
\begin{align*}
N:=\sum_{n\in\Z}\mcf{J}(2\pi n/q)
&=\left\lfloor\frac14\,(p+q\nu_{\alpha,\beta})-\frac{p+q}{2}+q\right\rfloor-\left\lfloor-\frac14\,(p+q\nu_{\alpha,\beta})+q\right\rfloor\\
&=\left\lfloor\frac14\,(p+q\nu_{\alpha,\beta})-\frac{p+q}{2}\right\rfloor+\left\lfloor\frac14\,(p+q\nu_{\alpha,\beta})\right\rfloor+1
\end{align*}
(since $-\lfloor-x\rfloor=\lfloor x\rfloor+1$ for any $x\notin\Z$).

\smallskip

\noindent
\emph{Case $p$, $q$ are both odd.}
Then $(p+q)/2\in\Z$ so
\[
N=2\left\lfloor\frac14\,(p+q\nu_{\alpha,\beta})\right\rfloor-\frac{p+q}{2}+1.
\]
Lemma \ref{lem:numzRRseqlimbrat} now gives
\[
\lim_{R\to\infty}\frac{\numz[0](R)}{R}
=\frac1\pi\Bigl(1+\frac2{q}\,N\Bigr)
=\frac{1}{\pi}\left(\frac4{q}\left\lfloor\frac14(p+q\nu_{\alpha,\beta})\right\rfloor-\frac{p}{q}+\frac2{q}\right);
\]
the right hand side is just $A(\alpha,\beta)/\pi$ since $p_\beta=p$, $q_\beta=q$ in this case.

\smallskip

\noindent
\emph{Case $p$, $q$ have opposite parity.}
Then $(p+q-1)/2\in\Z$ so 
\[
N=\left\lfloor\frac14\,(p+q\nu_{\alpha,\beta})-\frac12\right\rfloor-\frac{p+q-1}{2}+\left\lfloor\frac14\,(p+q\nu_{\alpha,\beta})\right\rfloor+1
=\left\lfloor\frac12\,(p+q\nu_{\alpha,\beta})\right\rfloor-\frac{p+q}{2}+\frac12
\]
(since $\lfloor x-1/2\rfloor+\lfloor x\rfloor=\lfloor 2x\rfloor-1$ whenever $2x\notin\Z$).
Lemma \ref{lem:numzRRseqlimbrat} now gives
\[
\lim_{R\to\infty}\frac{\numz[0](R)}{R}
=\frac1\pi\Bigl(1+\frac2{q}\,N\Bigr)
=\frac{1}{\pi}\left(\frac2{q}\left\lfloor\frac12(p+q\nu_{\alpha,\beta})\right\rfloor-\frac{p}{q}+\frac1{q}\right);
\]
since $p_\beta=2p$, $q_\beta=2q$ in this case, the right hand side becomes $A(\alpha,\beta)/\pi$  (note that, the condition $p+q\nu_{\alpha,\beta}\notin2\Z$ becomes $p_\beta+q_\beta\nu_{\alpha,\beta}\notin4\Z$).
\end{proof}

\subsubsection*{Turning points}

Consider the set of non-negative turning points of $\f$,
\[
\tp=\{x\ge0:\f'(x)=0\}.
\]
Since $f$ is an analytic function $\tp$ is a discrete subset of $\R$. 
List the points of $\tp$ in increasing order as $t_0=0,t_1,t_2,\dots$ (note that $\f'(0)=0$). 
For each $n\ge1$ set $\Itp{n}=[t_{n-1},t_n]$. 

\begin{lemma}
\label{lem:basictplem}
We have $t_n\to\infty$ as $n\to\infty$ and $t_n-t_{n-1}\le2\pi/\beta$ for all $n\in\N$. 
It follows that $t_n/t_{n-1}\to1$ as $n\to\infty$.
\end{lemma}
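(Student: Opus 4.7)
My plan is to exploit the explicit form $\f'(x) = -\sin(x) - \alpha\beta\sin(\beta x)$ together with the standing assumption $\alpha\beta>1$. The key observation is that the term $-\alpha\beta\sin(\beta x)$ dominates the term $-\sin(x)$, so $\f'$ inherits a sign-changing structure from $\sin(\beta x)$ and must therefore vanish at a controlled rate. Note also that $\f$ is real analytic and not identically constant, so its derivative is real analytic and non-trivial; hence $\tp$ is automatically a discrete subset of $\R$.

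First I would establish the spacing bound $t_n-t_{n-1}\le 2\pi/\beta$. Given any $a\ge 0$, consider the interval $I=[a,a+2\pi/\beta]$. Because $\beta x$ traverses an interval of length $2\pi$ as $x$ runs through $I$, there exist $b_\pm\in I$ with $\sin(\beta b_\pm)=\pm 1$. Then
\[
\f'(b_+)=-\sin(b_+)-\alpha\beta\le 1-\alpha\beta<0,\qquad \f'(b_-)=-\sin(b_-)+\alpha\beta\ge\alpha\beta-1>0,
\]
so by the intermediate value theorem $\f'$ has a zero in $I$. Applying this with $a=t_{n-1}$ and noting that $t_n$ is, by definition, the smallest turning point strictly exceeding $t_{n-1}$, gives $t_n\le t_{n-1}+2\pi/\beta$.

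The same argument, applied iteratively on the intervals $[2\pi k/\beta,2\pi(k+1)/\beta]$, shows that $\tp$ is infinite. Since $\tp$ is discrete and infinite, it cannot be bounded (a bounded infinite set has an accumulation point); therefore when its elements are listed in increasing order we must have $t_n\to\infty$. Finally, for $n\ge 2$ we have $t_{n-1}>0$ and
\[
1<\frac{t_n}{t_{n-1}}\le 1+\frac{2\pi/\beta}{t_{n-1}},
\]
and the right-hand side tends to $1$ as $n\to\infty$, giving $t_n/t_{n-1}\to 1$. There is really no hard step here; the only point requiring minor care is ensuring that the infinitude of $\tp$ is justified before invoking the discreteness of the zero set of an analytic function to conclude $t_n\to\infty$.
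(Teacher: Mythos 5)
Your proof is correct and uses essentially the same argument as the paper: both evaluate $\f'$ at points where $\sin(\beta x)=\pm1$, use $\alpha\beta>1$ to pin down the sign there, and invoke the intermediate value theorem to conclude $\f'$ must vanish at a controlled rate. The paper fixes the test points as $x=(n+1/2)\pi/\beta$ and concludes that each interval $\bigl((n-1/2)\pi/\beta,(n+1/2)\pi/\beta\bigr)$ contains a turning point, whereas you use a sliding window $[a,a+2\pi/\beta]$; the difference is cosmetic (and note that the zero produced by IVT in your window $[t_{n-1},t_{n-1}+2\pi/\beta]$ is automatically distinct from $t_{n-1}$, since it lies strictly between $b_+$ and $b_-$ and $\f'(b_\pm)\neq0=\f'(t_{n-1})$).
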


\begin{remark}
When $\alpha\beta\le1$ the same result holds with $2\pi$ in place of $2\pi/\beta$.
\end{remark}

\begin{proof}[Proof of Lemma \ref{lem:basictplem}]
Since $\f'(x)=-\sin(x)-\alpha\beta\sin(\beta x)$ we have $(-1)^nf'(x)<0$ when $x=(n+1/2)\pi/\beta$. 
Thus $\tp$ contains at least one point in the interval $\bigl((n-1/2)\pi/\beta,\,(n+1/2)\pi/\beta\bigr)$ for any $n\in\Z$; this forces $t_n\to\infty$ 
and $t_n-t_{n-1}\le2\pi/\beta$.
\end{proof}

\subsubsection*{Bound on $\f''$}

Let $x\in\R$ and set $u_n=\f^{(n)}(x)$ for $n=0,1,2$. Then $\abs{u_n}\le1+\alpha\beta^n$ while
\begin{align*}
\cos(x)&=-\alpha\cos(\beta x)+u_0,\\
\sin(x)&=-\alpha\beta\sin(\beta x)+u_1,\\
\cos(x)&=-\alpha\beta^2\cos(\beta x)+u_2.
\end{align*}
Squaring and rearranging each equation leads to
\begin{subequations}
\begin{align}
\label{eq:sin2eq0}
\sin^2(x)-\alpha^2\sin^2(\beta x)&=1-\alpha^2+v_0,&&v_0=2u_0\alpha\cos(\beta x)-u_0^2,\\
\label{eq:sin2eq1}
\sin^2(x)-\alpha^2\beta^2\sin^2(\beta x)&=v_1,&&v_1=-2u_1\alpha\beta\sin(\beta x)+u_1^2,\\
\label{eq:sin2eq2}
\sin^2(x)-\alpha^2\beta^4\sin^2(\beta x)&=1-\alpha^2\beta^4+v_2,&&v_2=2u_2\alpha\beta^2\cos(\beta x)-u_2^2.
\end{align}
\end{subequations}
In particular,
\begin{equation}
\label{eq:upbndetak}
\abs{v_n}\le2\alpha\beta^n\abs{u_n}+\abs{u_n}^2\le(1+3\alpha\beta^n)\abs{u_n}.
\end{equation}
Solving \eqref{eq:sin2eq0} and \eqref{eq:sin2eq1} as linear equations for $\sin^2(x)$ and $\sin^2(\beta x)$ leads to
\begin{equation}
\label{eq:sin2matsol}
\begin{pmatrix}
\sin^2(x)\\\sin^2(\beta x)
\end{pmatrix}
=\frac{1-\alpha^2}{\alpha^2(\beta^2-1)}\begin{pmatrix}
\alpha^2\beta^2\\1
\end{pmatrix}
+\frac1{\alpha^2(\beta^2-1)}\begin{pmatrix}
\alpha^2\beta^2v_0+\alpha^2v_1\\-\alpha^2v_0-v_1
\end{pmatrix}
\end{equation}
(recall that $\beta>1$). 
Using \eqref{eq:sin2eq2} then gives
\[
v_2
=(1+\beta^2)(\alpha^2\beta^2-1)+\frac{\beta^2(1+\alpha^2\beta^2)}{\beta^2-1}\,v_0+\frac{1+\beta^4}{\beta^2-1}\,v_1.
\]
The estimates \eqref{eq:upbndetak} now imply $\abs{u_2}\ge c_0-c_{1,0}\abs{u_0}-c_{1,1}\abs{u_1}$ where
\[
c_0=\frac{(1+\beta^2)(\alpha^2\beta^2-1)}{1+3\alpha^2\beta^4},
\ \ c_{1,0}=\frac{\beta^2(1+\alpha^2\beta^2)(1+3\alpha^2)}{(\beta^2-1)(1+3\alpha^2\beta^4)}
\ \ \text{and}\ \ c_{1,1}=\frac{(1+\beta^4)(1+3\alpha^2\beta^2)}{(\beta^2-1)(1+3\alpha^2\beta^4)}
\]
are positive constants. Taking $c_1=\sqrt2\max\{c_{1,0},c_{1,1}\}$ we have thus established the following. 

\begin{lemma}
\label{lem:lowbndf''}
There exist positive constants $c_0$ and $c_1$ (depending only on $\alpha$ and $\beta$) such that $\abs{\f''(x)}\ge c_0-c_1\sqrt{\Eng{\f}(x)}$ for all $x\in\R$.
\end{lemma}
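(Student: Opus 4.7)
The plan is to turn the Pythagorean identity $\sin^2+\cos^2=1$ (applied at both $x$ and $\beta x$) into an algebraic constraint linking $\f(x)$, $\f'(x)$, $\f''(x)$. Writing $u_n=\f^{(n)}(x)$, the definition of $\f$ gives $\cos(x)=u_0-\alpha\cos(\beta x)$, $\sin(x)=-u_1-\alpha\beta\sin(\beta x)$ and (from $\f''$) $\cos(x)=-u_2-\alpha\beta^2\cos(\beta x)$. Squaring each and using $\cos^2=1-\sin^2$ on both sides produces three linear relations in the pair of unknowns $(\sin^2(x),\sin^2(\beta x))$ of the shape
\[
\sin^2(x)-\alpha^2\beta^{2n}\sin^2(\beta x)=a_n+v_n,
\]
where $a_0=1-\alpha^2$, $a_1=0$, $a_2=1-\alpha^2\beta^4$, and each error term $v_n$ is linear-plus-quadratic in $u_n$ with $\abs{v_n}\le(1+3\alpha\beta^n)\abs{u_n}$.

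Next I would use two of the three relations (say $n=0,1$) to solve explicitly for $\sin^2(x)$ and $\sin^2(\beta x)$; the corresponding $2\times 2$ system has determinant $\alpha^2(\beta^2-1)\neq 0$ since $\beta>1$ (forced by $\alpha<1$ and $\alpha\beta>1$). Substituting into the $n=2$ relation eliminates the trigonometric unknowns and leaves a single identity of the form
\[
v_2=(1+\beta^2)(\alpha^2\beta^2-1)+A\,v_0+B\,v_1
\]
for explicit constants $A,B$ depending only on $\alpha,\beta$. The decisive point is that the free term $(1+\beta^2)(\alpha^2\beta^2-1)$ is strictly positive under the standing assumption $\alpha\beta>1$.

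Combining this identity with $\abs{v_n}\le(1+3\alpha\beta^n)\abs{u_n}$ rearranges immediately into $\abs{u_2}\ge c_0-c_{1,0}\abs{u_0}-c_{1,1}\abs{u_1}$ for explicit positive $c_0,c_{1,0},c_{1,1}$. Since $\abs{u_0},\abs{u_1}\le\sqrt{\Eng{\f}(x)}$, and $\abs{u_0}+\abs{u_1}\le\sqrt{2}\sqrt{u_0^2+u_1^2}=\sqrt{2}\sqrt{\Eng{\f}(x)}$, we absorb the last two terms into a single $c_1\sqrt{\Eng{\f}(x)}$ with $c_1=\sqrt{2}\max\{c_{1,0},c_{1,1}\}$, which is the stated inequality.

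The main obstacle is bookkeeping rather than anything conceptual: one must keep track of the two nondegeneracy conditions (invertibility of the $2\times 2$ system, which needs $\beta\neq 1$, and nonvanishing of the free term, which needs $\alpha\beta\neq 1$), both automatic here. The quadratic-in-$u_n$ contributions to $v_n$ are mildly unpleasant to carry through, but the single estimate $\abs{v_n}\le(1+3\alpha\beta^n)\abs{u_n}$ disposes of them uniformly.
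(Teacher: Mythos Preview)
Your proposal is correct and follows essentially the same argument as the paper: the paper likewise sets $u_n=\f^{(n)}(x)$, squares the three defining relations to obtain linear equations in $\sin^2(x)$ and $\sin^2(\beta x)$ with error terms $v_n$ satisfying $\abs{v_n}\le(1+3\alpha\beta^n)\abs{u_n}$, solves the $n=0,1$ pair, substitutes into the $n=2$ relation to isolate the free term $(1+\beta^2)(\alpha^2\beta^2-1)>0$, and finishes with exactly the same choice $c_1=\sqrt{2}\max\{c_{1,0},c_{1,1}\}$.
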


It follows that we can find $\kappa>0$ so that 
\begin{equation}
\label{eq:defpropkappa}
\text{if $\Eng{\f}(x)\le\kappa^2$ for some $x\in\R$ then $\abs{\f''(x)}\ge\kappa$} 
\end{equation}
(we can choose $\kappa$ to be anything in $(0,c_0/(1+c_1)]$). Now set 
\[
\sEng=\bigl\{x\in\R^+:\Eng{f}(x)<\kappa^2\bigr\}.
\]
Then $\sEng$ is open (as $\Eng{f}$ is continuous) and $\abs{\f''(x)}\ge\kappa$ for all $x\in\sEng$ 
(by \eqref{eq:defpropkappa}). Further useful properties are as follows.

\begin{lemma}
\label{lem:propssEng}
Let $I$ be a maximal connected component of $\sEng$.
\begin{itemize}
\item[(i)]
If $\Eng{\f}'(x)=0$ for some $x\in\sEng$ then $x\in\tp$.
\item[(ii)]
$I$ contains a unique point of $\tp$.
\item[(iii)]
If $\phi\in C^2(\R)$ satisfies $\abs{\phi''(x)}<\kappa$ on $I$ 
then $\f[\phi]$ can have at most two zeros on $I$.
\end{itemize}
\end{lemma}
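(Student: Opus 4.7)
The plan is to handle the three parts in sequence, using the identity $\Eng{\f}' = 2\f'(\f + \f'')$ together with the defining property \eqref{eq:defpropkappa} of $\kappa$. For part (i) I would start from this identity: if $\Eng{\f}'(x) = 0$ with $x \in \sEng$, then either $\f'(x) = 0$, in which case $x \in \tp$ as required, or $\f(x) + \f''(x) = 0$. The latter can be ruled out because $x \in \sEng$ gives $\f(x)^2 \le \Eng{\f}(x) < \kappa^2$, while \eqref{eq:defpropkappa} gives $|\f''(x)| \ge \kappa$; thus $|\f(x) + \f''(x)| \ge |\f''(x)| - |\f(x)| > 0$, forcing $\f'(x) = 0$.

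For part (ii), my first step would be to observe that $\f''$ has constant sign throughout $I$: since $|\f''| \ge \kappa > 0$ on $I \subseteq \sEng$ and $\f''$ is continuous, the intermediate value theorem forbids any sign change. Consequently $\f'$ is strictly monotonic on $I$, which yields at most one turning point and hence uniqueness. For existence I would first rule out an unbounded $I$: integrating $\f''$ of constant sign with $|\f''| \ge \kappa$ over an unbounded subinterval would force $|\f'|$ past $\kappa$, contradicting $\Eng{\f} < \kappa^2$ on $I$. With $I = (a,b)$ bounded, $\Eng{\f}$ attains its minimum on the compact set $\overline{I}$; since $\Eng{\f} = \kappa^2$ at the endpoints (by maximality of $I$) while $\Eng{\f} < \kappa^2$ on $I$, this minimum is attained at some interior point $x_0$, at which $\Eng{\f}'(x_0) = 0$, and part (i) then places $x_0$ in $\tp$.

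For part (iii), I would write $(\f[\phi])'' = \f'' + \phi''$ and note that on $I$ we have $|\f''| \ge \kappa > |\phi''|$, so by continuity $(\f[\phi])''$ inherits the constant sign of $\f''$ on $I$ and is nowhere zero. Hence $\f[\phi]$ is strictly convex or strictly concave on $I$, and two applications of Rolle's theorem show that it can vanish at most twice on $I$. The main obstacle I anticipate is the existence assertion in (ii): the natural compactness argument requires $I$ to be bounded, which is not immediate from the definition and has to be extracted from the monotonicity-of-$\f'$ observation above; once that is in place, the remaining steps are elementary.
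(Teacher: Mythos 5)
Your proposal is correct and follows essentially the same route as the paper: the factorisation $\Eng{\f}'=2\f'(\f+\f'')$ for (i), a critical-point-of-$\Eng{\f}$ argument plus constant sign of $\f''$ for (ii), and the bound $\abs{\f[\phi]''}\ge\abs{\f''}-\abs{\phi''}>0$ followed by Rolle for (iii). Your explicit check that $I$ is bounded is a nice bit of extra care — the paper simply writes $I=(s,t)$ with $\Eng{\f}(s)=\kappa^2=\Eng{\f}(t)$, taking boundedness for granted.
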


\begin{proof}
For part (i) let $x\in\sEng$ and suppose $0=\Eng{f}'(x)=2\f'(x)\bigl(\f(x)+\f''(x)\bigr)$.
However $\abs{\f(x)}^2\le\Eng{f}(x)<\kappa^2$ so $\f(x)+\f''(x)\neq0$. Thus $\f'(x)=0$. 

For part (ii) write $I=(s,t)$. Then $\Eng{f}(s)=\kappa^2=\Eng{f}(t)$ so $\Eng{f}'(x)=0$ for some $x\in I$ and hence $x\in\tp$ by part (i) (note that, $\Eng{f}(0)=(1+a)^2>1>c_0^2>\kappa^2$ so we can't have $s=0$). If there were distinct points $x_1,x_2\in I\cap\tp$ then we could find $x_0$ between $x_1$ and $x_2$ (and hence in $I$) with $\f''(x_0)=0$, contradicting the fact that $\abs{f''(x_0)}\ge\kappa$. 

For part (iii) note that we have $\abs{\f[\phi]''(x)}\ge\abs{\f''(x)}-\abs{\phi''(x)}>\kappa-\kappa=0$ for all $x\in I$.
\end{proof}

\subsubsection*{Tangential zeros}

In this section we use the notation of Theorem \ref{thm:coszeros}. 

\begin{lemma}
\label{lem:tangzsbrat}
Suppose $\beta\in\Q$. If $\f(x)=0=\f'(x)$ for some $x$ then $p_\beta+q_\beta\nu_{\alpha,\beta}\in4\Z$.
\end{lemma}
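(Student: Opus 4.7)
The plan is to derive, from the simultaneous conditions $f(x)=0$ and $f'(x)=0$, precise congruence descriptions of $x$ and $\beta x$ modulo $\pi$, and then translate these into an arithmetic relation between $p$, $q$ and $\nu_{\alpha,\beta}$.

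First, I will specialise formula \eqref{eq:sin2matsol} by setting $u_0 = f(x) = 0$ and $u_1 = f'(x) = 0$, so that $v_0 = v_1 = 0$. The resulting identities
\[
\sin^2(x) = \frac{\beta^2(1-\alpha^2)}{\beta^2-1}, \qquad \sin^2(\beta x) = \frac{1-\alpha^2}{\alpha^2(\beta^2-1)},
\]
together with \eqref{eq:alphabetadef} and \eqref{eq:alpha'beta'def}, show that $\sin^2(x) = \sin^2(\xi')$ and $\sin^2(\beta x) = \sin^2(\eta)$. Hence there exist $\epsilon_1, \epsilon_2 \in \{\pm1\}$ and $n_1, n_2 \in \Z$ with $x = \epsilon_1 \xi' + n_1\pi$ and $\beta x = \epsilon_2 \eta + n_2\pi$.

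Next, since $\alpha\beta > 1$ these values satisfy $\sin^2(x), \sin^2(\beta x) < 1$, so the sign information contained in $\cos(x) = -\alpha\cos(\beta x)$ and $\sin(x) = -\alpha\beta\sin(\beta x)$ is nondegenerate. Using that $\cos(\xi'), \sin(\xi'), \cos(\eta), \sin(\eta)$ are all strictly positive, the requirement that $\cos(x)$ and $\cos(\beta x)$ have opposite signs forces $n_1 + n_2$ to be odd, while the analogous requirement on the sines then forces $\epsilon_1 = \epsilon_2 =: \epsilon$. Equating the two expressions for $\beta x$ and using the identity $\beta\xi' - \eta = (\pi/2)(\beta - \nu_{\alpha,\beta})$ (an immediate consequence of $\xi + \xi' = \pi/2$ together with $\nu_{\alpha,\beta} = (2/\pi)(\beta\xi + \eta)$), I then obtain, after substituting $\beta = p/q$ and clearing denominators,
\[
q\nu_{\alpha,\beta} - p = 2\epsilon(pn_1 - qn_2).
\]

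Finally, I will match this to the asymmetric definition of $(p_\beta, q_\beta)$. The right-hand side is even, so $q\nu_{\alpha,\beta} + p$ is always even; when $p$ and $q$ have opposite parities, doubling gives $p_\beta + q_\beta\nu_{\alpha,\beta} = 2(p + q\nu_{\alpha,\beta}) \in 4\Z$ at once. When $p$ and $q$ are both odd, $pn_1 - qn_2 \equiv n_1 + n_2 \equiv 1 \pmod{2}$, so $2\epsilon(pn_1 - qn_2) \equiv 2 \pmod{4}$; combined with $2p \equiv 2 \pmod{4}$, this gives $p_\beta + q_\beta\nu_{\alpha,\beta} = q\nu_{\alpha,\beta} + p \equiv 0 \pmod{4}$. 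The only obstacle is the careful parity bookkeeping required in this last step to reconcile the two cases in the definition of $(p_\beta, q_\beta)$; conceptually nothing is deep, and the identity $\beta\xi' - \eta = (\pi/2)(\beta - \nu_{\alpha,\beta})$ is the pivotal link between the geometric and arithmetic sides of the argument.
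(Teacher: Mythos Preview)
Your proof is correct and follows essentially the same route as the paper's: both specialise \eqref{eq:sin2matsol} at $v_0=v_1=0$ to pin down $x$ and $\beta x$ modulo $\pi$ up to sign, use the opposite-sign information from $\cos(x)=-\alpha\cos(\beta x)$ and $\sin(x)=-\alpha\beta\sin(\beta x)$ to fix the remaining parities, and then convert $\beta\xi'-\eta=(\pi/2)(\beta-\nu_{\alpha,\beta})$ into the required divisibility. The only cosmetic difference is that the paper absorbs your $\epsilon$ into the choice of representative and works directly with $p_\beta,q_\beta$ (exploiting that they share parity) rather than splitting into the two cases of their definition.
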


\begin{proof}
Suppose $\f(x)=0=\f'(x)$ so $\cos(x)=-\alpha\cos(\beta x)$ and $\sin(x)=-\alpha\beta\sin(\beta x)$.
In particular $\cos(x)$ and $\cos(\beta x)$ have opposite signs, as do $\sin(x)$ and $\sin(\beta x)$. It follows that $x$ and $\beta x$ lie in diametrically opposite quadrants. 

From \eqref{eq:sin2matsol} (with $v_0=0=v_1$) we get
\[
\sin^2(x)=\frac{\beta^2(1-\alpha^2)}{\beta^2-1}
\quad\text{and}\quad
\sin^2(\beta x)=\frac{1-\alpha^2}{\alpha^2(\beta^2-1)}.
\]
Hence
\[
x\in(\xi'+\pi\Z)\cup(-\xi'+\pi\Z)
\quad\text{and}\quad
\beta x\in(\eta+\pi\Z)\cup(-\eta+\pi\Z).
\]
Since $\xi',\eta\in(0,\pi/2)$ this can be combined with the earlier observation to give one of four possibilities; 
\[
\begin{cases}
x\in\xi'+\pi2\Z,&\beta x\in\eta+\pi(2\Z+1),\\
x\in\xi'+\pi(2\Z+1),&\beta x\in\eta+\pi2\Z,\\
x\in-\xi'+\pi2\Z,&\beta x\in-\eta+\pi(2\Z+1),\\
x\in-\xi'+\pi(2\Z+1),&\beta x\in-\eta+\pi2\Z.
\end{cases}
\]
Comparing the expressions for $x$ and $\beta x$ we can thus find integers $m,n\in\Z$ of opposite parity such that $\beta(\xi'+m\pi)=\eta+n\pi$. Then
\[
\beta+\nu_{\alpha,\beta}=\beta+\frac{2}{\pi}(\beta\xi+\eta)
=2\beta+\frac{2}{\pi}(-\beta\xi'+\eta)
=2\bigl((1+m)\beta-n\bigr).
\]
However $\beta=p_\beta/q_\beta$ so
\[
p_\beta+q_\beta\nu_{\alpha,\beta}=q_\beta(\beta+\nu_{\alpha,\beta})=2\bigl((1+m)p_\beta-nq_\beta\bigr).
\]
Now $p_\beta$ and $q_\beta$ have the same parity, as do $1+m$ and $n$. Therefore $(1+m)p_\beta-nq_\beta\in2\Z$ and hence $p_\beta+q_\beta\nu_{\alpha,\beta}\in4\Z$.
\end{proof}

If $\beta\in\Q$ then $\Eng{f}$ is smooth, non-negative and periodic, so $\Eng{f}$ can be uniformly bounded away from $0$ if it is nowhere zero. 
Lemma \ref{lem:tangzsbrat} thus leads to the following.

\begin{corollary}
\label{cor:bratEngunifnz}
Suppose $\beta\in\Q$. If $p_\beta+q_\beta\nu_{\alpha,\beta}\notin4\Z$ then there exists $\delta>0$ such that $\Eng{\f}(x)\ge\delta$ for all $x\in\R$.
\end{corollary}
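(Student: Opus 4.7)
The plan is a short compactness-and-contradiction argument, using Lemma \ref{lem:tangzsbrat} as the main input. Since $\beta=p/q\in\Q$, the function $\f(x)=\cos(x)+\alpha\cos(\beta x)$ is periodic (a period is $2\pi q$, as $\cos(x)$ has period $2\pi$ and $\cos(\beta x)=\cos(px/q)$ has period $2\pi q/p$, and $2\pi q$ is a common period). Consequently $\Eng{\f}=\f^2+(\f')^2$ is a non-negative continuous periodic function on $\R$, so by compactness it attains its infimum
\[
\delta_0:=\inf_{x\in\R}\Eng{\f}(x)=\min_{x\in[0,2\pi q]}\Eng{\f}(x)\ge0
\]
at some point $x_0\in\R$.

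Now I would argue by contradiction. Suppose $\delta_0=0$. Then $\f(x_0)=0$ and $\f'(x_0)=0$ simultaneously. But Lemma \ref{lem:tangzsbrat} then forces $p_\beta+q_\beta\nu_{\alpha,\beta}\in4\Z$, directly contradicting our hypothesis. Therefore $\delta_0>0$, and the result holds with $\delta:=\delta_0$.

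The argument is essentially mechanical given Lemma \ref{lem:tangzsbrat}; the only thing to be careful about is verifying that $\f$ really is periodic when $\beta\in\Q$ (so that compactness applies), which is immediate from the rationality of $\beta$. There is no genuine obstacle here --- all the substantive work has already been done in establishing Lemma \ref{lem:tangzsbrat}.
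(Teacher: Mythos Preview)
Your proof is correct and matches the paper's approach exactly: the paper simply remarks that when $\beta\in\Q$ the function $\Eng{\f}$ is smooth, non-negative and periodic, hence uniformly bounded away from $0$ if it is nowhere zero, and then invokes Lemma \ref{lem:tangzsbrat}. Your compactness-and-contradiction argument spells this out explicitly.
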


\subsection{Perturbations}
\label{sec:zpert}

Suppose $\phi$ satisfies \eqref{eq:decayprop01}. 
Choose a decreasing function $\sigma:\R\to\R^+$ so that $\Eng{\phi}(x)<\sigma^2(x)$ for all $x$ and $\sigma(x)\to0$ as $x\to\infty$. 
Set 
\[
\stp=\bigl\{t_n:n\ge1,\,\Eng{f}(t_n)<\sigma^2(t_{n-1})\bigr\};
\]
these are the turning points of $\f$ which are `small' in some sense (relative to $\phi$) and can cause changes in the number of zeros when $\phi$ is added to $\f$. 

Choose $m_0>1$ so that $\sigma(t_{m_0-2})<\kappa$ (which is possible since $\sigma(x)\to0$ as $x\to\infty$). Now suppose $t_n\in\stp$ for some $n\ge m_0-1$. Then 
\[
\Eng{f}(t_n)<\sigma^2(t_{n-1})\le\sigma^2(t_{m_0-2})<\kappa
\]
so $t_n\in\sEng$. Let $\Istp{n}$ denote the maximal connected component of $\sEng$ which contains $t_n$. 
If $t_n\notin\stp$ for some $n\ge m_0-1$ set $\Istp{n}=\emptyset$. 

For any $n\ge m_0$ let $\rItp{n}=\Itp{n}\setminus(\Istp{n-1}\cup\Istp{n})$.
Firstly note that $\rItp{n}$ is non-empty (as otherwise we would have $\Itp{n}\subseteq\Istp{n-1}\cup\Istp{n}\subseteq\sEng$, 
implying that $\sEng$ has a connected component containing the distinct elements $t_{n-1},t_n$ of $\tp$). 
Also $\rItp{n}$ is an interval (removal of $\Istp{n-1}$ and $\Istp{n}$ could only split the interval $\Itp{n}$ if either $\Istp{n-1}\subset\Itp{n}$ or $\Istp{n}\subset\Itp{n}$ which, in turn, is only possible if $\Istp{n-1}=\emptyset$ or $\Istp{n}=\emptyset$).

\begin{lemma}
\label{lem:minKn}
Let $n\ge m_0$ and suppose $I$ is a closed and bounded sub-interval of $\Itp{n}$ 
with $\Eng{\f}(x)<\kappa$ for some $x\in I$. 
Then the minimum of $\Eng{\f}$ on $I$ occurs at an endpoint.
\end{lemma}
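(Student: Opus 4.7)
The plan is to argue by contradiction. Suppose the minimum of $\Eng{\f}$ on $I$ is attained at some interior point $y$ of $I$. I would first observe that since $I \subseteq \Itp{n} = [t_{n-1}, t_n]$ and, by definition of consecutive turning points, $\f'$ is non-vanishing on the open interval $(t_{n-1}, t_n)$, the set $\tp \cap \Itp{n}$ equals $\{t_{n-1}, t_n\}$. Interior points of $I$ lie strictly between these endpoints, so $y \in (t_{n-1}, t_n)$ and hence $y \notin \tp$.

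Next, combining the minimality of $y$ with the hypothesis $\Eng{\f}(x) < \kappa^2$ for some $x \in I$ yields $\Eng{\f}(y) \le \Eng{\f}(x) < \kappa^2$, so $y \in \sEng$. As $y$ is an interior minimum of the differentiable function $\Eng{\f}$, we have $(\Eng{\f})'(y) = 0$. But Lemma \ref{lem:propssEng}(i) then forces $y \in \tp$, contradicting the conclusion of the previous paragraph. Thus the minimum of $\Eng{\f}$ on $I$ must occur at an endpoint.

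There is no serious obstacle here: the argument is a short combination of the elementary fact that $\tp \cap \Itp{n}$ is just the two endpoints of $\Itp{n}$ with the already-established property (Lemma \ref{lem:propssEng}(i)) that critical points of $\Eng{\f}$ inside $\sEng$ are forced to be turning points of $\f$. The role of the condition $n \ge m_0$ does not enter the proof of this particular lemma, being only relevant to the surrounding setup in which $\Istp{n}$ and $\rItp{n}$ are defined.
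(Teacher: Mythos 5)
Your proof is correct and follows essentially the same route as the paper's: an interior minimum of $\Eng{\f}$ is a critical point lying in $\sEng$, hence belongs to $\tp$ by Lemma~\ref{lem:propssEng}(i), yet interior points of $\Itp{n}$ cannot be turning points — contradiction. One small point worth noting: you silently read the hypothesis as $\Eng{\f}(x)<\kappa^2$ rather than the stated $\Eng{\f}(x)<\kappa$; since $\kappa<1$, only the $\kappa^2$ bound places $y$ in $\sEng$ (which is defined via $\kappa^2$), and this stronger bound is indeed what is available in the lemma's sole application (Lemma~\ref{lem:Enf>sig}), so the paper's statement and proof carry the same $\kappa$-versus-$\kappa^2$ slip that your reading tacitly corrects.
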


\begin{proof}
Suppose the minimum of $\Eng{\f}$ on $I$ occurs at $x_0$ which is an interior point of $I$. Then $x_0$ is also in the interior of $\Itp{n}$ and hence $x_0\notin\tp$. On the other hand, we must have $\Eng{\f}'(x_0)=0$ and $\Eng{\f}(x_0)<\kappa$, so $x_0\in\tp$ by Lemma \ref{lem:propssEng}(i).
\end{proof}

\begin{lemma}
\label{lem:Enf>sig}
For any $n\ge m_0$ we have $\Eng{\f}(x)\ge\sigma^2(x)$ for all $x\in\rItp{n}$. 
\end{lemma}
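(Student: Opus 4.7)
The plan is to exploit the fact that $\widetilde{J}_n$ is a single closed sub-interval of $J_n$ whose endpoints are controlled in a very rigid way, and then to apply Lemma \ref{lem:minKn}.

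First I would analyse the geometry of $\widetilde{J}_n$. Since the $K_j$ are (possibly empty) maximal connected components of the open set $U_\kappa$ and, by Lemma \ref{lem:propssEng}(ii), each non-empty such component contains a unique turning point, the sets $K_{n-1}$ and $K_n$ (when non-empty) are disjoint open intervals lying inside $(t_{n-2},t_n)$ and $(t_{n-1},t_{n+1})$ respectively. Intersecting with $J_n=[t_{n-1},t_n]$ we see that $\widetilde{J}_n$ is a closed sub-interval $[a_n,b_n]\subseteq J_n$, where the left endpoint satisfies either $a_n=t_{n-1}$ (when $K_{n-1}=\emptyset$) or $\Eng{\f}(a_n)=\kappa^2$ (when $a_n\in\partial K_{n-1}$), and symmetrically for $b_n$.

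Next I would argue by cases. \emph{Case 1:} $\Eng{\f}(x)\ge\kappa^2$ for every $x\in\widetilde{J}_n$. Since $n\ge m_0$ we have $x\ge t_{n-1}\ge t_{m_0-1}>t_{m_0-2}$, so monotonicity of $\sigma$ and the choice of $m_0$ give $\sigma(x)\le\sigma(t_{m_0-2})<\kappa$, and therefore $\Eng{\f}(x)\ge\kappa^2>\sigma^2(x)$. \emph{Case 2:} there exists $x_0\in\widetilde{J}_n$ with $\Eng{\f}(x_0)<\kappa^2$. By Lemma \ref{lem:minKn} (applied with $I=\widetilde{J}_n\subseteq\Itp{n}$), the minimum of $\Eng{\f}$ on $\widetilde{J}_n$ is attained at $a_n$ or $b_n$. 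A boundary point of $K_{n-1}$ or $K_n$ carries the value $\kappa^2$, so the minimum (which is $<\kappa^2$) must actually be attained at a point belonging to $\tp$, namely at $t_{n-1}$ or $t_n$. In either case the corresponding $K_{n-1}$ or $K_n$ is empty, so by the very definition of $\stp$ we get $\Eng{\f}(t_{n-1})\ge\sigma^2(t_{n-2})$ or $\Eng{\f}(t_n)\ge\sigma^2(t_{n-1})$. For any $x\in\widetilde{J}_n\subseteq[t_{n-1},t_n]$ the decreasing nature of $\sigma$ then yields
\[
\Eng{\f}(x)\ge\min_{\widetilde{J}_n}\Eng{\f}\ge\sigma^2(t_{n-2})\ge\sigma^2(x)
\qquad\text{or}\qquad
\Eng{\f}(x)\ge\sigma^2(t_{n-1})\ge\sigma^2(x),
\]
as required.

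The only real obstacle is the bookkeeping at the endpoints: one must be sure that whenever $a_n$ (respectively $b_n$) is a turning point rather than a boundary point of a $K$-component, the corresponding $K_{n-1}$ (respectively $K_n$) is actually empty, so that the defining inequality of $\stp$ applies with the stronger lower bound $\sigma^2(t_{n-2})$ (respectively $\sigma^2(t_{n-1})$). This is built into the construction of $\stp$, but it is the point that ties Cases 1 and 2 together and makes the decreasing property of $\sigma$ do its job.
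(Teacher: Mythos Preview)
Your proof is correct and follows essentially the same approach as the paper: localize the minimum of $E_f$ on $\widetilde{J}_n$ to an endpoint via Lemma~\ref{lem:minKn}, then use the dichotomy that each endpoint is either a boundary point of some $K_j$ (where $E_f=\kappa^2$) or a turning point $t_j\notin\stp$ (where the defining inequality $E_f(t_j)\ge\sigma^2(t_{j-1})$ applies). The paper packages this as a proof by contradiction rather than your direct case split, but the logical content is the same.
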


\begin{proof}
Set $\rItp{n}=[s,t]$ and suppose $\Eng{\f}(x)<\sigma^2(x)$ for some $x\in\rItp{n}$. Since $x\ge t_{n-1}>t_{m_0-2}$ we get $\Eng{\f}(x)<\sigma^2(t_{m_0-2})<\kappa$.
Lemma \ref{lem:minKn} then shows that the minimum of $\Eng{\f}$ on $\rItp{n}$ must occur at either $s$ or $t$. 
Now $\Eng{\f}(s)=\kappa$ if $t_{n-1}\in\stp$ while $s=t_{n-1}$ if $t_{n-1}\notin\stp$. 
However $\kappa$ is not the minimum value of $\Eng{\f}$ on $\rItp{n}$, while having $\Eng{\f}(t_{n-1})$ as the minimum value would imply
\[
\Eng{\f}(t_{n-1})\le\Eng{\f}(x)<\sigma^2(x)\le\sigma^2(t_{n-1})\le\sigma^2(t_{n-2}),
\]
leading to the contradiction $t_{n-1}\in\stp$. A similar argument shows that the minimum value of $\Eng{\f}$ on $\rItp{n}$ can't occur at $t$. 
\end{proof}

If $n\ge1$ then $\f'(x)\neq0$ for any $x\in(t_{n-1},t_n)$ so $\f$ can have at most 1 zero on $\Itp{n}$; that is, $\numz[0](\Itp{n})\le1$. Since $\Eng{\phi}(x)<\sigma^2(x)$ for all $x$ we immediately get the following corollary of Lemmas \ref{lem:genpertz1}, \ref{lem:genpertz2} and \ref{lem:Enf>sig}.

\begin{corollary}
\label{cor:easyintcomp}
Suppose $n\ge m_0$. Then $\numz(\rItp{n})\le1$. Furthermore, if $\rItp{n}=\Itp{n}$ then $\numz(\Itp{n})=\numz[0](\Itp{n})$; in this case neither $\f$ nor $\f[\phi]$ can have a zero at an endpoint of $\Itp{n}$.
\end{corollary}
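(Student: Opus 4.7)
My plan is to treat the two assertions as direct applications of Lemmas \ref{lem:genpertz1} and \ref{lem:genpertz2}, combined with the energy bound from Lemma \ref{lem:Enf>sig}; only a small amount of bookkeeping is needed in between.

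For the bound $\numz(\rItp{n})\le 1$, I would first check the hypotheses of Lemma \ref{lem:genpertz1} with $g=\f$, $\psi=\phi$, and $I=\rItp{n}$. Because $\rItp{n}\subseteq\Itp{n}=[t_{n-1},t_n]$ and $\f'$ is nonzero on $(t_{n-1},t_n)$, continuity of $\f'$ forces it to have constant sign throughout $\Itp{n}$, hence on $\rItp{n}$. Lemma \ref{lem:Enf>sig} gives $\Eng{\f}(x)\ge\sigma^2(x)$ on $\rItp{n}$, while by construction of $\sigma$ we have $\Eng{\phi}(x)<\sigma^2(x)$ for all $x\in\R$. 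Thus $\Eng{\phi}<\Eng{\f}$ on $\rItp{n}$, and Lemma \ref{lem:genpertz1} yields that $g+\psi=\f+\phi=\f[\phi]$ has at most one zero on $\rItp{n}$.

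For the second assertion, assume $\rItp{n}=\Itp{n}$. Then Lemma \ref{lem:Enf>sig} delivers $\Eng{\phi}(x)<\sigma^2(x)\le\Eng{\f}(x)$ on all of $\Itp{n}=[t_{n-1},t_n]$, and by definition of $\Itp{n}$ we have $\f'(t_{n-1})=0=\f'(t_n)$ together with $\f'(x)\ne 0$ on $(t_{n-1},t_n)$. These are precisely the hypotheses of Lemma \ref{lem:genpertz2} applied with $g=\f$, $\psi=\phi$. That lemma simultaneously yields both remaining conclusions: $\f$ and $\f[\phi]$ have the same number of zeros on $\Itp{n}$ (giving $\numz(\Itp{n})=\numz[0](\Itp{n})$), and neither function vanishes at $t_{n-1}$ or $t_n$.

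There is no real obstacle: the corollary is essentially an assembly of the three cited results. The only point requiring genuine verification is that $\f'$ has constant sign on $\rItp{n}$, which is immediate once one observes that the only turning points of $\f$ inside $\Itp{n}$ are its endpoints.
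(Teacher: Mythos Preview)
Your argument is correct and matches the paper's own approach: the paper simply states that the corollary is an immediate consequence of Lemmas \ref{lem:genpertz1}, \ref{lem:genpertz2} and \ref{lem:Enf>sig} together with the bound $\Eng{\phi}(x)<\sigma^2(x)$, and you have faithfully spelled out how each lemma is applied.
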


Note that, the requirement $\rItp{n}=\Itp{n}$ is equivalent to $\Istp{n-1}=\emptyset=\Istp{n}$, or $\Itp{n}\cap\stp=\emptyset$.

\subsubsection*{Proof of Theorem \ref{thm:coszeros} when $\alpha\beta>1$, $\beta\in\Q$}

Suppose $\alpha\beta>1$ and $\beta\in\Q$. Theorem \ref{thm:coszeros} for general $\phi$ then follows from the case $\phi\equiv0$ and the following result.

\begin{proposition}
Suppose $\phi$ satisfies \eqref{eq:decayprop01} and $\f[\phi]^{-1}(0)$ is a discrete subset of $\R$. Then
\[
\lim_{R\to\infty}\frac{\abs{\numz(R)-\numz[0](R)}}{R}=0.
\]
\end{proposition}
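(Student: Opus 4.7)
The approach rests on observing that in this rational case, Corollary \ref{cor:bratEngunifnz} (applicable because the hypothesis of Theorem \ref{thm:coszeros} forces $p_\beta+q_\beta\nu_{\alpha,\beta}\notin 4\Z$) provides a uniform lower bound $\Eng{\f}(x)\ge\delta>0$ on all of $\R$. First I would exploit this to show that the set $\stp$ of ``small'' turning points is finite. Since $\sigma$ was chosen with $\sigma(x)\to0$ as $x\to\infty$, there is $N_0\ge m_0$ with $\sigma^2(t_{n-1})<\delta$ for every $n\ge N_0$. But then $\Eng{\f}(t_n)\ge\delta>\sigma^2(t_{n-1})$, so $t_n\notin\stp$ for all $n\ge N_0$. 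Thus $\stp\subseteq\{t_1,\dots,t_{N_0-1}\}$ is finite, and there exists an index $N_1\ge N_0$ so that for every $n\ge N_1$ both neighbouring sets $\Istp{n-1}$ and $\Istp{n}$ are empty, giving $\rItp{n}=\Itp{n}$.

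Next I would invoke Corollary \ref{cor:easyintcomp}. For each $n\ge N_1$ it yields $\numz(\Itp{n})=\numz[0](\Itp{n})$, and further guarantees that neither $\f$ nor $\f[\phi]$ vanishes at the endpoints $t_{n-1},t_n$. Hence, summing over $n$ from $N_1$ up to the largest index $N(R)$ with $t_{N(R)}\le R$, the counting functions restricted to $[t_{N_1-1},t_{N(R)}]$ coincide exactly.

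To finish, I would decompose $[0,R]=[0,t_{N_1-1}]\cup\bigcup_{n=N_1}^{N(R)}\Itp{n}\cup[t_{N(R)},R]$. The middle union contributes identically to $\numz$ and $\numz[0]$. The initial piece $[0,t_{N_1-1}]$ has length independent of $R$; $\numz[0]$ is bounded there since $\f$ is analytic (so has finitely many zeros on a compact set), and $\numz$ is bounded by the hypothesis that $\f[\phi]^{-1}(0)$ is a discrete subset of $\R$. Finally, by Lemma \ref{lem:basictplem} the trailing interval $[t_{N(R)},R]$ has length at most $2\pi/\beta$, and since it lies inside some $\Itp{n}$ with $n>N_1$ on which $\f'$ does not vanish (and on which, by Corollary \ref{cor:easyintcomp}, $\f[\phi]$ has at most one zero), it contributes at most one to each counting function. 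Combining, $\abs{\numz(R)-\numz[0](R)}=O(1)$, which is $o(R)$.

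There is no real obstacle in the rational case: the entire argument is driven by the finiteness of $\stp$, which falls out of Corollary \ref{cor:bratEngunifnz} almost for free. (By contrast, the genuine difficulty with the analogous irrational statement, handled elsewhere, is that $\stp$ can be infinite, forcing one to use the full $C^2$ decay hypothesis \eqref{eq:decayprop012} together with Lemma \ref{lem:propssEng}(iii) to show each $\Istp{n}$ changes the zero count by at most a bounded amount while controlling the asymptotic density of $\stp$.)
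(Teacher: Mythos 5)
Your proposal is correct and follows essentially the same route as the paper: invoke Corollary \ref{cor:bratEngunifnz} to get a uniform positive lower bound on $\Eng{\f}$, deduce from the decay of $\sigma$ that $\stp$ is eventually empty (hence $\rItp{n}=\Itp{n}$ for all large $n$), apply Corollary \ref{cor:easyintcomp} to match the counting functions interval by interval, and absorb the finitely many remaining zeros into an $O(1)$ term. The only cosmetic difference is that you handle the trailing piece $[t_{N(R)},R]$ by hand, whereas the paper dispatches the same boundary issue by reducing to the sequence $R=t_M$ via Lemmas \ref{lem:numzRRseqlim} and \ref{lem:basictplem}.
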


\begin{proof}
Using Corollary \ref{cor:bratEngunifnz} we can choose $m\ge m_0$ so that $\Eng{\f}(x)\ge\sigma^2(t_{m-1})$ for all $x$. 
Then $\stp\cap[t_m,\infty)=\emptyset$ so $\rItp{n}=\Itp{n}$ for any $n>m$.
Corollary \ref{cor:easyintcomp} then gives $\numz([t_m,t_M])=\numz[0]([t_m,t_M])$ for any $M>m$.  
On the other hand, $\f[0]^{-1}(0)$ and $\f[\phi]^{-1}(0)$ are discrete subsets of $\R$ so $\numz[0]([0,t_m))$ and $\numz([0,t_m))$ are both finite. 
Thus $\bigabs{\numz(t_M)-\numz[0](t_M)}=O(1)$ as $M\to\infty$. Then
\[
\lim_{R\to\infty}\frac{\abs{\numz(R)-\numz[0](R)}}{R}
=\lim_{M\to\infty}\frac{\abs{\numz(t_M)-\numz[0](t_M)}}{t_M}
=0
\]
with the help of Lemmas \ref{lem:numzRRseqlim} and \ref{lem:basictplem}.
\end{proof}

\subsubsection*{Distribution of points in $\stp$}

\begin{lemma}
\label{lem:minsep}
There exists a constant $c>0$ such that $\abs{t-s}\ge c$ for all distinct $s,t\in\stp$.
\end{lemma}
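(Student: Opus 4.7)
The plan is to show that elements of $\stp$ beyond a certain threshold are uniformly separated through the geometry of $\sEng$, and then to absorb the finitely many remaining elements into the final constant using discreteness of $\tp$.

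First I would use the fact that $\sigma(x)\to0$ as $x\to\infty$ together with $t_n\to\infty$ (Lemma~\ref{lem:basictplem}) to fix an integer $M\ge m_0$ satisfying $\sigma^2(t_{M-1})\le\kappa^2/2$. Monotonicity of $\sigma$ then forces every $t_n\in\stp$ with $n\ge M$ to obey $\Eng{f}(t_n)<\sigma^2(t_{n-1})\le\kappa^2/2$, so in particular $t_n\in\sEng$. I would then split $\stp=\stp'\sqcup\stp''$ with $\stp'=\stp\cap[0,t_{M-1}]$ and $\stp''=\stp\cap[t_M,\infty)$.

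The core of the argument is uniform separation within $\stp''$. If $s<t$ both lie in $\stp''$, they are distinct points of $\tp$ lying in $\sEng$, so by Lemma~\ref{lem:propssEng}(ii) they belong to distinct maximal connected components of $\sEng$. Consequently there is some $r\in(s,t)$ on the right-hand boundary of the component containing $s$ at which $\Eng{f}(r)=\kappa^2$. Differentiating gives $\Eng{f}'=2f'(f+f'')$, and the explicit form of $f$ provides uniform bounds on $\abs{f},\abs{f'},\abs{f''}$, hence a constant $C=C(\alpha,\beta)$ with $\abs{\Eng{f}'}\le C$ on $\R$. The mean value theorem on $[r,t]$ then yields
\[
\tfrac12\kappa^2\le\Eng{f}(r)-\Eng{f}(t)\le C(t-r)\le C(t-s),
\]
giving $t-s\ge\kappa^2/(2C)=:c_1>0$.

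Finally, since $\tp$ is a discrete subset of $\R$ the set $\stp'$ is finite, which lets me handle the remaining pairs directly: the minimum separation among distinct elements of $\stp'$ is some $c_2>0$ (vacuously infinite if $\#\stp'\le1$), and for any mixed pair $s\in\stp'$, $t\in\stp''$ we have $t-s\ge t_M-t_{M-1}>0$, providing a third positive constant $c_3$. The lemma then holds with $c=\min(c_1,c_2,c_3)$. The main obstacle is really the middle step: converting the topological fact that distinct elements of $\stp''$ lie in separate components of $\sEng$ into a concrete metric lower bound, which is precisely what the uniform bound on $\Eng{f}'$ buys us.
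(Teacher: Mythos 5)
Your proof is correct, but it takes a genuinely different route from the paper's. The paper works directly with derivatives of $\f$: since $s,t\in\sEng$ we have $\abs{\f''(s)},\abs{\f''(t)}\ge\kappa$ by \eqref{eq:defpropkappa}, while $\f'(s)=0=\f'(t)$ forces (via Rolle) a point $x_0\in(s,t)$ with $\f''(x_0)=0$; the total variation of $\f''$ on $[s,t]$ is therefore at least $2\kappa$, and the uniform bound $\abs{\f'''}\le1+\alpha\beta^3$ then gives $t-s\ge2\kappa/(1+\alpha\beta^3)$ in one line. You instead exploit the topology of $\sEng$: using Lemma~\ref{lem:propssEng}(ii) to place $s$ and $t$ in distinct components, locating a point $r\in(s,t)$ on a component boundary with $\Eng{\f}(r)=\kappa^2$, and then using a global Lipschitz bound on $\Eng{\f}$ to extract a quantitative gap. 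Both routes are sound and give constants of the same character; the paper's is shorter because it never passes through the energy $\Eng{\f}$. One thing your version does more carefully: the paper's proof tacitly assumes every element of $\stp$ lies in $\sEng$, which is only guaranteed for indices $n\ge m_0-1$; your explicit split $\stp=\stp'\sqcup\stp''$ and the three-case minimum $c=\min(c_1,c_2,c_3)$ cleanly absorbs the finitely many possible exceptions.
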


\begin{proof}
Let $s,t\in\stp$ with $s<t$. Now $s,t\in\sEng$ so $\abs{\f''(s)},\,\abs{\f''(t)}\ge\kappa$. 
Also $\f'(s)=0=\f'(t)$ so we can find $x_0\in(s,t)$ with $\f''(x_0)=0$. Then the total variation of $\f''$ between $s$ and $t$ is at least $2\kappa$.
However $\abs{\f'''(x)}\le1+\alpha\beta^3$ for all $x$; thus $t-s\ge2\kappa/(1+\alpha\beta^3)$. 
\end{proof}

Split $\stp$ into a pair of increasing sequences of distinct points $(s^+_n)_{n\ge1}$ and $(s^-_n)_{n\ge1}$ so that $s^+_n\in[0,\pi/2)+\pi\Z$ and $s^-_n\in[\pi/2,\pi)+\pi\Z$ for all $n$.

\begin{lemma}
\label{lem:limdiffs+n}
Suppose $(s^+_n)_{n\ge1}$ is an infinite sequence. Then $s^+_n-s^+_{n-1},\;b(s^+_n-s^+_{n-1})\longrightarrow\pi\N$ as $n\to\infty$. A similar result hold for $(s^-_n)_{n\ge1}$.
\end{lemma}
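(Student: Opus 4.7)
The plan is to exploit the fact that $\Eng{\f}(s^+_n) \to 0$ as $n \to \infty$ to force both $s^+_n$ and $\beta s^+_n$ to align, modulo $\pi$, with specific values determined by $\alpha$ and $\beta$. First I would show $\Eng{\f}(s^+_n) \to 0$: since the sequence $(s^+_n)$ is infinite with minimum separation (Lemma \ref{lem:minsep}), $s^+_n \to \infty$; writing $s^+_n = t_{m(n)}$ with $m(n) \to \infty$, the defining inequality $\Eng{\f}(t_{m(n)}) < \sigma^2(t_{m(n)-1})$ combined with $\sigma(x) \to 0$ yields the claim.

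Next, since $\f(s^+_n), \f'(s^+_n) \to 0$, the same computation used in Lemma \ref{lem:tangzsbrat} (solving \eqref{eq:sin2eq0}, \eqref{eq:sin2eq1} with $v_0, v_1 \to 0$) gives
\[
\sin^2(s^+_n) \to \sin^2(\xi'), \qquad \sin^2(\beta s^+_n) \to \sin^2(\eta).
\]
Combined with $s^+_n \bmod \pi \in [0,\pi/2)$ and $\xi' \in (0,\pi/2)$, the first limit forces $s^+_n = \xi' + k_n\pi + o(1)$ for non-negative integers $k_n$.

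The crucial step is to pin down $\beta s^+_n \bmod \pi$: the second limit alone only gives accumulation at $\eta$ or $\pi - \eta$, and mixing the two values between consecutive indices would leave an extraneous $\pm(\pi - 2\eta)$ term in the $\beta$-difference. To remove this ambiguity I would carry out a sign analysis. From $\cos s^+_n + \alpha\cos(\beta s^+_n) \to 0$ and $\sin s^+_n + \alpha\beta\sin(\beta s^+_n) \to 0$, together with $\cos s^+_n \approx (-1)^{k_n}\cos\xi'$ and $\sin s^+_n \approx (-1)^{k_n}\sin\xi'$ (noting $\cos\xi', \sin\xi' > 0$), both $\cos(\beta s^+_n)$ and $\sin(\beta s^+_n)$ share sign $(-1)^{k_n+1}$ in the limit. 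Hence $\beta s^+_n \bmod 2\pi$ tends to $\eta$ (first quadrant) when $k_n$ is odd and to $\pi + \eta$ (third quadrant) when $k_n$ is even; in both cases $\beta s^+_n = \eta + \ell_n\pi + o(1)$ for some integers $\ell_n$.

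Taking differences now gives $s^+_n - s^+_{n-1} = (k_n - k_{n-1})\pi + o(1)$ and $\beta(s^+_n - s^+_{n-1}) = (\ell_n - \ell_{n-1})\pi + o(1)$; both quantities are positive for $n$ large (since $s^+_n$ is strictly increasing and $\beta > 0$), so $k_n - k_{n-1}, \ell_n - \ell_{n-1} \in \N$ for large $n$. The statement for $(s^-_n)$ follows by the parallel argument, with $\xi'$ replaced by $\pi - \xi'$ and corresponding sign adjustments. The main obstacle is precisely this parity-coordinated sign analysis that forces the $\pm\eta$ ambiguity to resolve uniformly; without it, the $\beta$-difference would pick up a residual term outside $\pi\Z$ and fail to approach $\pi\N$.
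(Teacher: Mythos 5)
Your proof is essentially the same as the paper's: you establish $\Eng{\f}(s^+_n)\to0$ from the definition of $\stp$ together with $s^+_n\to\infty$, solve the linear system \eqref{eq:sin2matsol} with $v_0,v_1\to0$ to locate $s^+_n$ and $\beta s^+_n$ modulo $\pi$, use the constraint $s^+_n\bmod\pi\in[0,\pi/2)$ to pin down $s^+_n\to\xi'+\pi\Z$, and then carry out the sign analysis on $\cos(\beta s^+_n),\sin(\beta s^+_n)$ to force $\beta s^+_n\to\eta+\pi\Z$ (the "diametrically opposite quadrant" step in the paper). All of this is correct and matches the paper's structure.

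The only weak point is the very last step. You conclude $k_n-k_{n-1},\,\ell_n-\ell_{n-1}\in\N$ from positivity of $s^+_n-s^+_{n-1}$. But positivity of a quantity of the form $(k_n-k_{n-1})\pi+o(1)$ only gives $k_n-k_{n-1}\ge0$ for large $n$; it does not rule out $k_n-k_{n-1}=0$, in which case the difference would be $o(1)>0$ and $\dist(s^+_n-s^+_{n-1},\pi\N)\to\pi$, not $0$. To exclude this you need the uniform lower bound $s^+_n-s^+_{n-1}\ge c>0$ from Lemma \ref{lem:minsep} (which you have already invoked earlier, but for a different purpose). The paper makes this final appeal to Lemma \ref{lem:minsep} explicit; your write-up should do the same, replacing the appeal to strict monotonicity by the appeal to the uniform separation. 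With that one-line correction the proof is complete and coincides with the paper's.
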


If $(x_n)_{n\ge1}$ is a sequence and $X\subseteq\R$ then $x_n\to X$ simply means $\dist(x_n,X)\to0$.

\begin{proof}[Proof of Lemma \ref{lem:limdiffs+n}]
We have $\f'(s^+_n)=0$ for all $n$ and $\abs{\f(s^+_n)}^2=\Eng{\f}(s^+_n)\le\sigma^2(s^+_{n-1})\to0$ as $n\to\infty$. From \eqref{eq:sin2matsol} it follows that
\[
\sin^2(s^+_n)\longrightarrow\frac{\beta^2(1-\alpha^2)}{\beta^2-1}
\quad\text{and}\quad
\sin^2(\beta s^+_n)\longrightarrow\frac{1-\alpha^2}{\alpha^2(\beta^2-1)}
\]
as $n\to\infty$. Hence
\[
s^+_n\longrightarrow(\xi'+\pi\Z)\cup(-\xi'+\pi\Z)
\quad\text{and}\quad
\beta s^+_n\longrightarrow(\eta+\pi\Z)\cup(-\eta+\pi\Z)
\]
as $n\to\infty$ (recall \eqref{eq:alphabetadef} and \eqref{eq:alpha'beta'def} for the definitions of $\xi'$ and $\eta$). However, $s^+_n\in[0,\pi/2)+\pi\Z$ for all $n$, while $\cos(s^+_n)=-\alpha\cos(\beta s^+_n)+\f(s^+_n)$ and $\sin(s^+_n)=-\alpha\beta\sin(\beta s^+_n)$ so (modulo $2\pi$) $\beta s^+_n$ must tend to a quadrant diametrically opposite $s^+_n$. Hence $s^+_n,\,\beta s^+_n\longrightarrow[0,\pi/2]+\pi\Z$ as $n\to\infty$. Comparing with the previous expressions we then get
\[
s^+_n\longrightarrow\xi'+\pi\Z
\quad\text{and}\quad
\beta s^+_n\longrightarrow\eta+\pi\Z
\quad\Longrightarrow\quad
s^+_n-s^+_{n-1},\;\beta(s^+_n-s^+_{n-1})\longrightarrow\pi\Z
\]
as $n\to\infty$ (note that $\xi',\eta\in(0,\pi/2)$). Finally note that Lemma \ref{lem:minsep} gives $s^+_n-s^+_{n-1}\ge c>0$ for all $n$, so we can replace the final $\pi\Z$ with $\pi\N$.
\end{proof}

\begin{lemma}
\label{lem:bnddiffbirr}
If $\beta\notin\Q$ then $\#\{n:s^+_n-s^+_{n-1}\le C\}<\infty$ for all constants $C$.
A similar result holds for $(s^-_n)_{n\ge1}$.
\end{lemma}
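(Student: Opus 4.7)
The plan is to argue by contradiction using Lemma \ref{lem:limdiffs+n}. Suppose that for some constant $C$ we had infinitely many indices $n$ with $s^+_n-s^+_{n-1}\le C$. By Lemma \ref{lem:minsep} these differences are also bounded below by a positive constant, so the sequence of differences along this infinite set of indices is bounded. Passing to a subsequence $(n_j)_{j\ge1}$ we may therefore assume that
\[
s^+_{n_j}-s^+_{n_j-1}\longrightarrow L
\]
for some $L\in[0,C]$.

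By Lemma \ref{lem:limdiffs+n} we have $s^+_n-s^+_{n-1}\to\pi\N$ (in the sense that the distance to this closed set tends to $0$), so the limit value $L$ must itself lie in $\pi\N$; write $L=k\pi$ with $k\in\N$. Along the same subsequence we then also have
\[
\beta\bigl(s^+_{n_j}-s^+_{n_j-1}\bigr)\longrightarrow k\beta\pi.
\]
But Lemma \ref{lem:limdiffs+n} also asserts that $\beta(s^+_n-s^+_{n-1})\to\pi\N$, so the same closed-set argument forces $k\beta\pi\in\pi\N$, i.e.\ $k\beta=m$ for some $m\in\N$. Hence $\beta=m/k\in\Q$, contradicting the hypothesis $\beta\notin\Q$. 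The corresponding statement for $(s^-_n)_{n\ge1}$ follows by exactly the same argument, using the analogue of Lemma \ref{lem:limdiffs+n} noted at the end of its statement.

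There is no substantive obstacle; the work has already been done in Lemmas \ref{lem:minsep} and \ref{lem:limdiffs+n}. The only thing to be careful about is that $\pi\N$ is a closed subset of $\R$, so any convergent subsequence whose terms get arbitrarily close to $\pi\N$ has its limit in $\pi\N$; this is what lets us conclude simultaneously that both $L$ and $\beta L$ are integer multiples of $\pi$, which is incompatible with irrationality of $\beta$ as soon as $k\ge1$ (and $k\ge1$ is guaranteed by the lower bound $s^+_n-s^+_{n-1}\ge c>0$ of Lemma \ref{lem:minsep}).
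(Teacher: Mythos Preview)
Your proof is correct and follows essentially the same approach as the paper: argue by contradiction, use the bounds from Lemma \ref{lem:minsep} to extract a convergent subsequence of differences with limit $L>0$, then apply Lemma \ref{lem:limdiffs+n} to force both $L$ and $\beta L$ into $\pi\N$, yielding $\beta\in\Q$. The paper's version is slightly terser (it places the limit directly in $[c,C]$ rather than first in $[0,C]$ and then arguing $k\ge1$), but the logic is identical.
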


\begin{proof}
Suppose $\#\{n:s^+_n-s^+_{n-1}\le C\}=\infty$ for some $C$. 
Then we can find $x\in[c,C]$ and a sub-sequence $s^+_{n_k}$ such that $s^+_{n_k}-s^+_{n_k-1}\to x$ as $k\to\infty$. Lemma \ref{lem:limdiffs+n} then gives $x=q\pi$ and $\beta x=p\pi$ for some $p,q\in\N$, so $\beta=p/q\in\Q$.
\end{proof}

\begin{lemma}
\label{lem:genunbnddiff}
If $(x_n)_{n\ge1}$ is an increasing positive sequence with $\#\{n:x_n-x_{n-1}\le C\}<\infty$ for all $C$ then $\#\{n:x_n\le R\}=o(R)$ as $R\to\infty$. 
\end{lemma}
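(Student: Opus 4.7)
The plan is to observe that the hypothesis is equivalent to $x_n - x_{n-1} \to \infty$ as $n \to \infty$, and then to use this divergence of gaps to bound the counting function $N(R) := \#\{n : x_n \le R\}$ above by $\epsilon R + O(1)$ for arbitrary $\epsilon > 0$.

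First I would verify the equivalence: the condition $\#\{n : x_n - x_{n-1} \le C\} < \infty$ for every $C$ means precisely that for each $C > 0$, only finitely many terms of the sequence $(x_n - x_{n-1})_{n\ge 1}$ lie in $[0, C]$, which is the definition of $x_n - x_{n-1} \to \infty$. Fix $\epsilon > 0$. Choose $n_0 \in \N$ such that $x_n - x_{n-1} > 1/\epsilon$ for all $n > n_0$. Then by telescoping, for any $n > n_0$,
\[
x_n = x_{n_0} + \sum_{k=n_0+1}^{n}(x_k - x_{k-1}) > x_{n_0} + \frac{n - n_0}{\epsilon}.
\]

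Now, if $x_n \le R$, then either $n \le n_0$, or $n > n_0$ and the above estimate forces $n - n_0 < \epsilon(R - x_{n_0}) \le \epsilon R$. In either case $n \le n_0 + \epsilon R$, so $N(R) \le n_0 + \epsilon R$ for all sufficiently large $R$. Dividing by $R$ and taking $\limsup$ gives $\limsup_{R\to\infty} N(R)/R \le \epsilon$. Since $\epsilon > 0$ was arbitrary, this yields $N(R) = o(R)$.

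There is no serious obstacle here; the argument is a direct consequence of the divergence of gaps. The only mild subtlety is confirming the reformulation of the hypothesis, which follows immediately from unpacking the quantifiers.
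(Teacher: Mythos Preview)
Your proof is correct and follows essentially the same approach as the paper's own argument: fix $\epsilon>0$, choose an index beyond which all gaps exceed $1/\epsilon$, telescope to get a linear lower bound on $x_n$, and conclude $N(R)\le n_0+\epsilon R$ before taking $\limsup$. The only addition is your explicit reformulation of the hypothesis as $x_n-x_{n-1}\to\infty$, which the paper leaves implicit.
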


\begin{proof}
Let $\epsilon>0$ and choose $N$ so that $x_n-x_{n-1}\le1/\epsilon$ for all $n>N$. Then, for all $m\ge1$, $x_{N+m}\ge x_N+m/\epsilon$. Hence
\begin{align*}
\#\{n\ge1:x_n\le R\}
&\le N+\#\{m\ge1:x_{N+m}\le R\}\\
&\le N+\#\{m\ge1:x_N+m/\epsilon\le R\}
\le N+\epsilon(R-x_N)
\end{align*}
whenever $R\ge x_N$. Thus
\[
\limsup_{R\to\infty}\frac1{R}\#\{n\ge1:x_n\le R\}
\le\limsup_{R\to\infty}\left(\epsilon+\frac{N-\epsilon x_N}{R}\right)
=\epsilon.
\]
Taking $\epsilon\to0^+$ completes the result.
\end{proof}

Since $\#(\stp\cap[0,R])=\#\{n:s^+_n\le R\}+\#\{n:s^-_n\le R\}$ Lemmas \ref{lem:bnddiffbirr} and \ref{lem:genunbnddiff} immediately lead to the following.

\begin{corollary}
\label{cor:diststp}
If $\beta\notin\Q$ then $\#(\stp\cap[0,R])=o(R)$ as $R\to\infty$.
\end{corollary}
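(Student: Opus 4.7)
The plan is to exploit the decomposition of $\stp$ into the two sub-sequences $(s^+_n)_{n\ge1}$ and $(s^-_n)_{n\ge1}$ that was set up just before the statement, giving
\[
\#(\stp\cap[0,R])=\#\{n:s^+_n\le R\}+\#\{n:s^-_n\le R\},
\]
and then to bound each of the two counting functions on the right separately. This is essentially an assembly argument: the substantive work is already encoded in the preceding two lemmas, and the only task is to check that their hypotheses match and their conclusions combine additively.

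For each of the two sequences, the hypothesis $\beta\notin\Q$ lets us apply Lemma \ref{lem:bnddiffbirr}, which asserts that $\#\{n:s^\pm_n-s^\pm_{n-1}\le C\}<\infty$ for every constant $C$. This is precisely the input required by Lemma \ref{lem:genunbnddiff} applied to the positive increasing sequences $(s^+_n)$ and $(s^-_n)$; its conclusion then yields
\[
\#\{n:s^+_n\le R\}=o(R)\qquad\text{and}\qquad\#\{n:s^-_n\le R\}=o(R)
\]
as $R\to\infty$. Adding these two $o(R)$ estimates and using the displayed decomposition gives $\#(\stp\cap[0,R])=o(R)$, as required.

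The only mild subtlety, and certainly not an obstacle, is the degenerate case in which one (or both) of the sub-sequences $(s^\pm_n)$ happens to be finite; but then the corresponding counting function is trivially $O(1)=o(R)$, and the argument goes through unchanged. No further input beyond the two named lemmas is needed.
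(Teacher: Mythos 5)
Your proposal is correct and matches the paper's argument essentially verbatim: the paper also combines $\#(\stp\cap[0,R])=\#\{n:s^+_n\le R\}+\#\{n:s^-_n\le R\}$ with Lemmas \ref{lem:bnddiffbirr} and \ref{lem:genunbnddiff} to conclude. Your remark about the degenerate finite-subsequence case is a harmless extra observation that the paper leaves implicit.
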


\subsubsection*{Proof of Theorem \ref{thm:coszeros} when $\alpha\beta>1$, $\beta\notin\Q$}

If $\phi$ satisfies \eqref{eq:decayprop012} we can choose $m\ge m_0$ so that $\abs{\phi''(x)}<\kappa$ for all $x\ge t_{m-1}$. Lemma \ref{lem:propssEng}(iii) immediately gives the following (note that, if $\Istp{n}=\emptyset$ the result is trivial).

\begin{corollary}
\label{cor:dzIstp}
If $n\ge m$ then $\numz(\Istp{n})\le2$. 
\end{corollary}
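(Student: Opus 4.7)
My plan is to deduce Corollary \ref{cor:dzIstp} as a direct consequence of Lemma \ref{lem:propssEng}(iii) applied to the interval $\Istp{n}$, once we verify that the pointwise bound on $\phi''$ required by that lemma holds throughout $\Istp{n}$.

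First I would dispose of the trivial case: if $t_n \notin \stp$, then by definition $\Istp{n} = \emptyset$ and $\numz(\Istp{n}) = 0$, so there is nothing to prove. Assume henceforth that $t_n \in \stp$, so that $\Istp{n}$ is the maximal connected component of $\sEng$ containing $t_n$.

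The key observation is that $\Istp{n}$ is contained in $(t_{n-1}, t_{n+1})$. Indeed, by Lemma \ref{lem:propssEng}(ii), $\Istp{n}$ contains exactly one point of $\tp$; since $t_n \in \Istp{n}\cap\tp$, this point must be $t_n$, so neither $t_{n-1}$ nor $t_{n+1}$ can lie in $\Istp{n}$. Because $\Istp{n}$ is connected, this forces $\Istp{n} \subset (t_{n-1}, t_{n+1})$. Combined with the hypothesis $n \ge m$, which gives $t_{n-1} \ge t_{m-1}$, every $x \in \Istp{n}$ satisfies $x > t_{m-1}$, and hence $\abs{\phi''(x)} < \kappa$ by the defining property of $m$.

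Thus the hypothesis of Lemma \ref{lem:propssEng}(iii) is satisfied on $I = \Istp{n}$, and the lemma immediately gives $\numz(\Istp{n}) \le 2$. There is no real obstacle here beyond the bookkeeping of confirming that $\Istp{n}$ lies in the region where the perturbation $\phi$ is sufficiently small; all the analytical content has already been absorbed into the earlier choices of $\kappa$ (via \eqref{eq:defpropkappa}) and of $m$ (via condition \eqref{eq:decayprop012} for $n=2$).
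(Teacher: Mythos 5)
Your proof is correct and is essentially the paper's argument: the paper also deduces the corollary directly from Lemma \ref{lem:propssEng}(iii), having chosen $m$ so that $\abs{\phi''(x)}<\kappa$ for $x\ge t_{m-1}$. You merely make explicit the containment $\Istp{n}\subset(t_{n-1},t_{n+1})$ (via Lemma \ref{lem:propssEng}(ii)), which the paper leaves implicit in the phrase ``immediately gives.''
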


Now suppose $\alpha\beta>1$ and $\beta\notin\Q$. Theorem \ref{thm:coszeros} for general $\phi$ then follows from the case $\phi\equiv0$ and the following result.

\begin{proposition}
Suppose $\phi$ satisfies \eqref{eq:decayprop012} and $\f[\phi]^{-1}(0)$ is a discrete subset of $\R$. Then
\[
\lim_{R\to\infty}\frac{\abs{\numz(R)-\numz[0](R)}}{R}=0.
\]
\end{proposition}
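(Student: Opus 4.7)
The plan is to split $[0,R]$ using the turning points $t_n$ of $\f$ and to classify each resulting sub-interval $\Itp{n}=[t_{n-1},t_n]$ as \emph{good} (where $\f$ and $\f[\phi]$ are forced to have equal zero counts) or \emph{bad} (where they can differ), then show that bad intervals are sparse enough that their total contribution is $o(R)$.

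First I fix $m\ge m_0$ as in Corollary \ref{cor:dzIstp} and choose $M$ with $t_M\le R<t_{M+1}$. Then I write
\[
\numz(R)-\numz[0](R) = D_m + \sum_{n=m+1}^{M}\bigl(\numz(\Itp{n})-\numz[0](\Itp{n})\bigr) + D_{M,R},
\]
where $D_m$ and $D_{M,R}$ collect the discrepancies on $[0,t_m]$ and $(t_M,R]$ respectively. The term $D_m$ is a fixed finite constant, because $\f^{-1}(0)$ and $\f[\phi]^{-1}(0)$ are discrete and $[0,t_m]$ is bounded. For $D_{M,R}$, since $(t_M,R]\subset\Itp{M+1}$ a uniform bound on $\numz$ and $\numz[0]$ over any single turning-point interval (established next) keeps $D_{M,R}=O(1)$.

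I call $\Itp{n}$ \emph{good} if $\rItp{n}=\Itp{n}$ (equivalently, neither $t_{n-1}$ nor $t_n$ belongs to $\stp$), and \emph{bad} otherwise. By Corollary \ref{cor:easyintcomp}, good intervals satisfy $\numz(\Itp{n})=\numz[0](\Itp{n})$ and so contribute nothing to the middle sum. On a bad interval I use the disjoint decomposition $\Itp{n}=(\Itp{n}\cap\Istp{n-1})\sqcup\rItp{n}\sqcup(\Itp{n}\cap\Istp{n})$ together with $\numz(\Istp{k})\le2$ (Corollary \ref{cor:dzIstp}), $\numz(\rItp{n})\le1$ (Corollary \ref{cor:easyintcomp}), and the trivial $\numz[0](\Itp{n})\le1$ (since $\f'$ does not vanish in the interior) to obtain the uniform bound $\lvert\numz(\Itp{n})-\numz[0](\Itp{n})\rvert\le6$.

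Each point of $\stp$ is an endpoint of at most two consecutive intervals $\Itp{n}$, so the number of bad intervals among $\Itp{m+1},\ldots,\Itp{M}$ is bounded by $2\,\#(\stp\cap[0,R])$, which is $o(R)$ by Corollary \ref{cor:diststp}. Combining this with the bounded boundary discrepancies $D_m$ and $D_{M,R}$ gives $\lvert\numz(R)-\numz[0](R)\rvert=o(R)$, as required. The main care needed is the bookkeeping around the decomposition of a bad interval (zeros at the endpoints $t_n$ could in principle be overcounted when passing between adjacent intervals), but since any such over- or under-counting inflates constants only by a universal factor, it does not affect the $o(R)$ conclusion.
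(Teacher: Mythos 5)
Your proof is correct and takes essentially the same route as the paper: partition $[0,R]$ by the turning points $t_n$, use Corollary \ref{cor:easyintcomp} to show that intervals not meeting $\stp$ contribute zero discrepancy, bound the discrepancy on the remaining intervals uniformly via Corollaries \ref{cor:dzIstp} and \ref{cor:easyintcomp}, and invoke Corollary \ref{cor:diststp} to show those intervals are $o(R)$ in number. The only substantive difference is bookkeeping: the paper works with the half-open intervals $\Itp{n}'$ precisely to eliminate double-counting at shared endpoints, and separately verifies that $t_n\notin\stp$ forces $\f[\phi](t_n)\neq0$ via the inequality $\f^2(t_n)\ge\sigma^2(t_n)>\phi^2(t_n)$; this gives the cleaner bound $4\nstp$ rather than your constant $6$. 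Your remark that endpoint over/under-counting "inflates constants only by a universal factor" is a bit loose as stated — the correct reason it is harmless is that an endpoint zero of $\f[\phi]$ at $t_n$ can occur only when $t_n\in\stp$ (by the Corollary \ref{cor:easyintcomp} endpoint conclusion and the paper's auxiliary computation for $t_n\notin\stp$), so the total overcount is itself $O(\#\stp\cap[0,R])=o(R)$. With that clarification, the argument is complete.
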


\begin{proof}
If $M>m$ then
\[
[t_m,t_M]=\bigcup_{m<n\le M}\Itp{n}'
=\bigcup_{m<n\le M}\rItp{n}\ \cup\bigcup_{m<n<M}\Istp{n}\ \cup\Istp{m}'\cup\Istp{M}',
\]
where $\Itp{n}'=\Itp{n}\setminus\{t_n\}=[t_{n-1},t_n)$ for $m<n<M$, $\Itp{M}'=\Itp{M}$, $\Istp{m}'=\Istp{m}\cap[t_m,\infty)$ and $\Istp{M}'=\Istp{M}\cap[0,t_M]$. Furthermore the intervals in the first covering are disjoint while those in the second covering can only overlap at points $t_n\in\tp\setminus\stp$; for such points
\[
\f^2(t_n)=\Eng{\f}(t_n)\ge\sigma^2(t_{n-1})\ge\sigma^2(t_n)>\Eng{\phi}(t_n)\ge\phi^2(t_n)\ge0
\]
so $t_n$ is not zero of $\f[\phi]$. Therefore
\[
\numz[0]([t_m,t_M])=\sum_{m<n\le M}\numz[0](\Itp{n}')
\]
and
\[
\numz([t_m,t_M])=\sum_{m<n\le M}\numz[0](\rItp{n})+\sum_{m<n<M}\numz(\Istp{n})
\ +\numz(\Istp{m}')+\numz(\Istp{M}').
\]
Now set
\[
\nstp=\#\{m\le n\le M:\Istp{n}\neq\emptyset\}=\#(\stp\cap[t_m,t_M]).
\]
By Corollary \ref{cor:dzIstp}
\[
\sum_{m<n<M}\numz(\Istp{n})\ +\numz(\Istp{m}')+\numz(\Istp{M}')
\le\sum_{m\le n\le M}\numz(\Istp{n})
\le2\nstp.
\]
By Corollary \ref{cor:easyintcomp} (and the discussion proceeding it) $\numz(\rItp{n})-\numz(\Itp{n}')=0$ if $\rItp{n}=\Itp{n}$, while $\abs{\numz(\rItp{n})-\numz(\Itp{n}')}\le 1$ in general. 
Since $\#\bigl\{m<n\le M:\rItp{n}\neq\Itp{n}\bigr\}\le2\nstp$ we then get
\[
\Biggl\lvert\sum_{m<n\le M}\numz[0](\rItp{n})-\sum_{m<n\le M}\numz[0](\Itp{n}')\Biggr\rvert
\le2\nstp.
\]
Combining the above estimates now gives
\[
\bigabs{\numz([t_m,t_M])-\numz[0]([t_m,t_M])}\le4\nstp.
\]
However $\nstp\le\#(\stp\cap[0,t_M])$ so Corollary \ref{cor:diststp} (together with Lemma \ref{lem:basictplem}) implies
\[
\bigabs{\numz([t_m,t_M])-\numz[0]([t_m,t_M])}=o(t_M)
\]
as $M\to\infty$. 
On the other hand, $\f[0]^{-1}(0)$ and $\f[\phi]^{-1}(0)$ are discrete subsets of $\R$ so $\numz[0]([0,t_m))$ and $\numz([0,t_m))$ are both finite. 
Thus $\bigabs{\numz(t_M)-\numz[0](t_M)}=o(t_M)$ as $M\to\infty$ and so
\[
\lim_{R\to\infty}\frac{\abs{\numz(R)-\numz[0](R)}}{R}
=\lim_{M\to\infty}\frac{\abs{\numz(t_M)-\numz[0](t_M)}}{t_M}
=0
\]
with the help of Lemmas \ref{lem:numzRRseqlim} and \ref{lem:basictplem}.
\end{proof}

\begin{remark} 
\label{sec:zlimit}
It is instructive to look at the limiting cases in Theorem \ref{thm:coszeros}.
When $\alpha=1$ we have
\[
\f(x)=\cos(x)+\cos(\beta x)=2\cos\Bigl(\frac{\beta+1}2\,x\Bigr)\cos\Bigl(\frac{\beta-1}2\,x\Bigr).
\]
The zeros of this function occur precisely when $(\beta\pm1)x\in(2\Z+1)\pi$. If $\beta\neq p/q$ for some $p,q\in\N$ with opposite parity then all zeros of $\f$ are simple and
\[
\lim_{R\to\infty}\frac{\numz[0](R)}{R}
=\frac1\pi\Bigl(\frac{\beta+1}2+\frac{|\beta-1|}2\Bigr)
=\frac1\pi\begin{cases}
1&\text{if $\beta<1$,}\\
\beta&\text{if $\beta>1.$}
\end{cases}
\]
The same formula holds for arbitrary $\beta$ if we count zeros with multiplicity. 
This agrees with Theorem \ref{thm:coszeros} and the limiting behaviour of $\nu_{\alpha,\beta}$ as $\alpha\to1^-$.

\smallskip

At the same time, Theorem \ref{thm:coszeros} does not extend in a straightforward manner to the case $\alpha\beta=1$.
This can be seen by taking $\alpha=1/3$, $\beta=3$; then
\[
\f(x)=\cos(x)+\frac13\,\cos(3x)=\frac43\,\cos^3(x).
\]
Although the zeros of this function are precisely the points $\bigl(n+\frac12\bigr)\pi$ for $n\in\Z$, we also have
\[
\f'\Bigl(\Bigl(n+\frac12\Bigr)\pi\Bigr)=\f''\Bigl(\Bigl(n+\frac12\Bigr)\pi\Bigr)=0.
\]
It is then straightforward to construct a perturbation $\phi$ satisfying \eqref{eq:decayprop012} so that $\f[\phi]=\f+\phi$ has arbitrarily many zeros close to $\bigl(n+\frac12\bigr)\pi$ for each $n\in\Z$.
\end{remark}

\end{document}